    \theoremstyle{plain}
    \newtheorem{theorem}{Theorem}[section]
    \newtheorem{lemma}[theorem]{Lemma}
    \newtheorem{corollary}[theorem]{Corollary}
    \theoremstyle{definition}
    \newtheorem{definition}[theorem]{Definition}
    \theoremstyle{remark}
\def\nn{\nonumber \\ }
\def\mbr{\mathbb{R}} 
\def\mbq{\mathbb{Q}}
\def\mbz{\mathbb{Z}}
\def\mbn{\mathbb{N}}
\def\mcc{\mathcal{C}}
\def\mcf{\mathcal{F}}
\def\mcn{\mathcal{N}}
\def\mcO{\mathcal{O}}
\def\mcl{\mathcal{L}}
\def\de{\delta}
\def\ep{\epsilon}
\def\th{\theta}
\def\om{\omega}
\title{Physics-informed Neural Networks for Functional Differential Equations: Cylindrical Approximation and Its Convergence Guarantees} 
\author{%
  Taiki Miyagawa* \\
  Independent Researcher \\
  \texttt{chopin.grande.valse.brillatnte@gmail.com} \\
  \And
  Takeru Yokota* \\
  Interdisciplinary Theoretical and Mathematical Sciences Program (iTHEMS), RIKEN \\
  Center for Quantum Computing, RIKEN \\
  \texttt{takeru.yokota@riken.jp} \\ 
}
\begin{document}

\maketitle

\def\thefootnote{*}\footnotetext{These authors contributed equally to this work. {\color{red} Some contents are omitted due to arXiv's storage limit. Please refer to our full paper at OpenReview (NeurIPS 2024) or \url{https://github.com/TaikiMiyagawa/FunctionalPINN}.}}\def\thefootnote{\arabic{footnote}}
\begin{abstract}
We propose the first learning scheme for functional differential equations (FDEs).
FDEs play a fundamental role in physics, mathematics, and optimal control.
However, the numerical analysis of FDEs has faced challenges due to its unrealistic computational costs and has been a long standing problem over decades.
Thus, numerical approximations of FDEs have been developed, but they often oversimplify the solutions. 
To tackle these two issues, we propose a hybrid approach combining physics-informed neural networks (PINNs) with the \textit{cylindrical approximation}. 
The cylindrical approximation expands functions and functional derivatives with an orthonormal basis and transforms FDEs into high-dimensional PDEs. 
To validate the reliability of the cylindrical approximation for FDE applications, we prove the convergence theorems of approximated functional derivatives and solutions.
Then, the derived high-dimensional PDEs are numerically solved with PINNs.
Through the capabilities of PINNs, our approach can handle a broader class of functional derivatives more efficiently than conventional discretization-based methods, improving the scalability of the cylindrical approximation.
As a proof of concept, we conduct experiments on two FDEs and demonstrate that our model can successfully achieve typical $L^1$ relative error orders of PINNs $\sim 10^{-3}$.
Overall, our work provides a strong backbone for physicists, mathematicians, and machine learning experts to analyze previously challenging FDEs, thereby democratizing their numerical analysis, which has received limited attention. Code is available at \url{https://github.com/TaikiMiyagawa/FunctionalPINN}.
\end{abstract}

\section{Introduction} 
\label{sec: Introduction}
Functional differential equations (FDEs) appear in a wide variety of research areas \cite{venturi2018numerical, venturi2021spectral, rodgers2024tensor}. 
FDEs are partial differential equations (PDEs) involving functional derivatives, where a functional $F$ is a function of an input function $\th(x)$ to a real number, i.e., $F: \th \mapsto F([\th]) \in \mbr$, and a functional derivative is defined as the derivative of functional w.r.t. the input function at $x$, denoted by $\delta F([\th])/\delta \th(x)$.
FDEs play a fundamental role in Fokker-Planck systems \cite{fox1986functional}, turbulence theory \cite{monin2013statistical145}, quantum field theory \cite{peskin1995introduction_to_QFT}, mean-field games \cite{carmona2018probabilistic}, mean-field optimal control \cite{chow2019algorithm, ruthotto2020machine}, and unnormalized optimal transport \cite{gangbo2019unnormalized}.  
Major examples of FDEs include the Hopf functional equation in fluid mechanics, the Fokker-Planck functional equation in the theory of stochastic processes, and the functional Hamilton-Jacobi equation in optimal control problems in density spaces. 

Despite their wide applicability, numerical analyses of FDEs are known to suffer from significant computational complexity; therefore, numerical approximation methods have been developed over decades.
They include the functional power series expansion \cite{monin2013statistical145}, the Reynolds number expansion \cite{monin2013statistical145}, finite difference methods \cite{chow2019algorithm}, finite element methods \cite{rodgers2024tensor}, tensor decomposition methods \cite{venturi2018numerical, rodgers2024tensor}, and the cylindrical approximation \cite{baez1997functional, friedrichs1957integration, gikhman2004theory, van2008stochastic}.

\begin{wrapfigure}[15]{r}[0pt]{0.5\columnwidth}
    \centering
    \centerline{\includegraphics[width=0.5\columnwidth]{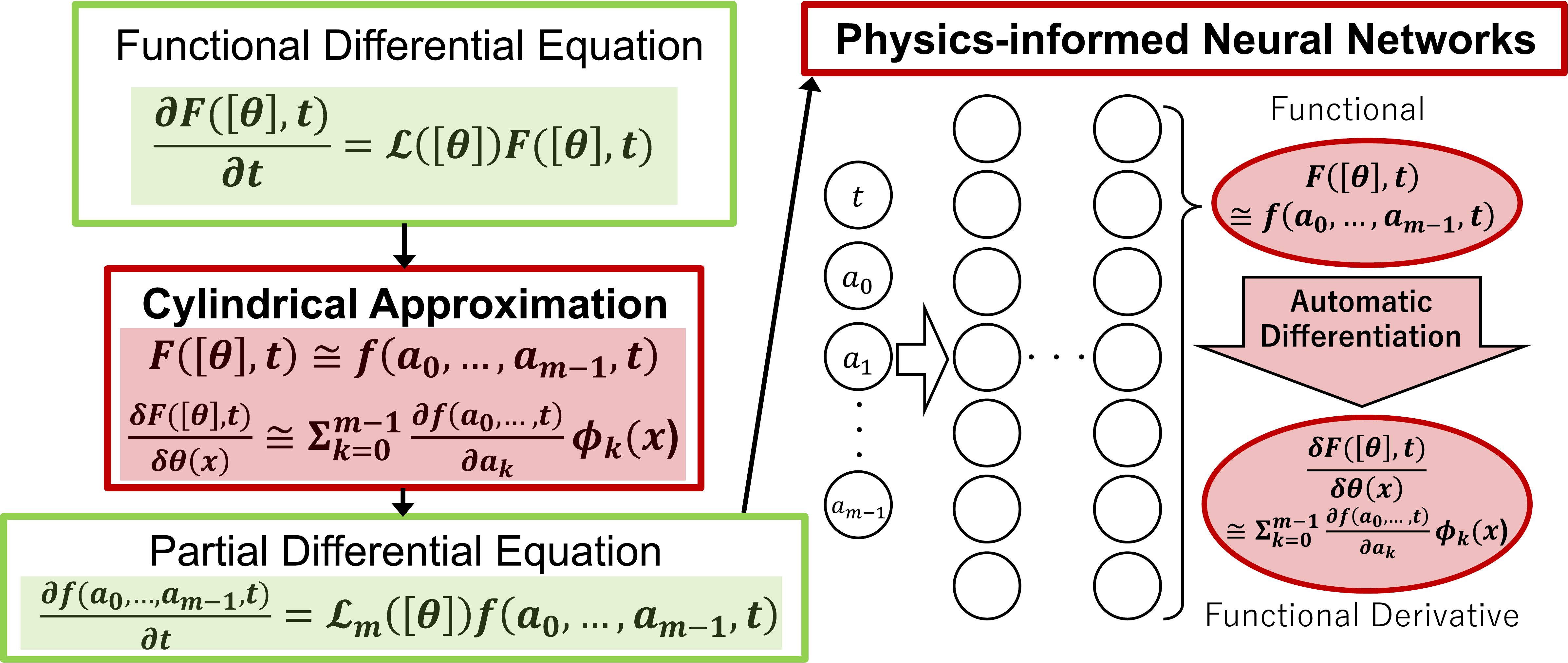}}
    \caption{\textbf{Overall architecture.} An FDE is simplified to a high-dimensional PDE via the cylindrical approximation. The PDE is solved with a PINN. The approximated functional derivative can be efficiently computed with automatic differentiation. 
    }
    \label{fig: figure1}
\end{wrapfigure}
However, they tend to oversimplify the solution of FDEs, prioritizing the reduction of computational costs.
The functional power series expansion is applicable only to input functions close to the expansion center. Moreover, it has no convergence guarantees in general \cite{monin2013statistical145}.
The Reynolds number expansion requires the Reynolds number to be close to zero, severely restricting its applicability, because the Reynolds number for turbulent flow can be $\gtrsim 1000$.
Discretization-based methods such as the finite difference and element methods restrict spacetime resolution and/or the class of input functions and functional derivatives \cite{rodgers2024tensor}.
Existing methods relying on the cylindrical approximation, akin to the spectral method for PDEs, include tensor decomposition \cite{venturi2018numerical, venturi2021spectral} to reduce computational costs; however, they tend to significantly simplify the class of input functions and functional derivatives. For instance, their expressivity is limited to polynomials or Fourier series of a few degrees.

To address the notorious computational complexity and limited approximation ability, we propose a hybrid approach combining physics-informed neural networks (PINNs) and the cylindrical approximation (Fig.~\ref{fig: figure1}).
In the first stage, we expand the input function with orthonormal basis functions, thereby transforming a given FDE into a high-dimensional PDE of the expansion coefficients.
This approximation is referred to as the \textit{cylindrical approximation}. 
We prove the convergence of the approximated functional derivatives and FDE solutions, validating the reliability of the cylindrical approximation for FDE applications, which is our main theoretical contribution.
In the second stage, the derived high-dimensional PDE is numerically solved with a PINN, which is known to be a universal approximator tailored to solve high-dimensional PDEs efficiently in a mesh-free manner.

A notable advantage of our approach is that, with the help of PINNs, it reduces the computational complexity by orders of magnitude, compared with previous discretization-based methods. 
In fact, it requires only $\mcO(m^r)$, where $m$ represents the ``class size'' of input functions and functional derivatives (e.g., the degree of polynomials), and $r$ ($\geq 1$) denotes the order of the functional derivative included in the target FDE (typically $1$ or $2$). 
This is a notable reduction from the state-of-the-art cylindrical approximation algorithm \cite{venturi2018numerical}, which requires as large as $\mcO(m^6)$.
Consequently, our approach substantially extends the class of input functions and functional derivatives that can be represented by the cylindrical approximation.
For instance, our approach extends the degrees of polynomials or Fourier series used for the approximation from $6$ \cite{venturi2018numerical} to $1000$, showing unprecedented expressivity.

As a proof of concept, we conduct experiments on two FDEs: the functional transport equation and the Burgers-Hopf equation. 
The results show that our model accurately approximates not only the solutions but also their functional derivatives, successfully achieving typical $L^1$ relative error orders of PINNs $\sim 10^{-3}$ \cite{cuomo2022scientific, blechschmidt2021three, das2022state,daw2023mitigating,jagtap2020adaptive,raissi2019physics_PINN_org,sharma2023stiff,wang2022is,wang2023learning}.

Our contribution is threefold. (1) We propose the first learning scheme for FDEs to address the significant computational complexity and limited approximation ability. Our model exponentially extends the class of input functions and functional derivatives that can be handled accurately and efficiently. (2) We prove the convergence of approximated functional derivatives and FDE solutions, ensuring the cylindrical approximation to be safely applied to FDEs. (3) Our experimental results show that our model accurately approximates not only the FDE solutions but also their functional derivatives.

\section{Related Work} 
\label{sec: Related Work}
FDEs are prevalent across numerous research fields \cite{fox1986functional, easther2003review, gangbo2019unnormalized, carmona2018probabilistic, chow2019algorithm, ruthotto2020machine, monin2013statistical145, peskin1995introduction_to_QFT}.
Research on FDEs has mainly focused on their theoretical aspects and formal solutions, with very few algorithms available to numerically solve general FDEs \cite{chow2019algorithm, rodgers2024tensor, venturi2018numerical, yokota2023physics}.
In \cite{chow2019algorithm}, a numerical method specialized for the Hamilton-Jacobi functional equation for optimal control problems in density space is proposed, based on spacetime discretization.
Similarly, \cite{rodgers2024tensor} employs spacetime discretization with tensor decomposition.
The state-of-the-art algorithm proposed in \cite{venturi2018numerical}, the CP-ALS (Canonical Polyadic tensor expansion \& Alternating Least Squares) algorithm, uses the cylindrical approximation along with the finite difference method and tensor decomposition, requiring $\mcO(m^6)$ (see App.~\ref{app: Computational Complexity of CP-ALS} for the derivation), whereas our model requires only $\mcO(m^r)$ ($r$ is $1$ or $2$ in most FDEs).
Furthermore, our model does not require such discretization, making it mesh-free.
See App.~\ref{app: Extra Introduction to FDEs} for an additional introduction to FDEs and their approximations.

The cylindrical approximation originates from the theory of stochastic processes \cite{gikhman2004theory, van2008stochastic}.
It is reminiscent of the spectral method for PDEs \cite{meuris2023machine} and is a generalization to FDEs.
Convergence theorems of the cylindrical approximation are summarized in a recent seminal paper \cite{venturi2021spectral}.
Note that the cylindrical approximation in this paper (Eq.~\eqref{eq: cylindrical approx of functional derivatives with omitting second term}) is different from the one in \cite{venturi2021spectral}, tailored for practical use. Consequently, our convergence theorems also differ from those in \cite{venturi2021spectral}. See App.~\ref{app: Functional Derivatives} for technical details.
See App.~\ref{app: Supplementary Related Work} for more comparisons with other studies.

\section{Proposed Approach} 
\label{sec: Theory: Cylindrical Approximation}
\subsection{Step 1: Cylindrical Approximation}
We first introduce the \textit{cylindrical approximation of functionals, functional derivatives, and FDEs}, beginning with the expansion of input functions and culminating in the transformation of FDEs into high-dimensional PDEs. Additionally, we prove the convergence theorems for this approximation. 
The rigorous mathematical background is reviewed in App.~\ref{app: Theorems Related to Cylindrical Approximation and Convergence} for interested readers.

Firstly, we define the \textit{cylindrical approximation of functionals} \cite{baez1997functional, friedrichs1957integration, gikhman2004theory, van2008stochastic}.
Any function $\th$ in a real separable Hilbert space $H$ can be represented uniquely in terms of an orthonormal basis $\{ \phi_k\}_{k=0}^\infty$ as $\th(x) = \sum_{k=0}^{\infty} a_k \phi_k(x)$, where $a_k := (\th, \phi_k)_H$ are the \textit{coefficients} (or spectrum) of $\th$ in terms of $\{\phi_k\}_{k \geq 0}$, and $(\cdot, \cdot)_H$ denotes the inner product of $H$. 
Substituting this expansion to functional $F([\th])$, we can define a multivariable function $f (\{a_k\}_{k=0}^\infty) := F([ \sum_{k=0}^{\infty} a_k \phi_k ])$ for any functional $F: H \rightarrow \mbr$. 
Truncating $k$ at $m-1 \in \mbz_{\geq 0}$ gives the cylindrical approximation of functionals: 
\begin{equation} \label{eq:1}
    f(\{a_k\}_{k=0}^{m-1}) := F([P_m \th]),    
\end{equation}
where $P_m$ is the projection operator s.t.~$P_m \th(x) := \sum_{k=0}^{m-1} a_k \phi_k (x)$, and $m$ is referred to as the \textit{degree} of approximation. 
See Thm.~\ref{thm: Uniform convergence of cylindrical approximation of functionals} and Thm.~\ref{thm: Convergence rate of cylindrical approximation of functionals} for the uniform convergence and convergence rate of this approximation, originally given by \cite{prenter1970weierstrass, venturi2021spectral}.

Secondly, we define the \textit{cylindrical approximation of functional derivatives}.
The functional derivative of $F$ w.r.t.~$\th$ at $x$ is defined as $\frac{\delta F ([\th])}{\delta \th (x)} := \underset{\ep \rightarrow 0}{\lim} \frac{ F([\th(y) + \ep \de(x - y)]) - F([\th(y)]) }{\ep}$, where $\de(x)$ denotes the Dirac delta function. This definition is impractical to simulate on computers with spacetime discretization; thus, we employ the expansion $\frac{\delta F([\th])}{\delta \theta(x)} = \sum_{k=0}^\infty (\frac{\delta F([\th])}{\delta \th}, \phi_k)_H \phi_k(x)$.
The expansion is possible because $\frac{\delta F ([\th])}{\delta \th (x)}$ itself is a function of $x$ in $H$ and thus can be represented as an orthonormal basis expansion.
Note that the expansion coefficients $(\frac{\delta F([\th])}{\delta \th}, \phi_k)_H$ are known to be equal to $\frac{\partial f}{\partial a_k}$ (see App.~\ref{app: Functional Derivatives} for the proof).
Hence, truncating $\th$ at $m-1$ gives the cylindrical approximation of functional derivatives:
\begin{equation} \label{eq: cylindrical approx of functional derivatives with omitting second term}
    P_m\frac{\delta F([P_m \th])}{\delta \theta(x)} = \sum_{k=0}^{m-1} \frac{\partial f}{\partial a_k} \phi_k (x) \,.
\end{equation}
Note that Eq.~\eqref{eq: cylindrical approx of functional derivatives with omitting second term} is different from the cylindrical approximation adopted in \cite{venturi2018numerical, venturi2021spectral}. They do not apply $P_m$ to $\delta F([P_m \theta]) / \delta \theta(x)$, and the emerging ``tail term'' $\Sigma_{k=m}^{\infty} (\delta F([\theta]) / \delta \theta, \phi_k) \phi_k(x)$ is simply ignored without any rationale.

The first main theoretical contribution of our work is the following convergence theorem of Eq.~\eqref{eq: cylindrical approx of functional derivatives with omitting second term}.
\begin{theorem}[Pointwise convergence of approximated functional derivatives (informal)] \label{thm: Pointwise convergence of cylindrical approximation (informal)}
    For arbitrary $\theta \in H$ and orthonormal basis $\lbrace \phi_0,\phi_1,\ldots \rbrace$, Eq.~\eqref{eq: cylindrical approx of functional derivatives with omitting second term} converges to $\frac{\delta F([\theta])}{\delta \theta}$ as $m \rightarrow \infty$.
\end{theorem}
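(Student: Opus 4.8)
The plan is to read the left-hand side of Eq.~\eqref{eq: cylindrical approx of functional derivatives with omitting second term} as a single Hilbert-space object. Writing $g := \frac{\de F([\th])}{\de \th}\in H$ and $g_m := \frac{\de F([P_m\th])}{\de\th}\in H$, the chain-rule identity $\frac{\partial f}{\partial a_k} = (g_m,\phi_k)_H$ (the evaluation being at $P_m\th$) shows that the right-hand side of Eq.~\eqref{eq: cylindrical approx of functional derivatives with omitting second term} equals $P_m g_m = \sum_{k=0}^{m-1}(g_m,\phi_k)_H\,\phi_k$. The target is $g$, so it suffices to prove $\|P_m g_m - g\|_H \to 0$. (I read ``pointwise'' as ``for each fixed $\th$'' rather than uniformly over $\th$, the two functions of $x$ being compared in the $H$-norm; the genuinely pointwise-in-$x$ variant is discussed at the end.)

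First I would split the error into a projection/continuity part and a truncation part,
\begin{equation*}
\|P_m g_m - g\|_H \;\le\; \underbrace{\|P_m g_m - P_m g\|_H}_{\text{(I)}} \;+\; \underbrace{\|P_m g - g\|_H}_{\text{(II)}} .
\end{equation*}
Term (II) is just the tail of the orthonormal expansion of the \emph{fixed} element $g\in H$: since $\{\phi_k\}_{k\ge 0}$ is a complete orthonormal basis, Parseval gives $\|P_m g - g\|_H^2 = \sum_{k\ge m}|(g,\phi_k)_H|^2 \to 0$, requiring nothing beyond differentiability of $F$ at $\th$.

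Term (I) is where the real content sits. Using that the orthogonal projection $P_m$ is non-expansive, $\|P_m g_m - P_m g\|_H = \|P_m(g_m-g)\|_H \le \|g_m - g\|_H$, which is the distance between the functional derivative evaluated at $P_m\th$ and at $\th$. Since $P_m\th\to\th$ in $H$ (Parseval again), this vanishes provided the map $\th\mapsto \frac{\de F([\th])}{\de\th}$, i.e.\ the Riesz representative of the Fr\'echet derivative of $F$, is continuous from $H$ to $H$ at $\th$. This is precisely the regularity hypothesis I expect the formal statement to make explicit (e.g.\ $F\in C^1$ near $\th$). Combining (I) and (II) and letting $m\to\infty$ closes the argument.

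The main obstacle is exactly term (I): unlike the value-level results (Thm.~\ref{thm: Uniform convergence of cylindrical approximation of functionals} and Thm.~\ref{thm: Convergence rate of cylindrical approximation of functionals}), here the derivative is taken at the \emph{moving} argument $P_m\th$, so completeness of the basis alone cannot control it and some continuity of the functional gradient must be assumed and invoked. A second, subtler obstacle appears if one insists on convergence pointwise in $x$: the $H$-bound on (I) does not transfer for free, since Cauchy--Schwarz only gives $|P_m(g_m-g)(x)| \le \|g_m-g\|_H\big(\sum_{k=0}^{m-1}|\phi_k(x)|^2\big)^{1/2}$ and the Christoffel-type factor $\sum_{k<m}|\phi_k(x)|^2$ may grow with $m$. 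Closing that gap would require an extra assumption --- uniform control of $\sum_k|\phi_k(x)|^2$, an RKHS/evaluation-continuity condition, or absolute convergence of the expansion of $g$ --- which I would fold into the formal hypotheses.
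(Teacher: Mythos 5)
Your proposal follows essentially the same route as the paper's proof: the identical triangle-inequality split into $\|P_m(g_m-g)\|_H$ (controlled via non-expansiveness of $P_m$ and convergence of the derivative at the moving argument $P_m\th \to \th$, which the paper gets from the cited uniform-convergence theorem for $\frac{\delta F([P_m\th])}{\delta\th}$ under the continuous-differentiability hypothesis you correctly anticipated) plus the tail $\|(1-P_m)g\|_H$, which the paper likewise kills by boundedness of the Fr\'echet derivative, Parseval, and a Cauchy-sequence argument, yielding exactly the pointwise-in-$\th$ (not uniform-in-$\th$) convergence you identified. Your closing remark on the pointwise-in-$x$ variant goes beyond what the paper proves but is consistent with its discussion; no gaps.
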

The formal statement and proof are given in App.~\ref{app: Theorem: Pointwise Convergence of Functional Derivatives under Cylindrical Approximation}.
The convergence rate is the same as $\|\theta - P_m \theta \|$ if $\delta F([\theta]) / \delta \theta(x) \in \mathrm{span}\{\phi_0, \dots, \phi_{m-1}\}$.
A technical discussion when this is not the case is provided in App.~\ref{app: Proofs and Derivations}.

\begin{wrapfigure}[22]{r}[0pt]{0.45\columnwidth}
    \centering
    \centerline{\includegraphics[width=0.45\columnwidth]{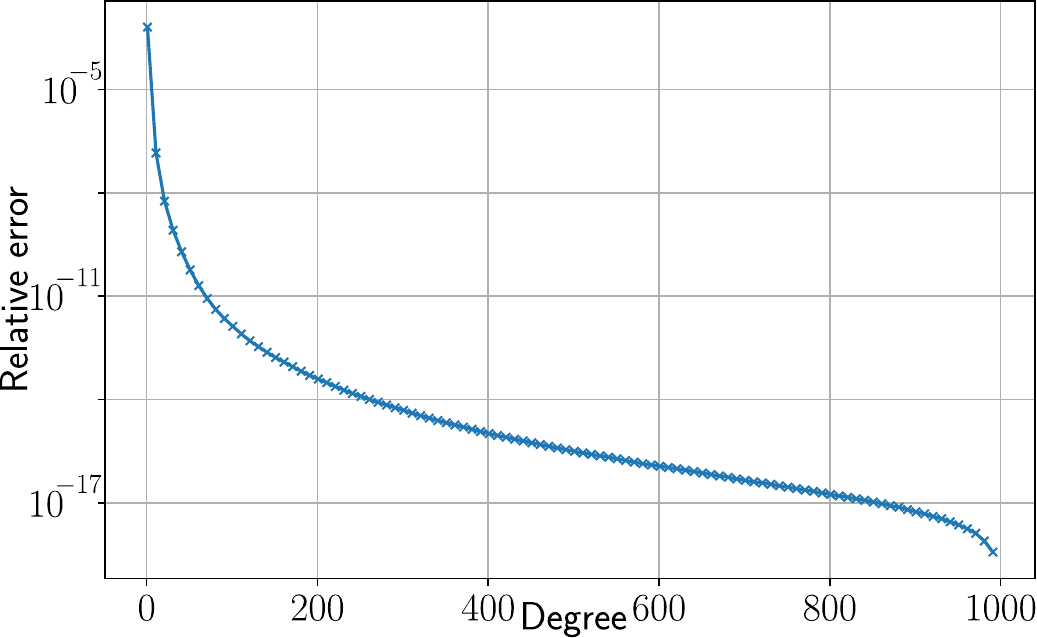}}
    \caption{\textbf{Cylindrical approximation of FDE's solution.} 
    The $L^1$ relative error, defined as $\frac{1}{N} \sum_{i=1}^{N} | F([\theta_i]) - F([P_m \theta_i]) | / | F([\theta_i]) |$, diminishes with increasing $m$. 
    The Burgers-Hopf equation with the delta initial condition (Sec.~\ref{sec: Burgers-Hopf Equation}) is considered. Note that PINN's training is not included.
    }
    \label{fig: fitting}
\end{wrapfigure}
Finally, we define the \textit{cylindrical approximation of FDEs} \cite{venturi2021spectral}.
In this paper, we consider the \textit{abstract evolution equation}, a class of FDEs having the following form: 
\begin{equation}
    {\partial F([\th], t)}/{\partial t} = \mcl([\th]) F([\th], t)
\end{equation}
with $F([\th], 0) = F_0([\th])$, where $\mcl([\th])$ is a linear functional operator, and $F_0$ is a given initial condition. 
The abstract evolution equation is a crucial class of FDEs in physics, mathematics, and engineering, including the Hopf functional equation, Fokker-Planck functional equation, and functional Hamilton-Jacobi equation.
The cylindrical approximation of the abstract evolution equation is given by 
\begin{align}
    {\partial f(\boldsymbol{a}, t)} / {\partial t} = \mcl_m([\th]) f(\boldsymbol{a}, t)     
\end{align}
with $f(\boldsymbol{a}, 0) =  f_0 (\boldsymbol{a})$, where $\boldsymbol{a}:= (a_0, \ldots, a_{m-1})^\top$, $f(\boldsymbol{a}, t) := F([P_m \theta], t)$, and $f_0 (\boldsymbol{a}) := F_0([P_m \th])$.
The operator $\mcl_m([\th])$ is the cylindrical approximation of $\mcl([\th])$. 
Examples are given in Secs.~\ref{sec: Functional Transport Equation} \& \ref{sec: Burgers-Hopf Equation}.

The second main theoretical contribution of our work is the following convergence theorem of solutions:
\begin{theorem}[Convergence of approximated solutions (informal)] \label{thm: main theorem (informal)}
    Under the cylindrical approximation (Eq.~\eqref{eq: cylindrical approx of functional derivatives with omitting second term}), if the FDE depends on functional derivatives only in the form of the inner product $(v, \frac{\delta F([\theta])}{\delta \theta})_H$ ($v \in H$),
    then, the solution of the approximated abstract evolution equation ($F([P_m \th], t)$) converges to the solution of the original one ($F([\th], t)$) as $m \rightarrow \infty$.
\end{theorem}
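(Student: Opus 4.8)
The plan is to recast the claim as a classical \emph{consistency-plus-stability} argument (a Lax-type equivalence) adapted to the functional setting, using Theorem~\ref{thm: Pointwise convergence of cylindrical approximation (informal)} to supply consistency and the inner-product hypothesis to linearise the error analysis. First I would introduce the error $e_m(t) := F([P_m\th], t) - F([\th], t)$ between the approximate and exact solutions and derive its evolution equation by subtracting $\partial_t F([\th],t) = \mcl([\th])F([\th],t)$ from the approximate equation $\partial_t F([P_m\th],t)=\mcl_m([\th])F([P_m\th],t)$. The key simplification is the hypothesis that the generator sees functional derivatives only through inner products $(v, \frac{\delta F([\th])}{\delta\th})_H$ with $v \in H$: this makes $\mcl$ affine in the functional derivative, so the generator mismatch collapses to controlled terms of the form
\begin{equation}
    \Bigl(v,\; P_m\tfrac{\delta F([P_m\th])}{\delta\th} - \tfrac{\delta F([\th])}{\delta\th}\Bigr)_H,
\end{equation}
rather than an uncontrolled nonlinear discrepancy.

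Next I would establish \textbf{consistency}. By Theorem~\ref{thm: Pointwise convergence of cylindrical approximation (informal)}, $P_m\frac{\delta F([P_m\th])}{\delta\th} \to \frac{\delta F([\th])}{\delta\th}$, so Cauchy--Schwarz together with continuity of the inner product forces the truncation error $\tau_m(t) := (\mcl_m([\th]) - \mcl([\th]))F([\th],t)$ to vanish as $m\to\infty$, with rate governed by $\|\th - P_m\th\|$ when $\frac{\delta F([\th])}{\delta\th}$ lies in $\mathrm{span}\{\phi_0,\dots,\phi_{m-1}\}$. For the \textbf{initial data}, the uniform-convergence theorem for the cylindrical approximation of functionals (Thm.~\ref{thm: Uniform convergence of cylindrical approximation of functionals}) gives $f_0(\boldsymbol{a}) = F_0([P_m\th]) \to F_0([\th])$, hence $\|e_m(0)\| \to 0$. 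Writing the error equation in Duhamel form and invoking a \textbf{stability} bound on the approximate evolution operators $S_m(t)$, uniform in $m$ and of the form $\|S_m(t)\| \le Ce^{\omega t}$ (which the linearity in the functional derivative plus a dissipativity or boundedness assumption on $\mcl$ supplies), a Grönwall inequality yields
\begin{equation}
    \| e_m(t)\| \;\le\; Ce^{\omega t}\Bigl(\|e_m(0)\| + \int_0^t \|\tau_m(s)\|\,ds\Bigr),
\end{equation}
and letting $m\to\infty$ sends the right-hand side to zero for each fixed $t$.

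The hard part will be the stability step: because $\mcl_m$ is assembled from \emph{truncated} functional derivatives, proving a uniform-in-$m$ bound on the generated evolution — so that the vanishing consistency and initial-data errors genuinely propagate to the solution rather than being amplified — requires structural control on $\mcl$ beyond the inner-product form, typically a coercivity or dissipativity estimate valid uniformly across the approximations. A secondary subtlety is upgrading the pointwise convergence of Theorem~\ref{thm: Pointwise convergence of cylindrical approximation (informal)} to convergence in $H$-norm (or uniformly on the relevant compact time window), which is what is actually needed to integrate $\|\tau_m(s)\|$ in $s$ and pass to the limit inside the Duhamel representation.
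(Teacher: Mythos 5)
Your high-level skeleton is the same as the paper's: decompose the approximated equation as the truncated generator plus a residual $R_m([\th],t)$, prove consistency and a uniform-in-$m$ stability bound, handle the initial data via Thm.~\ref{thm: Uniform convergence of cylindrical approximation of functionals}, and conclude by a Lax-type equivalence (the paper invokes Thm.~\ref{thm: Convergence of solutions under cylindrical approximation in App} rather than writing the Duhamel/Gr\"onwall argument out). However, both steps you flag as ``the hard part'' and ``a secondary subtlety'' are precisely where your proposed routes fail, and the paper resolves them by different mechanisms. For consistency, you propose deducing the vanishing of $\tau_m$ from Thm.~\ref{thm: Pointwise convergence of cylindrical approximation (informal)} plus Cauchy--Schwarz and then ``upgrading'' to uniformity. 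That upgrade is impossible in general: the paper exhibits an explicit functional on a compact set for which $P_m\frac{\delta F([P_m\th])}{\delta\th}\to\frac{\delta F([\th])}{\delta\th}$ pointwise but \emph{not} uniformly, and Thm.~\ref{thm: Consistency of cylindrical approximation of FDEs} needs uniform convergence of $R_m$ on $K$. The correct mechanism is Lem.~\ref{lem: Uniform convergence of inner products}: writing the residual as a difference of inner products with $v$, one uses self-adjointness of $P_m$ to move $(1-P_m)$ \emph{off} the functional derivative and \emph{onto} $v$, so that the problematic tail becomes $\sum_{k\geq m}|(v,\phi_k)_H|^2$; since $v$ ranges over a compact (in the paper's applications, a singleton) set $K'$, the equi-small tail condition (Thm.~\ref{thm: Necessary and sufficient condition of pre-compactness of metric space}) makes this tail uniformly small, while the remaining piece is controlled by the \emph{uniformly} convergent quantity $\|\frac{\delta F([\th])}{\delta\th}-\frac{\delta F([P_m\th])}{\delta\th}\|_H$ of Thm.~\ref{thm: Uniform convergence of cylindrical approximation of functional derivatives}. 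This shift of the tail onto $v$ is the essential idea your proposal is missing.

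For stability, you postulate a coercivity or dissipativity assumption on $\mcl$ not present in the theorem statement. The paper needs no such hypothesis: for each equation it verifies Def.~\ref{def: Stability of approximated equation} directly from the explicit solution of the \emph{approximated} problem. For the FTE, $f(\boldsymbol{a},t)=F_0([P_m(g-ut)])$ is a coordinate translation, so $\|e^{t\mcl_m}f_0\|_{L^\infty(\mbr^m)}=\|f_0\|_{L^\infty(\mbr^m)}$ and $\|e^{t\mcl_m}\|=1$; for the BHE, the solution is $W_0$ composed with a smoothed argument whose coefficient map sends $\mbr^m$ into a subset $S_\tau\subseteq\mbr^m$, giving $\|e^{t\mcl_m}\|\leq 1$. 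In both cases the bound $\|e^{t\mcl_m}\|\leq Me^{\omega t}$ holds with $M=1$, $\omega=0$, uniformly in $m$, with no structural estimate on $\mcl$ beyond knowing the solution representation. This also explains the scope of the theorem: it is proved for evolution equations where such a representation (or another $m$-independent semigroup bound) is available, which is why the paper carries out the argument concretely for the FTE and BHE rather than axiomatically as in your plan.
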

The proof is given in App.~\ref{app: Details about our FDEs}.
The convergence is visualized in Fig.~\ref{fig: fitting}.
Similar theorems for the FDEs with the second or higher-order functional derivatives can be shown in a similar way.
The inner-product assumption in Thm.~\ref{thm: main theorem (informal)} is satisfied by major FDEs, such as the Hopf functional equation, Fokker-Planck functional equation, and functional Hamilton-Jacobi equation.

In the following, we apply the cylindrical approximation to two FDEs: the functional transport equation (FTE) and the Burgers-Hopf equation (BHE).

\subsubsection{Application 1: Functional Transport Equation} 
\label{sec: Functional Transport Equation}

We first construct a simple FDE, the \textit{functional transport equation} (FTE), which is a generalization of the transport equation (the continuity equation) \cite{landau2013fluid_mechanics}. The FTE is provided by 
\begin{equation} \label{eq:FTE_main_text}
    \frac{\partial}{\partial t}F([\th],t) = -\int dx \, u(x)\frac{\delta F([\th],t)}{\delta \th(x)},
\end{equation}
with the initial condition $F([\th],0) = F_0([\th])$, where $x \in [-1, 1]$, and $u(x)$ is a given function. 
Specifically, we use the initial condition $F([\th],0) = \rho_0\int dx\, u(x)\th(x)$ with $\rho_0$ a constant.
The exact solution is $F([\th],t) = F_0([\th-ut]) = \rho_0\int dx\, u(x) (\th(x)-u(x)t) $, as can be seen by substituting this into Eq.~\eqref{eq:FTE_main_text}.
More details and motivations of the FTE are provided in App.~\ref{app: Functional Transport Equation}.

The cylindrical approximation of the FTE is given by 
\begin{equation} \label{eq: FTE CA}
    \frac{\partial}{\partial t}f(\boldsymbol{a},t) = - \sum_{k=0}^{m-1} u_k \frac{\partial}{\partial a_k}f(\boldsymbol{a},t)
\end{equation}
with the initial condition $f(\boldsymbol{a},0)=f_0(\boldsymbol{a})$, 
where 
$\th(x) = \sum_{k=0}^{m-1} a_k \phi_k(x)$, 
and $f_0(\boldsymbol{a}) = F_0([P_m \th]) = \rho_0 \Sigma_{k=0}^{m-1} u_k a_k$ with $u_k := (u, \phi_k)_{L^2([-1, 1])} = \int dx u(x) \phi_k (x)$.
We use the Legendre polynomials as the orthonormal basis $\{\phi_k\}_{k \geq 0}$.
The exact solution of Eq.~\eqref{eq: FTE CA} is $f(\boldsymbol{a}, t) = \rho_0\sum_{k=0}^{m-1} u_k (a_k - u_k t)$.

In our experiments in Sec.~\ref{sec: Experiment}, we consider two types of FTEs: (i) $u_k := \upsilon_0$ for $k=1$ (0 otherwise) and (ii) $u_k := \upsilon_0$ for $k \leq 14$ (0 otherwise), where $\upsilon_0$ is a constant.
For convenience, we call them the \textit{linear} and \textit{nonlinear initial conditions}, respectively.
The high-dimensional PDE thus obtained (Eq.~\eqref{eq: FTE CA}) is solved with PINNs (Sec.~\ref{sec: PINN}). 

\subsubsection{Application 2: Burgers-Hopf Equation} 
\label{sec: Burgers-Hopf Equation}
The second FDE is the \textit{Burgers-Hopf equation} (BHE), a crucial equation in turbulence theory:
\begin{equation} \label{eq: FTE org}
    \frac{\partial F([\th],t)}{\partial t} = \int dx \th(x)\frac{\partial^2}{\partial x^2}\frac{\delta F([\th],t)}{\delta \th(x)}     
\end{equation}
with the initial condition $F([\th],0)=F_0([\th])$, where $x \in [-1/2, 1/2]$.
Specifically, we use the Gaussian initial condition $F_0([\th]) = - \overline{\mu} \int dx  \th(x) + \frac{1}{2} \int dx \int dx' C(x,x') \th(x) \th(x')$, where $\overline{\mu}$ is a constant, and $C(x, x')$ is the infinite-dimensional covariance matrix. 
The exact solution is $F([\th],t) = F_0([\Theta])$, where $\Theta([\th], x, t) := \frac{1}{\sqrt{4\pi t}}
\int_{-\infty}^{\infty}dx' e^{-\frac{1}{4t} (x-x')^2} \th(x')$.
The derivation is provided in App.~\ref{app: Derivation of Eq. BH exact solution}.
Strictly speaking, Eq.~\eqref{eq: FTE org} is a modification of the original BHE.
The modification includes making the BHE dimensionless and neglecting the advection term.
For more technical details, see App.~\ref{app: Burgers-Hopf Equation}.

The cylindrical approximation of the BHE is given by
\begin{equation} \label{eq:BHE_CA_main_text}
    \frac{\partial}{\partial t} f(\boldsymbol{a}, t) = \sum_{k=0}^{m-1} \sum_{l=0}^{m-1} \int dx \frac{\partial \phi_k (x)}{\partial x^2} \phi_l(x)  a_k \frac{\partial}{\partial a_l}f(\boldsymbol{a}, t)
\end{equation}
with the initial condition $f(\boldsymbol{a},0) = f_0(\boldsymbol{a})$, where $\theta(x) = \sum_{k=0}^{m-1} a_k \phi_k(x)$, 
$f_0(\boldsymbol{a}) = F_0([P_m \th]) = - \bar{\mu} a_0 + \frac{1}{2} \Sigma_{k,l = 0}^{m-1} \tilde{C}_{kl} a_k a_l$ with $\tilde{C}_{kl} := \int dx \int dx' C(x,x') \phi_{k}(x) \phi_{l}(x')$.
We use the Fourier series as the orthonormal basis: $\{\phi_{k}(x)\}_{k \geq 0} = \{1, \sqrt{2}\sin(\pi k x), \sqrt{2}\cos(\pi kx)\}_{k \geq 1}$.
Then, the exact solution under the cylindrical approximation is given by
\begin{align} \label{eq: BHE sol main text}
    &f(\boldsymbol{a},t) =  - \overline{\mu} a_{0} +
    \Sigma_{k, l = 0}^{M-1}
    \big(
    e^{-4\pi^2 (k^2+l^2) t}
    \tilde{C}_{2k,2l}
    a_{2k} a_{2l} \nn
    &+
    e^{-4\pi^2 (k^2+(l+1)^2) t}
    \tilde{C}_{2k,2l+1}
    a_{2k} a_{2l+1} 
    +
    e^{-4\pi^2 ((k+1)^2+l^2)t}
    \tilde{C}_{2k+1,2l}
    a_{2k+1} a_{2l} \nn
    &+
    e^{-4\pi^2 ((k+1)^2+(l+1)^2)t}
    \tilde{C}_{2k+1,2l+1}
    a_{2k+1}
    a_{2l+1}
    \big)/2 \, ,
\end{align}
where $m = 2M$ ($M \in \mbn$). 
The derivation is given in App.~\ref{app: Derivation of BH exact solution in exp}.

In our experiments in Sec.~\ref{sec: Experiment}, we adopt two types of the covariance matrices: 
(i) $\tilde{C}_{ij} = \sigma^2$ for all $i = j \geq 0$ (0 otherwise) and
(ii) $\tilde{C}_{ij} = \sigma^2$ for $i = j = 0$ (0 otherwise), where $\sigma^2$ is a constant. 
Substituting (i) and (ii) into $f_0$, we have two types of initial conditions, which we call the \textit{delta and constant initial conditions}, respectively.
Again, the high-dimensional PDE thus obtained (Eq.~\eqref{eq:BHE_CA_main_text}) is solved with PINNs (Sec.~\ref{sec: PINN}).

\subsection{Step 2: Solving Approximated FDEs with PINNs} 
\label{sec: PINN}
\begin{wrapfigure}[11]{r}[0pt]{0.5\columnwidth}
 \centering
    \centerline{\includegraphics[width=0.5\columnwidth]{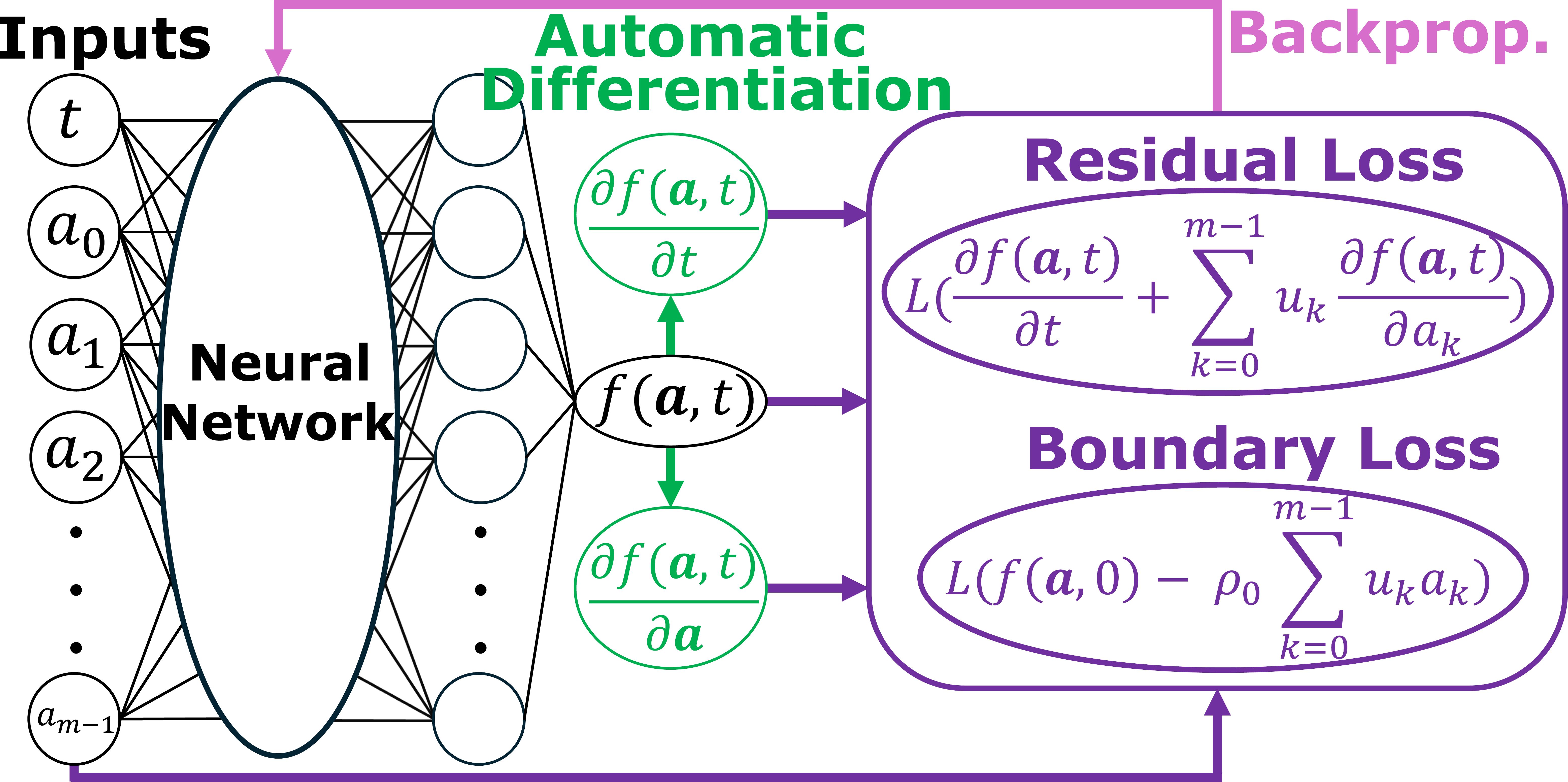}}
    \caption{\textbf{PINN's architecture.}}
    \label{fig: PINN}
\end{wrapfigure}
We briefly introduce the foundation of PINNs \cite{raissi2019physics_PINN_org}.
PINNs are universal approximators and can solve PDEs.
Let us consider a PDE $\partial f(t,x)/\partial t = \mathcal{N}[f]$ with an initial condition $\mathcal{B}[f]|_{t=0} = 0$, where $t \in [0, 1]$, $x \in [-1, 1]$. $\mathcal{N}$ and $\mathcal{B}$ are operators defining the PDE and the initial condition, respectively.
The PINN aims to approximate the solution $f(t,x)$.
Thus, the inputs to the PINN are $t$ and $x$, randomly sampled from $[0, 1]$ and $[-1, 1]$, respectively. 
Note that $(t=0, x)$ with $x \in [-1, 1]$ are also input to the PINN to compute the boundary loss. 
The inputs are transformed through linear layers and activation functions. 
The final output of the PINN is an approximation of $f(t, x)$, denoted by $\hat{f}(t, x)$. 
The loss function is the weighted sum of the residual loss $\| \partial \hat{f}(t,x)/\partial t - \mathcal{N}[\hat{f}] \|$ and the boundary loss $\| B[\hat{f}]|_{t=0} \|$, where $\| \cdot \|$ is a certain norm. 
The partial derivatives in the loss function can be computed via automatic differentiation of the PINN's output $\hat{f}$ w.r.t. the inputs $(t, x)$. 
Finally, the weight parameters of the PINN are minimized through backpropagation.

Next, we explain how to apply PINNs to our high-dimensional PDEs: see Fig.~\ref{fig: PINN}. 
For concreteness, consider the FTE (Eq.~\eqref{eq: FTE CA}) with the linear initial condition. 
The inputs to the PINN are $(t, a_0, a_1, \ldots, a_{m-1})$ and $(0, a_0, a_1, \ldots, a_{m-1})$, randomly sampled from finite intervals, and the outputs are $\hat{f}(\boldsymbol{a}, t)$ and $\hat{f}(\boldsymbol{a}, 0)$, respectively. 
Then, $\partial \hat{f}(\boldsymbol{a}, t) / \partial t$ and $\partial \hat{f}(\boldsymbol{a}, t) / \partial \boldsymbol{a}$ are computed via automatic differentiation, and we obtain the residual loss $\| \partial \hat{f}(\boldsymbol{a}, t) / \partial t + \Sigma_{k=0}^{m-1} u_k \partial \hat{f}(\boldsymbol{a},t) / \partial a_k \|$ and the boundary loss $\| \hat{f}_0(\boldsymbol{a}) - \rho_0 \Sigma_{k=0}^{m-1} u_k a_k \|$, where $\hat{f}_0(\boldsymbol{a}) := \hat{f}(\boldsymbol{a}, 0)$. 
These losses are minimized via mini-batch optimization.

\begin{wraptable}[30]{r}[0pt]{0.5\columnwidth}
    \caption{\textbf{Mean relative and absolute errors of models trained on FTEs under linear (top two) and nonlinear (bottom two) initial conditions.} I.C. is short for ``initial condition''. The error bars are the standard deviation over 10 training runs with different random seeds. 
    Note that this table is not for the assessment of the theoretical convergence of the cylindrical approximation (see Fig.~\ref{fig: fitting}, App.~\ref{app: Cross-degree Evaluation}, and the footnote in Sec.~\ref{sec: Result FTE} instead).
    } 
    \label{tab: Mean relative and absolute error (FTE)}
    \centering
        \begin{subtable}{0.5\columnwidth}
            \begin{small}
            \begin{sc}
            \begin{tabular}{@{}rlll@{}} 
                \toprule
                Degree & Relative error (Linear I.C.)  \\ 
                \midrule
                4      &  $(1.26820 \pm 0.31421) \times 10^{-3}$   \\ 
                20     &  $(2.01716 \pm 0.21742)  \times 10^{-3}$  \\ 
                100    &  $(6.24740 \pm 0.33492)   \times 10^{-3}$ 
                \vspace{0.04 in} \\ 
                \toprule
                Degree & Absolute error (Linear I.C.)  \\ 
                \midrule
                4      &   $(1.32203 \pm 0.44061)\times 10^{-3}$   \\ 
                20     &   $(2.29632 \pm 0.16459) \times 10^{-3} $  \\ 
                100    &   $(1.23312 \pm 0.18931) \times 10^{-3}$  
                \vspace{0.04 in} \\ 
                \toprule
                Degree & Relative error (Nonlinear I.C.)  \\ 
                \midrule
                4      &  $(1.79295 \pm 0.28535) \times 10^{-3}$   \\ 
                100     & $(7.63769 \pm 0.90872)  \times 10^{-3}$  \\ 
                1000   &  $(8.27096 \pm 1.19378)   \times 10^{-3}$ 
                \vspace{0.04 in} \\ 
                \toprule
                Degree & Absolute error (Nonlinear I.C.) \\ 
                \midrule
                4      &  $(2.37627 \pm 0.15278) \times 10^{-4}$   \\ 
                100     & $(1.84506 \pm 0.15765)  \times 10^{-3}$  \\ 
                1000   &  $(1.76470 \pm 0.36885)  \times 10^{-3}$
            \end{tabular}
            \end{sc}
            \end{small}
        \end{subtable}
\end{wraptable}
\paragraph{Computational Complexity}
The total computational complexity w.r.t. $m$ up to the computation of functional derivatives is given by $\mcO(m) + \mcO(m^r) = \mcO(m^r)$, where $r \geq 1$ is the order of the functional derivative included in the target FDE (typically $1$ or $2$). 
The first term $\mcO(m)$ comes from the input layer of the PINN.
The second term $\mcO(m^r)$ comes from the computation of functional derivatives under the cylindrical approximation (Eq.~\eqref{eq: cylindrical approx of functional derivatives with omitting second term}).
See App.~\ref{app: Computational Complexity of Loss Function} for more detailed discussions on computational complexity.

This is a notable reduction from discretization-based methods such as finite difference and element methods, which typically require exponentially large computational complexity w.r.t. the dimension of PDE $m$.
Also, $\mcO(m^r)$ is significantly smaller than the state-of-the-art cylindrical approximation algorithm, the CP-ALS \cite{venturi2018numerical}, which requires $\mathcal{O}(m^6)$ (the derivation is given in App.~\ref{app: Computational Complexity of CP-ALS}).
Consequently, given that $m$ represents the ``class size'' of input functions and functional derivatives (Eqs.~\eqref{eq:1} \& \eqref{eq: cylindrical approx of functional derivatives with omitting second term}), our approach significantly extends the range of input functions and functional derivatives that can be represented via the cylindrical approximation.
In fact, our approach extends the degrees of polynomials or Fourier series used for the approximation from $6$ \cite{venturi2018numerical} to $1000$ (Sec.~\ref{sec: Experiment}).

Finally, we note that the selection of basis functions influences computational efficiency. The choice depends on the specific FDE, boundary conditions, symmetry, function spaces, and numerical stability. For further discussions, see App.~\ref{app: Orthogonal Bases Compared}). 

In summary, our proposed approach transforms an FDE into a high-dimensional PDE using cylindrical approximation and then solves it with a PINN, which serves as a universal approximator of the solution (Figs.~\ref{fig: figure1}~\&~\ref{fig: PINN}). 
It is important to note that our model employs the basic PINN framework, allowing for seamless integration with any techniques developed within the PINN community.

\begin{table}[t] 
\caption{\textbf{Mean relative (top) and absolute (bottom) errors.} The models are trained on the BHE. 
The error bars are the standard deviation over 10 training runs with different random seeds.
Note that this table is not for the assessment of the theoretical convergence of the cylindrical approximation (see Fig.~\ref{fig: fitting}, App.~\ref{app: Cross-degree Evaluation}, and the footnote in Sec.~\ref{sec: Result FTE} instead).
} 
\label{tab: Mean relative (top) and absolute (bottom) error (BHE).}
    \centering
    \begin{center}
        \begin{subtable}{0.7\textwidth}
        \begin{small}
        \begin{sc}
            \begin{tabular}{@{}rcc@{}} 
                \toprule
                \multicolumn{3}{c}{\quad \quad Initial conditions} \\ 
                \cmidrule(l){2-3}
                Degree & Delta  & Constant \\ \midrule
                4  & $(2.93905 \pm 0.17403) \times 10^{-4}$ & $(12.1782 \pm 8.54758) \times 10^{-5}$ \\ 
                20 & $(2.20842 \pm 0.28531) \times 10^{-4}$ & $(6.14352 \pm 1.28641) \times 10^{-5}$ \\ 
                100& $(2.41667 \pm 0.25264) \times 10^{-4}$ & $(5.50375 \pm 1.75507) \times 10^{-5}$ \\ 
            \end{tabular}
        \end{sc}
        \end{small}
        \end{subtable}
    \end{center}
    \begin{center}
        \begin{subtable}{0.7\textwidth}
        \begin{small}
        \begin{sc}
            \begin{tabular}{@{}rcc@{}} 
                \toprule
                \multicolumn{3}{c}{\quad \quad Initial conditions} \\ 
                \cmidrule(l){2-3}
                Degree & Delta  & Constant \\ \midrule
                4  & $(1.66451 \pm 0.47547) \times 10^{-5}$ & $(1.62849 \pm 0.66896) \times 10^{-5}$ \\ 
                20 & $(1.34640 \pm 0.12105) \times 10^{-5}$ & $(1.12371 \pm 0.32367) \times 10^{-5}$ \\ 
                100& $(1.60980 \pm 0.08952) \times 10^{-5} $ & $(1.05558 \pm 0.16673) \times 10^{-5}$ \\ 
            \end{tabular}
        \end{sc}
        \end{small}
        \end{subtable}
    \end{center}
\end{table}

\section{Experiment} 
\label{sec: Experiment}
\begin{wrapfigure}[31]{r}[0pt]{0.5\columnwidth}
 \centering
    \centerline{\includegraphics[width=0.5\columnwidth]{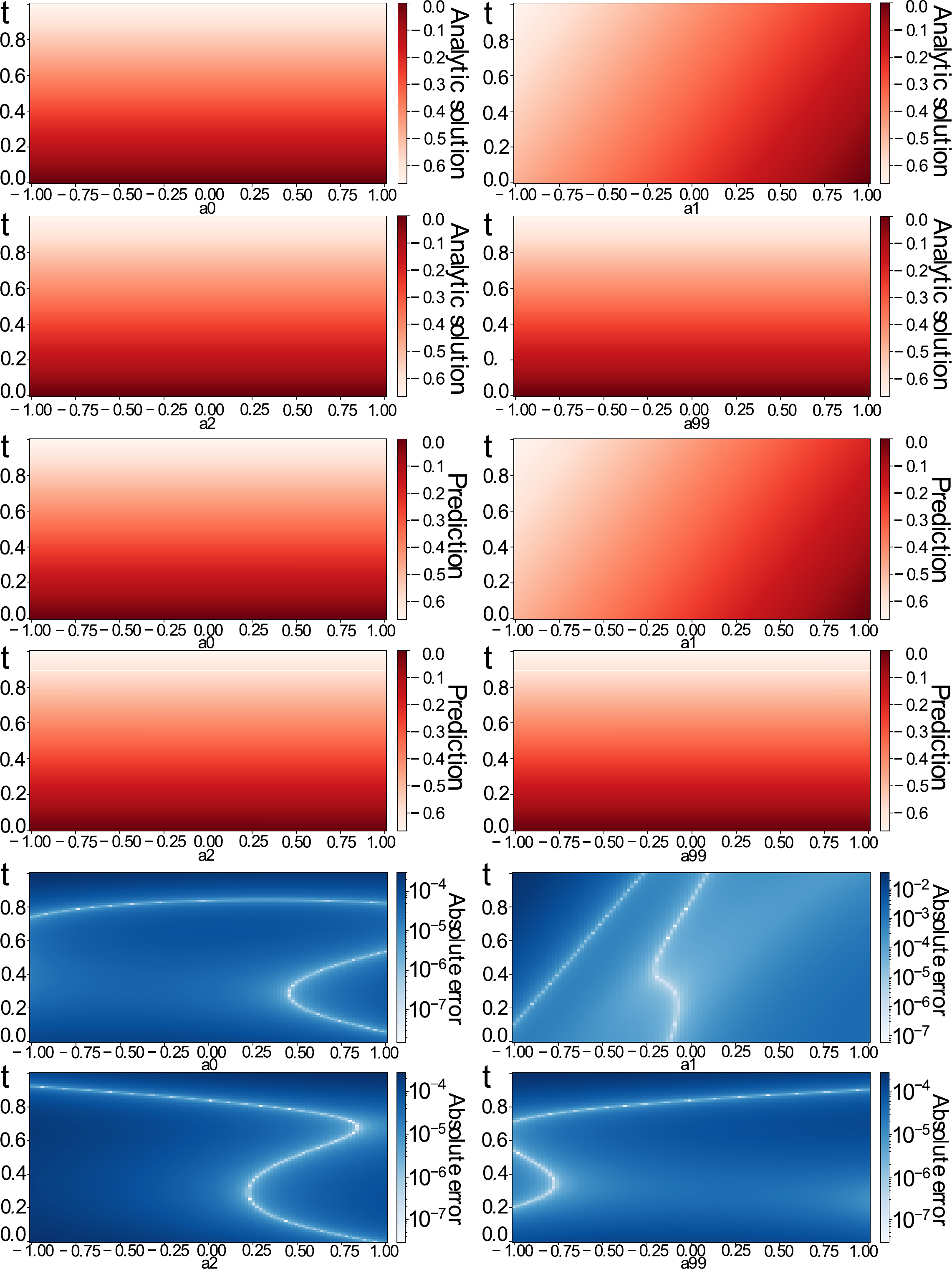}}
    \hfill
    \caption{
        \textbf{Analytic solution (top four panels), prediction (second four panels), and absolute error (bottom four panels) of FTE with degree 100 under linear initial condition.} 
        The horizontal axes represent $a_k$ for $k=0, 1, 2, 99$, with all the other coefficients set to 0. 
        Our model successfully learns the FTE.
    }
    \label{fig: fig_heat_AnalSolPredAbsWithTauVsAk_FTE_deg100_w1024_it500k_stat_model0_FTE}
\end{wrapfigure}
As a proof of concept for our approach, we numerically solve the FTE and BHE.\footnote{Code: \url{https://github.com/TaikiMiyagawa/FunctionalPINN}.} 
These two FDEs are suitable for numerical experiments because their analytic solutions are available, allowing for the computation of absolute and relative errors, major metrics in the numerical analysis of PDEs and FDEs. 
Note that the analytic solutions for most FDEs are currently unknown due to their mathematical complexity.

\paragraph{Setups.} 
We use a 4-layer PINN with $3 \times$ (linear + sin activation + layer normalization \cite{ba2016layer_LayerNorm_layer_normalization_org}) + last linear layer.
The total loss function is the smooth $L^1$ loss or the sum of $L^1$ and $L^\infty$ losses.
Softmax loss-reweighting is employed.
The optimizer is AdamW \cite{loshchilov2018AdamW_SGDW}.
The learning rate scheduler is the linear warmup with cosine annealing with warmup \cite{loshchilov2017sgdr_cosine_annealing_org}.
Latin hypercube sampling \cite{mckay2000comparison, helton2003latin} is used for the training, validation, and test sets.
For the BHE, the sampling range is decayed quadratically in terms of $k \in \{0,1,\ldots,m-1\}$ to stabilize the training.
We use $L^1$ relative and absolute errors, standard performance metrics for numerical analysis of PDEs and FDEs.
Absolute error $\frac{1}{N}\Sigma_{i=1}^{N}| f(\boldsymbol{a}_i, t_i) - \hat{f}(\boldsymbol{a}_i, t_i)|$ is used instead of relative error $\frac{1}{N}\Sigma_{i=1}^{N}| f(\boldsymbol{a}_i, t_i) - \hat{f}(\boldsymbol{a}_i, t_i)|/|f(\boldsymbol{a}_i, t_i)|$ when the analytic solution is close to zero because relative error in such a region blows up by definition, regardless of the model's prediction.
$\upsilon_0$, $\rho_0$, $\bar{\mu}$, and $\sigma^2$ are set to $1, 1, 8$, and $10$, respectively.
In App.~\ref{app: Detailed Experimental Settings}, we provide more detailed setups for reproducibility, including the range of $a_k$.

\begin{figure}[tb]
    \centering
      \centering
      \includegraphics[width=.75\textwidth]{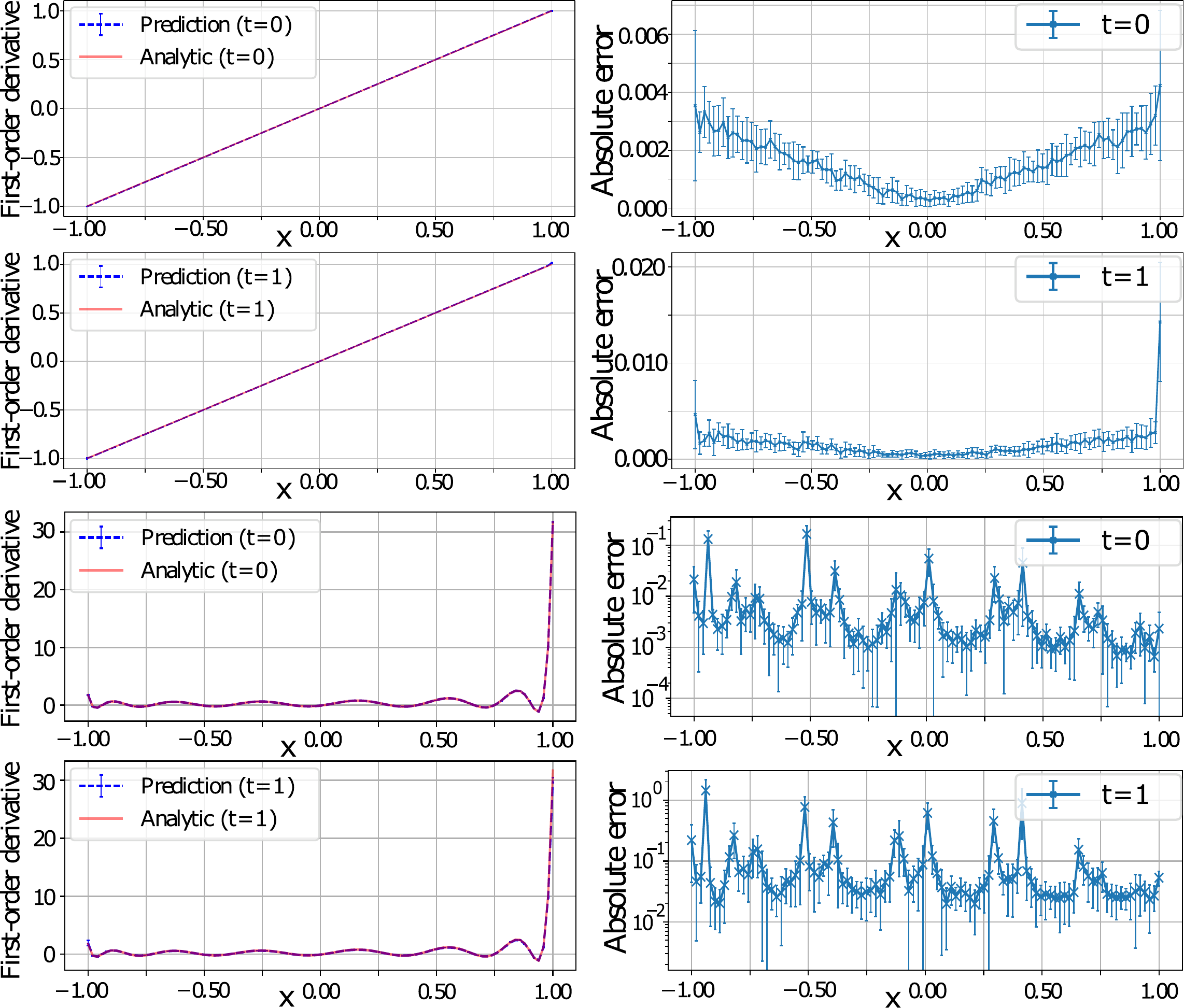}
      \captionof{figure}{
          \textbf{Absolute error of first-order functional derivative of FTE with degree 100 (top) and 1000 (bottom) under linear (top) and nonlinear (bottom) initial conditions.} The error bars represent the standard deviation over 10 runs with different random seeds.
      }
      \label{fig: fig_edit_plot_FirstMomentAbs_wERRBARdeg100_20_w1024_it500k_LinICICspectrum15_stat_model}
\end{figure}

\begin{figure}[H]
      \centering
      \includegraphics[width=.75\textwidth]{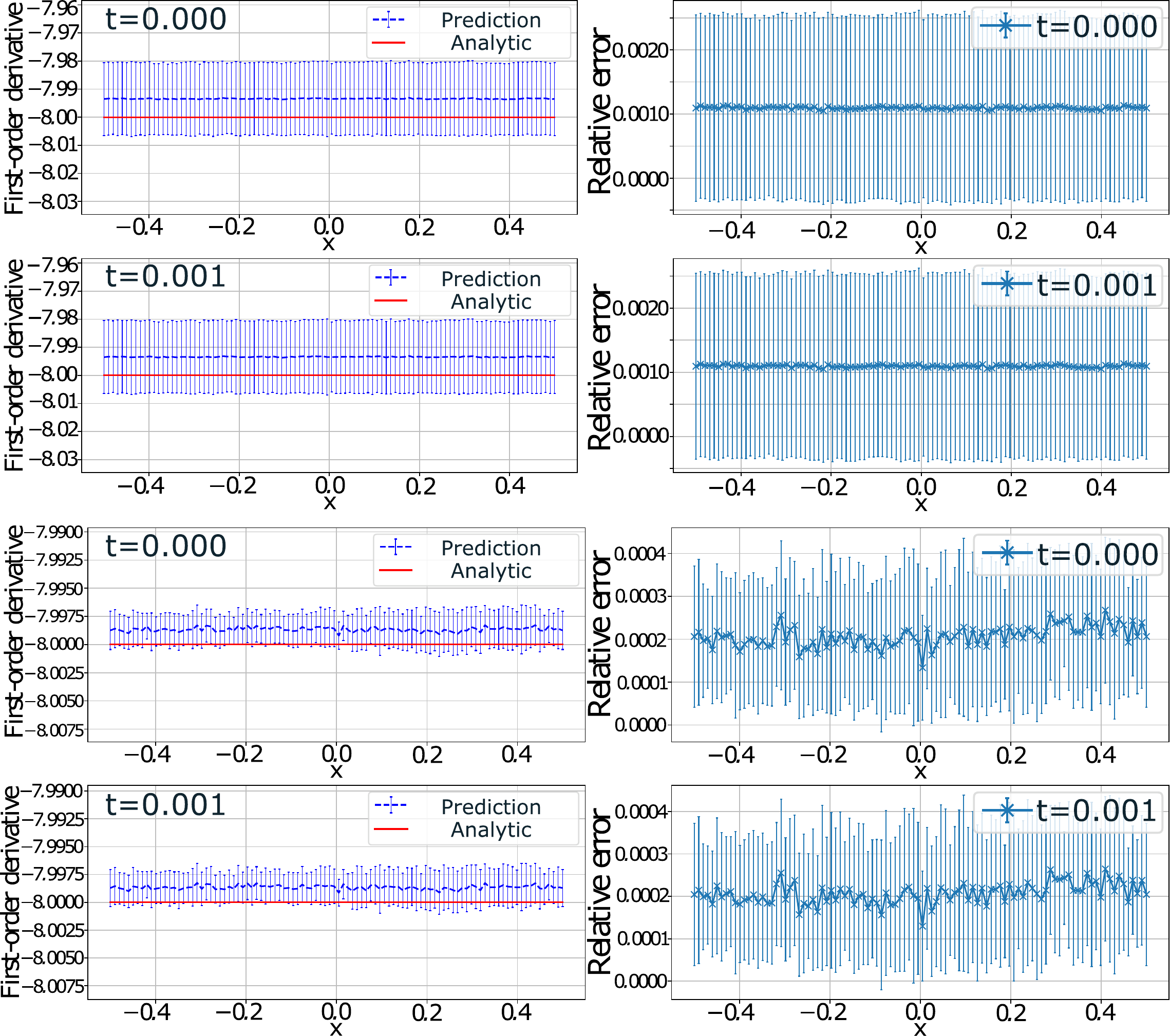}
      \captionof{figure}{
            \textbf{Relative error of first-order functional derivative of BHE with degree 100 under delta initial condition.}
            The error bars represent the standard deviation over 10 runs.
            The top/bottom four panels show the results with/without the loss function $\| W([0], t)\|$, respectively.
            With this loss function, the error reduces by $10^{-1}$.}
      \label{fig: fig_edit_plot_FirstMomentRel_wERRBARinit050_sig1e1_WandWObcW0_stat_model}
\end{figure}

\subsection{Result : Functional Transport Equation} 
\label{sec: Result FTE}
\textbf{Tab.~\ref{tab: Mean relative and absolute error (FTE)}} shows the main result.\footnote{
    Note that Tabs.~\ref{tab: Mean relative and absolute error (FTE)} \& \ref{tab: Mean relative (top) and absolute (bottom) error (BHE).} are not for the assessment of the theoretical convergence of the cylindrical approximation, unlike Fig.~\ref{fig: fitting}, because the analytic solutions used for error computation vary across each row, depending on the degree. 
    See App.~\ref{app: Cross-degree Evaluation} instead, where we additionally perform a cross-degree evaluation.
}
Our model achieves typical relative error orders of PINNs $\sim 10^{-3}$, even when the degree is as large as $1000$, which means our model's capability of representing $\theta$ and $\delta F([\theta], t)/ \delta \theta (x)$ as polynomials of degree $1000$. This is a notable improvement from the state-of-the-art algorithm \cite{venturi2018numerical}, which can handle $m\leq6$.

\textbf{Fig.~\ref{fig: fig_heat_AnalSolPredAbsWithTauVsAk_FTE_deg100_w1024_it500k_stat_model0_FTE}} visualizes the analytic solution, prediction, and absolute error of a model trained on the FTE with degree 100 under the linear initial condition.
Note that some of the collocation points used for plotting Fig.~\ref{fig: fig_heat_AnalSolPredAbsWithTauVsAk_FTE_deg100_w1024_it500k_stat_model0_FTE} are not in the training set, as can be seen from Figs.~\ref{fig: fig_hist_NormOfCoeffs_TestSet_FTE_de4_w1024_it500k_stat_FTE}--\ref{fig: fig_hist_NormOfCoeffs_TestSet_BHE_init1_deg50_sig1e1_stat_BHE} in App.~\ref{app: Latin Hypercube Sampling and Curse of Dimensionality}. 
This aspect highlights the model's ability to extrapolate beyond its training data ($a_k \sim 0$).

\textbf{Fig.~\ref{fig: fig_edit_plot_FirstMomentAbs_wERRBARdeg100_20_w1024_it500k_LinICICspectrum15_stat_model}} shows the absolute error of the first-order functional derivative estimated at $t=1$ and $\th=0$.
Again, $\th = 0$ is not included in the training set.
The errors in the top four panels increase at the edges of intervals ($x = \pm 1$) due to Runge's phenomenon \cite{runge1901runge_phenon_org}.

\subsection{Result: Burgers-Hopf Equation} 
\label{sec: Result BHE}
\begin{wrapfigure}[31]{r}[0pt]{0.5\columnwidth}
    \centering
    \centerline{\includegraphics[width=0.5\columnwidth]{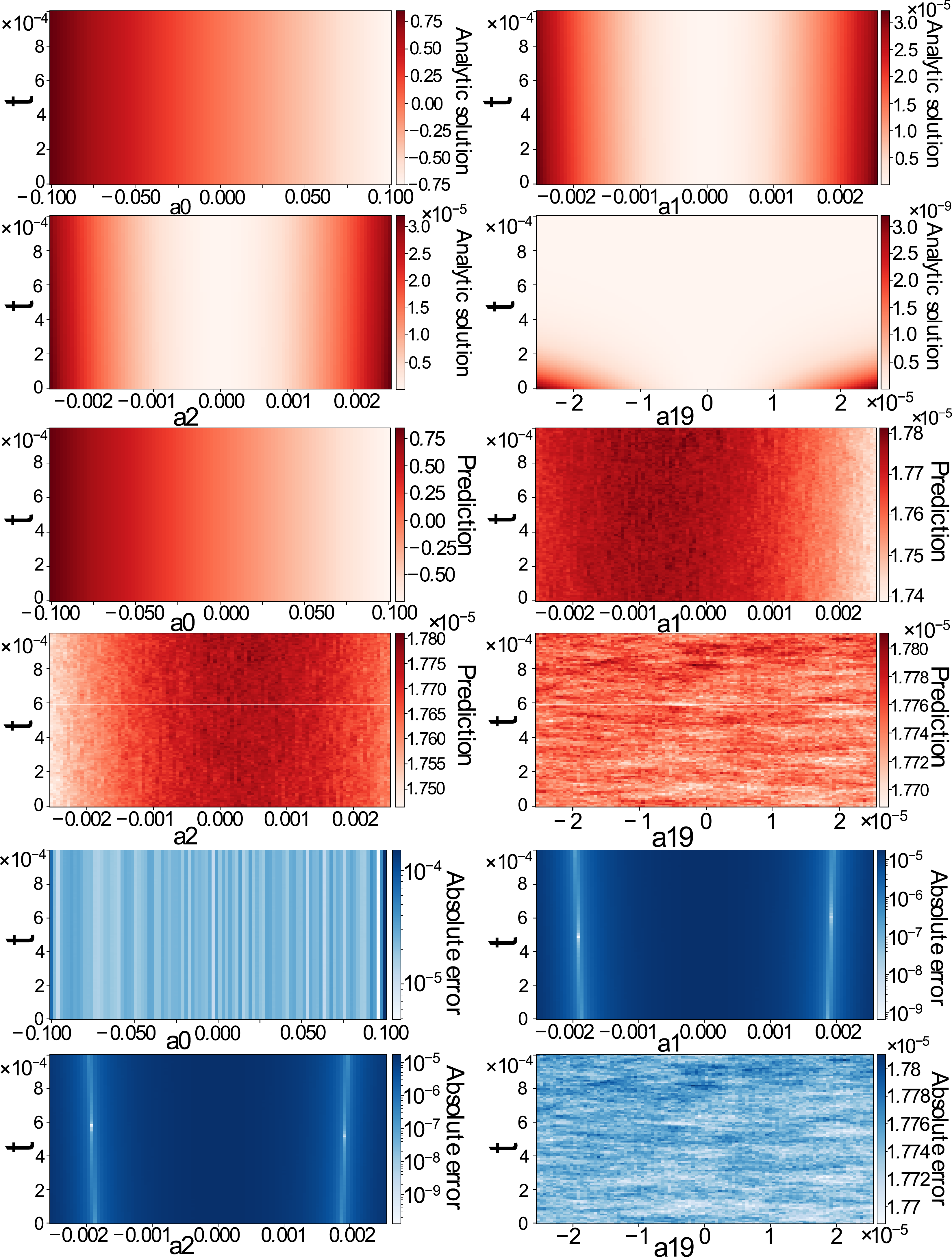}}
    \caption{\textbf{Analytic solution (top four panels), prediction (second four panels), and absolute error (last four panels) of BHE with degree 20 under delta initial condition.} 
    The horizontal axes are $a_k$ ($k=0,1,2,$ or $19$). The other coefficients are kept 0.
    Our model successfully learns the BHE.
    }
    \label{fig: fig_edit_heat_AnalSolPredRelWithTauVsAk_BHE_init0_deg10_sig1e1_stat_model0}
\end{wrapfigure}
\textbf{Tab.~\ref{tab: Mean relative (top) and absolute (bottom) error (BHE).}} shows the main result.
Again, our model successfully achieves $\sim 10^{-3}$, the typical order of relative error of PINNs.
See Fig.~\ref{fig: fitting}, App.~\ref{app: Cross-degree Evaluation}, and the footnote in Sec.~\ref{sec: Result FTE} for the assessment of the theoretical convergence of the cylindrical approximation.

\textbf{Fig.~\ref{fig: fig_edit_plot_FirstMomentRel_wERRBARinit050_sig1e1_WandWObcW0_stat_model}} shows the relative error of first-order functional derivatives at $\theta=0$.
Note again that some of the collocation points used for this figure are not included in the training dataset, highlighting the model's ability to extrapolate beyond its training dataset ($a_k \sim 0$).
Additionally, the error is reduced by a factor of $10^{-1}$ by incorporating a loss term corresponding to the identity $W([0], t) = 0$ (bottom four panels).

\textbf{Fig.~\ref{fig: fig_edit_heat_AnalSolPredRelWithTauVsAk_BHE_init0_deg10_sig1e1_stat_model0}} visualizes the analytic solution, prediction, and absolute error of a model trained on the BHE with degree $20$ under the delta initial condition.
The absolute error w.r.t.~$a_0$ is $10^{-4}$ times smaller than the scale of the solution; i.e., the model learns $\theta$ well in the direction of $a_0$, which dominates the analytic solution.
Conversely, the absolute error w.r.t.~$a_{19}$ is on par with the scale of the solution. 
This result is anticipated because the dependence of the solution on $a_{19}$ is much smaller
than $a_0$.
This relationship is evident from Eq.~\eqref{eq: BHE sol main text}, which indicates that the higher degree terms decay exponentially in terms of $k$, $l$, and $t$, and the solution is dominated by $a_k$ with $k \lesssim 1$.
Therefore, optimizing the model in the direction of $a_{19}$ has only a negligible effect on minimizing the loss function.

Finally, many additional experimental results are provided in App.~\ref{app: Additional Results I}, including a comparison with the CP-ALS algorithm.

\section{Limitations} 
\label{sec: Conclusion and Limitations}
One limitation of our work is that the spacetime dimension is limited to $1+1$ ($t$ and $x$) in our experiments.
However, generalization to $1+d$ dimensions is feasible, albeit with additional computational costs.
For $d>1$ dimensional spaces, we have several options for expansion bases \cite{rodgers2024tensor}.

Another limitation is that the orders of functional derivatives in FDEs in our experiments are limited to $r=1$. However, extending to $r\geq2$ is straightforward. 
For instance, the cylindrical approximation of the second-order functional derivative is expressed as $\delta^2 F([\theta], t)/ \delta \theta(x) \delta \theta(y) \approx \Sigma_{k,l=0}^{m-1} \partial^2 f(\boldsymbol{a}, t) / \partial a_k \partial a_l \phi_k(x) \phi_l (y)$, which can be computed via backpropagation twice.

Furthermore, this paper focuses exclusively on the abstract evolution equation. 
While this includes important FDEs (see Sec.~\ref{sec: Theory: Cylindrical Approximation}), it does not cover certain equations, such as the Schwinger-Dyson equation or the Wetterich equation. 
Nonetheless, the mathematical foundations regarding the existence and uniqueness of these FDEs remain unestablished, which is beyond the scope of our paper. 
Once these foundations are defined, applying our model to these equations would be straightforward.
More discussions on limitations, including technical ones, are provided in App.~\ref{app: Limitations}.

\begin{ack}

Taiki Miyagawa, an employee at NEC Corporation, contributed to this paper independently of his company role. He undertook this research as an independent researcher under the permission by the company.
Takeru Yokota was supported by the RIKEN Special Doctoral Researchers Program.

\end{ack}

\clearpage
{
\small
\bibliography{__main}

\begin{thebibliography}{100}

\bibitem{acary2008numerical}
V.~Acary and B.~Brogliato.
\newblock {\em Numerical Methods for Nonsmooth Dynamical Systems: {A}pplications in Mechanics and Electronics}.
\newblock Springer Science \& Business Media, 2008.

\bibitem{Optuna}
T.~Akiba, S.~Sano, T.~Yanase, T.~Ohta, and M.~Koyama.
\newblock Optuna: A next-generation hyperparameter optimization framework.
\newblock In {\em Proceedings of the 25th ACM SIGKDD International Conference on Knowledge Discovery and Data Mining}, KDD ’19, page 2623–2631, New York, NY, USA, 2019. Association for Computing Machinery.
\newblock License: MIT License.

\bibitem{alankus1988generating}
T.~Alankus.
\newblock The generating functional for the probability density functions of navier-stokes turbulence.
\newblock {\em Journal of Statistical Physics}, 53:1261--1271, 1988.

\bibitem{anandkumar2019neural_Neural_Operator_org}
A.~Anandkumar, K.~Azizzadenesheli, K.~Bhattacharya, N.~Kovachki, Z.~Li, B.~Liu, and A.~Stuart.
\newblock Neural operator: Graph kernel network for partial differential equations.
\newblock In {\em ICLR 2020 Workshop on Integration of Deep Neural Models and Differential Equations}, 2019.

\bibitem{Aronszajn1976differentiability}
N.~Aronszajn.
\newblock Differentiability of {L}ipschitzian mappings between {B}anach spaces.
\newblock {\em Studia Mathematica}, 57(2):147--190, 1976.

\bibitem{ba2016layer_LayerNorm_layer_normalization_org}
J.~L. Ba, J.~R. Kiros, and G.~E. Hinton.
\newblock Layer normalization.
\newblock {\em arXiv preprint arXiv:1607.06450}, 2016.

\bibitem{baez1997functional}
J.~C. Baez and S.~Sawin.
\newblock Functional integration on spaces of connections.
\newblock {\em Journal of Functional Analysis}, 150(1):1--26, 1997.

\bibitem{basir2022critical}
S.~Basir and I.~Senocak.
\newblock Critical investigation of failure modes in physics-informed neural networks.
\newblock In {\em AiAA SCITECH 2022 Forum}, page 2353, 2022.

\bibitem{spatialfuncta23}
M.~Bauer, E.~Dupont, A.~Brock, D.~Rosenbaum, J.~R. Schwarz, and H.~Kim.
\newblock Spatial functa: Scaling functa to imagenet classification and generation.
\newblock {\em arXiv preprint arXiv: 2302.03130}, 2023.

\bibitem{baydin2018automatic}
A.~G. Baydin, B.~A. Pearlmutter, A.~A. Radul, and J.~M. Siskind.
\newblock Automatic differentiation in machine learning: {A} survey.
\newblock {\em Journal of Marchine Learning Research}, 18:1--43, 2018.

\bibitem{beran1965statistical}
M.~Beran.
\newblock Statistical continuum theories.
\newblock {\em Transactions of the Society of Rheology}, 9(1):339--355, 1965.

\bibitem{blechschmidt2021three}
J.~Blechschmidt and O.~G. Ernst.
\newblock Three ways to solve partial differential equations with neural networks—-{A} review.
\newblock {\em GAMM-Mitteilungen}, 44(2):e202100006, 2021.

\bibitem{bogoliubov1947theory}
N.~Bogoliubov.
\newblock On the theory of superfluidity.
\newblock {\em Journal of Physics}, 11(1):23, 1947.

\bibitem{brugiapaglia2021sparse}
S.~Brugiapaglia, S.~Dirksen, H.~C. Jung, and H.~Rauhut.
\newblock Sparse recovery in bounded {R}iesz systems with applications to numerical methods for {PDE}s.
\newblock {\em Applied and Computational Harmonic Analysis}, 53:231--269, 2021.

\bibitem{burlutskaya2009riesz}
M.~S. Burlutskaya.
\newblock On {R}iesz bases of root functions for a class of functional-differential operators on a graph.
\newblock {\em Differential Equations}, 45:779--788, 2009.

\bibitem{carmona2018probabilistic}
R.~Carmona, F.~Delarue, et~al.
\newblock {\em Probabilistic theory of mean field games with applications I-II}.
\newblock Springer, 2018.

\bibitem{chen2021solving}
Y.~Chen, B.~Hosseini, H.~Owhadi, and A.~M. Stuart.
\newblock Solving and learning nonlinear {PDEs} with {G}aussian processes.
\newblock {\em Journal of Computational Physics}, 447:110668, 2021.

\bibitem{chow2019algorithm}
Y.~T. Chow, W.~Li, S.~Osher, and W.~Yin.
\newblock Algorithm for {H}amilton-{J}acobi equations in density space via a generalized {H}opf formula.
\newblock {\em Journal of Scientific Computing}, 80:1195--1239, 2019.

\bibitem{cuomo2022scientific}
S.~Cuomo, V.~S. Di~Cola, F.~Giampaolo, G.~Rozza, M.~Raissi, and F.~Piccialli.
\newblock Scientific machine learning through physics-informed neural networks: {W}here we are and what’s next.
\newblock {\em Journal of Scientific Computing}, 92(3):88, 2022.

\bibitem{das2022state}
S.~Das and S.~Tesfamariam.
\newblock State-of-the-art review of design of experiments for physics-informed deep learning.
\newblock {\em arXiv preprint arXiv:2202.06416}, 2022.

\bibitem{daw2023mitigating}
A.~Daw, J.~Bu, S.~Wang, P.~Perdikaris, and A.~Karpatne.
\newblock Mitigating propagation failures in physics-informed neural networks using retain-resample-release ({R}3) sampling.
\newblock In A.~Krause, E.~Brunskill, K.~Cho, B.~Engelhardt, S.~Sabato, and J.~Scarlett, editors, {\em Proceedings of the 40th International Conference on Machine Learning}, volume 202 of {\em Proceedings of Machine Learning Research}, pages 7264--7302. PMLR, 23--29 Jul 2023.

\bibitem{di2023language}
P.~Di~Gianantonio, A.~Edalat, and R.~Gutin.
\newblock A language for evaluating derivatives of functionals using automatic differentiation.
\newblock {\em Electronic Notes in Theoretical Informatics and Computer Science}, 3, 2023.

\bibitem{dupuis2021nonterturbative_FRGreview}
N.~Dupuis, L.~Canet, A.~Eichhorn, W.~Metzner, J.~Pawlowski, M.~Tissier, and N.~Wschebor.
\newblock The nonperturbative functional renormalization group and its applications.
\newblock {\em Physics Reports}, 910:1--114, 2021.

\bibitem{easther2003review}
R.~Easther, D.~Ferrante, G.~Guralnik, and D.~Petrov.
\newblock A review of two novel numerical methods in {QFT}.
\newblock {\em arXiv preprint hep-lat/0306038}, 2003.

\bibitem{engel2000one}
K.-J. Engel, R.~Nagel, and S.~Brendle.
\newblock {\em One-parameter semigroups for linear evolution equations}, volume 194 of {\em Graduate {Texts} in {Mathematics}}.
\newblock Springer, 2000.

\bibitem{fiedler2022deep}
L.~Fiedler, K.~Shah, M.~Bussmann, and A.~Cangi.
\newblock Deep dive into machine learning density functional theory for materials science and chemistry.
\newblock {\em Physical Review Materials}, 6(4):040301, 2022.

\bibitem{fox1986functional}
R.~F. Fox.
\newblock Functional-calculus approach to stochastic differential equations.
\newblock {\em Physical Review A}, 33:467--476, Jan 1986.

\bibitem{franzese2023continuoustime}
G.~Franzese, G.~Corallo, S.~Rossi, M.~Heinonen, M.~Filippone, and P.~Michiardi.
\newblock Continuous-time functional diffusion processes.
\newblock In {\em Thirty-seventh Conference on Neural Information Processing Systems}, 2023.

\bibitem{friedrichs1957integration}
K.~Friedrichs and H.~Shapiro.
\newblock Integration over {H}ilbert space and outer extensions.
\newblock {\em Proceedings of the National Academy of Sciences}, 43(4):336--338, 1957.

\bibitem{fukuda2013on}
N.~Fukuda, T.~Kinoshita, and T.~Kubo.
\newblock On the finite element method with {R}iesz bases and its applications to some partial differential equations.
\newblock In {\em 2013 10th International Conference on Information Technology: New Generations}, pages 761--766, 2013.

\bibitem{fukuda1994density}
R.~Fukuda, T.~Kotani, Y.~Suzuki, and S.~Yokojima.
\newblock Density functional theory through {L}egendre transformation.
\newblock {\em Progress of Theoretical Physics}, 92(4):833--862, 10 1994.

\bibitem{gangbo2019unnormalized}
W.~Gangbo, W.~Li, S.~Osher, and M.~Puthawala.
\newblock Unnormalized optimal transport.
\newblock {\em Journal of Computational Physics}, 399(C), dec 2019.

\bibitem{gao2023failure}
Z.~Gao, L.~Yan, and T.~Zhou.
\newblock Failure-informed adaptive sampling for pinns.
\newblock {\em SIAM Journal on Scientific Computing}, 45(4):A1971--A1994, 2023.

\bibitem{gikhman2004theory}
I.~I. Gikhman and A.~V. Skorokhod.
\newblock {\em The theory of stochastic processes {I}}.
\newblock Classics in {Mathematics}. Springer Berlin Heidelberg, 2004.

\bibitem{gopakumar2023loss}
V.~Gopakumar, S.~Pamela, and D.~Samaddar.
\newblock Loss landscape engineering via data regulation on {PINNs}.
\newblock {\em Machine Learning with Applications}, 12:100464, 2023.

\bibitem{guidetti2004approximation}
D.~Guidetti, B.~Karas{\"o}zen, and S.~Piskarev.
\newblock Approximation of abstract differential equations.
\newblock {\em Journal of Mathematical Sciences}, 122(2):3013--3054, 2004.

\bibitem{guo2006riesz}
B.-Z. Guo and J.-M. Wang.
\newblock Riesz basis generation of abstract second-order partial differential equation systems with general non-separated boundary conditions.
\newblock {\em Numerical Functional Analysis and Optimization}, 27(3-4):291--328, 2006.

\bibitem{numpy}
C.~R. Harris, K.~J. Millman, S.~J. van~der Walt, R.~Gommers, P.~Virtanen, D.~Cournapeau, E.~Wieser, J.~Taylor, S.~Berg, N.~J. Smith, R.~Kern, M.~Picus, S.~Hoyer, M.~H. van Kerkwijk, M.~Brett, A.~Haldane, J.~F. Del~R{\'i}o, M.~Wiebe, P.~Peterson, P.~G{\'e}rard-Marchant, K.~Sheppard, T.~Reddy, W.~Weckesser, H.~Abbasi, C.~Gohlke, and T.~E. Oliphant.
\newblock {{A}rray programming with {N}um{P}y}.
\newblock {\em Nature}, 585(7825):357--362, 09 2020.
\newblock License: BSD 3-Clause "New" or "Revised" License.

\bibitem{helton2003latin}
J.~C. Helton and F.~J. Davis.
\newblock Latin hypercube sampling and the propagation of uncertainty in analyses of complex systems.
\newblock {\em Reliability Engineering \& System Safety}, 81(1):23--69, 2003.

\bibitem{Hendrycks2016gelu_org}
D.~Hendrycks and K.~Gimpel.
\newblock {{G}aussian {E}rror {L}inear {U}nits ({GELU}s)}.
\newblock {\em arXiv preprint arXiv:1606.08415}, 2016.

\bibitem{hesthaven2007spectral}
J.~S. Hesthaven, S.~Gottlieb, and D.~Gottlieb.
\newblock {\em Spectral methods for time-dependent problems}, volume~21.
\newblock Cambridge University Press, 2007.

\bibitem{hohenberg1964inhomogeneous}
P.~Hohenberg and W.~Kohn.
\newblock Inhomogeneous electron gas.
\newblock {\em Physical review}, 136:B864--B871, Nov 1964.

\bibitem{hopf1952statistical}
E.~Hopf.
\newblock Statistical hydromechanics and functional calculus.
\newblock {\em Journal of rational Mechanics and Analysis}, 1:87--123, 1952.

\bibitem{hu2023tackling}
Z.~Hu, K.~Shukla, G.~E. Karniadakis, and K.~Kawaguchi.
\newblock Tackling the curse of dimensionality with physics-informed neural networks.
\newblock {\em arXiv preprint arXiv:2307.12306}, 2023.

\bibitem{hu2023bias}
Z.~Hu, Z.~Yang, Y.~Wang, G.~E. Karniadakis, and K.~Kawaguchi.
\newblock Bias-variance trade-off in physics-informed neural networks with randomized smoothing for high-dimensional pdes.
\newblock {\em arXiv preprint arXiv:2311.15283}, 2023.

\bibitem{itzykson2012quantum}
C.~Itzykson and J.-B. Zuber.
\newblock {\em Quantum field theory}.
\newblock Courier Corporation, 2012.

\bibitem{jagtap2020adaptive}
A.~D. Jagtap, K.~Kawaguchi, and G.~E. Karniadakis.
\newblock Adaptive activation functions accelerate convergence in deep and physics-informed neural networks.
\newblock {\em Journal of Computational Physics}, 404:109136, 2020.

\bibitem{jordan2022empirical}
M.~Jordan, Y.~Wang, and A.~Zhou.
\newblock Empirical {G}ateaux derivatives for causal inference.
\newblock In S.~Koyejo, S.~Mohamed, A.~Agarwal, D.~Belgrave, K.~Cho, and A.~Oh, editors, {\em Advances in Neural Information Processing Systems}, volume~35, pages 8512--8525. Curran Associates, Inc., 2022.

\bibitem{klyatskin2005dynamics112}
V.~I. Klyatskin.
\newblock {\em Dynamics of stochastic systems}.
\newblock Elsevier, 2005.

\bibitem{krishnapriyan2021characterizing}
A.~Krishnapriyan, A.~Gholami, S.~Zhe, R.~Kirby, and M.~W. Mahoney.
\newblock Characterizing possible failure modes in physics-informed neural networks.
\newblock {\em Advances in Neural Information Processing Systems}, 34:26548--26560, 2021.

\bibitem{kurdyumov2008riesz}
V.~Kurdyumov and A.~Khromov.
\newblock Riesz bases formed by root functions of a functional-differential equation with a reflection operator.
\newblock {\em Differential Equations}, 44(2), 2008.

\bibitem{kurdyumov2010riesz}
V.~P. Kurdyumov and A.~P. Khromov.
\newblock The {R}iesz bases consisting of eigen and associated functions for a functional differential operator with variable structure.
\newblock {\em Russian Mathematics}, 54:33--45, 2010.

\bibitem{landau2013fluid_mechanics}
L.~D. Landau and E.~M. Lifshitz.
\newblock {\em Fluid Mechanics: {L}andau and {L}ifshitz: {C}ourse of Theoretical Physics, Volume 6}, volume~6.
\newblock Elsevier, 2013.

\bibitem{li2021fourier_FNO_org}
Z.~Li, N.~B. Kovachki, K.~Azizzadenesheli, B.~liu, K.~Bhattacharya, A.~Stuart, and A.~Anandkumar.
\newblock Fourier neural operator for parametric partial differential equations.
\newblock In {\em International Conference on Learning Representations}, 2021.

\bibitem{li2021physics_PINO_org}
Z.~Li, H.~Zheng, N.~Kovachki, D.~Jin, H.~Chen, B.~Liu, K.~Azizzadenesheli, and A.~Anandkumar.
\newblock Physics-informed neural operator for learning partial differential equations.
\newblock {\em arXiv preprint arXiv:2111.03794}, 2021.

\bibitem{lin2024automatic}
M.~Lin.
\newblock Automatic functional differentiation in {JAX}.
\newblock In {\em The Twelfth International Conference on Learning Representations}, 2024.

\bibitem{lindenstrauss2003Frechet}
J.~Lindenstrauss and D.~Preiss.
\newblock On {F}r{\'e}chet differentiability of {L}ipschitz maps between {B}anach spaces.
\newblock {\em Annals of Mathematics}, pages 257--288, 2003.

\bibitem{loshchilov2017sgdr_cosine_annealing_org}
I.~Loshchilov and F.~Hutter.
\newblock {SGDR}: Stochastic gradient descent with warm restarts.
\newblock In {\em International Conference on Learning Representations}, 2017.

\bibitem{loshchilov2018AdamW_SGDW}
I.~Loshchilov and F.~Hutter.
\newblock Decoupled weight decay regularization.
\newblock In {\em International Conference on Learning Representations}, 2019.

\bibitem{lu2019deeponet_DeepONet_org_arXiv_ver}
L.~Lu, P.~Jin, and G.~E. Karniadakis.
\newblock {D}eep{ON}et: Learning nonlinear operators for identifying differential equations based on the universal approximation theorem of operators.
\newblock {\em arXiv preprint arXiv:1910.03193}, 2019.

\bibitem{lu2021learning_DeepONet_org_Nature_ver}
L.~Lu, P.~Jin, G.~Pang, Z.~Zhang, and G.~E. Karniadakis.
\newblock Learning nonlinear operators via {D}eep{ON}et based on the universal approximation theorem of operators.
\newblock {\em Nature machine intelligence}, 3(3):218--229, 2021.

\bibitem{luigi2023deep}
L.~D. Luigi, A.~Cardace, R.~Spezialetti, P.~Z. Ramirez, S.~Salti, and L.~di~Stefano.
\newblock Deep learning on implicit neural representations of shapes.
\newblock In {\em The Eleventh International Conference on Learning Representations}, 2023.

\bibitem{mankiewicz1973differentiability}
P.~Mankiewicz.
\newblock On the differentiability of {L}ipschitz mappings in {F}r{\'e}chet spaces.
\newblock {\em Studia Mathematica}, 45(1):15--29, 1973.

\bibitem{mckay2000comparison}
M.~D. McKay, R.~J. Beckman, and W.~J. Conover.
\newblock A comparison of three methods for selecting values of input variables in the analysis of output from a computer code.
\newblock {\em Technometrics}, 42(1):55--61, 2000.

\bibitem{melrose2005introduction}
R.~Melrose.
\newblock Introduction to functional analysis, 2005.

\bibitem{meuris2023machine}
B.~Meuris, S.~Qadeer, and P.~Stinis.
\newblock Machine-learning-based spectral methods for partial differential equations.
\newblock {\em Scientific Reports}, 13(1):1739, 2023.

\bibitem{monin2013statistical145}
A.~S. Monin and A.~M. Yaglom.
\newblock {\em Statistical fluid mechanics, volume {II}: {M}echanics of turbulence}, volume~2.
\newblock Courier Corporation, 2013.

\bibitem{neidinger2010introduction}
R.~D. Neidinger.
\newblock Introduction to automatic differentiation and {MATLAB} object-oriented programming.
\newblock {\em SIAM review}, 52(3):545--563, 2010.

\bibitem{nichols2010turbulence}
R.~H. Nichols.
\newblock Turbulence models and their application to complex flows, 2010.

\bibitem{pytorch}
A.~Paszke, S.~Gross, F.~Massa, A.~Lerer, J.~Bradbury, G.~Chanan, T.~Killeen, Z.~Lin, N.~Gimelshein, L.~Antiga, A.~Desmaison, A.~Kopf, E.~Yang, Z.~DeVito, M.~Raison, A.~Tejani, S.~Chilamkurthy, B.~Steiner, L.~Fang, J.~Bai, and S.~Chintala.
\newblock Pytorch: An imperative style, high-performance deep learning library.
\newblock In {\em Advances in Neural Information Processing Systems 32}, pages 8024--8035. Curran Associates, Inc., 2019.
\newblock License: https://github.com/pytorch/pytorch/blob/master/LICENSE.

\bibitem{peskin1995introduction_to_QFT}
M.~E. Peskin and D.~V. Schroeder.
\newblock {\em {An Introduction to quantum field theory}}.
\newblock Addison-Wesley, Reading, USA, 1995.

\bibitem{polchinski1984renormalization_FRG}
J.~Polchinski.
\newblock Renormalization and effective lagrangians.
\newblock {\em Nuclear Physics B}, 231(2):269 -- 295, 1984.

\bibitem{polonyi2002Effective_FRGDFT}
J.~Polonyi and K.~Sailer.
\newblock Effective action and density-functional theory.
\newblock {\em Physical Review B}, 66:155113, Oct 2002.

\bibitem{preiss1990differentiability}
D.~Preiss.
\newblock Differentiability of {L}ipschitz functions on {B}anach spaces.
\newblock {\em Journal of Functional Analysis}, 91(2):312--345, 1990.

\bibitem{prenter1970weierstrass}
P.~Prenter.
\newblock A {W}eierstrass theorem for real, separable {H}ilbert spaces.
\newblock {\em Journal of Approximation Theory}, 3(4):341--351, 1970.

\bibitem{rabah2003generalized}
R.~Rabah, G.~M. Sklyar, and A.~V. Rezounenko.
\newblock Generalized {R}iesz basis property in the analysis of neutral type systems.
\newblock {\em Comptes Rendus Mathematique}, 337(1):19--24, 2003.

\bibitem{raissi2019physics_PINN_org}
M.~Raissi, P.~Perdikaris, and G.~E. Karniadakis.
\newblock Physics-informed neural networks: A deep learning framework for solving forward and inverse problems involving nonlinear partial differential equations.
\newblock {\em Journal of Computational Physics}, 378:686--707, 2019.

\bibitem{ramasinghe2022beyond}
S.~Ramasinghe and S.~Lucey.
\newblock Beyond periodicity: Towards a unifying framework for activations in coordinate-{MLP}s.
\newblock In {\em European Conference on Computer Vision}, pages 142--158. Springer, 2022.

\bibitem{rodgers2024tensor}
A.~Rodgers and D.~Venturi.
\newblock Tensor approximation of functional differential equations.
\newblock {\em arXiv preprint arXiv:2403.04946}, 2024.

\bibitem{runge1901runge_phenon_org}
C.~Runge.
\newblock Über empirische funktionen und die interpolation zwischen äquidistanten ordinaten.
\newblock {\em Zeitschrift für Mathematik und Physik}, 46:224--243, 1901.

\bibitem{ruthotto2020machine}
L.~Ruthotto, S.~J. Osher, W.~Li, L.~Nurbekyan, and S.~W. Fung.
\newblock A machine learning framework for solving high-dimensional mean field game and mean field control problems.
\newblock {\em Proceedings of the National Academy of Sciences}, 117(17):9183--9193, 2020.

\bibitem{schwenk2004Towards_FRGDFT}
A.~Schwenk and J.~Polonyi.
\newblock Towards density functional calculations from nuclear forces.
\newblock In {\em 32nd International Workshop on Gross Properties of Nuclei and Nuclear Excitation: Probing Nuclei and Nucleons with Electrons and Photons}, pages 273--282, 2004.

\bibitem{seiringer2011excitation}
R.~Seiringer.
\newblock The excitation spectrum for weakly interacting bosons.
\newblock {\em Communications in Mathematical Mhysics}, 306:565--578, 2011.

\bibitem{sharma2023stiff}
P.~Sharma, L.~Evans, M.~Tindall, and P.~Nithiarasu.
\newblock Stiff-{PDE}s and physics-informed neural networks.
\newblock {\em Archives of Computational Methods in Engineering}, 30(5):2929--2958, 2023.

\bibitem{shen2011spectral}
J.~Shen, T.~Tang, and L.~Wang.
\newblock {\em Spectral methods: {A}lgorithms, analysis and applications}, volume~41 of {\em Springer {Series} in {Computational} {Mathematics}}.
\newblock Springer Berlin Heidelberg, 2011.

\bibitem{vauinberg1964variational}
M.~Va{\u\i}nberg.
\newblock {\em Variational Methods for the Study of Nonlinear Operators}.
\newblock Holden-Day Series in Mathematical Physics. Holden-Day, 1964.

\bibitem{valiev1997generalized}
M.~Valiev and G.~W. Fernando.
\newblock Generalized {Kohn-Sham} density-functional theory via effective action formalism.
\newblock {\em arXiv preprint arXiv:cond-mat/9702247}, 1997.

\bibitem{van2008stochastic}
J.~van Neerven.
\newblock Stochastic evolution equations.
\newblock {\em ISEM lecture notes}, 2008.

\bibitem{Python3}
G.~Van~Rossum and F.~L. Drake.
\newblock {\em Python 3 Reference Manual}.
\newblock CreateSpace, Scotts Valley, CA, 2009.

\bibitem{venturi2013conjugate225}
D.~Venturi.
\newblock Conjugate flow action functionals.
\newblock {\em Journal of Mathematical Physics}, 54(11), 2013.

\bibitem{venturi2018numerical}
D.~Venturi.
\newblock The numerical approximation of nonlinear functionals and functional differential equations.
\newblock {\em Physics Reports}, 732:1--102, 2018.

\bibitem{venturi2021spectral}
D.~Venturi and A.~Dektor.
\newblock Spectral methods for nonlinear functionals and functional differential equations.
\newblock {\em Research in the Mathematical Sciences}, 8(2):27, 2021.

\bibitem{wang2022is}
C.~Wang, S.~Li, D.~He, and L.~Wang.
\newblock Is ${L}^2$ physics informed loss always suitable for training physics informed neural network?
\newblock In A.~H. Oh, A.~Agarwal, D.~Belgrave, and K.~Cho, editors, {\em Advances in Neural Information Processing Systems}, 2022.

\bibitem{wang2023learning}
H.~Wang, L.~Lu, S.~Song, and G.~Huang.
\newblock Learning specialized activation functions for physics-informed neural networks.
\newblock {\em arXiv preprint arXiv:2308.04073}, 2023.

\bibitem{wang2021learning_PI_DeepONet_org}
S.~Wang, H.~Wang, and P.~Perdikaris.
\newblock Learning the solution operator of parametric partial differential equations with physics-informed {D}eep{ON}ets.
\newblock {\em Science advances}, 7(40):eabi8605, 2021.

\bibitem{wang2023multi}
Y.~Wang and C.-Y. Lai.
\newblock Multi-stage neural networks: Function approximator of machine precision.
\newblock {\em arXiv preprint arXiv:2307.08934}, 2023.

\bibitem{wegner1973renormalizaton_FRG}
F.~J. Wegner and A.~Houghton.
\newblock Renormalization group equation for critical phenomena.
\newblock {\em Physical Review A}, 8:401--412, Jul 1973.

\bibitem{weinan2021algorithms}
E.~Weinan, J.~Han, and A.~Jentzen.
\newblock Algorithms for solving high dimensional {PDEs}: From nonlinear {M}onte {C}arlo to machine learning.
\newblock {\em Nonlinearity}, 35(1):278, 2021.

\bibitem{weinan2018mean}
E.~Weinan, J.~Han, and Q.~Li.
\newblock A mean-field optimal control formulation of deep learning.
\newblock {\em arXiv preprint arXiv:1807.01083}, 2018.

\bibitem{wetterich1993exact_FRG}
C.~Wetterich.
\newblock Exact evolution equation for the effective potential.
\newblock {\em Physical Letter B}, 301:90--94, 1993.

\bibitem{wiener1966nonlinear}
N.~Wiener.
\newblock {\em Nonlinear Problems in Random Theory}.
\newblock MIT Press, 1966.

\bibitem{wilson1974renormalization_FRG}
K.~G. Wilson and J.~Kogut.
\newblock The renormalization group and the $ \epsilon $ expansion.
\newblock {\em Physical Reports}, 12(2):75 -- 199, 1974.

\bibitem{yao2023multadam}
J.~Yao, C.~Su, Z.~Hao, S.~Liu, H.~Su, and J.~Zhu.
\newblock {M}ulti{A}dam: Parameter-wise scale-invariant optimizer for multiscale training of physics-informed neural networks.
\newblock In A.~Krause, E.~Brunskill, K.~Cho, B.~Engelhardt, S.~Sabato, and J.~Scarlett, editors, {\em Proceedings of the 40th International Conference on Machine Learning}, volume 202 of {\em Proceedings of Machine Learning Research}, pages 39702--39721. PMLR, 23--29 Jul 2023.

\bibitem{yokota2023physics}
T.~Yokota.
\newblock Physics-informed neural networks for solving functional renormalization group on a lattice.
\newblock {\em Physical Review B}, 109:214205, Jun 2024.

\bibitem{yokota2021abinitio}
T.~Yokota and T.~Naito.
\newblock Ab initio construction of the energy density functional for electron systems with the functional-renormalization-group-aided density functional theory.
\newblock {\em Physical Review Research}, 3:L012015, Feb 2021.

\bibitem{zeng2022competitive}
Q.~Zeng, S.~H. Bryngelson, and F.~T. Schaefer.
\newblock Competitive physics informed networks.
\newblock In {\em ICLR 2022 Workshop on Gamification and Multiagent Solutions}, 2022.

\bibitem{zhou2023neural}
A.~Zhou, K.~Yang, Y.~Jiang, K.~Burns, W.~Xu, S.~Sokota, J.~Z. Kolter, and C.~Finn.
\newblock Neural functional transformers.
\newblock In {\em Thirty-seventh Conference on Neural Information Processing Systems}, 2023.

\end{thebibliography}
\bibliographystyle{abbrv}
}

\clearpage
\appendix
\section*{Appendix}
\tableofcontents

\clearpage
\section{Supplementary Related Work} \label{app: Supplementary Related Work}
\paragraph{Note on \cite{venturi2018numerical} \& \cite{venturi2021spectral}.}
Most of the numerical results presented in \cite{venturi2018numerical} and \cite{venturi2021spectral} are derived from simulations based on analytically specified functions and functionals, without involving the numerical integration of FDEs.
For instance, Fig.~2 in \cite{venturi2021spectral} does not depict the result of numerically solving an FDE. Instead, it illustrates the approximation error of the cylindrical approximation of an analytically given functional (refer to Eq.~(127) in \cite{venturi2021spectral}). The performance of numerical integration of FDEs using CPU-based algorithms is given in Fig.~38 in \cite{venturi2018numerical}, which shows the application of the CP-ALS and hierarchical Tucker (HT) methods for cases with $m \leq 6$. Note that the HT is reported to be slower than the CP-ALS \cite{venturi2018numerical}.

\paragraph{Applications of functionals.}
Functionals play a fundamental role in stochastic systems \cite{wiener1966nonlinear, fox1986functional, klyatskin2005dynamics112}, Fokker-Planck equations \cite{fox1986functional}, the statistical theory of turbulence \cite{hopf1952statistical, alankus1988generating, monin2013statistical145}, the theory of superfluidity \cite{bogoliubov1947theory, seiringer2011excitation}, quantum field theories \cite{venturi2013conjugate225, venturi2018numerical}, mean-field games \cite{carmona2018probabilistic}, many-body Schr{\"o}dinger equations \cite{hohenberg1964inhomogeneous}, mean-field optimal control \cite{weinan2018mean, ruthotto2020machine}, and unnormalized optimal transport \cite{gangbo2019unnormalized}.  

\paragraph{Examples of FDEs.}
Examples of FDEs include the Schwinger-Dyson equation in quantum field theory \cite{itzykson2012quantum}, the Hopf characteristic functional equation in fluid mechanics and random processes \cite{beran1965statistical, klyatskin2005dynamics112, monin2013statistical145}, probability density functional equations, and effective Fokker-Planck systems \cite{venturi2018numerical}.

\paragraph{Operator learning.}
Functionals can be seen as operators that map a function to a scalar; thus, operator learning \cite{anandkumar2019neural_Neural_Operator_org, li2021fourier_FNO_org, lu2019deeponet_DeepONet_org_arXiv_ver, lu2021learning_DeepONet_org_Nature_ver} appears to be a promising approach to learning functionals. 
However, this method requires simulation or observation data unless PINNs are used simultaneously \cite{li2021physics_PINO_org, wang2021learning_PI_DeepONet_org}. In other words, operator learning methods solve inverse problems, while we focus on forward problems, where only the equation to be solved is given.

\paragraph{Other approximation methods for FDEs.}
A common class of solvers for FDEs is based on truncating power series expansions at a finite order \cite{monin2013statistical145}. 
This includes the functional Taylor expansion, which expands a functional in terms of its argument functions. 
However, its applicability is limited because solutions obtained from the functional Taylor expansion can only be used for input functions close to the expansion center. 

Another type of expansion used in the theory of functional renormalization group is the derivative expansion \cite{dupuis2021nonterturbative_FRGreview}. It is an expansion in terms of derivatives of the input functions. However, solutions are only feasible for inputs close to constants. For example, in the three-dimensional $\mathcal{O}(N)$ statistical model, derivative expansions up to the sixth order have been executed \cite{dupuis2021nonterturbative_FRGreview}, but they are limited to calculations in uniform states and cannot handle non-uniform states.

Yet another expansion method uses the Reynolds number to distinguish between laminar and turbulent flow and has been applied to the Hopf equation \cite{monin2013statistical145}. However, increasing the truncation order poses a significant challenge. Specifically, calculating each expansion coefficient requires spatial integrals, leading to an exponential increase in computational costs.

Influence functions can be used for approximating Gateaux derivatives.
In \cite{jordan2022empirical}, the proposed approach is based on a finite-difference approximation of Gateaux derivatives, which requires a computational mesh for input function space when solving FDEs. Such an approach is infeasible because the number of mesh points grows exponentially with respect to the size of the input-function space.

\paragraph{High-dimensional PDEs.}
In our experiments, with the help of PINNs, we numerically solved 1000-dimensional PDEs, which are impossible to handle with discretization-based methods, such as the finite element method.
Numerical computation of high-dimensional PDEs is known to suffer from the curse of dimensionality, making PDEs with dimensions $d \geq 40$ particularly challenging \cite{weinan2021algorithms}. 

However, rapid development in this field, especially in PINNs, has enabled solving much higher-dimensional PDEs. 
For example, a 100,000-dimensional PDE is now tractable \cite{hu2023tackling}, which can be combined with our model.
Nevertheless, $d \sim \mcO(100)$ is typically sufficient in practice as long as input functions are regular. See also Fig.~\ref{fig: fitting functions}.

\paragraph{PINNs.}
PINNs are a type of mesh-free universal approximators of PDE solutions \cite{raissi2019physics_PINN_org}.
There are several machine learning-based mesh-free solvers, e.g., \cite{chen2021solving}.

\paragraph{Automatic functional differentiation of higher-order functions.}
Automatic differentiation of higher-order functions has a long history in theoretical computer science (see \cite{neidinger2010introduction, baydin2018automatic} and the references therein)."
Most studies focus on the mathematical aspects of programming languages, particularly how to implement reliable automatic functional differentiation, which is beyond our scope.
We cite two recent papers that explicitly mention functional derivatives. 
Di et al. \cite{di2023language} develop a language to compute automatic functional derivatives; however, the implementation is not available.
Lin \cite{lin2024automatic} provides a JAX implementation of functional derivatives w.r.t. parameterized input functions. However, it supports only local, semilocal, and several nonlocal operators, limiting the functional space.  In contrast, our model extends its approximability as $m$ increases.

\paragraph{Density functional theory (DFT).}
An alternative neural network-based approach to functional analysis utilizes finite element methods for spacetime grid approximation, commonly employed in first-principles computations of density functional theory (DFT) \cite{fiedler2022deep}. Examples include a neural network, $\hat{F}(\{y_j := f(\mathbf{r}_j)\}_j)$, 
approximating a target functional $F([f])$ by evaluating $f$ at specific grid points $\{\mathbf{r}_j\}_j$. 
Functional derivatives at each grid point can be computed using automatic differentiation: $\{ \frac{\delta F([f])}{\delta f(\boldsymbol{r})} \}_{\boldsymbol{r}} \fallingdotseq \{ \frac{\partial \hat{F}(\{ y_j \}_j)}{\partial y_i}\}_i$. 
However, the central focus of the machine learning studies for DFT does not include solving PDEs, let alone FDEs.

\paragraph{Neural functional networks.}
Implicit Neural Representations (INRs) is another strategy to handle functions as the inputs to neural networks \cite{zhou2023neural, spatialfuncta23, luigi2023deep, franzese2023continuoustime}. However, this method requires a large number of weight parameters, resulting in substantial computational demands.

\clearpage
\section{Additional Introduction to FDEs} \label{app: Extra Introduction to FDEs}

\subsection{Background}
Functional differential equations (FDEs) are prevalent across various scientific fields, including the Hopf equation in statistical turbulence theory \cite{hopf1952statistical}, the Schwinger-Dyson equation in quantum field theory \cite{peskin1995introduction_to_QFT}, the functional renormalization group equation \cite{wegner1973renormalizaton_FRG, wilson1974renormalization_FRG, polchinski1984renormalization_FRG, wetterich1993exact_FRG}, the Fokker-Planck equation in statistical mechanics \cite{fox1986functional}, and equations for the energy density functional in DFT \cite{polonyi2002Effective_FRGDFT, schwenk2004Towards_FRGDFT}. The strength of FDEs lies in their comprehensive nature, enabling the derivation of various statistical properties of physical systems. For instance, the Hopf equation yields the characteristic functional, encompassing all information about simultaneous correlations of velocities at different positions, a crucial quantity in turbulence theory. Therefore, a highly accurate, efficient, and universal method for solving FDEs significantly impacts a broad range of scientific research but is yet to be explored.

A common class of solvers for FDEs is based on truncating power series expansions at a finite order \cite{monin2013statistical145}. 
This includes the functional Taylor expansion, which expands a functional in terms of its argument functions. 
However, its applicability is limited because solutions obtained from the functional Taylor expansion can only be used for input functions close to the expansion center. 

Another type of expansion used in the functional renormalization group theory is the derivative expansion \cite{dupuis2021nonterturbative_FRGreview}. It is an expansion in terms of derivatives of the input functions. However, solutions are only feasible for inputs close to constants. For example, in the three-dimensional $\mathcal{O}(N)$ statistical model, derivative expansions up to the sixth order have been executed \cite{dupuis2021nonterturbative_FRGreview}, but they are limited to calculations in uniform states and cannot treat non-uniform states.

Yet another expansion method uses the Reynolds number, distinguishing between laminar and turbulent flow, and has been applied to the Hopf equation \cite{monin2013statistical145}. However, increasing the truncation order poses a significant challenge. Specifically, calculating each expansion coefficient requires spatial integrals, leading to an exponential increase in computational costs.

An alternative to solving FDEs is the cylindrical approximation \cite{venturi2018numerical}. 
In this method, the input function is expanded using a set of basis functions truncated at a finite degree. 
The cylindrical approximation transforms an FDE into a high-dimensional PDE, and discretization-based methods are often used together.
To address its high computational cost, tensor decomposition methods are also used.
Canonical Polyadic (CP) tensor expansion with the Alternating Least Squares (ALS) method \cite{venturi2018numerical} is the state-of-the-art algorithm in this class. 
The reported results to date are limited to cases with six or fewer basis functions. 
The computational cost related to $m$ is at least $\mcO(m^6)$, presenting a challenge in increasing the number of bases $m$.
In contrast, our model scales $\sim \mcO(m) + \mcO(m^r)$, where $r$ is the order of the functional derivative included in the target FDE. 
$r$ is typically 1 or 2, and thus the dependence on $m$ is $\sim \mcO(m)$ or $\mcO(m^2)$.

An alternative to solving FDEs is the cylindrical approximation \cite{venturi2018numerical}. In this method, the input function is expanded using a set of basis functions truncated at a finite degree. The cylindrical approximation transforms an FDE into a high-dimensional PDE, often solved using discretization-based methods. To address the high computational cost, tensor decomposition methods are also used. The CP-ALS method \cite{venturi2018numerical} is the state-of-the-art algorithm in this class. The reported results to date are limited to cases with six or fewer basis functions. The computational cost related to $m$ is at least $\mcO(m^6)$, presenting a challenge in increasing the number of basis functions $m$. In contrast, our model scales as $\mcO(m^r)$, where $r$ is the order of the functional derivative in the target FDE. Typically, $r$ is 1 or 2.

Below, we provide examples from the fields of turbulence, quantum field theory, and density functional theory.

\subsection{Turbulence} \label{app: Turbulence}
Turbulence appears everywhere, from natural systems (e.g., river flows and wind currents) to artificial systems (e.g., water flow in pipes and airflow over airplane surfaces). Understanding its properties is important both in natural sciences and in engineering. However, turbulence is a very complex system involving many degrees of freedom, and the only way to theoretically represent the properties of turbulence is through statistical methods. The Hopf equation \cite{hopf1952statistical} is an FDE that comprehensively describes the properties of turbulence. For example, the Hopf equation for a fluid following the Navier-Stokes equations is written as follows:
\begin{align}
    \label{eq: Navier-Stokes Hopf equation}
    \frac{\partial \Phi([\boldsymbol{\theta}],t)}{\partial t}
    =
    \sum_{k=1}^3
    \int_{V} d\boldsymbol{x}
    \theta_k(\boldsymbol{x})
    \left(
    \frac{i}{2}
    \sum_{j=1}^3
    \frac{\partial}{\partial x_j}\frac{\delta^2 \Phi([\boldsymbol{\theta}],t)}{\delta \theta_j(\boldsymbol{x})\delta \theta_k(\boldsymbol{x})}
    +
    \nu 
    \nabla^2
    \frac{\delta \Phi([\boldsymbol{\theta}],t)}{\delta \theta_k(\boldsymbol{x})}
    \right).
\end{align}
Here, $\Phi([\boldsymbol{\theta}],t)$ is a characteristic functional of a test function $\boldsymbol{\theta}(\boldsymbol{x})=(\theta_1(\boldsymbol{x}),\theta_2(\boldsymbol{x}),\theta_3(\boldsymbol{x}))$, and $\nu$ is the kinematic viscosity. The characteristic functional contains all the information about the correlations of velocities at different locations at the same time. Indeed, the moments of velocity can be obtained from the derivatives of the characteristic functional as follows:
\begin{align}
    \label{eq: derivative of characteristic functional}
    \langle
    u_{i_1}(\boldsymbol{x}_1,t)
    \cdots
    u_{i_n}(\boldsymbol{x}_n,t)
    \rangle_{\mathrm{m}}
    =
    (-i)^n
    \left.
    \frac{\delta^n \Phi([\boldsymbol{\theta}],t)}
    {
    \delta \theta_{i_1}(\boldsymbol{x}_1)
    \cdots
    \delta \theta_{i_n}(\boldsymbol{x}_n)
    }
    \right|_{\boldsymbol{\theta}=\boldsymbol{0}},
\end{align}
where $u_{i}(\boldsymbol{x},t)$ is the $i$the component of the velocity at $(\boldsymbol{x},t)$. In particular, the Fourier transform of the second-order cumulant is known as the energy spectrum, which represents the contribution of eddies at various momentum scales to turbulence. The search for a comprehensive method to describe the behavior of the energy spectrum across a wide range of momentum scales continues, and for this reason, developing methods to solve the Hopf equation holds significant importance.

More specifically, let us consider the fluid mechanics of aircraft or pipeline flow. 
For such systems, the second-order functional derivative of the solution of the Hopf characteristic functional gives the two-point correlation function of the velocity field at arbitrary two positions $x$ and $y$ and an arbitrary time $t$.
The Fourier transform of it w.r.t. $x$ and $y$ is called the energy spectrum, which indicates which scales of motion are most energetic in the fluid flow.
The energy spectrum is used to model and predict the behavior of turbulence, e.g., constructing safe and efficient shapes of airplanes or pipelines \cite{nichols2010turbulence}.

\subsection{Quantum Field Theory}
Quantum mechanics tells us that physical quantities in the microscopic world do not always have deterministic values but fluctuate. In quantum field theory (QFT), which is a branch of quantum mechanics and forms the basis of modern particle physics, particles are described as fluctuating \textit{fields} spreading throughout spacetime. QFT allows us to describe the properties of elementary particles in a statistical way, i.e., the correlation functions of fields at different points in spacetime.
Therefore, developing methods to calculate correlation functions is very important for understanding the properties of elementary particles. 

Several FDEs provide information on the correlation function of fields. A well-known example is the Schwinger-Dyson equation \cite{peskin1995introduction_to_QFT}. In the statistical model known as the $\phi^4$ model, which is described by the following action
\begin{align}
    S([\phi])
    =
    \int d\boldsymbol{x}
    \left(
    \frac{1}{2}\phi(\boldsymbol{x})\left(-\nabla^2+m^2 \right)\phi(\boldsymbol{x})
    +
    \frac{\lambda}{4!}\phi(\boldsymbol{x})^4
    \right),
\end{align}
the Schwinger-Dyson equation is given as follows:
\begin{align}
    \label{eq:Schwinger-Dyson}
    -
    \nabla^2
    \frac{\delta Z([J])}{\delta J(\boldsymbol{x})}
    +
    m^2
    \frac{\delta Z([J])}{\delta J(\boldsymbol{x})}
    -
    \frac{\lambda}{3!}
    \frac{\delta^3 Z([J])}{\delta J(\boldsymbol{x})^3}
    -
    iJ(\boldsymbol{x})Z([J])=0.
\end{align}
Here, $Z([J])$ is a quantity known as the partition function, and by functionally differentiating this quantity w.r.t. $J(\boldsymbol{x})$, all correlation functions for the field $\phi(\boldsymbol{x})$ can be obtained. Another method to describe the correlation functions is the functional renormalization group \cite{wegner1973renormalizaton_FRG,wilson1974renormalization_FRG,polchinski1984renormalization_FRG,wetterich1993exact_FRG}. The renormalization group is a method of analyzing physical systems based on the operation of reducing spacetime resolution. 
Under such operations, we can define an FDE for the effective action $\Gamma([\phi])$, which is calculated by the Legendre transformation of $\ln Z([J])$. $\Gamma([\phi])$ contains all the information of the correlation functions, similarly to $Z([J])$. 
$\Gamma([\phi])$ satisfies the following FDE \cite{wetterich1993exact_FRG}:
\begin{align}
    \label{eq:Wetterich-eq}
    &\partial_k
    \Gamma_k([\phi])
    =
    \int d\boldsymbol{x}
    \int d\boldsymbol{x}'
    \partial R_k(\boldsymbol{x}-\boldsymbol{x}')
    G_k([\phi], \boldsymbol{x}',\boldsymbol{x}).
\end{align}
$k$ represents the momentum scale that specifies the resolution at which spacetime is observed, $R_k(\boldsymbol{x})$ is a function manually provided to realize the operation of the renormalization group,
and $G_k([\phi], \boldsymbol{x}',\boldsymbol{x})$ is the regulated propagator defined as
\begin{align}
    \int d\boldsymbol{x}'
    \left(
    \frac{\delta^2 \Gamma_k([\phi])}{\delta \phi(\boldsymbol{x}) \delta \phi(\boldsymbol{x}')}
    +
    R_k(\boldsymbol{x}-\boldsymbol{x}')
    \right)
    G_k([\phi], \boldsymbol{x}',\boldsymbol{x}'')
    =
    \delta(\boldsymbol{x}-\boldsymbol{x}'').
\end{align}

\subsection{Density Functional Theory}
Density functional theory (DFT) is widely used in material science, quantum chemistry, and nuclear physics.
The properties of materials and molecules are determined by the state of electrons, which follows the Schr\"odinger equation. Solving the Schr\"odinger equation becomes challenging especially when the material contains many electrons. Hohenberg and Kohn demonstrated that it is possible to determine the state of electrons by solving a variational equation for the density \cite{hohenberg1964inhomogeneous}, instead of solving the Schr\"odinger equation directly. This is a density-based variational equation and has been shown to be easier to solve than the Schr\"odinger equation.

However, in DFT, exact calculations are usually not possible. The reason is that the Energy Density Functional (EDF), which provides the variational equation for density, cannot be precisely determined. Whether a good approximation for the EDF can be provided or not significantly influences the success of DFT calculations. There has been a lot of research on finding EDFs, including empirical approaches, for a long time. One recent approach is based on FDEs. 
Specifically, several FDEs are known to describe the evolution of the EDF when the two-body interaction $U(\boldsymbol{x} - \boldsymbol{x}')$, e.g., the Coulomb interaction between electrons, gradually increases \cite{polonyi2002Effective_FRGDFT,schwenk2004Towards_FRGDFT}. When the interaction is replaced with $\lambda U(\boldsymbol{x} - \boldsymbol{x}')$, and when $\lambda$ gradually increases, the FDE becomes:
\begin{align} 
    &\partial_\lambda \Gamma_\lambda([n])
    =
    \frac{1}{2}
    \int d\tau \int d\boldsymbol{x} \int d\boldsymbol{x}'
    U(\boldsymbol{x}-\boldsymbol{x}')
    \left[
    n(\boldsymbol{x},\tau)
    n(\boldsymbol{x}',\tau)
    +
    G_\lambda([n],\boldsymbol{x},\tau,\boldsymbol{x}',\tau')
    -
    n(\boldsymbol{x},\tau)
    \delta(\boldsymbol{x}-\boldsymbol{x}')
    \right], \label{eq: tmp000}
    \\
    &
    \int d\boldsymbol{x}'
    \int d\tau'
    \frac{\delta^2 \Gamma_\lambda([n])}
    {\delta n(\boldsymbol{x},\tau) \delta n(\boldsymbol{x}',\tau')}
    G_\lambda([n], \boldsymbol{x}',\tau',\boldsymbol{x}'',\tau'')
    =
    \delta(\boldsymbol{x}-\boldsymbol{x}'')
    \delta(\tau-\tau''). \label{eq: tmp001}
\end{align}
Here, $n(\boldsymbol{x}, \tau)$ is the density of electrons, and $\Gamma_\lambda([n])$ represents an effective action, which is an extension of the EDF \cite{fukuda1994density,valiev1997generalized}. In addition to the coordinates $\boldsymbol{x}$ and $\boldsymbol{x}'$, a virtual dimension, known as the imaginary time $\tau$, is introduced. This FDE is expected to provide a new method for constructing the EDF \cite{polonyi2002Effective_FRGDFT,schwenk2004Towards_FRGDFT}.
For example, the EDF of the three-dimensional electron system is derived from Eqs.~(\ref{eq: tmp000}--\ref{eq: tmp001}) based on the functional Taylor expansion \cite{yokota2021abinitio}.

\clearpage
\section{Theoretical Background of Cylindrical Approximation and Convergence} 
\label{app: Theorems Related to Cylindrical Approximation and Convergence}
The mathematical background of our theoretical contribution is provided to make our paper self-contained. 
This section is based on the recent development of the theory of cylindrical approximation \cite{venturi2021spectral} and the classical spectrum theory \cite{hesthaven2007spectral, shen2011spectral}. 
The differentiability of functionals is discussed in App.~\ref{app: Differentiation of Functionals}, showing that the solutions of the FDEs in our experiment are differentiable. 
The equivalence and difference between the functional, Fr\'{e}chet, and G\^{a}teaux derivative are summarized in App.~\ref{app: Differentiation of Functionals}. They are equivalent in practical settings, and we do not distinguish them in this paper.

\subsection{Continuity and Compactness of Functionals}
Let $X$ be a Banach space unless otherwise stated. In this paper, a functional on $X$ is defined as a map $F$ from $D(F) \subset X$ to $\mbr$, where $D(F)$ is the domain of $F$. Note that $\mbr$ cannot be replaced with $\mbq$ in all the statements below. 
We first define pointwise continuity, uniform continuity, compactness, and complete continuity of functionals.

\begin{definition}[Pointwise continuity of $F$]
    A functional $F: X \supset D(F) \rightarrow \mbr$ is continuous at $\theta \in D(F) \subset X$ if for any Cauchy sequence $\{ \theta_n \}_n \subset D(F)$,
        \begin{equation}
        \lim_{n \rightarrow \infty} || \th_n - \th ||_X = 0 \Rightarrow \lim_{n \rightarrow \infty} | F([\th_n]) - F([\th]) | = 0,    
        \end{equation}
    where $||\cdot||_X$ is the norm induced by $X$.
\end{definition}

\begin{definition}[Uniform continuity of $F$]
    A functional $F: X \supset D(F) \rightarrow \mbr$ is uniformly continuous on $D(F)$ if 
    \begin{align}
        & \forall \ep >0, \,\, \exists \delta > 0 \,\, s.t. \nn
        & \forall \th_1, \th_2 \in D(F) \,\, \text{satisfying} || \th_1 - \th_2 ||_X \leq \delta, \nn 
        & | F([\th_1]) - F ([\th_2]) | < \ep \text{ holds},       
    \end{align}
    where $||\cdot||_X$ is the norm induced by $X$.
\end{definition}
We simply say ``continuous'' in the following.

\begin{definition}[Compactness of $F$]
    A functional $F: X \supset D(F) \rightarrow \mbr$ is compact on $D(F)$ if $F$ maps any bounded subset of $D(F)$ into a pre-compact subset of $\mbr$.
\end{definition}
Recall that $A \subset \mbr$ is a pre-compact subset if the closure of $A$, denoted by $\bar{A}$, is compact.

\begin{definition}[Complete continuity of $F$]
    A functional $F: X \supset D(F) \rightarrow \mbr$ is completely continuous on $D(F)$ if $F$ is continuous and compact on $D(F)$.    
\end{definition}
Based on these concepts, functional derivatives are defined.

\subsection{Boundedness, Closedness, Compactness, and Pre-compactness of Metric Space of Functions}
Next, we define boundedness, closedness, compactness, and pre-compactness of a metric space of functions.

\begin{definition}[Boundedness of metric space of functions]
    Let $X$ be a metric space of functions. $K \subset X$ is bounded if $\exists M \in \mbr$ s.t. $\forall \th \in K$,  $|| \th ||_X < M$.
\end{definition}

\begin{definition}[Closedness of metric space of functions]
    Let $X$ be a metric space of functions. $K \subset X$ is closed if any convergent sequence in $K$ has a limit in $K$.
\end{definition}

\begin{definition}[Compactness of metric space of functions]
    Let $X$ be a metric space of functions. $K \subset X$ is compact if any open cover of $K$ has a finite subcover. Equivalently, $K$ is compact if and only if any sequence in $K$ is a bounded subsequence whose limit is in $K$.
\end{definition}

\begin{definition}[Pre-compactness of metric space of functions]
    Let $X$ be a metric space of functions. $K \subset X$ is pre-compact if its closure $\bar{K}$ is compact. Equivalently, $K$ is pre-compact if and only if any sequence in $K$ has a convergent subsequence whose limit is in $X$.   
\end{definition}
A critical characteristic of pre-compactness is given by the following theorem (a necessary and sufficient condition for the pre-compactness of metric spaces).
\begin{theorem}[Necessary and sufficient condition of pre-compactness of metric space \cite{melrose2005introduction}]
    \label{thm: Necessary and sufficient condition of pre-compactness of metric space}
    A subset $E$ of a real separable Hilbert space $H$ is pre-compact if and only if $E$ is (i) bounded, (ii) closed, and (iii) for any orthonormal basis $\{ \phi_0, \phi_1,\ldots \}$ of $H$ and for any $\ep > 0$, there exists $m \in \mbn$ s.t.
    \begin{equation}
        \label{eq: equi-samll tail condition}
        \forall \th \in E, \sum_{k=m+1}^\infty | (\th, \phi_k)_H |^2 \leq \ep \, ,
    \end{equation}
    where $(\cdot, \cdot)_H$ is the inner product defined on $H$.
\end{theorem}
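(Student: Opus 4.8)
The plan is to prove the two implications separately, using the sequential characterisation of pre-compactness recorded just above: $E$ is pre-compact iff every sequence in $E$ admits a subsequence converging in norm to a limit in $H$. The substantive content is the equivalence of pre-compactness with the two quantitative conditions (i) boundedness and (iii) equi-small tails; condition (ii), closedness, is what upgrades a pre-compact set to a genuinely compact one (a pre-compact set need not be closed in general), so I would treat it as the ingredient that forces the limit to lie in $E$ rather than merely in $H$. Throughout I will freely use the orthonormal expansion $\theta = \sum_k (\theta,\phi_k)_H \phi_k$ and Parseval's identity, valid since $H$ is a separable Hilbert space.

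For the sufficiency direction (conditions $\Rightarrow$ pre-compact), first I would take an arbitrary sequence $\{\theta_n\}_n \subset E$ and extract a subsequence along which every coefficient converges. Boundedness (i) gives $|(\theta_n,\phi_k)_H| \le \|\theta_n\| \le M$ for each fixed $k$, so Bolzano--Weierstrass lets me choose nested subsequences making $(\theta_n,\phi_k)_H$ convergent for $k=0$, then $k=1$, and so on; a diagonal extraction then produces a single subsequence $\{\theta_{n_j}\}_j$ with $(\theta_{n_j},\phi_k)_H \to c_k$ for every $k$. The key step is to show this subsequence is Cauchy in $H$. Using Parseval I would split
\begin{equation}
\|\theta_{n_i}-\theta_{n_j}\|^2 = \sum_{k=0}^{m}|(\theta_{n_i}-\theta_{n_j},\phi_k)_H|^2 + \sum_{k=m+1}^{\infty}|(\theta_{n_i}-\theta_{n_j},\phi_k)_H|^2 ,
\end{equation}
choose $m$ from (iii) so that the infinite tail is small uniformly in $i,j$ (via the $\ell^2$ triangle inequality applied to the two tails of $\theta_{n_i}$ and $\theta_{n_j}$ separately), and then send $i,j \to \infty$ so that the remaining finite sum vanishes by the coefficientwise convergence. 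Completeness of $H$ then yields a norm limit, establishing pre-compactness.

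For the necessity direction (pre-compact $\Rightarrow$ conditions), boundedness (i) is immediate since a pre-compact set is totally bounded, hence contained in a finite union of unit balls. For the equi-small tail condition (iii), I would fix $\ep>0$ and cover $E$ by finitely many balls $B(\psi_1,\delta),\dots,B(\psi_N,\delta)$ with centres $\psi_i\in H$ and $\delta := \sqrt{\ep}/2$. Since each $\psi_i$ lies in $H$, Parseval gives $\sum_k |(\psi_i,\phi_k)_H|^2 = \|\psi_i\|^2 < \infty$, so I can pick $m_i$ with $\sum_{k>m_i}|(\psi_i,\phi_k)_H|^2 \le \ep/4$ and set $m := \max_i m_i$. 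For any $\theta\in E$, choosing a nearby centre $\psi_i$ and applying the triangle inequality in $\ell^2$ bounds the tail norm $(\sum_{k>m}|(\theta,\phi_k)_H|^2)^{1/2}$ by $\|\theta-\psi_i\| + (\sum_{k>m}|(\psi_i,\phi_k)_H|^2)^{1/2} \le \delta + \sqrt{\ep}/2 = \sqrt{\ep}$, which squared is exactly (iii).

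The main obstacle I anticipate is the Cauchy estimate in the sufficiency direction: coefficientwise convergence of $\{\theta_{n_j}\}_j$ by itself only yields weak convergence, and it is precisely the uniform tail control of (iii) that promotes this to norm (strong) convergence. Getting the order of operations right --- choosing $m$ uniformly in the indices \emph{before} letting $i,j\to\infty$ --- is the delicate point; once that splitting is arranged, the remaining estimates in both directions are routine applications of Parseval and the $\ell^2$ triangle inequality.
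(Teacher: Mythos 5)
The paper contains no proof of this statement: it is imported verbatim from the cited source \cite{melrose2005introduction}, so your proposal can only be measured against the standard argument --- and it reproduces that argument correctly. The sufficiency direction is sound: bounded coefficients plus Bolzano--Weierstrass and a diagonal extraction give coefficientwise convergence, and your order of operations (fix $m$ uniformly from the tail condition \emph{first}, bound the tail of $\theta_{n_i}-\theta_{n_j}$ by the $\ell^2$ triangle inequality applied to the two individual tails, then let $i,j\to\infty$ in the finite block) is exactly the mechanism that upgrades weak, coefficientwise convergence to a norm-Cauchy sequence; completeness of $H$ then yields pre-compactness in the sequential sense the paper records. The necessity direction via a finite $\delta$-net with $\delta=\sqrt{\epsilon}/2$ and center tails below $\epsilon/4$ is the standard total-boundedness computation, and the constants check out: the tail norm of any $\theta\in E$ is at most $\delta+\sqrt{\epsilon}/2=\sqrt{\epsilon}$, whose square is the required bound, and since the net argument makes no reference to a particular basis, the conclusion holds for every orthonormal basis as the statement demands.

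You also correctly flagged the one genuine defect, which lies in the statement rather than in your proof: a pre-compact set need not be closed, so condition (ii) cannot be necessary for pre-compactness as literally claimed; the source states the result for \emph{compact} sets. Your repair --- the substantive equivalence is pre-compactness $\Leftrightarrow$ (i) $+$ (iii), with closedness (ii) serving only to force the limit into $E$ and thereby upgrade pre-compact to compact --- is the right reading, and your two directions prove precisely that corrected equivalence.
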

Eq.~\eqref{eq: equi-samll tail condition} is known as the equi-small tail condition. Thm.~\ref{thm: Necessary and sufficient condition of pre-compactness of metric space} characterizes the domain of functions $D(F)$ to which the cylindrical approximation can be applied.
Note that boundedness is not a necessary nor sufficient condition for the equi-small tail condition.

\subsection{Differentiation of Functionals} \label{app: Differentiation of Functionals}
Next, we define the G\^{a}teaux differential, Fr{\'e}chet differential, and functional derivative. Higher-order functional derivatives can be defined in a similar way \cite{venturi2018numerical}.

\begin{definition}[G\^{a}teaux differential]
    A functional $F: X \overset{\mathrm{open}}{\supset} D(F) \rightarrow \mbr$ is G\^{a}teaux differentiable at $\th \in D(F)$ if 
    \begin{equation}
        dF^{\rm G}_\eta([\th]) := \lim_{\ep \rightarrow 0} \frac{ F([\th + \ep \eta]) - F([\th]) }{\ep} \label{eq: def of Gateaux differential}
    \end{equation}
    exists and is finite for all $\eta \in D(F)$, where $dF^{\rm G}_\eta([\th])$ is called the G\^{a}teaux differential of $F$ in the direction $\eta$.
\end{definition}



There are some patterns of differentiability conditions. One of them is:
\begin{theorem}[G\^{a}teaux differentiability of Lipschitz functionals \cite{mankiewicz1973differentiability, Aronszajn1976differentiability, lindenstrauss2003Frechet}]
    Let Banach space $X$ be separable. 
    Then, any Lipschitz functional $F: X \overset{\mathrm{open}}{\supset} D(F) \rightarrow \mbr$ is G\^{a}teaux differentiable outside a Gauss-null set.  
\end{theorem}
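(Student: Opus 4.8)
The plan is to prove the classical Aronszajn--Mankiewicz differentiability theorem in its sharp form, namely that $F$ is G\^{a}teaux differentiable outside an \emph{Aronszajn-null} set, and then to conclude by invoking the inclusion of the Aronszajn-null $\sigma$-ideal inside the Gauss-null one. Write $L$ for the Lipschitz constant of $F$; since $D(F)$ is open, the difference quotients $\epsilon^{-1}(F([\theta+\epsilon\eta])-F([\theta]))$ are defined for small $\epsilon$ at every $\theta\in D(F)$. First I would reduce pointwise G\^{a}teaux differentiability to two checkable conditions. Fix a countable, $\mathbb{Q}$-linear, dense set $S\subset X$ (so $\overline{\mathrm{span}}\,S=X$). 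I claim $F$ is G\^{a}teaux differentiable at $\theta$ as soon as (i) the directional derivative $D_\eta F([\theta]):=\lim_{\epsilon\to 0}\epsilon^{-1}(F([\theta+\epsilon\eta])-F([\theta]))$ exists for every $\eta\in S$, and (ii) the map $\eta\mapsto D_\eta F([\theta])$ is additive on $S$. Homogeneity is automatic from the definition of the limit, so (ii) makes this map $\mathbb{Q}$-linear, and the bound $|D_\eta F([\theta])|\le L\|\eta\|$ lets it extend to a unique $T\in X^\ast$. The three-term estimate
\begin{equation}
\left|\frac{F([\theta+th])-F([\theta])}{t}-T(h)\right| \le 2L\|h-\eta\| + \left|\frac{F([\theta+t\eta])-F([\theta])}{t}-D_\eta F([\theta])\right|,
\end{equation}
valid for any $h\in X$ and any $\eta\in S$ (using $T(\eta)=D_\eta F([\theta])$ and the Lipschitz bound on the remaining difference), then shows that $T$ is the full G\^{a}teaux differential once $\eta$ is taken close to $h$. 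Thus it suffices to guarantee (i) and (ii) off a Gauss-null set.

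Condition (i) I would handle by one-dimensional reduction. For fixed $\eta$, the map $t\mapsto F([\theta_0+t\eta])$ is Lipschitz on $\mathbb{R}$, hence differentiable at almost every $t$ by the Lebesgue differentiation theorem. Letting $\theta_0$ range over a complement of $\mathbb{R}\eta$, the exceptional set $N_\eta$ on which $D_\eta F$ fails to exist is Lebesgue-null on every line parallel to $\eta$, which places $N_\eta$ in the Aronszajn-null ideal. As $S$ is countable, $\bigcup_{\eta\in S}N_\eta$ is again Aronszajn-null.

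The heart of the proof, and where I expect the main difficulty, is condition (ii). It suffices to treat one pair $\eta,\eta'\in S$ at a time, since there are countably many and $\eta+\eta'\in S$ by $\mathbb{Q}$-linearity. I would pass to the two-dimensional picture: write $X=P\oplus Z$ with $P=\mathrm{span}\{\eta,\eta'\}$ and restrict $F$ to each affine plane $z+P$, obtaining a Lipschitz function on $\mathbb{R}^2$. By Rademacher's theorem this restriction is (totally) differentiable at Lebesgue-almost every point of the plane, and at such points every directional derivative equals the gradient pairing and is therefore additive; hence additivity of $D_\eta, D_{\eta'}, D_{\eta+\eta'}$ can fail only on a planar Lebesgue-null set. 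The remaining work is to certify that a set meeting every translate $z+P$ in a two-dimensional null set is Aronszajn-null: this follows from a Fubini decomposition of a planar null set into a piece null on every $\eta$-line and a piece null on every $\eta'$-line, glued across the slices $z\in Z$, which exhibits the failure set as a union of two members of the Aronszajn ideal. The delicate points are exactly the measurability/Borel bookkeeping required for this gluing and the standard independence of the Aronszajn ideal from the chosen spanning sequence.

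Finally, combining the Aronszajn-null exceptional sets from (i) and (ii) yields a single Aronszajn-null set off which $F$ is G\^{a}teaux differentiable. To match the stated conclusion I would invoke the classical fact that every Aronszajn-null subset of a separable Banach space is Gauss-null, i.e.\ has measure zero under every nondegenerate Gaussian measure; I would cite this inclusion rather than reprove it, as it is itself a nontrivial result in the theory of null-set ideals. This gives differentiability outside a Gauss-null set, as required.
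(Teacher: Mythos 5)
The paper does not actually prove this statement: it is imported as classical background, with the proof deferred to the cited sources (Mankiewicz, Aronszajn, Benyamini--Lindenstrauss), so there is no in-paper argument for your sketch to diverge from. What you have written is, in outline, exactly the standard Aronszajn proof from those references, and it is correct as a sketch: the reduction of G\^{a}teaux differentiability of a Lipschitz map to existence plus additivity of directional derivatives on a countable dense $\mathbb{Q}$-linear set $S$ (your three-term estimate with the $2L\|h-\eta\|$ bound is the right mechanism for extending the $\mathbb{Q}$-linear map to $T\in X^\ast$ and upgrading to all directions); the one-dimensional Lebesgue-differentiation step for existence, noting that it correctly delivers the two-sided limit; the two-dimensional Rademacher step for additivity; and the Fubini splitting of a planar null set $E$ into $E_1=\lbrace p\in E : E\cap(p+\mathbb{R}\eta)\text{ is null}\rbrace$ and $E_2=E\setminus E_1$, which is the standard decomposition, is Borel, and glues correctly across translates of the plane since each line in direction $\eta$ or $\eta'$ lies in exactly one such translate. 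Two caveats, both of which you flag but should appear as explicit citations rather than asides: first, your exceptional sets are only exhibited as countable unions of sets null on lines in directions drawn from the particular set $S$, so placing them in the Aronszajn $\sigma$-ideal genuinely requires the nontrivial lemma that the ideal is independent of the generating complete sequence (equivalently, that a Borel set null on every line in one fixed direction is Aronszajn-null); second, the inclusion of Aronszajn-null sets in Gauss-null sets is itself a substantive theorem (indeed the two ideals coincide, by Cs\"{o}rnyei), not a formality. Since the paper's definition of Gauss-null requires Borel sets, you should also record that the failure sets in your steps (i) and (ii) are Borel --- routine for continuous $F$ because the defining limits can be tested along rational $\epsilon$, but necessary for the bookkeeping you defer. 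With those two cited facts in place, the proposal is a faithful reconstruction of the proof the paper points to.
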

Note that a Gauss-null set is a Borel set $A \subset X$ s.t. $\forall$ non-degenerate Gaussian measure $\mu$ on $X$, $\mu(A)$ is equal to 0.
In this theorem, there is no guarantee for non-Lipschitz functionals, e.g., $F([\theta]) = \int dx \sqrt{ |\theta(x)| }$, where $\theta(0)=0$.




Under mild conditions, the G\^{a}teaux derivative is defined, based on the G\^{a}teaux differential.
\begin{theorem}[G\^{a}teaux derivative \cite{vauinberg1964variational}]
    If the following two conditions are satisfied, then the G\^{a}teaux differential $dF_\eta([\theta])$ of functional $F: X \overset{\mathrm{open}}{\supset} D(F) \rightarrow \mbr$ at $\theta \in D(F)$ in the direction $\eta \in D(F)$ can be represented as a linear operator acting on $\eta$, denoted by $F^\prime([\theta])$, s.t. 
        \begin{equation}
            dF^{\rm G}_\eta([\theta]) = F^\prime([\theta])\eta \, , \label{eq: def of Gateaux derivative}
        \end{equation}
    where $F^\prime([\theta]): D(F) \rightarrow \mbr$ is a linear operator, or a linear functional, depending on $\theta$ and is called the G\^{a}teaux derivative of $F$ at $\th$.
    \begin{enumerate}
        \item $dF^{\rm G}_\eta([\theta])$ exists in some neighborhood of $\th_0 \in D(F)$ and is continuous w.r.t. $\theta$ at $\th_0$.
        \item $dF^{\rm G}_\eta([\theta_0])$ is continuous w.r.t. $\eta$ at $\eta = \eta_0$, where $||\eta_0||_X = 0$.
    \end{enumerate}
\end{theorem}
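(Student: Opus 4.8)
The plan is to show that the map $L_{\theta_0}\colon \eta \mapsto dF^{\rm G}_\eta([\theta_0])$ is a continuous linear functional, which decomposes into three tasks: homogeneity in $\eta$, additivity in $\eta$, and continuity. Homogeneity is immediate from the definition in Eq.~\eqref{eq: def of Gateaux differential}: for $\alpha \neq 0$ the substitution $\delta = \alpha\ep$ in the difference quotient gives $dF^{\rm G}_{\alpha\eta}([\theta_0]) = \alpha\, dF^{\rm G}_\eta([\theta_0])$, while $\alpha = 0$ is trivial since the numerator vanishes; this uses neither hypothesis. I expect additivity to carry essentially all the content, with Condition~1 doing the work there, and Condition~2 to be needed only at the very end to pass from linearity to boundedness.

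For additivity, $dF^{\rm G}_{\eta_1+\eta_2}([\theta_0]) = dF^{\rm G}_{\eta_1}([\theta_0]) + dF^{\rm G}_{\eta_2}([\theta_0])$, my approach is to reduce the infinite-dimensional claim to ordinary calculus on $\mbr^2$ via the real-valued function $\Psi(s,t) := F([\theta_0 + s\eta_1 + t\eta_2])$, defined for $(s,t)$ near the origin. By the definition of the G\^{a}teaux differential, $\partial_s \Psi(s,t) = dF^{\rm G}_{\eta_1}([\theta_0 + s\eta_1 + t\eta_2])$ and $\partial_t \Psi(s,t) = dF^{\rm G}_{\eta_2}([\theta_0 + s\eta_1 + t\eta_2])$. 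Condition~1 supplies exactly the two ingredients of the ``partials exist on a neighborhood and are continuous at the point'' criterion for total differentiability: existence of the differential near $\theta_0$ makes these partials exist near $(0,0)$, and continuity in $\theta$ at $\theta_0$ makes them continuous at $(0,0)$ (since $\theta_0 + s\eta_1 + t\eta_2 \to \theta_0$ in $X$-norm). Hence $\Psi$ is totally differentiable at the origin, and the elementary chain rule applied to $\ep \mapsto \Psi(\ep,\ep) = F([\theta_0 + \ep(\eta_1+\eta_2)])$ gives $dF^{\rm G}_{\eta_1+\eta_2}([\theta_0]) = \partial_s\Psi(0,0) + \partial_t\Psi(0,0)$, which is the claim.

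Once $L_{\theta_0}$ is known to be linear, I would invoke Condition~2 to finish: a linear functional that is continuous at $\eta_0 = 0$ is bounded, hence continuous everywhere, so $F'([\theta_0])$ is the desired continuous linear functional. The main obstacle is the additivity step, and specifically the careful bookkeeping that makes Condition~1 match the hypotheses of the $C^1$-implies-differentiable theorem on $\mbr^2$: one must keep $\theta_0 + s\eta_1 + t\eta_2$ inside the neighborhood where the differential exists and verify that continuity ``at $\theta_0$'' transfers to joint continuity of the partials at $(0,0)$. The remaining steps are one-line manipulations of difference quotients and the standard equivalence of continuity and boundedness for linear maps.
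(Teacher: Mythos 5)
Your argument is sound, but there is no in-paper proof to compare it against: the paper quotes this statement as a classical result of Vainberg \cite{vauinberg1964variational} without proof, so the relevant comparison is with the standard textbook argument. That argument establishes additivity by writing $F([\theta_0+\ep(\eta_1+\eta_2)])-F([\theta_0])$ as a telescoping sum and applying the one-dimensional mean value theorem to $t \mapsto F([\theta_0+\ep\eta_1+t\ep\eta_2])$, whose derivative is $\ep\, dF^{\rm G}_{\eta_2}([\theta_0+\ep\eta_1+\tau\ep\eta_2])$; Condition~1's continuity in $\theta$ then lets one pass to the limit $\ep\to 0$. Your route through $\Psi(s,t)=F([\theta_0+s\eta_1+t\eta_2])$ is this same mean-value argument packaged inside the ``partials exist near the point and are continuous at the point'' criterion for total differentiability on $\mathbb{R}^2$ (note you do need the sharp form of that criterion, with continuity of the partials only \emph{at} $(0,0)$ rather than on a neighborhood, since Condition~1 gives continuity in $\theta$ only at $\theta_0$; that sharp form is true and standard). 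The division of labor also matches the classical proof exactly: homogeneity is free from the difference quotient, Condition~1 carries additivity, and Condition~2 enters only to upgrade the (algebraically) linear functional $\eta \mapsto dF^{\rm G}_\eta([\theta_0])$ to a bounded one via continuity at $0$. The two bookkeeping points you flagged do go through: openness of $D(F)$ and norm-continuity of the affine map $(s,t)\mapsto \theta_0+s\eta_1+t\eta_2$ keep the construction inside the neighborhood where the differential exists, and composing that map with the continuity of $\theta\mapsto dF^{\rm G}_{\eta_i}([\theta])$ at $\theta_0$ yields joint continuity of the partials at the origin. What the classical direct-MVT route buys over yours is a marginally lighter toolkit (no appeal to the two-variable differentiability theorem, whose proof is the same MVT estimate); what yours buys is a cleaner separation of the infinite-dimensional content from finite-dimensional calculus.
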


Next, we define the second type of differentials, Fr\'{e}chet differential.
\begin{definition}[Fr{\'e}chet differential]
    A functional $F: X \overset{\mathrm{open}}{\supset} D(F) \rightarrow \mbr$ is Fr{\'e}chet differentiable at $\th \in D(F)$ if $dF^{\rm F}_\eta([\th]) \in \mbr$ s.t.
    \begin{equation}
        \lim_{\ep \rightarrow 0} \frac{| F([\th + \ep \eta]) - F([\th]) - dF^{\rm F}_\eta([\th])|}{\ep} = 0
    \end{equation}
    exists and is finite for all $\eta \in D(F)$, where $dF^{\rm F}_\eta([\th])$ is called the Fr{\'e}chet differential of $F$ in the direction $\eta$.
\end{definition}

There are also some patterns of differentiability conditions. One of them is:
\begin{theorem}[Fr\'{e}chet differentiability of Lipschitz functionals \cite{preiss1990differentiability}]
    Let $K$ be a compact subset of a Hilbert space $H$. Then, any locally-Lipschitz functional $F: K \overset{\mathrm{open}}{\supset} D(F) \rightarrow \mbr$ is Fr\'{e}chet differentiable on a dense subset of $K$.
\end{theorem}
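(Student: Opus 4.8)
The plan is to recognize the statement as Preiss's Fr\'{e}chet-differentiability theorem specialized to a Hilbert-space domain, and to organize the argument around the Asplund property of $H$ together with the structural consequence of compactness recorded in Thm.~\ref{thm: Necessary and sufficient condition of pre-compactness of metric space}. First I would reduce to a separable setting: since $K$ is compact, $H_0 := \overline{\mathrm{span}}\, K$ is a separable Hilbert space, so it is reflexive and hence Asplund, and it suffices to study $F$ inside $H_0$. In this separable space the G\^{a}teaux-differentiability theorem quoted above already supplies a linear candidate derivative $F'([\theta])$ off a Gauss-null set, so the genuine task is to upgrade G\^{a}teaux differentiability to Fr\'{e}chet differentiability.

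The natural first attempt, and the step where compactness enters, is a finite-dimensional reduction. By the equi-small tail condition of Thm.~\ref{thm: Necessary and sufficient condition of pre-compactness of metric space}, for every $\ep > 0$ there is an $m$ with $\| \theta - P_m \theta \| \le \ep$ uniformly over $\theta \in K$; hence every feasible increment $h$ (one keeping $\theta + h$ inside the domain, and therefore inside $K$) satisfies $\| (I - P_m) h \| \le 2\ep$. On a fixed finite-dimensional slice the restriction of $F$ is Lipschitz, so Rademacher's theorem gives Fr\'{e}chet differentiability almost everywhere within the slice, and on the compact unit sphere of that slice the equi-Lipschitz difference quotients $h \mapsto (F([\theta + th]) - F([\theta]))/t$ converge uniformly to $F'([\theta]) h$ as $t \to 0$. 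This delivers differentiability \emph{within} each finite-dimensional slice on a dense set of base points.

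The hard part is promoting slice-differentiability to genuine Fr\'{e}chet differentiability, and this is where I expect the main obstacle. The tail bound $\| (I - P_m) h \| \le 2\ep$ is a fixed constant rather than $o(\|h\|)$, so an arbitrarily small increment may carry almost all of its mass in the directions transverse to the slice; the Hilbert cube at its centre realizes exactly this pathology. Consequently the transverse error $|F([\theta + h]) - F([\theta + P_m h])| \lesssim \| (I - P_m) h \|$ cannot be absorbed into the $o(\|h\|)$ remainder, and the finite-dimensional reduction genuinely fails in infinite dimensions. Removing this obstruction is precisely the content of Preiss's theorem, whose core is a variational \emph{mean-value} argument: one perturbs the candidate derivative so as to nearly maximize an associated directional-slope functional and then shows that such a near-maximizer must be a point of Fr\'{e}chet differentiability. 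I would therefore invoke that result as a black box on the separable Asplund space $H_0$, leaving only the bookkeeping of transferring the local-Lipschitz hypothesis to $H_0$ and reconciling the open-domain formulation of Preiss's theorem with the compact domain $K$, the latter being the most delicate point since a compact subset of an infinite-dimensional Hilbert space has empty interior.
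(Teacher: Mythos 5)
The paper contains no internal proof of this statement: it is imported verbatim from Preiss's differentiability theorem \cite{preiss1990differentiability}, which is precisely the black box you invoke after your (correct) reduction to the separable, hence Asplund, Hilbert space $\overline{\mathrm{span}}\,K$. Your diagnosis that the equi-small-tail/finite-dimensional slicing cannot produce the $o(\|h\|)$ remainder and that Preiss's variational argument is the irreducible input matches the paper's own reliance on the cited result, so your proposal takes essentially the same approach.
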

An example functional that is G\^{a}teaux differentiable but Fr\'{e}chet non-differentiable is $F([\theta]) = \frac{a_k^3}{a_k^2 + a_l^2}$, where $\theta (x) = \sum_{i=0}^\infty a_i \phi_i(x)$ and $k, l \in \mbn$.

\paragraph{Relationship between G\^{a}teaux and Fr\'{e}chet differential.}
If $F$ has a continuous G\^{a}teaux derivative on $D(F)$, then $F$ is Fr\'{e}chet differentiable on $D(F)$, and these two derivatives are equal \cite{vauinberg1964variational}.
The G\^{a}teaux derivative of the aforementioned example, $F([\theta]) = \frac{a_k^3}{a_k^2 + a_l^2}$, is not continuous, and thus, the Fr\'{e}chet differentiability is not guaranteed.
In the following, we consider functionals $F$ that are continuously G\^{a}teaux differentiable in $D(F)$; therefore, we do not care about differentiability and do not distinguish G\^{a}teaux derivatives from Fr\'{e}chet derivatives.
Hereafter, we write $dF_\eta([\th]) := dF^{\rm G}_\eta([\th]) = dF^{\rm F}_\eta([\th]) = F^\prime([\th]) \eta$ 

Next, we define the third type of derivatives.
\begin{definition}[Functional derivative] \label{def: Functional derivative}
    The functional derivative of a functional $F: X \overset{\mathrm{open}}{\supset} D(F) \rightarrow \mbr$ w.r.t. $\th(x)$ is defined as 
    \begin{equation}
        \frac{\delta F ([\th])}{\delta \th (x)} := \lim_{\ep \rightarrow 0} \frac{ F([\th(y) + \ep \de(x - y)]) - F([\th(y)]) }{\ep},
    \end{equation}
    if it exists and is finite, where $\de(x)$ is the Dirac delta.
\end{definition}
Strictly speaking, this definition may be informal because $\th(y)$ is a function, while $\de(x - y)$ is a distribution.
The representation theorem below (Thm.~\ref{thm: Representation theorem of Frechet derivative}) is sometimes regarded as the definition of the functional derivative.

Lem.~\ref{lem: Compactness of Frechet derivative} and the Riesz representation theorem prove the following relation between the Fr\'{e}chet derivative and the functional derivative.

\begin{lemma}[Compactness of Fr\'{e}chet derivative] \label{lem: Compactness of Frechet derivative}
    Let $K$ be a compact subset of a real separable Hilbert space $H$. Let $F$ be a continuous functional on $H$. If the Fr\'{e}chet derivative $F^\prime([\th])$ exists at $\th \in K$, then it is a compact linear operator.
\end{lemma}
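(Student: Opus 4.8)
The plan is to exploit the fact that $F'([\theta])$ takes values in $\mbr$, which makes the notion of a \emph{compact operator} collapse onto the much simpler notion of a \emph{bounded functional}. Concretely, $F'([\theta]) : H \to \mbr$ is a linear functional (by the definition of the G\^{a}teaux/Fr\'{e}chet derivative, $dF_\eta([\theta]) = F'([\theta])\eta$ is linear in $\eta$), and such a linear map is compact precisely when it sends the unit ball of $H$ to a pre-compact subset of $\mbr$. Since a subset of $\mbr$ is pre-compact if and only if it is bounded (Bolzano--Weierstrass/Heine--Borel), the whole lemma reduces to showing that $F'([\theta])$ is a \emph{bounded} functional, i.e.\ that $\|F'([\theta])\| := \sup_{\|\eta\|_H = 1} |F'([\theta])\eta| < \infty$.

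To establish boundedness I would first use the continuity of $F$ at $\theta$: given $\epsilon > 0$ there is $\rho > 0$ with $|F([\theta + \eta]) - F([\theta])| < \epsilon$ whenever $\|\eta\|_H < \rho$, so $F$ is bounded on the ball $B(\theta,\rho)$. I would then feed this local bound into the defining little-$o$ estimate of Fr\'{e}chet differentiability: taking tolerance $\delta = 1$ produces a radius $r \le \rho$ for which $|F'([\theta])\eta| \le |F([\theta + \eta]) - F([\theta])| + \|\eta\|_H$ for all $\|\eta\|_H \le r$, and the right-hand side is uniformly bounded on that ball, say by $C$. Invoking the linearity (homogeneity) of $F'([\theta])$ then transfers this to all of $H$: for $\eta \neq 0$, rescaling $\eta$ to the sphere of radius $r$ gives $|F'([\theta])\eta| \le (C/r)\|\eta\|_H$, hence $\|F'([\theta])\| \le C/r < \infty$. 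When $F$ is locally Lipschitz near $\theta$ -- as guaranteed for continuous functionals on the compact set $K$ by the earlier differentiability theorem -- this step is immediate, since $|F'([\theta])\eta| = \lim_{\epsilon \to 0} |F([\theta + \epsilon\eta]) - F([\theta])|/\epsilon \le L\|\eta\|_H$, so $\|F'([\theta])\| \le L$.

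With boundedness in hand, compactness follows at once: $F'([\theta])$ maps the unit ball of $H$ into the bounded interval $[-\|F'([\theta])\|,\, \|F'([\theta])\|] \subset \mbr$, whose closure is compact, so $F'([\theta])$ is a compact linear operator (and, via the Riesz representation theorem, is therefore realized by an inner product against an element of $H$, which is what connects it to the functional derivative in the subsequent theorem). I expect the genuinely delicate point to be the \emph{uniformity} in the boundedness step: the differential is defined one direction at a time, whereas the operator norm demands a single bound valid over the entire unit sphere. This is exactly where the uniform (rather than direction-wise) little-$o$ control of true Fr\'{e}chet differentiability, together with the local boundedness of $F$ coming from its continuity -- or, on the Lipschitz route, the compactness of $K$ supplying a local Lipschitz constant -- is indispensable; everything else is routine.
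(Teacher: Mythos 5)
Your main line of argument is correct, and it fills a gap rather than diverging from anything: the paper states this lemma \emph{without proof}, importing it as background from the cylindrical-approximation literature \cite{venturi2021spectral}, so there is no in-paper argument to compare against. Your reduction is the standard one and is sound: $F^\prime([\th])$ has rank one, so compactness collapses to boundedness (a subset of $\mbr$ is pre-compact iff bounded), and boundedness follows by combining the uniform little-$o$ estimate of Fr\'{e}chet differentiability at tolerance $1$ on a small ball with the local boundedness of $F$ at $\th$ supplied by continuity, then extending to all of $H$ by homogeneity. You also correctly isolate the one delicate point: the paper's displayed definition of the Fr\'{e}chet differential is direction-wise (G\^{a}teaux-type), and the operator-norm bound genuinely needs the little-$o$ control to be uniform over a ball, i.e., true Fr\'{e}chet differentiability; invoking that is consistent with the paper's stance of not distinguishing the two derivatives for continuously differentiable functionals (App.~\ref{app: Differentiation of Functionals}). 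It is also worth noting, as your proof implicitly shows by never using it, that the compactness of $K$ is not needed for the lemma as stated -- the conclusion holds at any point where $F$ is continuous and Fr\'{e}chet differentiable. Compactness of $K$ matters only for how the lemma is later deployed (e.g., in the proof of the pointwise convergence theorem and of Lem.~\ref{lem: Uniform convergence of inner products}, where $\sup_{\th \in K} \| \delta F([\th]) / \delta \th \|_H < \infty$ is invoked); that uniform bound additionally uses continuity of $\th \mapsto F^\prime([\th])$ on the compact $K$, which those statements assume.

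One genuine error in a side remark: local Lipschitzness is \emph{not} ``guaranteed for continuous functionals on the compact set $K$.'' Continuity on a compact set yields uniform continuity, not a Lipschitz constant -- the paper's own example $F([\th]) = \int dx \sqrt{|\th(x)|}$ is continuous but non-Lipschitz, and the differentiability theorems in App.~\ref{app: Differentiation of Functionals} \emph{assume} Lipschitzness as a hypothesis rather than derive it. Since the Lipschitz route is only your alternative shortcut and your primary little-$o$ argument does not rely on it, the proof stands once that parenthetical claim is deleted.
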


\begin{theorem}[Representation theorem of Fr\'{e}chet derivative] \label{thm: Representation theorem of Frechet derivative}
    Let $K$ be a compact subset of a real separable Hilbert space $H$. Let $F$ be a continuous functional on $H$. If the Fr\'{e}chet derivative $F^\prime([\th])$ exists at $\th \in K$, then the following unique integral representation of the Fr\'{e}chet derivative holds:
    \begin{equation}
        \forall \eta \in H, \,\, F^\prime([\th]) \eta = \left( \frac{\delta F([\th])}{\delta \theta}, \eta \right)_H \, ,
        \label{eq: representation theorem}
    \end{equation}
    where $\frac{\delta F([\th])}{\delta \theta(x)} \in H$. 
\end{theorem}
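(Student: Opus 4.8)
The plan is to realize the Fr\'{e}chet derivative $F^\prime([\theta])$ as a bounded linear functional on $H$, invoke the Riesz representation theorem to obtain a unique representative $g \in H$, and then identify $g$ with the functional derivative of Definition \ref{def: Functional derivative} by matching coefficients against an orthonormal basis.

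First I would establish that $F^\prime([\theta])$ is a bounded linear functional on $H$. Linearity in the direction argument is built into the definition of the (G\^{a}teaux/Fr\'{e}chet) derivative, and boundedness is immediate from Lemma \ref{lem: Compactness of Frechet derivative}: a compact linear operator into $\mbr$ is in particular bounded, hence continuous. Next I would apply the Riesz representation theorem: since $H$ is a real Hilbert space and $F^\prime([\theta]) : H \rightarrow \mbr$ is bounded and linear, there exists a unique $g \in H$ such that $F^\prime([\theta])\eta = (g, \eta)_H$ for all $\eta \in H$. This already gives the displayed identity \eqref{eq: representation theorem} with $g$ standing in for $\delta F([\theta])/\delta\theta$, and the asserted uniqueness of the integral representation is precisely the uniqueness of the Riesz representative.

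The remaining and most delicate step is to verify that $g$ is the functional derivative. Rather than substituting the non-$H$ direction $\eta = \delta(x - \cdot)$ into \eqref{eq: representation theorem}, which is only formal because the Dirac delta is not an element of $H$, I would compute coefficients against the orthonormal basis $\{\phi_k\}_{k\geq 0}$. Taking $\eta = \phi_k$ gives $(g, \phi_k)_H = F^\prime([\theta])\phi_k = dF_{\phi_k}([\theta]) = \partial f / \partial a_k$, which is exactly the $k$-th expansion coefficient of $\delta F([\theta])/\delta\theta$ recorded in the main text. Since two elements of $H$ sharing all coefficients in an orthonormal basis must coincide, we conclude $g = \delta F([\theta])/\delta\theta$, and in particular $\delta F([\theta])/\delta\theta \in H$, as claimed.

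The main obstacle is this final identification. The functional derivative is defined through a direction, $\delta(x-\cdot)$, that does not live in $H$, so the bridge to the genuine Hilbert-space element $g$ cannot be a naive sifting argument; it must instead go through the basis-coefficient computation, which in turn relies on the fact (established earlier in this appendix) that the continuously G\^{a}teaux differentiable functionals we consider have coincident G\^{a}teaux and Fr\'{e}chet derivatives, so that the directional limits $dF_{\phi_k}([\theta])$ are legitimately evaluated by $F^\prime([\theta])\phi_k$.
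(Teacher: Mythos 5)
Your core argument is the paper's own: the paper derives this theorem from Lem.~\ref{lem: Compactness of Frechet derivative} (compactness, hence boundedness and continuity, of the linear functional $F^\prime([\theta])$) combined with the Riesz representation theorem, with the asserted uniqueness inherited from the uniqueness of the Riesz representative. Your first two paragraphs reproduce exactly this route, and they are correct.

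Where you go beyond the paper --- the final ``identification'' of the Riesz representative $g$ with the functional derivative of Def.~\ref{def: Functional derivative} --- you run into a circularity. The coefficient identity you invoke, namely that $\partial f / \partial a_k$ equals the $k$-th expansion coefficient $\left( \delta F([\theta])/\delta\theta, \phi_k \right)_H$, is established in App.~\ref{app: Functional Derivatives} (the chain ending at Eq.~\eqref{eq: df/da_k}) precisely by appealing to Eq.~\eqref{eq: representation theorem}, i.e., to the theorem under proof. The portion $(g, \phi_k)_H = F^\prime([\theta])\phi_k = \partial f/\partial a_k$ is legitimate on its own (it needs only the G\^{a}teaux/Fr\'{e}chet definitions and Eq.~\eqref{eq: basis function expansion of F}), but equating $\partial f/\partial a_k$ with a coefficient of the delta-defined object presupposes the result. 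Nor can a non-circular identification be supplied: as the paper notes immediately after Def.~\ref{def: Functional derivative}, the Dirac-delta definition is informal (the direction $\delta(x-\cdot)$ is a distribution, not an element of $H$), and the representation theorem ``is sometimes regarded as the definition'' of $\delta F([\theta])/\delta\theta$. So the correct reading --- and the paper's --- is that the theorem asserts existence and uniqueness of an $H$-representative of $F^\prime([\theta])$, which is then \emph{named} $\delta F([\theta])/\delta\theta$. Your Riesz argument proves exactly this; your third paragraph should be replaced by the definition $\delta F([\theta])/\delta\theta := g$, with the delta-direction computation demoted to a formal consistency check rather than a step of the proof.
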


The representation theorem \ref{thm: Representation theorem of Frechet derivative} is the foundation of the cylindrical approximation of functional derivatives, which is shown below.

\subsection{Cylindrical Approximation}
\subsubsection{Functionals}
Let $H$ be a real separable Hilbert space. The cylindrical approximation is based on the fact that any $\th \in H$ can be represented uniquely in terms of an orthonormal basis $\{ \phi_0, \phi_2,\ldots \}$ as $\th(x) = \sum_{k=0}^{\infty} (\th, \phi_k)_H \phi_k(x)$. 




Thus, we can define 
\begin{equation}
    f (a_0, a_1, \ldots) := F([ \sum_{k=0}^{\infty} a_k \phi_k ]), \label{eq: basis function expansion of F}
\end{equation} 
where $a_k := (\th, \phi_k)_H$. 
Truncating $k \leq m-1$ ($m \in \mbn$) gives the cylindrical approximation of functionals:
\begin{definition}[Cylindrical approximation of functionals \cite{friedrichs1957integration, baez1997functional, gikhman2004theory, van2008stochastic}] \label{def: Cylindrical approximation of functionals}
    Let $P_m$ be the projection operator s.t. $P_m \th := \sum_{k=0}^{m-1} (\th, \phi_k)_H \phi_k$.
    Let $D_m$ be the finite-dimensional space induced by $P_m$; i.e., $D_m := \mathrm{span}\{ \phi_0, \phi_1, \ldots, \phi_{m-1}\}$.
    The cylindrical approximation of a functional $F([\th])$ on $H$ is the $m$-dimensional multivariable function 
    \begin{equation}
        f(a_0, a_1, \ldots, a_{m-1}) := F([P_m \th]) = F([\sum_{k=0}^{m-1} a_k \phi_k(x)]) \, ,
    \end{equation}
    where $a_k := (\th, \phi_k)_H$. In short, $F([\th]) \sim f(a_0, \ldots, a_{m-1})$.
\end{definition}
The cylindrical approximation of functionals is uniform:
\begin{theorem}[Uniform convergence of cylindrical approximation of functionals \cite{prenter1970weierstrass}] \label{thm: Uniform convergence of cylindrical approximation of functionals}
    Let $K$ be a compact subset of a real separable Hilbert space $H$. Let $F$ be a continuous functional on $H$.
    Then, 
    \begin{equation}
        \forall \ep > 0, \,\, \exists M \in \mbn  \,\, s.t. \,\,  \forall m \geq M, \,\,  \forall \th \in K, \,\,  | F([\th]) - F([P_m\th]) | < \ep
    \end{equation}
    holds; i.e., $F([P_m\th])$ converges uniformly to $ F([\th])$ on $K$.
\end{theorem}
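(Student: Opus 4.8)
The plan is to reduce the statement to two ingredients: the uniform smallness of the projection error $\|\th - P_m \th\|_H$ over $K$, and the uniform continuity of $F$ on a suitable compact set containing both $\th$ and its truncations $P_m\th$. First I would exploit the pre-compactness of $K$. Since $K$ is compact it is in particular pre-compact, so Thm~\ref{thm: Necessary and sufficient condition of pre-compactness of metric space} applies: for the chosen orthonormal basis $\{\phi_k\}$ and any $\eta > 0$ there is an $M_0 \in \mbn$ with $\sum_{k \ge M_0} |(\th,\phi_k)_H|^2 \le \eta$ \emph{uniformly} in $\th \in K$. Because $\|\th - P_m\th\|_H^2 = \sum_{k=m}^{\infty} |(\th,\phi_k)_H|^2$, this yields $\sup_{\th \in K} \|\th - P_m\th\|_H \to 0$ as $m \to \infty$. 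This uniform projection estimate is the quantitative engine of the proof.

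The obstacle is that pointwise continuity of $F$ only controls $|F([\th]) - F([P_m\th])|$ for each fixed $\th$, whereas I need a single $M$ working for all $\th \in K$ simultaneously; moreover, the projected points $P_m\th$ need not lie in $K$, so uniform continuity on $K$ alone does not suffice. To handle this I would enlarge $K$ to $\tilde K := K \cup \bigcup_{m \ge 1} P_m(K)$ and show $\tilde K$ is pre-compact, again via Thm~\ref{thm: Necessary and sufficient condition of pre-compactness of metric space}. Boundedness follows from $\|P_m\th\|_H \le \|\th\|_H$, and the equi-small tail condition for $\tilde K$ is inherited from that of $K$, since truncation only zeroes out coefficients and hence $\sum_{k \ge n}|(P_m\th,\phi_k)_H|^2 \le \sum_{k \ge n}|(\th,\phi_k)_H|^2$ for every $n$. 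Therefore the closure $\overline{\tilde K}$ is compact.

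Finally I would invoke the Heine--Cantor theorem on the compact metric space $\overline{\tilde K}$: the continuous functional $F$ is automatically uniformly continuous there, so given $\ep > 0$ there is $\de > 0$ with $|F([\th_1]) - F([\th_2])| < \ep$ whenever $\th_1,\th_2 \in \overline{\tilde K}$ and $\|\th_1-\th_2\|_H \le \de$. Choosing $M$ from the first step so that $\sup_{\th\in K}\|\th - P_m\th\|_H \le \de$ for all $m \ge M$, and noting that both $\th$ and $P_m\th$ belong to $\tilde K \subset \overline{\tilde K}$, gives $|F([\th]) - F([P_m\th])| < \ep$ for all $\th \in K$ and all $m \ge M$, which is exactly the asserted uniform convergence.

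I expect the main subtlety to be precisely the construction and pre-compactness verification of $\tilde K$: this enlargement is what lets me upgrade the merely pointwise continuity of $F$ to the uniform continuity needed to push the uniform projection estimate through $F$. Everything else is a routine $\ep$--$\de$ combination of the two uniform estimates.
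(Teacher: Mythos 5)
Your proposal is correct. The paper does not actually prove this theorem---it imports it from the cited literature (Prenter's Weierstrass theorem for separable Hilbert spaces)---and your argument is precisely the standard one behind that result: uniform smallness of $\sup_{\th \in K}\|\th - P_m\th\|_H$ via the equi-small tail characterization of compactness (Thm.~\ref{thm: Necessary and sufficient condition of pre-compactness of metric space}), combined with Heine--Cantor uniform continuity of $F$ on the compact closure of $K \cup \bigcup_{m\geq 1} P_m(K)$. Your handling of the one genuine subtlety---that $P_m\th$ may leave $K$, so one must verify the enlarged set inherits boundedness and the equi-small tail condition (and that the tail bound passes to the closure, since $\th \mapsto \|(1-P_n)\th\|_H$ is continuous)---is exactly right.
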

The convergence rate is given by the mean value theorem of functionals:
\begin{theorem}[Convergence rate of cylindrical approximation of functionals \cite{venturi2021spectral}] \label{thm: Convergence rate of cylindrical approximation of functionals}
    Let $K$ be a compact and convex subset of a real separable Hilbert space $H$. Let $F$ be a continuously differentiable functional on $K$.
    Then, 
    \begin{equation}
        \forall \th \in K, \,\, | F([\th]) - F([P_m \th]) | \leq \sup_{\eta \in K} \left\| F^\prime([\eta]) \right\| \left\| \th - P_m \th \right\|_H \,.
    \end{equation}  
\end{theorem}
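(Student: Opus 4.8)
The plan is to reduce the functional statement to the ordinary (scalar) mean value theorem by restricting $F$ to the line segment joining $P_m\theta$ and $\theta$. Writing $r := \theta - P_m\theta = (I - P_m)\theta$ and $\theta_s := P_m\theta + s\,r$ for $s \in [0,1]$, I would introduce the real-valued auxiliary function $\phi(s) := F([\theta_s])$, which interpolates the two quantities of interest, since $\phi(0) = F([P_m\theta])$ and $\phi(1) = F([\theta])$. The target inequality then follows by controlling $\phi(1) - \phi(0)$ through the derivative $\phi'$, whose size is governed by the Fr\'{e}chet/G\^{a}teaux derivative of $F$.

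First I would invoke convexity of $K$: since $\theta \in K$ and, as discussed below, $P_m\theta \in K$, every convex combination $\theta_s$ lies in $K$, so $F$ and its derivative are defined along the whole segment. Second, I would compute $\phi'(s)$ via the chain rule: by the definition of the G\^{a}teaux differential in the direction $r$, $\phi'(s) = dF_r([\theta_s]) = F'([\theta_s])\,r$, and continuous differentiability of $F$ on $K$ guarantees that $\phi$ is $C^1$ on $[0,1]$. Third, I would apply the scalar mean value theorem to $\phi$: there exists $s^\ast \in (0,1)$ with $\phi(1) - \phi(0) = \phi'(s^\ast)$. Fourth, I would bound the right-hand side: using that $F'([\theta_{s^\ast}])$ is a bounded linear functional, $|\phi'(s^\ast)| = |F'([\theta_{s^\ast}])\,r| \leq \|F'([\theta_{s^\ast}])\|\,\|r\|_H$ (equivalently, via the representation theorem, $|(\delta F([\theta_{s^\ast}])/\delta\theta,\, r)_H| \leq \|\delta F([\theta_{s^\ast}])/\delta\theta\|_H\,\|r\|_H$ by Cauchy--Schwarz). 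Finally, since $\theta_{s^\ast} \in K$, I replace $\|F'([\theta_{s^\ast}])\|$ by its supremum over $K$ and recognize $\|r\|_H = \|\theta - P_m\theta\|_H$, which yields the claim.

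The main obstacle is ensuring that the segment $\{\theta_s\}_{s\in[0,1]}$ stays inside $K$, which convexity delivers only once $P_m\theta \in K$ is known; this is not automatic from $\theta \in K$ alone and would require either an explicit hypothesis or the observation that for the relevant domains (e.g.\ a ball centered at the origin, where $\|P_m\theta\|_H \leq \|\theta\|_H$) orthogonal projection does not leave $K$. The secondary technical point is the chain-rule step: one must verify that continuous G\^{a}teaux differentiability genuinely makes $\phi$ a $C^1$ function of the scalar $s$, so that the classical mean value theorem applies, and that the directional derivative $F'([\theta_s])\,r$ is uniformly bounded by the operator norm times $\|r\|_H$. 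Everything else is a routine application of Cauchy--Schwarz and the supremum bound over $K$.
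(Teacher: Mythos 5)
Your proposal is correct and follows essentially the same route the paper indicates: the paper gives no standalone proof but explicitly attributes the bound to the mean value theorem of functionals (citing Venturi's spectral-approximation work), which is precisely your reduction to the scalar mean value theorem along the segment from $P_m\theta$ to $\theta$, followed by the operator-norm/Cauchy--Schwarz bound and the supremum over $K$. Your flagged caveat that $P_m\theta$ must lie in $K$ is well taken but is already implicit in the theorem statement itself, since $F([P_m\theta])$ must be defined for the inequality to make sense.
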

$||F^\prime([\eta])||$ is the operator norm of the linear operator $F^\prime([\eta])$; i.e., $\| F^\prime([\th]) \| := \sup_{\eta (\neq 0) \in H} \frac{| F^\prime([\th]) \eta |}{\| \eta \|_H}$.
The convergence rate of $\mcO(\| \th - P_m \th \|_H)$ depends on the basis and is provided in \cite{hesthaven2007spectral} (Chapters 4 \& 6) and \cite{shen2011spectral} (Sec. 3.5) for several bases.

\subsubsection{Functional Derivatives} \label{app: Functional Derivatives}
The cylindrical approximation of functional derivatives is motivated by the representation theorem \ref{thm: Representation theorem of Frechet derivative}, which states that (i) $\frac{\delta F([\th])}{\delta \theta(x)} \in H$ and (ii) $F^\prime([\th]) \eta = ( \frac{\delta F([\th])}{\delta \theta(x)}, \eta)_H$. Statement (i) means that $\frac{\delta F([\th])}{\delta \theta(x)}$ can be represented in terms of an orthogonal basis as 
\begin{equation}
    \frac{\delta F([\th])}{\delta \theta(x)} = \sum_{k=0}^\infty (\frac{\delta F([\th])}{\delta \theta(x)}, \phi_k)_H \phi_k(x) \, .
    \label{eq: basis function expansion of dF/dtheta}
\end{equation} 
Statement (ii) means that 
\begin{align}
    \frac{\partial f}{\partial a_k} 
    &= \lim_{\ep \rightarrow 0} \frac{f(a_0,\ldots,a_k + \ep,\ldots) - f(a_0,\ldots, a_k,\ldots)}{\ep} \\
    &= \lim_{\ep \rightarrow 0} \frac{1}{\ep}\left[ F([\sum_{l=0}^\infty a_l \phi_l + \ep \phi_k]) - F([\sum_{l=0}^\infty a_l \phi_l]) \right] \hspace{20pt} \text{($\because$ Eq.~\eqref{eq: basis function expansion of F})} \\
    &= F^\prime([\sum_{l=0}^\infty a_l \phi_l]) \phi_k  \hspace{20pt} \text{($\because$ Eqs.~(\ref{eq: def of Gateaux differential}--\ref{eq: def of Gateaux derivative}))} \\
    &= (\frac{\delta F([\th])}{\delta \phi_k}, \phi_k)_H \hspace{20pt} \text{($\because$ Eq.~\eqref{eq: representation theorem})},
    \label{eq: df/da_k}
\end{align}
where $\th (x) = \sum_{l=0}^\infty a_l \phi_l(x)$ with $a_l = (\th, \phi_l)_H$.
Eqs.~\eqref{eq: basis function expansion of dF/dtheta} and \eqref{eq: df/da_k} gives
\begin{equation}
    \frac{\delta F([\th])}{\delta \theta(x)} = \sum_{k=0}^\infty \frac{\partial f}{\partial a_k} \phi_k(x) \, .
\end{equation}
Therefore, truncating $k \leq m-1$ ($m \in \mbn$) gives the cylindrical approximation of functional derivatives:
\begin{definition}[Cylindrical approximation of functional derivatives \cite{venturi2018numerical, venturi2021spectral}]
The cylindrical approximation of a functional derivative $\frac{\delta F([\th])}{\delta \theta(x)} \in H$ is defined as
    \begin{equation} \label{eq: CA of FD with tail term}
        \frac{\delta F([P_m \th])}{\delta \theta(x)} 
        = \sum_{k=0}^{m-1}    \frac{\partial f}{\partial a_k} \phi_k (x) 
            + \sum_{k=m}^{\infty} \left( \frac{\delta F([P_m \th])}{\delta \th}, \phi_k \right)_H \phi_k(x) \, .\nonumber
    \end{equation}
    If $\frac{\delta F([P_m \th])}{\delta \theta(x)} \in D_m$, the second term on the left-hand side is equal to zero.
    $\sum_{k=m}^{\infty} \left( \frac{\delta F([P_m \th])}{\delta \th}, \phi_k \right)_H \phi_k(x)$ is called the \textit{tail term} in this paper.
\end{definition}
In short, $\frac{\delta F([\th])}{\delta \theta(x)} \sim \sum_{k=0}^{m-1} \frac{\partial f}{\partial a_k} \phi_k (x)$ for $\frac{\delta F([\th])}{\delta \theta(x)}$ that satisfies the equi-small tail condition.

Note that this version of the cylindrical approximation of functional derivatives is different from ours (Eq.~\ref{eq: cylindrical approx of functional derivatives with omitting second term}). Specifically, in \cite{venturi2018numerical, venturi2021spectral}, $P_m$ is not applied to $\delta F([\theta]) / \delta \theta(x)$, and the emerging tail term $\Sigma_{k=m}^{\infty} (\delta F([\theta]) / \delta \theta, \phi_k) \phi_k(x)$ is simply ignored without any rationale.





The cylindrical approximation of functional derivatives is uniform:
\begin{theorem}[Uniform convergence of cylindrical approximation of functional derivatives \cite{venturi2021spectral}]
\label{thm: Uniform convergence of cylindrical approximation of functional derivatives}
    Let $K$ be a compact subset of a real separable Hilbert space $H$. Let $F$ be a continuously differentiable functional on $K$.
    Then, 
    \begin{equation}
        \forall \ep > 0, \,\, \exists M \in \mbn \,\, s.t. \,\, \forall m \geq M, \,\, \forall \th \in K, \,\, 
        \left\| \frac{\delta F([\th])}{\delta \th} - \frac{\delta F([P_m \th])}{\delta \th} \right\|_H < \ep
    \end{equation}
    holds; i.e., $\frac{\delta F([P_m \th])}{\delta \th}$ converges uniformly to $\frac{\delta F([\th])}{\delta \th}$.
\end{theorem}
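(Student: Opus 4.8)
The plan is to reduce this statement about functional derivatives to a statement about Fr\'{e}chet derivatives measured in operator norm, and then exploit compactness twice: once to obtain \emph{uniform} convergence of the projections $P_m\th \to \th$, and once to upgrade continuity of the derivative map to uniform continuity.

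First I would invoke the representation theorem (Thm.~\ref{thm: Representation theorem of Frechet derivative}): for each $\th$, the functional derivative $\frac{\delta F([\th])}{\delta \th}$ is exactly the Riesz representative of the bounded linear functional $F'([\th])$. Because the Riesz map of a Hilbert space is an isometry, the operator norm of $F'([\th])$ equals $\| \frac{\delta F([\th])}{\delta \th} \|_H$, and more importantly
\begin{equation}
    \left\| \frac{\delta F([\th])}{\delta \th} - \frac{\delta F([P_m \th])}{\delta \th} \right\|_H = \left\| F'([\th]) - F'([P_m \th]) \right\| \,.
\end{equation}
Thus it suffices to show $\sup_{\th \in K} \| F'([\th]) - F'([P_m \th]) \| \to 0$ as $m \to \infty$.

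Next I would establish that the projections converge uniformly on $K$. Since $K$ is compact it is pre-compact, so by Thm.~\ref{thm: Necessary and sufficient condition of pre-compactness of metric space} it satisfies the equi-small tail condition. Because $\| \th - P_m \th \|_H^2 = \sum_{k \geq m} |(\th, \phi_k)_H|^2$, the tail condition immediately yields $\sup_{\th \in K} \| \th - P_m \th \|_H \to 0$. The essential point is that compactness, not mere separability, is what converts the pointwise fact $P_m \th \to \th$ into a uniform one.

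The remaining step is a uniform-continuity argument for $F'$, and here lies the main obstacle: $P_m \th$ need not belong to $K$, so I cannot directly apply uniform continuity of $F'$ on $K$. To remedy this I would pass to the enlarged set $\hat{K} := \overline{K \cup \{ P_m \th : \th \in K,\, m \in \mbn \}}$. Using the same characterization one checks that $\hat{K}$ is still compact: it is bounded (projections are contractions), closed by construction, and satisfies the equi-small tail condition because $|(P_m \th, \phi_k)_H| \leq |(\th, \phi_k)_H|$ makes the tails of the projections dominated by those of points of $K$. Granting that $F$ is continuously differentiable on $\hat{K}$ (a mild strengthening of the hypothesis, needed because the approximants live in $\hat{K}$ rather than $K$), the map $F'$ is continuous on the compact set $\hat{K}$ and hence uniformly continuous there. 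Finally I would combine the pieces: given $\ep > 0$, uniform continuity supplies $\delta > 0$, and the uniform projection convergence supplies $M$ with $\sup_{\th \in K} \| \th - P_m \th \|_H < \delta$ for all $m \geq M$; since both $\th$ and $P_m \th$ lie in $\hat{K}$, uniform continuity gives $\| F'([\th]) - F'([P_m \th]) \| < \ep$ for every $\th \in K$, which together with the first step closes the argument.
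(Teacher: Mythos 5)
Your proof is sound, but note that the paper itself gives no proof of Thm.~\ref{thm: Uniform convergence of cylindrical approximation of functional derivatives}: it is imported verbatim from \cite{venturi2021spectral}, and the closest in-paper relative is the rate bound of Thm.~\ref{thm: Convergence rate of cylindrical approximation of Frechet derivatives} (rewritten as Eq.~\eqref{eq: dFdtheta conv rate}), which yields $\| F^\prime([\theta]) - F^\prime([P_m\theta]) \| \leq \sup_{\eta \in K} \| F^{\prime\prime}([\eta]) \| \, \| \theta - P_m\theta \|_H$ at the price of assuming continuous \emph{second} derivatives and a convex $K$. Your route is more economical: the Riesz isometry (via Thm.~\ref{thm: Representation theorem of Frechet derivative}) reduces the claim to operator norms, compactness plus the equi-small tail condition of Thm.~\ref{thm: Necessary and sufficient condition of pre-compactness of metric space} gives $\sup_{\theta \in K} \| \theta - P_m\theta \|_H \to 0$, and Heine--Cantor on an enlarged compact set upgrades continuity of $F^\prime$ to uniform continuity, so only $C^1$ regularity is needed. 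Combined with uniform projection convergence, this would even recover a modulus-of-continuity rate, weaker than Eq.~\eqref{eq: dFdtheta conv rate} but under weaker hypotheses.

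Two remarks on the details. First, your compactness check for $\hat{K}$ verifies the tail condition only for the particular basis $\{\phi_k\}$, whereas Thm.~\ref{thm: Necessary and sufficient condition of pre-compactness of metric space} as stated quantifies over \emph{all} orthonormal bases; a single basis does suffice for the ``if'' direction (standard diagonal argument), but if you want to stay strictly within the paper's toolkit you can instead obtain compactness of $\hat{K}$ by total boundedness: for $m \geq M$ your step 2 places every $P_m\theta$ within $\epsilon$ of $K$, while $K \cup \bigcup_{m < M} P_m(K)$ is a finite union of continuous images of a compact set. Second, the ``mild strengthening'' you flag --- continuous differentiability of $F$ on $\hat{K}$ rather than on $K$ --- is not really an added assumption: the theorem's conclusion already evaluates $\delta F([P_m\theta])/\delta\theta$ at points $P_m\theta \notin K$, so the stated hypothesis must implicitly be of this form (elsewhere the paper assumes $F$ defined on all of $H$, e.g., Thm.~\ref{thm: Uniform convergence of cylindrical approximation of functionals}). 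Making this explicit, as you did, repairs a genuine sloppiness in the statement rather than introducing a gap in your argument.
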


Note that this theorem proves the \textit{uniform} convergence, while ours (Thm.~\ref{thm: Pointwise convergence of cylindrical approximation (informal)}) proves the \textit{pointwise} convergence.
The difference comes from that the uniform convergence of the tail term, which is assumed in the above theorem, can be violated when our cylindrical approximation Eq.~\eqref{eq: cylindrical approx of functional derivatives with omitting second term} is used because the tail term is absent in Eq.~\eqref{eq: cylindrical approx of functional derivatives with omitting second term}.

Similarly, the cylindrical approximation of Fr\'{e}chet derivatives is uniform:
\begin{theorem}[Uniform convergence of cylindrical approximation of Fr\'{e}chet derivatives \cite{venturi2021spectral}]
    Let $K$ be a compact subset of a real separable Hilbert space $H$. Let $F$ be a continuously differentiable functional on $K$.
    Then, 
    \begin{equation}
        \forall \ep > 0, \,\, \exists M \in \mbn \,\, s.t. \,\, \forall m \geq M, \,\, \forall \th \in K, \,\, 
        \|  F^\prime([\th]) - F^\prime([P_m \th]) \| < \ep
    \end{equation}
    holds; i.e., $F^\prime([P_m \th])$ converges uniformly to $F^\prime([\th])$.
\end{theorem}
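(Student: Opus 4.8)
The plan is to obtain this theorem as an immediate corollary of the preceding result, Thm~\ref{thm: Uniform convergence of cylindrical approximation of functional derivatives}, by exploiting the Riesz representation supplied by Thm~\ref{thm: Representation theorem of Frechet derivative}. The crucial structural fact is that the Riesz correspondence $g \mapsto (\,\cdot\,, g)_H$ is an isometry between $H$ and its dual, so the \emph{operator norm} of a Fr\'{e}chet derivative equals the \emph{$H$-norm} of its functional-derivative representative. Once this identity is established, the operator-norm statement to be proved is literally the same inequality as the $H$-norm statement already proved in Thm~\ref{thm: Uniform convergence of cylindrical approximation of functional derivatives}.

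First I would fix $\theta \in K$ and $m \in \mbn$. Since $F$ is continuously differentiable on $K$, the Fr\'{e}chet derivatives $F^\prime([\theta])$ and $F^\prime([P_m\theta])$ exist and, by Lem.~\ref{lem: Compactness of Frechet derivative}, are bounded (indeed compact) linear operators, so their operator norms are finite. Applying the representation theorem (Thm~\ref{thm: Representation theorem of Frechet derivative}) to each and subtracting gives, for every $\eta \in H$,
\begin{align}
    \big( F^\prime([\theta]) - F^\prime([P_m\theta]) \big)\eta
    = \left( \frac{\delta F([\theta])}{\delta \theta} - \frac{\delta F([P_m\theta])}{\delta \theta}, \, \eta \right)_H .
\end{align}
(One bookkeeping point to confirm here is that $F$ and its derivative are defined at the projected point $P_m\theta$, which need not lie in $K$; this is the same standing assumption under which Thm~\ref{thm: Uniform convergence of cylindrical approximation of functional derivatives} is stated, so I would simply inherit it.)

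Next I would convert this into a norm identity. Writing $g_m := \frac{\delta F([\theta])}{\delta \theta} - \frac{\delta F([P_m\theta])}{\delta \theta} \in H$, the Cauchy--Schwarz inequality yields $|(g_m, \eta)_H| \le \|g_m\|_H\,\|\eta\|_H$ with equality attained at $\eta = g_m$; hence, using the operator-norm definition stated after Thm~\ref{thm: Convergence rate of cylindrical approximation of functionals},
\begin{align}
    \big\| F^\prime([\theta]) - F^\prime([P_m\theta]) \big\|
    = \sup_{\eta(\neq 0) \in H} \frac{|(g_m, \eta)_H|}{\|\eta\|_H}
    = \|g_m\|_H
    = \left\| \frac{\delta F([\theta])}{\delta \theta} - \frac{\delta F([P_m\theta])}{\delta \theta} \right\|_H .
\end{align}
Thus the operator-norm difference of the Fr\'{e}chet derivatives is \emph{exactly} the $H$-norm difference of the functional derivatives.

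Finally I would invoke Thm~\ref{thm: Uniform convergence of cylindrical approximation of functional derivatives}: given $\ep > 0$, it supplies an $M \in \mbn$, independent of $\theta$, such that $\|g_m\|_H < \ep$ for all $m \ge M$ and all $\theta \in K$. By the identity above, this is precisely $\| F^\prime([\theta]) - F^\prime([P_m\theta]) \| < \ep$ uniformly on $K$, which is the claim. I do not anticipate a genuine obstacle here: the entire mathematical content sits in the already-proved functional-derivative theorem, and the present statement adds only the Riesz isometry identification. The sole points requiring care are the uniformity of $M$ in $\theta$ (guaranteed by the quantifier structure of the invoked theorem) and the domain technicality at the projected points $P_m\theta$, both of which are already handled by the hypotheses.
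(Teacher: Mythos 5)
Your proof is correct and is essentially the route the paper itself endorses: the paper states this theorem without proof (imported from \cite{venturi2021spectral}), and the Riesz-isometry identification you establish --- that $\| F^\prime([\theta]) - F^\prime([P_m\theta]) \|$ equals $\bigl\| \frac{\delta F([\theta])}{\delta \theta} - \frac{\delta F([P_m\theta])}{\delta \theta} \bigr\|_H$ via Thm.~\ref{thm: Representation theorem of Frechet derivative} and Cauchy--Schwarz with equality at $\eta = g_m$ --- is exactly the translation the paper uses when it rewrites the Fr\'{e}chet-derivative rate bound of Thm.~\ref{thm: Convergence rate of cylindrical approximation of Frechet derivatives} in functional-derivative form in Eq.~\eqref{eq: dFdtheta conv rate}. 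Given Thm.~\ref{thm: Uniform convergence of cylindrical approximation of functional derivatives}, your reduction is immediate, the uniformity of $M$ in $\theta$ carries over verbatim, and your handling of the domain technicality at $P_m\theta$ correctly inherits the same standing assumption under which the invoked theorem is stated.
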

The convergence rate is given by:
\begin{theorem}[Convergence rate of cylindrical approximation of Fr\'{e}chet derivatives \cite{venturi2021spectral}] \label{thm: Convergence rate of cylindrical approximation of Frechet derivatives}
    Let $K$ be a compact and convex subset of a real separable Hilbert space $H$. Let $F$ be a differentiable functional on $K$ with continuous first- and second-order Fr\'{e}chet derivatives.
    Then, 
    \begin{equation}
        \forall \th \in K, \,\, \| F^\prime([\th]) - F^\prime([P_m \th]) \| 
        \leq \sup_{\eta \in K} \| F^{\prime\prime}([\eta]) \| \| \th - P_m \th \|_H \,,
    \end{equation}
    where $\| F^{\prime\prime}([\eta]) \| := 
    \underset{\zeta, \xi \in H, \zeta \neq 0, \xi \neq 0}{\sup}
    \frac{|F^{\prime\prime}([\eta])\zeta \xi|}{\| \zeta \|_H \| \xi \|_H}$.
\end{theorem}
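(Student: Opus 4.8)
The plan is to reduce this statement to the already-established convergence rate for functionals (Thm.~\ref{thm: Convergence rate of cylindrical approximation of functionals}) by ``freezing'' a test direction. Since the quantity to bound is an operator norm, I would first write
\begin{equation}
    \| F^\prime([\theta]) - F^\prime([P_m \theta]) \| = \sup_{\eta \in H, \, \eta \neq 0} \frac{ | F^\prime([\theta]) \eta - F^\prime([P_m \theta]) \eta | }{ \| \eta \|_H } \, ,
\end{equation}
so that it suffices to bound the scalar difference $| F^\prime([\theta]) \eta - F^\prime([P_m \theta]) \eta |$ uniformly in $\eta$ by $\sup_{\zeta \in K} \| F^{\prime\prime}([\zeta]) \| \, \| \eta \|_H \, \| \theta - P_m \theta \|_H$.

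For a fixed direction $\eta$, I would introduce the auxiliary functional $G_\eta([\cdot]) := F^\prime([\cdot]) \eta : K \to \mbr$. Because $F$ has continuous first- and second-order Fr\'echet derivatives, $G_\eta$ is a continuously differentiable functional on $K$, and its Fr\'echet derivative acts as $G_\eta^\prime([\zeta]) \xi = F^{\prime\prime}([\zeta]) \xi \eta$. Applying Thm.~\ref{thm: Convergence rate of cylindrical approximation of functionals} to $G_\eta$ on the compact convex set $K$ then yields
\begin{equation}
    | G_\eta([\theta]) - G_\eta([P_m \theta]) | \leq \sup_{\zeta \in K} \| G_\eta^\prime([\zeta]) \| \, \| \theta - P_m \theta \|_H \, .
\end{equation}
The final algebraic step is the estimate $\| G_\eta^\prime([\zeta]) \| = \sup_{\xi \neq 0} | F^{\prime\prime}([\zeta]) \xi \eta | / \| \xi \|_H \leq \| F^{\prime\prime}([\zeta]) \| \, \| \eta \|_H$, which follows directly from the definition of the bilinear operator norm $\| F^{\prime\prime}([\zeta]) \|$ given in the statement. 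Dividing by $\| \eta \|_H$ and taking the supremum over $\eta \neq 0$ closes the argument.

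I expect the main obstacle to be the rigorous identification of $G_\eta$ as a continuously differentiable functional with $G_\eta^\prime([\zeta]) \xi = F^{\prime\prime}([\zeta]) \xi \eta$; this is the place where the hypothesis of a continuous \emph{second} Fr\'echet derivative is essential, since it guarantees that $\zeta \mapsto F^{\prime\prime}([\zeta])$ is a continuous map into bounded bilinear forms and that differentiating $F^\prime([\cdot]) \eta$ indeed produces $F^{\prime\prime}([\cdot]) \cdot \eta$ rather than a weaker G\^ateaux object. A secondary technical point, inherited from Thm.~\ref{thm: Convergence rate of cylindrical approximation of functionals}, is that the underlying mean value argument requires the segment $\{ P_m \theta + s(\theta - P_m \theta) : s \in [0,1] \}$ to lie in $K$; convexity of $K$ supplies this once $P_m \theta \in K$, so I would either assume this (as the functional-level theorem implicitly does) or replace the supremum over $K$ by a supremum over this segment. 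If one prefers to avoid invoking the previous theorem, the same bound follows directly by setting $\gamma(s) := P_m \theta + s(\theta - P_m \theta)$ and $g(s) := F^\prime([\gamma(s)]) \eta$, applying the fundamental theorem of calculus $g(1) - g(0) = \int_0^1 F^{\prime\prime}([\gamma(s)])(\theta - P_m \theta) \eta \, ds$, and bounding the integrand by $\| F^{\prime\prime}([\gamma(s)]) \| \, \| \theta - P_m \theta \|_H \, \| \eta \|_H$.
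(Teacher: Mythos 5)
Your argument is correct, but note that the paper itself contains no proof of this statement: Thm.~\ref{thm: Convergence rate of cylindrical approximation of Frechet derivatives} is imported verbatim from \cite{venturi2021spectral} (the paper only restates it in functional-derivative form as Ineq.~\eqref{eq: dFdtheta conv rate}), so there is no in-paper proof to compare against. Your reduction is exactly the standard argument behind the cited result, and both of your routes work: the ``frozen-direction'' functional $G_\eta([\cdot]) := F^\prime([\cdot])\eta$ lets you recycle Thm.~\ref{thm: Convergence rate of cylindrical approximation of functionals} wholesale, at the cost of verifying that $G_\eta$ is continuously differentiable with $G_\eta^\prime([\zeta])\xi = F^{\prime\prime}([\zeta])\xi\eta$ (which your hypothesis of a continuous second Fr\'{e}chet derivative does supply, since postcomposition with the bounded linear map $A \mapsto A\eta$ preserves Fr\'{e}chet differentiability); the direct route via $g(s) = F^\prime([\gamma(s)])\eta$ and the fundamental theorem of calculus is self-contained and avoids that verification. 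The uniformity of your bound in $\eta$ before dividing by $\|\eta\|_H$ and taking the supremum is handled correctly, and the final estimate $\|G_\eta^\prime([\zeta])\| \leq \|F^{\prime\prime}([\zeta])\|\,\|\eta\|_H$ follows from the bilinear norm as you say. Your flagged technical point is also genuine and worth keeping: the mean-value segment $\lbrace P_m\theta + s(\theta - P_m\theta) : s \in [0,1]\rbrace$ lies in $K$ only if $P_m\theta \in K$, an assumption that is equally implicit in Thm.~\ref{thm: Convergence rate of cylindrical approximation of functionals} as stated (convexity of $K$ alone does not guarantee $P_m\theta \in K$); restricting the supremum to that segment, as you propose, is the cleanest repair and in fact yields a slightly sharper constant than the supremum over all of $K$.
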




In terms of a functional derivative, this is rewritten as
\begin{align} \label{eq: dFdtheta conv rate}
    \left\| 
    \frac{\delta F([\theta])}{\delta \theta}-\frac{\delta F([P_m\theta])}{\delta \theta}
    \right\|_H
    &\leq
    \sup_{\zeta \in K}\left(
    \left\| \frac{\delta^2 F([\zeta])}{\delta \theta \delta \theta} \right\|
    \right)
    \|\theta - P_m \theta \|_H \, ,
\end{align}
where $\left\| \frac{\delta^2 F([\zeta])}{\delta \theta \delta \theta} \right\| := 
\sup_{\xi,\xi'\in H, \xi \neq 0, \xi^\prime \neq 0}
    \frac{
    \left| 
    \frac{\delta^2 F([\zeta])}{\delta \theta \delta \theta} \xi \xi' 
    \right|
    }{
    \|\xi\|_H \|\xi'\|_H
    }
:= \sup_{\xi,\xi'\in H, \xi \neq 0, \xi^\prime \neq 0}
    \frac{
    \left|\left(\frac{\delta}{\delta \theta}
    \left(\frac{\delta F([\zeta])}{\delta \theta},\xi\right)_H,
    \xi' \right)_H\right|
    }{\|\xi\|_H \|\xi'\|_H}$; i.e., we regard $\frac{\delta^2 F([\zeta])}{\delta \theta \delta \theta}$ as an operator acting on $H \times H$.

\subsubsection{Pointwise Convergence of Functional Derivatives under Cylindrical Approximation}
We prove the pointwise convergence of the functional derivative under the cylindrical approximation.
We already noted that the cylindrical approximation of functional derivatives \eqref{eq: CA of FD with tail term} uniformly converges.
Below, we show that the convergence becomes pointwise if we omit the second term of the r.h.s. of Eq.~\eqref{eq: CA of FD with tail term}; i.e., we use    
\begin{equation}
    P_m\frac{\delta F([P_m \th])}{\delta \theta(x)} 
    = \sum_{k=0}^{m-1}    \frac{\partial f}{\partial a_k} \phi_k (x) \, \text{ (Eq.(\ref{eq: cylindrical approx of functional derivatives with omitting second term}))} \nonumber
\end{equation}
as the approximated functional derivative.

\begin{theorem}[Thm.~\ref{thm: Pointwise convergence of cylindrical approximation (informal)}. Pointwise convergence of cylindrical approximation] 
    Let $K$ be a compact subset of a real separable Hilbert space $H$. Let $F$ be a continuously differentiable functional on $K$. Then, 
    \begin{equation}
        \forall \text{orthonormal basis }\lbrace \phi_0,\phi_1,\ldots \rbrace,  \,\,
        \forall \epsilon > 0, \,\,
        \forall \theta \in K, \,\, 
        \exists M \in \mathbb{N} \,\, s.t. \,\, \forall m \geq M, \,\,  
        \left\| 
        \frac{\delta F([\theta])}{\delta \theta} 
        - 
        P_m
        \frac{\delta F([P_m \theta])}{\delta \theta} 
        \right\|_H < \epsilon,
    \end{equation}
    where $P_m$ is the projection onto $\mathrm{span}\lbrace \phi_0,\ldots,\phi_{m-1}\rbrace$.
\end{theorem}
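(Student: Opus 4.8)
The plan is to control the quantity $\bigl\| \frac{\delta F([\theta])}{\delta\theta} - P_m\frac{\delta F([P_m\theta])}{\delta\theta} \bigr\|_H$ with a single triangle-inequality split and then invoke two facts already established in the excerpt. Write $g := \frac{\delta F([\theta])}{\delta\theta} \in H$, which is a genuine element of $H$ by the representation theorem (Thm.~\ref{thm: Representation theorem of Frechet derivative}), and $g_m := \frac{\delta F([P_m\theta])}{\delta\theta} \in H$. The key is to insert the intermediate term $P_m g$ and estimate
\begin{equation}
\left\| g - P_m g_m \right\|_H \leq \left\| g - P_m g \right\|_H + \left\| P_m g - P_m g_m \right\|_H .
\end{equation}

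First I would dispatch the second term. Since $P_m$ is the orthogonal projection onto $\mathrm{span}\{\phi_0,\ldots,\phi_{m-1}\}$, it is non-expansive ($\|P_m\| \leq 1$), so $\|P_m g - P_m g_m\|_H = \|P_m(g - g_m)\|_H \leq \|g - g_m\|_H$. The right-hand side is precisely what is controlled by the uniform convergence of the cylindrical approximation of functional derivatives (Thm.~\ref{thm: Uniform convergence of cylindrical approximation of functional derivatives}), whose hypotheses ($K$ compact, $F$ continuously differentiable) hold here; hence this term tends to $0$ as $m \to \infty$, uniformly on $K$ and in particular at the fixed $\theta$.

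Next I would treat the first term, which is the genuinely new ingredient compared with the uniform theorem. Because $\{\phi_k\}_{k\geq 0}$ is a complete orthonormal basis of $H$ and $g \in H$, the series $\sum_{k\geq 0} |(g,\phi_k)_H|^2 = \|g\|_H^2$ converges, so its tail obeys
\begin{equation}
\|g - P_m g\|_H^2 = \sum_{k=m}^{\infty} |(g,\phi_k)_H|^2 \longrightarrow 0 \quad (m \to \infty).
\end{equation}
This is pure Hilbert-space completeness and requires no differentiability of $F$. Combining the two bounds and choosing $M$ large enough that each term is below $\epsilon/2$ for all $m \geq M$ yields the claim; note $M$ may depend on $\theta$ through the decay of the first term, consistent with the pointwise quantifier order in the statement.

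The main conceptual point — and the reason the conclusion is only pointwise rather than uniform — lies in this first term. The tail $\|g - P_m g\|_H$ converges for each fixed $\theta$, but its rate is governed by the decay of the Fourier coefficients of $g = \frac{\delta F([\theta])}{\delta\theta}$, which need not be uniform over $\theta \in K$ unless an equi-small-tail condition (Thm.~\ref{thm: Necessary and sufficient condition of pre-compactness of metric space}) is imposed on the family $\bigl\{ \frac{\delta F([\theta])}{\delta\theta} : \theta \in K \bigr\}$. Thus the only real subtlety is recognizing that dropping the tail term in Eq.~\eqref{eq: cylindrical approx of functional derivatives with omitting second term} costs exactly this non-uniform tail: the estimate is otherwise routine, but the result must be \emph{downgraded} from the uniform convergence of Thm.~\ref{thm: Uniform convergence of cylindrical approximation of functional derivatives} to pointwise convergence.
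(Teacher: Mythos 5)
Your proof is correct and matches the paper's own argument essentially line for line: the same triangle-inequality split $\| g - P_m g_m \|_H \leq \| P_m(g - g_m) \|_H + \| (1-P_m) g \|_H$, with the first piece handled by non-expansiveness of $P_m$ plus the uniform convergence theorem (Thm.~\ref{thm: Uniform convergence of cylindrical approximation of functional derivatives}) and the second by Parseval's identity applied to $g = \frac{\delta F([\theta])}{\delta \theta} \in H$. Your closing remark on why the tail term forces the downgrade from uniform to pointwise convergence (absent an equi-small-tail condition on $\lbrace \frac{\delta F([\theta])}{\delta \theta} : \theta \in K \rbrace$) is exactly the paper's own discussion.
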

Now, the convergence becomes pointwise. Technically, this is because the set $K'=\lbrace \frac{\delta F([\th])}{\delta \th}: \th \in K \rbrace$ is not guaranteed to satisfy the equi-small tail condition 
\begin{align}
    \forall \text{ orthonormal basis }\lbrace \phi_0,\phi_1,\ldots \rbrace,  \,\,
    \forall \epsilon' > 0, \,\,
    \exists M \in \mathbb{N} \,\, s.t. \,\, 
    \forall m \geq M, \,\, 
    \forall \theta \in K, \,\, 
    \sum_{k=m}^{\infty}\left|
    \left(
    \frac{\delta F([\theta])}{\delta \theta},\phi_k
    \right)_H
    \right|^2< \epsilon \, ,
\end{align}
while its boundedness $\sup_{\theta \in K} \|\frac{\delta F([\th])}{\delta \th}\|_{H}<\infty$ holds according to Thm.~\ref{lem: Compactness of Frechet derivative}. An example that converges pointwisely but not uniformly is 
\begin{equation}
    F([\theta])
    =
    \begin{cases}
        \sum_{k=1}^\infty e^{-(k-\tan a_0)^2}a_k 
        &
        (0 \leq a_0 < \pi/2)
        \\
        0
        &
        (a_0 = \pi/2)
    \end{cases},\,\,
    a_k=(\theta,\phi_k)_H,
\end{equation}
defined on a compact subset
\begin{equation}
    K=\left\lbrace\, \theta\,
    :\,
    a_k=(\theta,\phi_k)_H, \,\, 
    0\leq a_k \leq \frac{\pi}{2(k+1)} \text{ for }k=0,1,\ldots \right\rbrace.
\end{equation}
Anyways, we have to use large $m$ in either case (Eq.~\eqref{eq: CA of FD with tail term} or \eqref{eq: cylindrical approx of functional derivatives with omitting second term}) when we want to approximate a complicated functional derivative, and the degree $m$ that is required for a sufficiently small approximation error depends on the smoothness, or spectral tail, of $\frac{\delta F([P_m\th])}{\delta \th(x)}$.
As discussed in App.~\ref{app: Abstract Evolution Equations}, while the lack of uniform convergence may affect the convergence of the cylindrical approximation for linear FDEs in general, this is not problematic in our experiment. 
We use Eq.~\eqref{eq: cylindrical approx of functional derivatives with omitting second term} as the approximated functional derivative in our experiment.

\subsubsection{Abstract Evolution Equations} \label{app: Abstract Evolution Equations}
We first provide related theorems to the convergence of equations (\textit{consistency}) \cite{venturi2021spectral} and then those to the convergence of solutions (\textit{stability}) \cite{engel2000one, guidetti2004approximation, venturi2021spectral}.
\paragraph{Definitions.}
Let $\mcf(H)$ be a Banach space of functionals from a real separable Hilbert space $H$ into $\mbr$.
We consider an abstract evolution equation \cite{guidetti2004approximation}
\begin{equation}
    \frac{\partial F([\th], t)}{\partial t} = \mcl([\th]) F([\th], t) \text{ with } F([\th], 0) = F_0([\th]) \, ,
\end{equation}
where $F $ is in $ \mcf(H)$, and $\mcl ([\th])$ is a linear operator in the set of closed, densely-defined, and continuous linear operators on $\mcf(H)$ denoted by $\mcc(\mcf)$.
Let $D(\mcl)$ denote the domain of operator $\mcl$.
Let $\mcf_m(H)$ be the Banach space of functionals on $H$ such that $\mcf_m(H) := \{ F_m \, | \, F_m([\th],t) = F([P_m \th], t) \}$; in other words, $\mcf_m(H)$ is the set of cylindrically approximated functionals.
Using $\mcf(H)$ and $\mcf_m(H)$, we define a continuous linear operator $B_m: \mcf(H) \ni F([\th], t) \mapsto F([P_m \th], t) \in \mcf_m(H)$, which represents the cylindrical approximation of functionals. $B_m$ allows us to decompose the right-hand side of the abstract evolution equation: 
\begin{equation}
    \label{eq: Bm decomposition}
    B_m(\mcl([\th]) F([\th], t)) = \mcl_m([\th]) F_m([\th], t) + R_m([\th], t),
\end{equation}
where $\mcl_m([\th])$ is a linear operator acting on the $m$-dimensional multivariable function $f(a_0,\ldots,a_{m-1}, t) = F_m([\th], t)$ (note that $\mcl_m([\th])$ have nothing to do with $t$), and $R_m([\th], t)$ is the functional residual.

\paragraph{Convergence of equations (consistency).}
Now, we can show that $| \mcl([\th]) F([\th]) - \mcl_m([\th]) F_m([\th])  | = \mathcal{O}(\| \th - P_m \th \|_H)$ \cite{venturi2021spectral}.
\begin{definition}[Consistency of sequence of operators]
    A sequence of linear operators $\{ \mcl_m \} \in \mcc(\mcf_m)$ is consistent with a linear operator $\mcl \in \mcc(\mcf)$ if $\forall F \in D(\mcl)$, $\exists$ a sequence $F_m \in D(\mcl_m)$ s.t. $\| F - F_m \| \xrightarrow{m\rightarrow\infty} 0$ and $\| \mcl F - \mcl_m F_m \| \xrightarrow{m\rightarrow\infty} 0$. If $\| \mcl F - \mcl_m F_m \| \xrightarrow{m\rightarrow\infty} \mathcal{O}(m^{-p})$, $\{ \mcl_m \}$ is consistent with $\mcl$ to order $p(>0)$.
\end{definition}

\begin{theorem}[Consistency of FDEs under cylindrical approximation \cite{venturi2021spectral}] \label{thm: Consistency of cylindrical approximation of FDEs}
    Let $H$ be a real separable Hilbert space. Let $F \in \mcf(H)$ and $\mcl \in \mcl(\mcf)$. If $\mcl([\th]) F([\th])$ is continuous in $\th$, then the sequence of operators $\{ \mcl_m \}$ is consistent with $\mcl$ on arbitrary compact subset $K$ of $H$, provided that $\forall \th \in K$, $| R_m([\th]) | \xrightarrow{m \rightarrow \infty} 0$.
\end{theorem}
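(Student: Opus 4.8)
The plan is to verify the two defining conditions of consistency directly, using the cylindrical approximation of functionals as the approximating sequence. Concretely, for a given $F \in D(\mcl)$ I would take $F_m := B_m F$, so that $F_m([\th],t) = F([P_m\th],t) \in \mcf_m(H)$ is exactly the cylindrically approximated functional; this is the only natural candidate, since $\mcl_m$ is defined precisely through its action on such $F_m$ via the decomposition \eqref{eq: Bm decomposition}. With this choice the argument splits into controlling $\|F - F_m\|$ and $\|\mcl F - \mcl_m F_m\|$ separately.

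For the first quantity, $\|F - F_m\| = \|F - B_m F\|$ measures the error of the cylindrical approximation of $F$ itself. Since $F$ is a continuous functional and $K$ is compact, Thm.~\ref{thm: Uniform convergence of cylindrical approximation of functionals} applies and gives $\sup_{\th \in K}|F([\th]) - F([P_m\th])| \to 0$, hence $\|F - F_m\| \xrightarrow{m\to\infty} 0$. This step is essentially immediate once the approximating sequence is fixed. The operator condition is where the decomposition \eqref{eq: Bm decomposition} does the real work: rearranging it gives $\mcl_m F_m = B_m(\mcl F) - R_m$, so that
\begin{equation}
    \mcl F - \mcl_m F_m = \big(\mcl F - B_m(\mcl F)\big) + R_m,
\end{equation}
and the triangle inequality yields $\|\mcl F - \mcl_m F_m\| \leq \|\mcl F - B_m(\mcl F)\| + \|R_m\|$.

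The first term on the right is again a cylindrical-approximation error, now for the functional $\mcl([\th])F([\th])$: by hypothesis this functional is continuous in $\th$, so a second application of Thm.~\ref{thm: Uniform convergence of cylindrical approximation of functionals} forces $\|\mcl F - B_m(\mcl F)\| \to 0$ on $K$. The second term is governed by the standing assumption $|R_m([\th])| \to 0$ for all $\th \in K$. Combining the two bounds gives $\|\mcl F - \mcl_m F_m\| \to 0$, which together with the first step establishes that $\{\mcl_m\}$ is consistent with $\mcl$ on $K$.

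The main obstacle I anticipate is the gap between the pointwise hypothesis $|R_m([\th])|\to 0$ and the norm convergence $\|R_m\|\to 0$ demanded by the definition of consistency: if $\|\cdot\|$ is the supremum norm over the compact set $K$, pointwise decay of the residual is not by itself sufficient. I would close this gap by exploiting the continuity of $R_m$ (inherited from that of $\mcl F$ and $B_m$) together with compactness of $K$ --- for instance via an equicontinuity or Dini-type argument upgrading the pointwise convergence to uniform convergence on $K$ --- or, if the intended notion is the pointwise consistency used throughout the FDE literature, by simply reading the convergence in \eqref{eq: Bm decomposition} at each fixed $\th$. Everything else in the argument is a routine two-term estimate built entirely from the uniform convergence theorem for functionals.
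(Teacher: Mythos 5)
Your proposal reconstructs the intended argument: the paper states this theorem without proof, citing \cite{venturi2021spectral}, and the proof there is precisely your two-term estimate --- take $F_m := B_m F$, obtain $\|F - F_m\| \to 0$ from Thm.~\ref{thm: Uniform convergence of cylindrical approximation of functionals} (continuity of $F$ plus compactness of $K$), rearrange Eq.~\eqref{eq: Bm decomposition} into $\mcl F - \mcl_m F_m = \bigl(\mcl F - B_m(\mcl F)\bigr) + R_m$, and apply the same uniform-convergence theorem to the continuous functional $\mcl([\th])F([\th])$. Structurally there is nothing to add.

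The one place your write-up is on thinner ice is the patch you propose for the pointwise-versus-uniform gap in $R_m$, a gap you were right to flag. A Dini-type argument needs monotone convergence of $|R_m|$ in $m$, which is not available, and equicontinuity of the family $\{R_m\}$ is not inherited in any obvious way from continuity of $\mcl F$ and $B_m$ alone, so neither upgrade goes through as stated. The paper's actual resolution is different: it never derives uniformity from pointwise decay but instead \emph{establishes} uniform convergence of $R_m$ on $K$ directly in its applications, via Lem.~\ref{lem: Uniform convergence of inner products}, which applies whenever the functional derivative enters the FDE only through an inner product $(v, \frac{\delta F([\th])}{\delta \th})_H$ (see App.~\ref{app: Details about our FDEs} and Remark 2 of App.~\ref{app: Abstract Evolution Equations}). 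So the hypothesis $|R_m([\th])| \to 0$ should be read as it is used --- with the uniform convergence supplied externally --- rather than as a pointwise assumption to be bootstrapped; your fallback reading (pointwise consistency at each fixed $\th$) is the honest alternative if that stronger input is unavailable.
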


\begin{corollary}[Convergence of cylindrical approximation of FDEs \cite{venturi2021spectral}]
    Let $H$ be a real separable Hilbert space. Let $F \in \mcf(H)$ and $\mcl \in \mcl(\mcf)$. If $\mcl([\th]) F([\th])$ is continuous in $\th$, then the sequence of operators $\{ \mcl_m \}$ is consistent with $\mcl$ to the same order as $\| \th - P_m \th \|_H$ on arbitrary compact, convex subset $K$ of $H$, provided that $\forall \th \in K$, $| R_m([\th]) | = \mathcal{O}(\| \th - P_m \th \|_H)$ as $m \rightarrow \infty$ and that $\mcl([\th]) F([\th])$ is continuously Fr\'{e}chet differentiable in $K$. In short, $| \mcl([\th]) F([\th]) - \mcl_m([\th]) F_m([\th])  | = \mathcal{O}(\| \th - P_m \th \|_H)$.
\end{corollary}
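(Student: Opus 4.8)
The plan is to reduce the corollary to the consistency theorem already stated (Thm.~\ref{thm: Consistency of cylindrical approximation of FDEs}), sharpening its qualitative conclusion $|R_m([\th])|\to 0$ into the quantitative rate $\mcO(\|\th - P_m\th\|_H)$. For the approximating sequence required by the definition of consistency I would take $F_m := B_m F$, i.e.\ $F_m([\th],t) = F([P_m\th],t)$. Then $\|F - F_m\| \to 0$, and in fact $\|F - F_m\| = \mcO(\|\th - P_m\th\|_H)$, follows immediately from the convergence-rate theorem for functionals (Thm.~\ref{thm: Convergence rate of cylindrical approximation of functionals}) applied to $F$. Hence the only substantive task is the ``in short'' estimate $|\mcl([\th])F([\th]) - \mcl_m([\th])F_m([\th])| = \mcO(\|\th - P_m\th\|_H)$.

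For this I would use the $B_m$-decomposition in Eq.~\eqref{eq: Bm decomposition}, which by definition gives $\mcl_m([\th])F_m([\th]) = B_m(\mcl([\th])F([\th])) - R_m([\th])$. Substituting and applying the triangle inequality yields
\begin{equation}
    |\mcl([\th])F([\th]) - \mcl_m([\th])F_m([\th])| \leq |\mcl([\th])F([\th]) - B_m(\mcl([\th])F([\th]))| + |R_m([\th])|.
\end{equation}
The second term is $\mcO(\|\th - P_m\th\|_H)$ directly by hypothesis. The key observation is that the first term is precisely the cylindrical-approximation error of the single functional $G := \mcl([\th])F([\th])$: since $B_m$ acts as the substitution $\th \mapsto P_m\th$, we have $B_m G = G([P_m\th])$, so the first term equals $|G([\th]) - G([P_m\th])|$.

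Now I would invoke Thm.~\ref{thm: Convergence rate of cylindrical approximation of functionals} a second time, this time with $G$ in place of $F$. The hypothesis that $\mcl([\th])F([\th])$ is continuously Fr\'{e}chet differentiable on the compact convex set $K$ is exactly what is needed to verify that theorem's assumptions for $G$, giving $|G([\th]) - G([P_m\th])| \leq \sup_{\eta\in K}\|G'([\eta])\|\,\|\th - P_m\th\|_H = \mcO(\|\th - P_m\th\|_H)$, where the supremum is finite because $G'$ is continuous on the compact $K$. Combining the two $\mcO(\|\th - P_m\th\|_H)$ bounds through the displayed inequality finishes the proof, and together with the first paragraph's control of $\|F - F_m\|$ this establishes consistency to the stated order.

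The one step that requires care rather than computation is recognizing that the residual-free part $\mcl([\th])F([\th]) - B_m(\mcl([\th])F([\th]))$ is itself a \emph{functional} cylindrical-approximation error, so that the existing rate theorem can be reused with $G = \mcl([\th])F([\th])$ playing the role of the functional; the evolution variable $t$ is a passive spectator throughout and needs no separate treatment. Beyond that identification, everything follows from hypotheses already in hand and a single triangle inequality.
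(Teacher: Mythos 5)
Your proposal is correct and is essentially the argument the paper intends: the paper states this corollary without its own proof, citing \cite{venturi2021spectral}, and the proof there proceeds exactly along your lines---rewrite $\mcl_m([\th])F_m([\th])$ via the decomposition \eqref{eq: Bm decomposition}, bound $|R_m([\th])|$ by hypothesis, and recognize $\mcl([\th])F([\th]) - B_m\left(\mcl([\th])F([\th])\right)$ as the cylindrical-approximation error of the single functional $G = \mcl F$, to which Thm.~\ref{thm: Convergence rate of cylindrical approximation of functionals} applies on the compact, convex $K$ using precisely the assumed continuous Fr\'{e}chet differentiability of $\mcl([\th])F([\th])$. One small caveat: your opening claim that $\|F - F_m\| = \mcO(\|\th - P_m\th\|_H)$ would require $F$ itself to be continuously differentiable, which is not among the stated hypotheses; however, the definition of consistency (including consistency to order $p$) only demands $\|F - F_m\| \to 0$, which already follows from continuity via Thm.~\ref{thm: Uniform convergence of cylindrical approximation of functionals} (or from Thm.~\ref{thm: Consistency of cylindrical approximation of FDEs} itself), so this overreach is harmless to the conclusion.
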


\paragraph{Convergence of solutions (stability).}
Next, we show that $F_m([\th], t) \rightarrow F([\th], t)$ if and only if the cylindrical approximation is stable and consistent.
Let us consider the approximated abstract evolution equation $\frac{\partial F_m ([\th], t)}{\partial t} = \mcl_m ([\th]) F_m([\th], t)$ with $F_m([\th], 0) = B_m F_0([\th])$. It is said to be \textit{consistent} with the original abstract evolution equation if Thm.~\ref{thm: Consistency of cylindrical approximation of FDEs} holds.
\begin{definition}[Stability of approximated equation] \label{def: Stability of approximated equation}
    Suppose that $\mcl_m$ of the approximated abstract evolution equation generates a strongly continuous semigroup $e^{t \mcl_m}$.
    The approximated abstract evolution equation is stable if $\exists M, \omega$ s.t. $\| e^{t\mcl_m} \| \leq M e^{\omega t}$, where $M$ and $\om$ are independent of $m$.
\end{definition} 








\begin{theorem}[Convergence of solutions under cylindrical approximation \cite{engel2000one, guidetti2004approximation, venturi2021spectral}] \label{thm: Convergence of solutions under cylindrical approximation in App}
    Let $K$ be a compact subset of a real separable Hilbert space $H$.
    Suppose that the approximated abstract evolution equation is well-posed, in the sense of an initial value problem, in the time interval $[0, T]$ with a finite $T$. 
    Suppose also that $\mcl([\th]) \in \mcc(\mcf)$ generates a strongly continuous semigroup in $[0, T]$. Then, the approximated abstract evolution equation is stable and consistent in $K$ if and only if $\underset{t\in[0,T]}{\max}\underset{\th \in K}{\max} | F_m([\th], t) - F([\th], t) | \xrightarrow{m \rightarrow \infty} 0$, provided that $F_m([\th], 0) \xrightarrow{m \rightarrow \infty} F_0([\th])$. In short, $F_m([\th], t) \rightarrow F([\th], t)$ if and only if the cylindrical approximation is stable and consistent.
\end{theorem}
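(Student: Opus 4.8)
The plan is to recognize the statement as the Lax equivalence theorem (equivalently, the Trotter--Kato approximation theorem) for strongly continuous semigroups, specialized to the cylindrical approximation, and to reduce it to that classical machinery \cite{engel2000one, guidetti2004approximation}. First I would invoke the hypotheses: well-posedness gives each $\mcl_m$ a strongly continuous semigroup $e^{t\mcl_m}$ on $\mcf_m(H)$, and by assumption $\mcl$ generates $e^{t\mcl}$ on $\mcf(H)$. I would then write the exact and approximate solutions as $F([\th],t)=e^{t\mcl}F_0([\th])$ and $F_m([\th],t)=e^{t\mcl_m}B_m F_0([\th])$, where $B_m$ is the cylindrical-approximation operator of Eq.~\eqref{eq: Bm decomposition}, and prove each direction of the equivalence relative to this representation.

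For the direction ``stable and consistent $\Rightarrow$ convergent,'' the key device will be the telescoping (``Lady Windermere'') identity
\[
    e^{t\mcl_m}B_m - B_m e^{t\mcl}
    = \int_0^t e^{(t-s)\mcl_m}\bigl(\mcl_m B_m - B_m \mcl\bigr)e^{s\mcl}\,ds \, ,
\]
valid after restricting to the common core $D(\mcl)$ and extending by density. Rewriting Eq.~\eqref{eq: Bm decomposition} gives $\mcl_m B_m - B_m\mcl = -R_m$, so the integrand applied to $F_0$ is $-e^{(t-s)\mcl_m}R_m(e^{s\mcl}F_0)$. Consistency (Thm.~\ref{thm: Consistency of cylindrical approximation of FDEs}) makes $R_m\to 0$ pointwise on $K$, while stability (Def.~\ref{def: Stability of approximated equation}) supplies the $m$-independent bound $\|e^{(t-s)\mcl_m}\|\le M e^{\om(t-s)}$; dominated convergence over $s\in[0,t]$ then forces the integral to vanish, uniformly for $t\in[0,T]$. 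Adding $\|B_m e^{t\mcl}F_0 - e^{t\mcl}F_0\|\to 0$, which is the uniform convergence of the cylindrical approximation of functionals (Thm.~\ref{thm: Uniform convergence of cylindrical approximation of functionals}), and taking the maximum over the compact set $K$, I would obtain $\max_{t\in[0,T]}\max_{\th\in K}|F_m([\th],t)-F([\th],t)|\to 0$.

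For the converse ``convergent $\Rightarrow$ stable and consistent,'' I would first extract stability from the uniform boundedness principle: convergence of $\{e^{t\mcl_m}B_m F_0\}_m$ for every $F_0$ (on a dense core, hence everywhere by well-posedness) makes the family pointwise bounded, and Banach--Steinhaus upgrades this to a uniform operator bound, which after absorbing the exponential growth of the well-posed limit semigroup yields $\|e^{t\mcl_m}\|\le M e^{\om t}$ with $m$-independent $M,\om$, i.e.\ Def.~\ref{def: Stability of approximated equation}. Consistency I would then recover by passing to the generators: convergence of the semigroups on $D(\mcl)$ together with the resolvent identity forces $\mcl_m B_m F \to B_m\mcl F$, i.e.\ $R_m\to 0$, which is exactly Thm.~\ref{thm: Consistency of cylindrical approximation of FDEs}.

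The hard part will be the converse, and within it the stability claim in particular: I must verify that Banach--Steinhaus is applied to the correct uniformly bounded family, which requires $B_m$ to be a uniformly bounded (indeed contractive) projection and the approximate generators $\mcl_m$ to share a common core with $\mcl$, so that both the telescoping identity and the resolvent passage are legitimate. A secondary technical point will be upgrading the pointwise-in-$\th$ statements to the $\max_{\th\in K}$ convergence asserted; this is precisely where compactness of $K$ and the uniform convergence theorems for functionals and their derivatives (Thms.~\ref{thm: Uniform convergence of cylindrical approximation of functionals} and~\ref{thm: Uniform convergence of cylindrical approximation of functional derivatives}) become essential.
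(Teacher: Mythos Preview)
The paper does not supply its own proof of this theorem: it is stated in Appendix~\ref{app: Abstract Evolution Equations} with citations to \cite{engel2000one, guidetti2004approximation, venturi2021spectral} and used as a black-box tool when establishing convergence for the FTE and BHE in Appendix~\ref{app: Details about our FDEs}. Your plan to cast the result as a Lax equivalence/Trotter--Kato theorem and argue via the telescoping identity (forward direction) and Banach--Steinhaus plus resolvent convergence (converse) is exactly the classical route in those references, so there is no discrepancy to flag.

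One technical point worth tightening: the semigroups $e^{t\mcl_m}$ act on the varying spaces $\mcf_m(H)$, not on a single Banach space, so the uniform boundedness principle cannot be applied to them directly. The correct formulation (as in \cite{guidetti2004approximation}) is to work with the composite operators $e^{t\mcl_m}B_m:\mcf(H)\to\mcf_m(H)\hookrightarrow\mcf(H)$, which do live on a fixed space; you allude to this when noting that $B_m$ must be uniformly bounded, but the write-up should make the target of Banach--Steinhaus explicit. Similarly, the consistency-from-convergence step is usually done via resolvent convergence $(\lambda-\mcl_m)^{-1}B_m\to B_m(\lambda-\mcl)^{-1}$ rather than directly at the level of generators, since $\mcl_m B_m F$ need not make sense for all $F\in D(\mcl)$ without first passing through the resolvent.
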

For example, the cylindrical approximation of the following initial value problem is stable and consistent \cite{venturi2021spectral}: $\frac{\partial F ([\th], t)}{\partial t} = \int_0^{2\pi} \th(x) \frac{\partial }{\partial x}\frac{\delta F([\th], t)}{\delta \th(x)} dx$ with $F([\th], 0) = F_0([\th])$.
To our knowledge, the convergence rate has been unknown so far.

\paragraph{Remark 1.}
Most of the approximation results for compact subsets of real separable Hilbert spaces hold also in compact subsets of Banach spaces admitting a basis. We refer the readers to Sec. 8 in \cite{venturi2021spectral}.

\paragraph{Remark 2.}
Finally, we comment on how the lack of uniform convergence of $\frac{\delta F([\theta])}{\delta \theta} - P_m\frac{\delta F([P_m\theta])}{\delta \theta}$ affects the cylindrical approximation of linear FDEs. The difference $\frac{\delta F([\theta])}{\delta \theta} - P_m\frac{\delta F([P_m\theta])}{\delta \theta}$ is manifested in the functional residual $R_{m}([\theta], t)$ in Eq.~\eqref{eq: Bm decomposition}. 
The lack of uniform convergence may have a negative effect on the consistency of the cylindrical approximation, given that the convergence of $R_{m}([\theta], t)$ is required in Thm.~\ref{thm: Consistency of cylindrical approximation of FDEs}. 
This issue, however, is circumvented in many cases. 
In fact, let us consider the scenario where functional derivatives in an FDE are expressed in terms of the inner-product $(v, \frac{\delta F([\theta])}{\delta \theta})_H$, which is satisfied by our examples ($v = u$ in the FTE and $v = A \boldsymbol{a}$ in Eq.~\eqref{eq: BH cylindrical approx eq}) (see App.~\ref{app: Details about our FDEs}). 
Importantly, its cylindrical approximation $(v, P_m\frac{\delta F([P_m\theta])}{\delta \theta})_H$ uniformly converge to $(v, \frac{\delta F([\theta])}{\delta \theta})_H$ even if $P_m\frac{\delta F([P_m\theta])}{\delta \theta}$ does not uniformly converge to $\frac{\delta F([\theta])}{\delta \theta}$.
\begin{lemma}[Uniform convergence of inner products (ours)]
    \label{lem: Uniform convergence of inner products}
    Let $K$ and $K'$ be compact subsets of a real separable Hilbert space $H$. Let $F$ be a continuously differentiable functional on $K$.
    Then, 
    \begin{equation}
        \forall \epsilon > 0, \,\, \exists M \in \mathbb{N} \,\, 
        s.t. 
        \,\, \forall m \geq M, 
        \,\, \forall \theta \in K, 
        \,\, \forall \theta' \in K', 
        \,\, 
        \left| 
        \left(\theta', \frac{\delta F([\theta])}{\delta \theta} \right)_H
        -
        \left(\theta', P_m\frac{\delta F([P_m \theta])}{\delta \theta} \right)_H
        \right| < \epsilon
    \end{equation}
    holds; i.e., 
    $\left(\theta', P_m\frac{\delta F([P_m \theta])}{\delta \theta} \right)_H$ 
    converges uniformly to 
    $\left(\theta', \frac{\delta F([\theta])}{\delta \theta} \right)_H$ on $K$ and $K'$.
\end{lemma}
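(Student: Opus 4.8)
The plan is to control the inner-product difference by a triangle-inequality decomposition that isolates two effects: the convergence of the (untruncated) functional derivative of the approximated functional, and the error incurred by applying the projection $P_m$. Writing $g(\theta) := \frac{\delta F([\theta])}{\delta \theta}$ and inserting the intermediate term $\frac{\delta F([P_m\theta])}{\delta \theta}$ (the full Fr\'echet derivative of the cylindrically approximated functional, tail included), I would split the quantity to be bounded as
\begin{align*}
\left(\theta', g(\theta)\right)_H - \left(\theta', P_m\tfrac{\delta F([P_m\theta])}{\delta \theta}\right)_H
&= \left(\theta',\, g(\theta) - \tfrac{\delta F([P_m\theta])}{\delta \theta}\right)_H \\
&\quad + \left(\theta',\, (I-P_m)\tfrac{\delta F([P_m\theta])}{\delta \theta}\right)_H .
\end{align*}

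For the first term, I would apply Cauchy--Schwarz to bound it by $\|\theta'\|_H \,\|g(\theta) - \frac{\delta F([P_m\theta])}{\delta \theta}\|_H$. Compactness of $K'$ makes $\|\theta'\|_H$ uniformly bounded, while Thm.~\ref{thm: Uniform convergence of cylindrical approximation of functional derivatives} supplies $\|g(\theta) - \frac{\delta F([P_m\theta])}{\delta \theta}\|_H \to 0$ \emph{uniformly} over $\theta \in K$; hence this term can be made smaller than $\epsilon/2$ uniformly once $m$ is large enough.

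The second term is where the genuine idea lies. Using the self-adjointness of the orthogonal projection $P_m$ (and therefore of $I-P_m$), I would rewrite it as $\left((I-P_m)\theta',\, \frac{\delta F([P_m\theta])}{\delta \theta}\right)_H$, thereby \emph{transferring} the truncation from the functional derivative onto the fixed function $\theta'$. Cauchy--Schwarz then bounds this by $\|(I-P_m)\theta'\|_H \,\|\frac{\delta F([P_m\theta])}{\delta \theta}\|_H$. The second factor is uniformly bounded in $\theta\in K$ and in $m$, since it converges uniformly to $g(\theta)$ and the set $\{g(\theta):\theta\in K\}$ is bounded by Lem.~\ref{lem: Compactness of Frechet derivative}. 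The first factor satisfies $\|(I-P_m)\theta'\|_H^2 = \sum_{k=m}^\infty |(\theta',\phi_k)_H|^2$, which tends to $0$ uniformly over $\theta'\in K'$ precisely because the compact set $K'$ obeys the equi-small tail condition of Thm.~\ref{thm: Necessary and sufficient condition of pre-compactness of metric space} (up to the harmless index shift $m\leftrightarrow m+1$).

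The main obstacle --- and the whole reason the lemma is nontrivial --- is this second term: the approximant $P_m\frac{\delta F([P_m\theta])}{\delta \theta}$ is known \emph{not} to converge uniformly to $g(\theta)$, so pulling $\theta'$ out by a naive Cauchy--Schwarz on the unmodified difference would fail to give uniformity. The resolution is the self-adjointness manoeuvre above, which moves the non-uniformly-converging tail $(I-P_m)$ onto $\theta'$, living in the fixed compact set $K'$ where the equi-small tail condition restores the needed uniformity. Combining the two estimates and choosing $M$ so that each contribution is below $\epsilon/2$ for all $m\geq M$ completes the proof.
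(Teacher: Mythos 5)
Your proof is correct and follows essentially the same route as the paper's: both decompose the difference by the triangle inequality into a term controlled by the uniform convergence of $\frac{\delta F([P_m\theta])}{\delta \theta}$ (Thm.~\ref{thm: Uniform convergence of cylindrical approximation of functional derivatives}) and a tail term, which is handled by the same key manoeuvre of moving $I-P_m$ onto $\theta'$ via self-adjointness and invoking the equi-small tail condition for the compact set $K'$ (Thm.~\ref{thm: Necessary and sufficient condition of pre-compactness of metric space}). The only cosmetic difference is that your tail term carries $\frac{\delta F([P_m\theta])}{\delta \theta}$ where the paper's carries $\frac{\delta F([\theta])}{\delta \theta}$, and the extra uniform-boundedness step this requires is supplied correctly in your argument.
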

The proof is given in App.~\ref{app: Theorem: Pointwise Convergence of Functional Derivatives under Cylindrical Approximation}. In App.~\ref{app: Details about our FDEs}, we employ this lemma to show the consistency of our FDEs.

In short, Lem.~\ref{lem: Uniform convergence of inner products} states that the cylindrical approximation of inner products $(v, P_m\frac{\delta F([P_m\theta])}{\delta \theta})_H$ uniformly converge to $(v, \frac{\delta F([\theta])}{\delta \theta})_H$ even if $P_m\frac{\delta F([P_m\theta])}{\delta \theta}$ does not uniformly converge to $\frac{\delta F([\theta])}{\delta \theta}$.
Because of this mechanism, in App.~\ref{app: Details about our FDEs}, we show that the uniform convergence of $R_m$ is ensured, which is one of the assumptions for stability.
The full proof of the convergence of solutions is provided in App.~\ref{app: Details about our FDEs}.

\clearpage
\section{Proofs I}  
\label{app: Proofs and Derivations}
\subsection{Theorem: Pointwise Convergence of Functional Derivatives under Cylindrical Approximation} 
\label{app: Theorem: Pointwise Convergence of Functional Derivatives under Cylindrical Approximation}
\begin{theorem}[Thm.~\ref{thm: Pointwise convergence of cylindrical approximation (informal)}. Pointwise convergence of cylindrical approximation]
    Let $K$ be a compact subset of a real separable Hilbert space $H$. Let $F$ be a continuously differentiable functional on $K$. Then, 
    \begin{align}
        \forall \text{ orthonormal basis }\lbrace \phi_0,\phi_1,\ldots \rbrace,  \,\,
        \forall \epsilon > 0, \,\,
        \forall \theta \in K, \,\, 
        \exists M \in \mathbb{N} \nn s.t. \,\, \forall m \geq M, \,\,  
        \left\| 
        \frac{\delta F([\theta])}{\delta \theta} 
        - 
        P_m
        \frac{\delta F([P_m\theta])}{\delta \theta} 
        \right\|_H < \epsilon,
    \end{align}
    where 
    $P_m$ is the projection onto $\mathrm{span}\lbrace \phi_0,\ldots,\phi_{m-1}\rbrace$.
\end{theorem}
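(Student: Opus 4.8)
The plan is to prove the convergence by a triangle-inequality decomposition that separates the error into a ``projection-tail'' part and a ``derivative-approximation'' part, each of which can be dispatched with results already available. Fix the orthonormal basis $\lbrace \phi_0, \phi_1, \ldots \rbrace$, the tolerance $\epsilon > 0$, and the point $\theta \in K$. Write $g := \frac{\delta F([\theta])}{\delta \theta}$ and $g_m := \frac{\delta F([P_m \theta])}{\delta \theta}$; by the representation theorem (Thm.~\ref{thm: Representation theorem of Frechet derivative}) both lie in $H$, since $F$ is continuously differentiable on the compact set $K$. Because $P_m$ is the orthogonal projection onto $\mathrm{span}\lbrace \phi_0, \ldots, \phi_{m-1} \rbrace$, the triangle inequality gives
\begin{equation}
    \left\| g - P_m g_m \right\|_H \leq \left\| g - P_m g \right\|_H + \left\| P_m (g - g_m) \right\|_H \leq \left\| g - P_m g \right\|_H + \left\| g - g_m \right\|_H,
\end{equation}
where the last step uses that an orthogonal projection is norm-nonincreasing.

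First I would control the projection-tail term $\| g - P_m g \|_H$. Since $g \in H$ and $\lbrace \phi_k \rbrace$ is an orthonormal basis of the separable Hilbert space $H$, the partial sums $P_m g = \sum_{k=0}^{m-1} (g, \phi_k)_H \phi_k$ converge to $g$ in norm by completeness (Parseval's identity), so there is an $M_1 \in \mbn$ with $\| g - P_m g \|_H < \epsilon/2$ for all $m \geq M_1$. Crucially, this step is where only \emph{pointwise} convergence survives: the integer $M_1$ depends on the decay of the spectral tail $\sum_{k \geq m} |(g, \phi_k)_H|^2$, which in turn depends on $\theta$.

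Next I would control the derivative-approximation term $\| g - g_m \|_H = \left\| \frac{\delta F([\theta])}{\delta \theta} - \frac{\delta F([P_m \theta])}{\delta \theta} \right\|_H$. This is exactly the quantity handled by Thm.~\ref{thm: Uniform convergence of cylindrical approximation of functional derivatives}, which yields uniform (hence pointwise) convergence to zero on $K$; thus there is an $M_2 \in \mbn$ with $\| g - g_m \|_H < \epsilon/2$ for all $m \geq M_2$. Taking $M := \max\lbrace M_1, M_2 \rbrace$ makes the right-hand side of the displayed inequality smaller than $\epsilon$ for every $m \geq M$, which is the claim.

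The main obstacle is conceptual rather than computational: one must recognize why the result cannot be strengthened to uniform convergence. The derivative-approximation term converges uniformly over $K$, but the projection-tail term does not, because the set $K' = \lbrace \frac{\delta F([\theta])}{\delta \theta} : \theta \in K \rbrace$, although bounded by Lem.~\ref{lem: Compactness of Frechet derivative}, need not satisfy the equi-small tail condition of Thm.~\ref{thm: Necessary and sufficient condition of pre-compactness of metric space}. Consequently the choice of $M_1$ genuinely depends on $\theta$, and the explicit counterexample following the informal statement confirms that this dependence cannot be removed in general.
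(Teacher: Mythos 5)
Your proposal is correct and takes essentially the same route as the paper's own proof: the identical triangle-inequality decomposition into $P_m\bigl(\frac{\delta F([\theta])}{\delta \theta}-\frac{\delta F([P_m\theta])}{\delta \theta}\bigr)$ and $(1-P_m)\frac{\delta F([\theta])}{\delta \theta}$, with the first term dispatched uniformly by Thm.~\ref{thm: Uniform convergence of cylindrical approximation of functional derivatives} and the tail term dispatched pointwise via the boundedness of the derivative (Lem.~\ref{lem: Compactness of Frechet derivative}) together with Parseval's identity. Your closing observation that the $\theta$-dependence of the tail (failure of the equi-small tail condition on $K'$) is exactly what blocks uniform convergence also matches the paper's discussion.
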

\begin{proof}
The triangle inequality gives
\begin{align}
    &
    \left\|
    \frac{\delta F([\theta])}{\delta \theta}
    -
    P_m\frac{\delta F([P_m\theta])}{\delta \theta}
    \right\|_{H}
    \leq
    \left\|
    P_m
    \left(
    \frac{\delta F([\theta])}{\delta \theta}
    -
    \frac{\delta F([P_m\theta])}{\delta \theta}
    \right)
    \right\|_{H}
    +
    \left\|
    (1-P_m)
    \frac{\delta F([\theta])}{\delta \theta}
    \right\|_{H}
    \notag
    \\
    &
    \leq
    \left\|
    \frac{\delta F([\theta])}{\delta \theta}
    -
    \frac{\delta F([P_m\theta])}{\delta \theta}
    \right\|_{H}
    +
    \sqrt{
    \sum_{k=m}^{\infty} 
    \left|
    \left( \frac{\delta F([\theta])}{\delta \theta}, \phi_k \right)_H
    \right|^2
    }.
    \label{eq: dFdt dfda ineq}
\end{align}
The first term on the right-hand side converges to zero uniformly according to Thm.~\ref{thm: Uniform convergence of cylindrical approximation of functional derivatives}. 
As for the second term, we first note that $\frac{\delta F([\theta])}{\delta \theta(x)}$ is bounded on $K$ in the sense of a function in $H$, according to Lem.~\ref{lem: Compactness of Frechet derivative}. This, together with Parseval's identity, implies $\| \frac{\delta F([\theta])}{\delta \theta} \|_H^2 = \sum_{k=0}^{\infty}| ( \frac{\delta F([\theta])}{\delta \theta}, \phi_k )_H |^2 < \infty$. Therefore, the sequence of the partial sums $S_m=\sum_{k=0}^{m-1}|( \frac{\delta F([\theta])}{\delta \theta}, \phi_k )_H|^2$ is a convergent sequence and thus is a Cauchy sequence; i.e.,
\begin{equation}
    \forall \epsilon' > 0, \,\,
    \exists M \in \mathbb{N}, \,\,
    s.t.\,\,
    \forall m,n \geq M, \,\,
    |S_m-S_n|<\epsilon'.
\end{equation}
By taking $n\to \infty$, we can claim that
\begin{align}
    \forall \text{ orthonormal basis }\lbrace \phi_0,\phi_1,\ldots \rbrace,  \,\,
    \forall \epsilon' > 0, \,\,
    \forall \theta \in K, \,\, 
    \exists M \in \mathbb{N} \nn 
    s.t. \,\, \forall m \geq M, \,\,  
    \sum_{k=m}^{\infty} 
    \left|
    \left( \frac{\delta F([\theta])}{\delta \theta}, \phi_k \right)_H
    \right|^2 < \epsilon' \, .
\end{align}
Therefore, the second term on the right-hand side of Ineq.~\eqref{eq: dFdt dfda ineq} converges pointwisely.
The theorem was thus proved.
\end{proof}

\paragraph{Convergence rate.}
The convergence rates of the approximated functional derivative can be derived from Ineq.~\eqref{eq: dFdt dfda ineq}.
The first term on the r.h.s. converges at the same rate as $||\theta - P_m \theta||$ (Eq.~\eqref{eq: dFdtheta conv rate}). The convergence rate of $||\theta - P_m \theta||$ depends on the basis functions and is provided in \cite{hesthaven2007spectral} (Chapters 4 \& 6) and \cite{shen2011spectral} (Sec.~3.5) for several bases. 
The convergence rate of the second term on the r.h.s. depends on the compact subset of functions $K \in H$ under consideration, and further assumptions on $K$ are required.

\begin{lemma}[Lem.~\ref{lem: Uniform convergence of inner products}. Uniform convergence of inner products] 
    Let $K$ and $K'$ be compact subsets of a real separable Hilbert space $H$. Let $F$ be a continuously differentiable functional on $K$.
    Then, 
    \begin{align}
        \forall \epsilon > 0, \,\, \exists M \in \mathbb{N} \,\, 
        s.t. 
        \,\, \forall m \geq M, 
        \,\, \forall \theta \in K, 
        \,\, \forall \theta' \in K', 
        \nn
        \left| 
        \left(\theta', \frac{\delta F([\theta])}{\delta \theta} \right)_H
        -
        \left(\theta', P_m\frac{\delta F([P_m \theta])}{\delta \theta} \right)_H
        \right| < \epsilon
    \end{align}
    holds; i.e., 
    $\left(\theta', P_m\frac{\delta F([P_m \theta])}{\delta \theta} \right)_H$ 
    converges uniformly to 
    $\left(\theta', \frac{\delta F([\theta])}{\delta \theta} \right)_H$ on $K$ and $K'$.
\end{lemma}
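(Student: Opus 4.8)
The plan is to collapse the difference of the two inner products into a single inner product and then re-use the \emph{same} orthogonal splitting that appears in the proof of Thm.~\ref{thm: Pointwise convergence of cylindrical approximation (informal)}, but — crucially — to pair each piece against $\theta'$ rather than bound it in norm. By linearity of the inner product,
\begin{equation}
\left(\theta', \frac{\delta F([\theta])}{\delta \theta}\right)_H - \left(\theta', P_m\frac{\delta F([P_m\theta])}{\delta \theta}\right)_H = \left(\theta', \frac{\delta F([\theta])}{\delta \theta} - P_m\frac{\delta F([P_m\theta])}{\delta \theta}\right)_H ,
\end{equation}
and, writing $g_\theta := \delta F([\theta])/\delta\theta$ and $g_{P_m\theta} := \delta F([P_m\theta])/\delta\theta$, I would decompose the right-hand factor exactly as in Ineq.~\eqref{eq: dFdt dfda ineq}:
\begin{equation}
g_\theta - P_m g_{P_m\theta} = P_m\left(g_\theta - g_{P_m\theta}\right) + (1-P_m)\,g_\theta .
\end{equation}
This produces two inner-product terms to be estimated uniformly over $(\theta,\theta')\in K\times K'$.

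For the first term I would apply Cauchy--Schwarz, $\left|\left(\theta', P_m(g_\theta - g_{P_m\theta})\right)_H\right| \leq \|\theta'\|_H\,\|g_\theta - g_{P_m\theta}\|_H$, and then control the two factors separately: $\|\theta'\|_H$ is bounded on the compact (hence bounded) set $K'$, while $\|g_\theta - g_{P_m\theta}\|_H$ tends to $0$ uniformly over $\theta\in K$ by Thm.~\ref{thm: Uniform convergence of cylindrical approximation of functional derivatives}. Hence this term is uniformly small.

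The second term is where the uniformity lost in Thm.~\ref{thm: Pointwise convergence of cylindrical approximation (informal)} is recovered, and it is the crux of the argument. Since $P_m$ is an orthogonal projection and therefore self-adjoint, I would transfer $(1-P_m)$ onto the first slot,
\begin{equation}
\left(\theta', (1-P_m)\,g_\theta\right)_H = \left((1-P_m)\,\theta', g_\theta\right)_H ,
\end{equation}
and then bound it by $\|\theta' - P_m\theta'\|_H\,\|g_\theta\|_H$ via Cauchy--Schwarz. By Lem.~\ref{lem: Compactness of Frechet derivative} the factor $\|g_\theta\|_H$ is uniformly bounded over the compact set $K$. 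The key observation is that, whereas the image set $\{g_\theta:\theta\in K\}$ need not have equi-small tails, the set $K'$ \emph{is} compact, so by Thm.~\ref{thm: Necessary and sufficient condition of pre-compactness of metric space} it satisfies the equi-small tail condition; consequently $\|\theta' - P_m\theta'\|_H = \big(\sum_{k\geq m}|(\theta',\phi_k)_H|^2\big)^{1/2}$ tends to $0$ uniformly over $\theta'\in K'$. Thus the second term is uniformly small as well, and choosing $M$ large enough to force each piece below $\epsilon/2$ finishes the argument.

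The main obstacle — indeed the single idea driving the lemma — is the asymmetry between the two compact sets. Uniformity failed in Thm.~\ref{thm: Pointwise convergence of cylindrical approximation (informal)} precisely because the tail $\sum_{k\geq m}|(g_\theta,\phi_k)_H|^2$ cannot be controlled uniformly in $\theta$. Pairing against $\theta'$ and using self-adjointness of $P_m$ shifts that tail off the derivative and onto $\theta'$, whose membership in the compact set $K'$ guarantees equi-small tails. Getting this transfer right is the only delicate step; the remaining estimates are routine applications of Cauchy--Schwarz.
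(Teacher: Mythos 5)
Your proposal is correct and follows essentially the same route as the paper's own proof: the identical decomposition $g_\theta - P_m g_{P_m\theta} = P_m(g_\theta - g_{P_m\theta}) + (1-P_m)g_\theta$, Cauchy--Schwarz on the first piece combined with Thm.~\ref{thm: Uniform convergence of cylindrical approximation of functional derivatives}, and the self-adjointness transfer $\left(\theta', (1-P_m)g_\theta\right)_H = \left((1-P_m)\theta', g_\theta\right)_H$ so that the equi-small tail condition of the compact set $K'$ (Thm.~\ref{thm: Necessary and sufficient condition of pre-compactness of metric space}) controls the tail uniformly. Your statement of the key asymmetry --- shifting the tail off the derivative and onto $\theta'$ --- is exactly the mechanism the paper exploits, so nothing further is needed.
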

\begin{proof}
    Using the triangle inequality and the Cauchy-Schwarz inequality, we have
    \begin{align}
        &
        \left| 
        \left(\theta', \frac{\delta F([\theta])}{\delta \theta} \right)_H
        -
        \left(\theta', P_m\frac{\delta F([P_m \theta])}{\delta \theta} \right)_H
        \right|
        \notag
        \\
        \leq&
        \left| 
        \left(\theta', P_m
        \left(\frac{\delta F([\theta])}{\delta \theta} 
        -
        \frac{\delta F([P_m \theta])}{\delta \theta}
        \right)
        \right)_H
        \right|
        +
        \left| 
        \left(\theta', (1-P_m)\frac{\delta F([\theta])}{\delta \theta} \right)_H
        \right|
        \notag
        \\
        \leq&
        \left\| 
        \theta'
        \right\|_H
        \left\| 
        P_m
        \left(
        \frac{\delta F([\theta])}{\delta \theta} 
        -
        \frac{\delta F([P_m \theta])}{\delta \theta} 
        \right)
        \right\|_H
        +
        \left| 
        \left((1-P_m)\theta', \frac{\delta F([\theta])}{\delta \theta} \right)_H
        \right|
        \notag
        \\
        \leq&
        \left\| 
        \theta'
        \right\|_H
        \left\| 
        \frac{\delta F([\theta])}{\delta \theta} 
        -
        \frac{\delta F([P_m \theta])}{\delta \theta} 
        \right\|_H
        +
        \left\| 
        \frac{\delta F([\theta])}{\delta \theta} 
        \right\|_H
        \sqrt{\sum_{k=m}^{\infty}\left|\left(\theta', \phi_k\right)_H\right|^2}. \label{eq: tmp1}
    \end{align}
    Note that $\frac{\delta F([\theta])}{\delta \theta}$ and $\theta'$ are bounded on $K$ and $K'$, respectively; 
    thus, $C$ and $C'$ s.t.
    $\sup_{\theta \in K}\left\| \frac{\delta F([\theta])}{\delta \theta} \right\|_H \leq C<\infty$
    and
    $\sup_{\theta' \in K'}\left\| \theta'\right\|_H \leq C'<\infty$.
    Therefore, Ineq.~\eqref{eq: tmp1} gives
    \begin{align}
        \label{eq: inner prod inequality}
        &\left| 
        \left(\theta', \frac{\delta F([\theta])}{\delta \theta} \right)_H
        -
        \left(\theta', P_m\frac{\delta F([P_m \theta])}{\delta \theta} \right)_H
        \right|
        \leq
        C'
        \left\| 
        \frac{\delta F([\theta])}{\delta \theta} 
        -
        \frac{\delta F([P_m \theta])}{\delta \theta} 
        \right\|_H
        +
        C\sum_{k=m}^{\infty}\left|\left(\theta', \phi_k\right)_H\right|^2.
    \end{align}
    Finally, according to Thm.~\ref{thm: Necessary and sufficient condition of pre-compactness of metric space}, the compactness of $K'$ means
    \begin{align}
        \forall \epsilon' > 0, \,\,
        \exists M \in \mathbb{N} \,\, 
        s.t. \,\, 
        \forall \theta' \in K', \,\, 
        \forall m \geq M, \,\, 
        \sum_{k=m}^{\infty}\left|\left(\theta', \phi_k\right)_H\right|^2<\epsilon'.
    \end{align}
    This, together with Ineq.~\eqref{eq: inner prod inequality} and Thm.~\ref{thm: Uniform convergence of cylindrical approximation of functional derivatives}, proves the lemma.
\end{proof}

\paragraph{Tail term.}
In the cylindrical approximation \eqref{eq: cylindrical approx of functional derivatives with omitting second term}, $P_m$ projects the ``tail term'' $\sum_{k=m}^{\infty} (\frac{\delta F([\theta])}{\delta \theta}, \phi)_H \phi_k(x)$ to zero, unlike the cylindrical approximation \eqref{eq: CA of FD with tail term} adopted in \cite{venturi2018numerical, venturi2021spectral}.
The tail term vanishes if one considers an inner product of a functional derivative and $ v \in D_m$. 
However, it is not always the case that functional derivatives appear in the form of $(\frac{\delta F([\theta])}{\delta \theta}, v)$ \textbf{with $ v \in D_m$} in FDEs. 
In fact, in the FTE, the functional derivative appears as an inner product with $u(x)$, which can be chosen arbitrarily. 
The point is that Lem.~\ref{lem: Uniform convergence of inner products}, which plays a fundamental role in proving the convergence of approximated solutions, guarantees the convergence of the inner product $(\frac{\delta F([\theta])}{\delta \theta}, v)$ in a wider variety of situations including $v \notin D_m$. In other words, our theorems extend the class of FDEs whose uniform convergence of the approximated solution is theoretically guaranteed.

\subsection{Derivation of Exact Solution of Burgers-Hopf Equation} 
\label{app: Derivation of Exact Solution of BHE}
\subsubsection{Derivation of Eq.~\eqref{eq: BH exact solution}}
\label{app: Derivation of Eq. BH exact solution}
We show the derivation of Eq.~\eqref{eq: BH exact solution}. It is based on the functional Taylor expansion
\begin{align}
    \label{eq:ftaylor}
    W([\Theta],\tau)
    =
    \sum_{n=0}^{\infty}
    \frac{1}{n!}
    \int_{-\frac{1}{2}}^{\frac{1}{2}} d\xi_1 
    \cdots 
    \int_{-\frac{1}{2}}^{\frac{1}{2}} d\xi_n
    W^{(n)}(\tau,\xi_1,\ldots,\xi_n)
    \Theta(\xi_1)\cdots\Theta(\xi_n).
\end{align}
This turns Eq.~\eqref{eq: BHE} as follows:
\begin{align}
    \frac{\partial}{\partial \tau}
    W^{(n)}(\tau,\xi_1,\ldots,\xi_n)
    =
    \left(
    \frac{\partial^2}{\partial \xi_1^2}
    +
    \cdots
    +
    \frac{\partial^2}{\partial \xi_n^2}
    \right)
    W^{(n)}(\tau,\xi_1,\ldots,\xi_n).
\end{align}
In the momentum space, this is written in the following form
\begin{align}
    &\frac{\partial}{\partial \tau}
    \tilde{W}^{(n)}(\tau,k_1,\ldots,k_n)
    =
    -\left(k_1^2+\cdots+k_n^2\right)
    \tilde{W}^{(n)}(\tau,k_1,\ldots,k_n),
    \\
    &W^{(n)}(\tau,\xi_1,\ldots,\xi_n)
    =
    \sum_{k_1}\cdots\sum_{k_n}
    e^{-i k_1 \xi_1+\cdots -ik_n \xi_n}
    \tilde{W}^{(n)}(\tau,k_1,\ldots,k_n),
\end{align}
where $k_{1,\cdots,n}=2\pi l_{1,\cdots,n}$ with $l_{1,\cdots,n}\in\mathbb{Z}$. The solution of this equation is
\begin{align}
    \tilde{W}^{(n)}(\tau,k_1,\ldots,k_n)
    =
    e^{-\left(k_1^2+\cdots+k_n^2\right)\tau}
    \tilde{W}^{(n)}(0,k_1,\ldots,k_n).
\end{align}
Using this result, Eq.~\eqref{eq:ftaylor} gives
\begin{align}
    \label{eq:ftaylor_2}
    W([\Theta],t)
    =&
    \sum_{n=0}^{\infty}
    \frac{1}{n!}
    \sum_{k_1}\cdots\sum_{k_n}
    \tilde{W}^{(n)}(\tau,k_1,\ldots,k_n)
    \tilde{\Theta}(-k_1)\cdots\tilde{\Theta}(-k_n)
    \notag
    \\
    =&
    \sum_{n=0}^{\infty}
    \frac{1}{n!}
    \int_{-\frac{1}{2}}^{\frac{1}{2}} d\xi_1
    \cdots 
    \int_{-\frac{1}{2}}^{\frac{1}{2}} d\xi_n
    W^{(n)}(0,x_1,\ldots,x_n)
    \prod_{m=1}^n
    \left(
    \sum_{k_m}e^{-k_m^2 \tau+ik_m \xi_m}\tilde{\Theta}(-k_m)
    \right)
    \notag
    \\
    =&
    \sum_{n=0}^{\infty}
    \frac{1}{n!}
    \int_{-\frac{1}{2}}^{\frac{1}{2}} d\xi_1
    \cdots 
    \int_{-\frac{1}{2}}^{\frac{1}{2}} d\xi_n
    W^{(n)}(0,\xi_1,\ldots,\xi_n)
    \prod_{m=1}^n
    \left(
    \int_{-\frac{1}{2}}^{\frac{1}{2}}d\xi_m'
    \Theta(\xi_m')
    \sum_{k_m}e^{-k_m^2 \tau+ik_m (\xi_m-\xi_m')}
    \right)
    \notag
    \\
    =&
    \sum_{n=0}^{\infty}
    \frac{1}{n!}
    \int_{-\frac{1}{2}}^{\frac{1}{2}} d\xi_1
    \cdots 
    \int_{-\frac{1}{2}}^{\frac{1}{2}} d\xi_n
    W^{(n)}(0,\xi_1,\ldots,\xi_n)
    \prod_{m=1}^n
    \left(
    \int_{-\frac{1}{2}}^{\frac{1}{2}}d\xi_m'
    \Theta(\xi_m')
    \sum_{l_m=-\infty}^{\infty}
    e^{-4\pi^2 \tau l_m^2+2\pi i l_m(\xi_m-\xi_m')}
    \right),
\end{align}
where
\begin{align}
    \tilde{\Theta}(k)
    =
    \int_{-\frac{1}{2}}^{\frac{1}{2}}d\xi
    e^{ik\xi}\Theta(\xi).
\end{align}
The Poisson's summation formula transforms the summation w.r.t. $l_m$:
\begin{align}
    \sum_{l_m=-\infty}^{\infty}
    e^{-4\pi^2\tau l_m^2+2\pi i l_m(\xi_m-\xi_m')}
    =&
    \sum_{q=-\infty}^{\infty}
    \int_{-\infty}^{\infty}dy
    e^{-2\pi i q y}\left(
    e^{-4\pi^2\tau y^2+2\pi i y(\xi_m-\xi_m')}
    \right)
    \notag
    \\
    =&
    \frac{1}{\sqrt{4\pi\tau}}
    \sum_{q=-\infty}^{\infty}
    e^{-\frac{1}{4\tau}
    \left(\xi_m-\xi_m'-q\right)^2}.
\end{align}
Plugging this into Eq.~\eqref{eq:ftaylor_2}, we arrive at the following expression:
\begin{align}
    W([\Theta],\tau)
    =&
    \sum_{n=0}^{\infty}
    \frac{1}{n!}
    \int_{-\frac{1}{2}}^{\frac{1}{2}} d\xi_1
    \cdots 
    \int_{-\frac{1}{2}}^{\frac{1}{2}} d\xi_n
    W^{(n)}(0,\xi_1,\ldots,\xi_n)
    \prod_{m=1}^n
    \left(
    \int_{-\frac{1}{2}}^{\frac{1}{2}}d\xi_m'
    \frac{\Theta(\xi_m')}{\sqrt{4\pi\tau}}
    \sum_{q=-\infty}^{\infty}
    e^{-\frac{1}{4\tau}
    \left(\xi_m-\xi_m'-q\right)^2}
    \right).
\end{align}
From this result, we conclude that the solution is given by
\begin{align*}
    W([\Theta],\tau)
    =
    W_0([\Theta_\tau([\Theta])]),
    \quad
    \Theta_\tau([\Theta],\xi)
    =
    \frac{1}{\sqrt{4\pi\tau}}
    \sum_{q=-\infty}^{\infty}\int_{-\frac{1}{2}}^{\frac{1}{2}}d\xi'
    e^{-\frac{1}{4\tau}\left(\xi-\xi'-q\right)^2}\Theta(\xi')
    \quad
    (\text{Eq.~\eqref{eq: BH exact solution}}).
\end{align*}

\subsubsection{Derivation of Eq.~\eqref{eq: BH exact solution in fourier}}
\label{app: Derivation of BH exact solution in exp}
We show the derivation of Eq.~\eqref{eq: BH exact solution in fourier}. The cylindrical approximation of Eq.~\eqref{eq: BH exact solution} is represented by
\begin{align*}
    W([P_{2M}\Theta],\tau)
    =
    W_0([\Theta_\tau([P_{2M}\Theta])]),
    \quad
    \Theta_\tau([P_{2M}\Theta],\xi)
    =
    \frac{1}{\sqrt{4\pi\tau}}
    \sum_{q=-\infty}^{\infty}
    \int_{-1/2}^{1/2}d\xi'
    e^{-\frac{1}{4\tau}
    \left(\xi-\xi'-q\right)^2}
    P_{2M}\Theta(\xi').
\end{align*}
The basis \eqref{eq: fourier basis} satisfies 
\begin{align}
    \phi_k(\xi+\xi')
    &=
    \begin{cases}
        1 & (k=0)
        \\
        \sqrt{2}\sin(\pi (k+1)(\xi+\xi')) & (k:\text{odd})
        \\
        \sqrt{2}\cos(\pi k(\xi+\xi')) & (k:\text{nonzero even})
    \end{cases}
    \notag
    \\
    &=
    \begin{cases}
        1 & (k=0)
        \\
        \sqrt{2}
        \sin(\pi (k+1)\xi)
        \cos(\pi (k+1)\xi')
        +
        \sqrt{2}
        \cos(\pi (k+1)\xi) 
        \sin(\pi (k+1)\xi')
        &
        (k:\text{odd})
        \\
        \sqrt{2}
        \cos(\pi k\xi)
        \cos(\pi k\xi')
        -
        \sqrt{2}
        \sin(\pi k\xi) 
        \sin(\pi k\xi') 
        &
        (k:\text{nonzero even})
    \end{cases}
    \notag
    \\
    &=
    \begin{cases}
        \cos(\pi (k+1)\xi')
        \phi_{k}(\xi)
        +
        \sin(\pi (k+1)\xi')
        \phi_{k+1}(\xi)
        &
        (k:\text{odd})
        \\
        \cos(\pi k\xi')
        \phi_k(\xi)
        -
        \sin(\pi k\xi') 
        \phi_{k-1}(\xi)
        &
        (k:\text{even})
    \end{cases}.
\end{align}
Using this relation, we evaluate $\Theta_\tau([P_{2M}\Theta], \xi)$ as
\begin{align}
    \Theta_\tau([P_{2M}\Theta], \xi)
    =&
    \frac{1}{\sqrt{4\pi\tau}}
    \sum_{q=-\infty}^{\infty}
    \int_{-1/2}^{1/2}d\xi'
    e^{-\frac{1}{4\tau}(\xi^{\prime}-q)^2}
    \sum_{k=0}^{2M-1}
    a_k \phi_k(\xi'+\xi)
    \notag
    \\
    =&
    \frac{1}{\sqrt{4\pi \tau}}
    \int_{-\infty}^{\infty} d\xi'
    e^{-\frac{\xi^{\prime 2}}{4\tau}}
    \sum_{k=0}^{M-1}
    \left[
    \cos(2\pi k\xi')\phi_{2k}(\xi)
    -
    \sin(2\pi k \xi') \phi_{2k-1}(\xi)
    \right]
    a_{2k}
    \notag
    \\
    &
    +
    \frac{1}{\sqrt{4\pi\tau}}
    \int_{-\infty}^{\infty} d\xi'
    e^{-\frac{\xi^{\prime 2}}{4\tau}}
    \sum_{k=0}^{M-1}
    \left[
    \cos(2\pi (k+1)\xi')
    \phi_{2k+1}(\xi)
    +
    \sin(2\pi (k+1)\xi')
    \phi_{2k+2}(\xi)
    \right]
    a_{2k+1}
    \notag
    \\
    =&
    \sum_{k=0}^{M-1}
    \left(
    e^{-4\pi^2 k^2 \tau}\phi_{2k}(\xi)a_{2k}
    +
    e^{-4\pi^2 (k+1)^2\tau}
    \phi_{2k+1}(\xi)
    a_{2k+1}
    \right),
\end{align}
where $a_k=\left(\phi_k,\Theta\right)_{L_{\mathrm{p}}^2([-1/2,1/2])}$.

When the initial condition is given by Eq.~\eqref{eq: BH Gaussian initial condition},
\begin{align*}
    &
    W([P_{2M}\Theta],\tau)
    =
    -
    \overline{\mu}
    \int_{-1/2}^{1/2} d\xi 
    \Theta_{\tau}(\xi)
    +
    \frac{1}{2}
    \int_{-1/2}^{1/2} d\xi
    \int_{-1/2}^{1/2} d\xi'
    C(\xi,\xi')
    \Theta_{\tau}([P_{2M}\Theta],\xi)
    \Theta_{\tau}([P_{2M}\Theta],\xi')
    \notag
    \\
    =&
    -
    \overline{\mu}a_{0}
    +
    \frac{1}{2}
    \sum_{k=0}^{M-1}
    \sum_{l=0}^{M-1}
    \int_{-1/2}^{1/2} d\xi
    \int_{-1/2}^{1/2} d\xi'
    C(\xi,\xi')
    \notag
    \\
    &
    \quad\quad
    \times
    \left(
    e^{-4\pi^2 k^2 \tau}
    \phi_{2k}(\xi)
    a_{2k}
    +
    e^{-4\pi^2 (k+1)^2\tau}
    \phi_{2k+1}(\xi)
    a_{2k+1}
    \right)
    \left(
    e^{-4\pi^2 l^2 \tau}
    \phi_{2l}(\xi')
    a_{2l}
    +
    e^{-4\pi^2 (l+1)^2\tau}
    \phi_{2l+1}(\xi')
    a_{2l+1}
    \right)
    \notag
    \\
    =&
    -
    \overline{\mu}a_{0}
    +
    \frac{1}{2}
    \sum_{k=0}^{M-1}
    \sum_{l=0}^{M-1}
    \left(
    e^{-4\pi^2 (k^2+l^2) \tau}
    \tilde{C}_{2k,2l}
    a_{2k}
    a_{2l}
    +
    e^{-4\pi^2 (k^2+(l+1)^2) \tau}
    \tilde{C}_{2k,2l+1}
    a_{2k}
    a_{2l+1}
    \right.
    \notag
    \\
    &
    \left.
    \quad
    +
    e^{-4\pi^2 ((k+1)^2+l^2)\tau}
    \tilde{C}_{2k+1,2l}
    a_{2k+1}
    a_{2l}
    +
    e^{-4\pi^2 ((k+1)^2+(l+1)^2)\tau}
    \tilde{C}_{2k+1,2l+1}
    a_{2k+1}
    a_{2l+1}
    \right)
    \quad
    (\text{Eq.~\eqref{eq: BH exact solution in fourier}}),
\end{align*}
where we have introduced
\begin{align*}
    \tilde{C}_{kl}
    =
    \int_{-1/2}^{1/2} d\xi
    \int_{-1/2}^{1/2} d\xi'
    C(\xi,\xi')
    \phi_{k}(\xi)
    \phi_{l}(\xi').
\end{align*}

\clearpage
\section{Proofs II: Details of Functional Transport Equation and Burgers-Hopf Equation 
\label{app: Details about our FDEs}}
In this Appendix, we provide the detailed background of the FTE and BHE and prove Thm.~\ref{thm: main theorem (informal)}, i.e., the convergence of solutions under the cylindrical approximation \eqref{eq: cylindrical approx of functional derivatives with omitting second term}.
The main materials are Lem.~\ref{lem: Uniform convergence of inner products} and Thm.~\ref{thm: Convergence of solutions under cylindrical approximation in App}. Technical assumptions are summarized in Thm.~\ref{thm: Convergence of solutions under cylindrical approximation in App}.

\subsection{Functional Transport Equation} 
\label{app: Functional Transport Equation}
\begin{figure}[htbp]
\vskip 0.1in
    \begin{center}
    \centerline{\includegraphics[width=0.8\columnwidth]{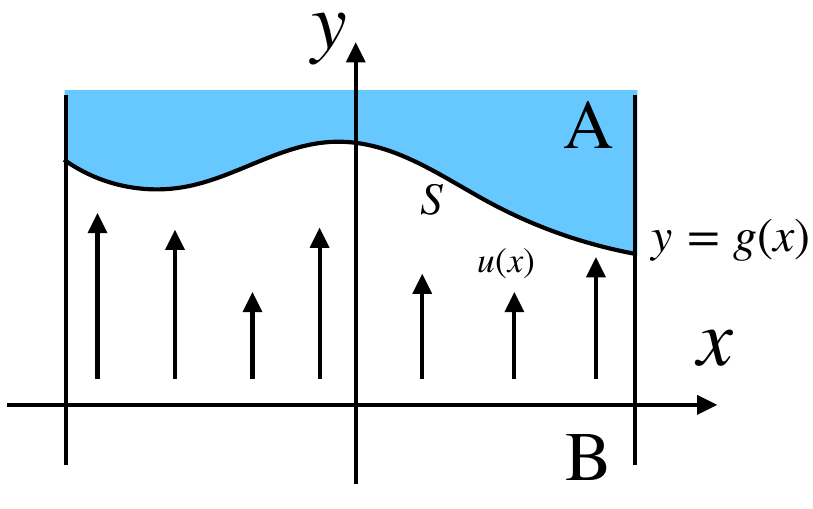}}
    \caption{\textbf{Two-dimensional fluid.} Let $\boldsymbol{u}(\boldsymbol{x})=(0,u(x))^\top$ denote velocity field. The surface $S$ is defined by $y=g(x)$, which separates the space into two areas, A and B.}
    \label{fig: fig_fluid_FTE}
    \end{center}
    \vskip 0.1in
\end{figure}
We introduce the \textit{functional transport equation} (FTE), a generalization of the transport equation (continuity equation) \cite{landau2013fluid_mechanics}. 
We consider a two-dimensional fluid system in $x$-$y$ coordinates (Fig.~\ref{fig: fig_fluid_FTE}). 
We assume that the velocity field is given by $\boldsymbol{u}(\boldsymbol{x}) = (0, u(x))^\top$.
The surface $S$, defined by $y = g(x)$, separates the space into two distinct areas, $y > g(x)$ (area A) and $y < g(x)$ (area B). 

We calculate the amount of fluid entering the area A from B per unit time:
\begin{align}
    \label{eq: amount of incoming fluid}
    F([g],t)=\int dx\, u(x)\rho(x,g(x),t) \, ,
\end{align}
where  $\rho(x, y, t)$ denotes the fluid density at $(x, y)$ and at time $t$.
Differentiating both sides w.r.t. $t$ and using the transport equation for the fluid density
\begin{align}
    \label{eq: density transport equation}
    \frac{\partial}{\partial t}
    \rho(x,y,t)
    =
    -u(x)\frac{\partial}{\partial y}\rho(x, y,t)\, ,
\end{align}
we have
\begin{align}
    \frac{\partial}{\partial t}F([g],t)
    =&
    -\int dx \,
    u(x)\frac{\delta F([g],t)}{\delta g(x)} \, .
\end{align}
We refer to this linear FDE as the FTE.

\subsubsection{Analytic Solution}
Let $L^2([-1,1])$ be a real Hilbert space of functions on $[-1,1]$ with the inner product
\begin{align}
    \left(f,g\right)_{L^2([-1,1])} :=\int_{-1}^1 dx f(x)g(x),
\end{align}
where $f,g \in L^2([-1,1])$.
Let $\mathcal{F}(L^2([-1,1]))$ be a Banach space of functionals that map $L^2([-1,1])$ to $\mathbb{R}$. The functional transport equation 
\begin{align}
    \label{eq: functional transport equation}
    \frac{\partial}{\partial t}F([g],t)
    =&
    -\int_{-1}^1 dx \,
    u(x)\frac{\delta F([g],t)}{\delta g(x)},\,\,
    F([g],0)=F_0([g]),
\end{align}
is a linear FDE in $\mathcal{F}(L^2([-1,1]))$, where $F_0$ is a given functional. Here, $u\in L^2([-1,1])$ is assumed to have a finite norm $\| u\|_{L^2([-1,1])}<\infty$.

The solution of this equation is given by
\begin{align}
    \label{eq: FTE exact solution}
    F([g],t)=&F_0[g_t],\quad 
    g_t(x) := g(x)-u(x)t,
\end{align}
as can be seen immediately by substituting this into Eq.~\eqref{eq: functional transport equation}.

\subsubsection{Cylindrical Approximation and Convergence}
We prove that the solution of the approximated equation converges to that of the non-approximated original equation.
Let $\lbrace \phi_0,\phi_{1},\ldots \rbrace$ be an orthonormal basis of $L^2([-1,1])$ 
and $P_m$ the projection onto $\mathrm{span}\lbrace \phi_0,\ldots,\phi_{m-1} \rbrace$. 
Following App.~\ref{app: Abstract Evolution Equations}, we apply $B_m$ on both sides and the initial condition, to obtain
\begin{align} \label{eq: approximated FTE with Rm}
    \frac{\partial}{\partial t}f(\boldsymbol{a},t)
    =&
    -\sum_{k=0}^{m-1} u_k
    \frac{\partial}{\partial a_k}f(\boldsymbol{a},t)+R_m([g],t),
    \,\,
    \quad f(\boldsymbol{a},0)=f_0(\boldsymbol{a}),
\end{align}
where we have introduced
\begin{align}
    &f(\boldsymbol{a},t) := F([P_m g],t),
    \,\,
    f_0(\boldsymbol{a},t) := F_0([P_m g]),
    \nn
    &a_k := \left(\phi_k,g\right)_{L^2([-1,1])},
    \,\,
    u_k := \left(\phi_k,u\right)_{L^2([-1,1])}.    
\end{align}
The residual functional $R_m$ is defined as
\begin{align}
    R_m([g],t)
    :=&
    -
    \left(u,\frac{\delta F([g],t)}{\delta g}\right)_{L^2([-1,1])}
    +
    \left(u,P_m\frac{\delta F([P_m g],t)}{\delta g}\right)_{L^2([-1,1])}.
\end{align}
The cylindrical approximation of Eq.~\eqref{eq: functional transport equation} is given by
\begin{align} \label{eq: approximated FTE}
    \frac{\partial}{\partial t}f(\boldsymbol{a},t)
    =&
    -\sum_{k=0}^{m-1} u_k
    \frac{\partial}{\partial a_k}f(\boldsymbol{a},t), 
    \,\,
    \quad f(\boldsymbol{a},0)=f_0(\boldsymbol{a}).
\end{align}

We first prove the consistency of Eq.~\eqref{eq: approximated FTE}.
According to Lem.~\ref{lem: Uniform convergence of inner products},\footnote{
    In this case, $K'$ in Lem.~\ref{lem: Uniform convergence of inner products} is set to a singleton $K'=\lbrace u\rbrace$. Obviously, this is compact because $\|u\|_{L^2([-1,1])}<\infty$.
} $R_m([g],t)$ uniformly converges to zero on an arbitrary compact subset $K$ in $L^2([-1,1])$ if $F([g],t)$ is a continuously differentiable functional on $K$. Thus, we can use Thm.~\ref{thm: Consistency of cylindrical approximation of FDEs}, which proves the consistency of the cylindrical approximation \eqref{eq: approximated FTE}.

We next prove the stability  (Def.~\ref{def: Stability of approximated equation}) of the cylindrical approximation \eqref{eq: approximated FTE} in the $L^{\infty}(\mathbb{R}^{m})$ norm. Suppose that $f_0(\boldsymbol{a})$ is bounded by a constant $c$ independent of $m$.\footnote{
    Strictly speaking, if $a_k$ are defined on an infinite interval, this assumption of boundedness is not valid for the FDEs used in our experiments because $\| F_0 \|_{L^{\infty}(\mathbb{R}^{m})} \rightarrow \infty$ as $a_k \rightarrow \infty$. 
    However, the range of $a_k$ is usually set to a finite interval in numerical experiments, and thus $\| F_0 \|_{L^{\infty}(\mathbb{R}^{m})}$ is also finite; i.e., the assumption of boundedness holds.
} 
From the cylindrical approximation of the solution Eq.~\eqref{eq: FTE exact solution}, $f(\boldsymbol{a},t)=F_0[P_m (g(x)-u(x)t)]$, we see
\begin{equation}
    \sup_{\boldsymbol{a}}
    |f(\boldsymbol{a},t)|
    =
    \sup_{\boldsymbol{a}}
    |F_0[P_m (g(x)-u(x)t)]|
    =
    \sup_{\boldsymbol{a}}
    |F_0[P_m g(x)]|
    =
    \sup_{\boldsymbol{a}}
    |f_0(\boldsymbol{a})|.
\end{equation}
Therefore, we obtain
\begin{align}
    \left\| f(t)\right \|_{L^{\infty}(\mathbb{R}^{m})} 
    (=
    \left\| e^{t\mcl_m}f_0\right \|_{L^{\infty}(\mathbb{R}^{m})})
    =
    \left\| f_0\right \|_{L^{\infty}(\mathbb{R}^{m})}\leq c,\,\,
    \forall m\in \mathbb{N},
\end{align}
for the operator $\mcl_m=-\sum_{k=0}^{m-1} u_k\frac{\partial}{\partial a_k}$. From this result, the operator norm of $e^{t\mcl_m}$ is given by
\begin{align}
    \|e^{t\mcl_m}\|
    =
    \sup_{f_0\neq 0}
    \frac{
    \left\| e^{t\mcl_m}f_0\right \|_{L^{\infty}(\mathbb{R}^{m})} 
    }{\left\| f_0\right \|_{L^{\infty}(\mathbb{R}^{m})}}
    =
    \sup_{f_0\neq 0}
    \frac{
    \left\| f(t)\right \|_{L^{\infty}(\mathbb{R}^{m})} 
    }{\left\| f_0\right \|_{L^{\infty}(\mathbb{R}^{m})}}
    =
    1.
\end{align}
This shows that $\exists M, \omega$ s.t.~$\forall m \in \mathbb{N}, \|e^{t\mcl_m} \| \leq Me^{\omega t}$ ($M=1$ and $\omega=0$), and thus the cylindrical approximation is stable in the sense of Def.~\ref{def: Stability of approximated equation}.

According to Thm.~\ref{thm: Consistency of cylindrical approximation of FDEs}, the fact that the cylindrical approximation \eqref{eq: approximated FTE} is consistent and stable guarantees the convergence
$F([P_m g], t) \xrightarrow{m \rightarrow \infty} F([g],t)$ on a compact subset in a finite time interval $[0, T]$ if $F([P_m g], 0) \xrightarrow{m \rightarrow \infty} F([g],0)$ (this is satisfied by the initial conditions used in our experiment) and if the approximated functional transport equation is well-posed in $[0, T]$.

\subsubsection{Initial Conditions}
We define the initial condition $F([g],0)$ in Eq.~\eqref{eq: functional transport equation} as
\begin{align} \label{eq: initial condition FTE}
    F([g],0)
    =
    \rho_0\int_{-1}^1 dx\, u(x)g(x) \,.
\end{align}
The exact solution is given by
\begin{align}
    F([g],t)
    =&
    \rho_0
    \int_{-1}^1 dx\, u(x)\left(g(x)-u(x)t\right) \, .
\end{align}
Under the cylindrical approximation, the solution is
\begin{align} \label{eq: solution FTE}
    F([P_m g],t) = f(\boldsymbol{a}, t)
    =& \rho_0\sum_{k=0}^{m-1} u_k (a_k - u_k t) \, .
\end{align}
With $\upsilon_0$ being a constant and $\{\phi_k\}_{k \geq 0}$ being the Legendre polynomials, we consider two types of $u(x)$:
\begin{align}
    u_k :=& \upsilon_0 \text{ for $k=1$, otherwise } 0,  \\
    u_k :=& \upsilon_0 \text{ for $k \leq 14$, otherwise } 0,
\end{align}
corresponding to two types functional transport equations \eqref{eq: functional transport equation}, initial conditions \eqref{eq: initial condition FTE}, and solutions \eqref{eq: solution FTE}. 
For convenience, we call them the linear initial condition and the nonlinear initial condition, respectively, though the form of Eq.~\eqref{eq: functional transport equation} also changes in accordance with the form of $u(x)$.

\subsection{Burgers-Hopf Equation} 
\label{app: Burgers-Hopf Equation}
The BHE, the one-dimensional analog of Eq.~\eqref{eq: Navier-Stokes Hopf equation} in App.~\ref{app: Turbulence}, describes the statistical properties of one-dimensional fluids and is a basic tool for studying turbulence, as mentioned in App.~\ref{app: Turbulence}.
We consider fluid in a one-dimensional box $[-L/2,L/2]$ that evolves in accordance with the Burgers equation
\begin{align}
    \frac{\partial u(x,t)}{\partial t}+u(x,t)\frac{\partial u(x,t)}{\partial x}
    =
    \nu \frac{\partial^2 u(x,t)}{\partial x^2},
\end{align}
where $u(x,t)$ is a velocity field and $\nu$ is the kinematic viscosity. 
Specifically, we focus on the case where the initial value $u(x,0)$ is given randomly, thus making $u(x,t)$ a random field. Let us introduce the characteristic functional
\begin{align}
    \Phi([\theta],t)
    =
    \mathop{\mathbb{E}}\limits_{\lbrace u(x,0) \rbrace\sim \mathcal{P}_0}
    \left[\exp\left(i\int_{-L/2}^{L/2} dx\, u(x,t)\theta(x)\right)\right],
\end{align}
where $\mathcal{P}_0$ is the probability distribution for the initial velocity field $u(x,0)$, and $\theta(x)$ is called the test function. It provides statistical properties of the velocity field because the functional derivatives are equal to the moments of the velocity field:
\begin{align}
    \mathop{\mathbb{E}}\limits_{\lbrace u(x,0) \rbrace\sim \mathcal{P}_0}
    \left[u(x_1,t)\cdots u(x_n,t)
    \right]
    =
    (-i)^n
    \left.
    \frac{\delta^n \Phi([\theta],t)}
    {
    \delta \theta(x_1)
    \cdots
    \delta \theta(x_n)
    }
    \right|_{\theta=0},
\end{align}
as is already shown in Eq.~\eqref{eq: derivative of characteristic functional} in App.~\ref{app: Turbulence}. 
The time evolution of $\Phi([\theta],t)$ is known to follow the BHE \cite{venturi2018numerical}:
\begin{align}
    \label{eq: BH original equation}
    \frac{\partial \Phi([\theta],t)}{\partial t}
    =
    \int_{-L/2}^{L/2} dx\,
    \theta(x)
    \left(
    \frac{i}{2}
    \frac{\partial}{\partial x}\frac{\delta^2 \Phi([\theta],t)}{\delta \theta(x)\delta \theta(x)}
    +
    \nu 
    \frac{\partial^2}{\partial x^2}
    \frac{\delta \Phi([\theta],t)}{\delta \theta(x)}
    \right).
\end{align}
The initial condition $\Phi([\theta],0)$ is determined by $\mathcal{P}_0$. For example, in the  case of the Gaussian random field, we have
\begin{align} \label{eq: BH original initial condition}
    \Phi([\theta],0)
    =
    \exp\left(
    i\int_{-L/2}^{L/2} dx \mu(x)\theta(x)-\frac{1}{2}\int_{-L/2}^{L/2} dx\int_{-L/2}^{L/2} dx' C(x,x')\theta(x)\theta(x')
    \right),
\end{align}
where $\mu(x)$ is the mean velocity and $C(x,x')$ is the infinite-dimensional covariance matrix.

We modify Eqs.~\eqref{eq: BH original equation} and \eqref{eq: BH original initial condition} to facilitate numerical computation. 
First, we replace $-i\theta(x)$ with $\theta(x)$ and regard $\theta(x)$ as a real-valued function to avoid complex numbers. 
Second, we make Eqs.~\eqref{eq: BH original equation} and \eqref{eq: BH original initial condition} dimensionless, a common convention in numerical computation, by introducing
\begin{equation}
    T:=\frac{L^2}{\nu},\quad
    \tau:=\frac{t}{T},\quad
    \xi:=\frac{x}{L},\quad
    \Theta(\xi):=\frac{L^2}{T}\theta(L\xi) \, .
\end{equation}
Third, we introduce 
\begin{equation}
    W([\Theta],\tau) :=\ln \Phi([\theta],t)
\end{equation}
to remove the exponential function in $\Phi([\theta],t)$ and stabilize numerical computation.
Note that the functional derivatives of $W([\Theta],\tau)$ give the cumulants, not moments, of the velocity field.
Fourth, we neglect the advection term (the first term on the right-hand side of Eq.~\eqref{eq: BH original equation}).
We derive the analytic solution for the equation in the following.

\subsubsection{Analytic Solution}
Let us consider $L_{\mathrm{p}}^2([-1/2,1/2])$, a real Hilbert space of periodic functions on $[-1/2,1/2]$ with the inner product
\begin{align}
    \left(f,g\right)_{L_{\mathrm{p}}^2([-1/2,1/2])} := \int_{-1/2}^{1/2} dx f(x)g(x) \, ,
\end{align}
where $f, g \in L_{\mathrm{p}}^2([-1/2,1/2])$.
Let $\mathcal{F}(L_{\mathrm{p}}^2([-1/2,1/2]))$ be a Banach space of functionals that map $L_{\mathrm{p}}^2([-1/2,1/2])$ to $\mathbb{R}$. 
In $\mathcal{F}(L_{\mathrm{p}}^2([-1/2,1/2]))$, the BHE without the advection term is given by
\begin{align}
    \label{eq: BHE}
    \frac{\partial W([\Theta],\tau)}{\partial \tau}
    =
    \int_{-\frac{1}{2}}^{\frac{1}{2}} d\xi \Theta(\xi)\frac{\partial^2}{\partial \xi^2}\frac{\delta W([\Theta],\tau)}{\delta \Theta(\xi)},
    \quad
    W([\Theta],0)=W_0([\Theta]).
\end{align}
The exact solution of this equation is given by
\begin{align}
    \label{eq: BH exact solution}
    W([\Theta],\tau)
    =
    W_0([\Theta_\tau([\Theta],\xi)]),
    \quad
    \Theta_\tau([\Theta],\xi)
    :=
    \frac{1}{\sqrt{4\pi\tau}}
    \sum_{q=-\infty}^{\infty}\int_{-\frac{1}{2}}^{\frac{1}{2}}d\xi'
    e^{-\frac{1}{4\tau}\left(\xi-\xi'-q\right)^2}
    \Theta(\xi'),
\end{align}
where $W_0$ is a given functional. The proof is a bit technical and is given in App.~\ref{app: Derivation of Eq. BH exact solution}.

\subsubsection{Cylindrical Approximation and Convergence}
In this section, we prove the convergence of the solution for the BHE under the cylindrical approximation.
Let $\lbrace \phi_0,\phi_{1},\ldots \rbrace$ be an orthonormal basis of $L_{\mathrm{p}}^2([-1/2,1/2])$ and $P_m$ be the projection onto $\mathrm{span}\lbrace \phi_0,\ldots,\phi_{m-1} \rbrace$. 
Following App.~\ref{app: Abstract Evolution Equations}, we apply $B_m$ on both sides and the initial condition, to obtain
\begin{align} \label{eq: BH cylindrical approx eq with Rm}
    \frac{\partial}{\partial \tau}w(\boldsymbol{a},\tau)
    =&
    \sum_{k,l=0}^{m-1} 
    A_{kl}
    a_k
    \frac{\partial}{\partial a_l}
    w(\boldsymbol{a},\tau)
    +
    R_m([\Theta],\tau),
    \,\,
    w(\boldsymbol{\alpha},0)
    =
    w_0(\boldsymbol{a}),
\end{align}
where we have introduced
\begin{align}
    &w(\boldsymbol{a},\tau):=W([P_m \Theta],\tau),
    \,\,
    w_0(\boldsymbol{a},\tau):=W_0([P_m \Theta]),
    \,\,
    a_k:=\left(\phi_k,\Theta\right)_{L_{\mathrm{p}}^2([-1/2,1/2])},
    \nn
    &A_{kl}
    :=
    \left(
    \varphi_k,\frac{\partial^2 \varphi_l}{\partial \xi^2}
    \right)_{L_{\mathrm{p}}^2([-1/2,1/2])}.
\end{align}
The residual functional is defined as
\begin{align}
    R_m([\Theta],\tau)
    =&
    \left(
    \frac{\partial^2 \Theta}{\partial \xi^2}, 
    \frac{\delta W([\Theta],\tau)}{\delta \Theta}
    \right)_{L_{\mathrm{p}}^2([-1/2,1/2])}
    -
    \left(
    \frac{\partial^2 \Theta}{\partial \xi^2}, 
    P_m\frac{\delta W([P_m\Theta],\tau)}{\delta \Theta}
    \right)_{L_{\mathrm{p}}^2([-1/2,1/2])}.
\end{align}
The cylindrical approximation of Eq.~\eqref{eq: BHE} is defined as
\begin{align} \label{eq: BH cylindrical approx eq}
    \frac{\partial}{\partial \tau}w(\boldsymbol{a},\tau)
    =&
    \sum_{k,l=0}^{m-1} 
    A_{kl}
    a_k
    \frac{\partial}{\partial a_l}
    w(\boldsymbol{a},\tau) \, ,
    \,\,
    w(\boldsymbol{a},0)
    =
    w_0(\boldsymbol{a}).
\end{align}

We first prove the consistency of Eq.~\eqref{eq: BH cylindrical approx eq}.
Let $K$ be a compact subset in $L_{\mathrm{p}}^2([-1/2,1/2])$ such that $\overline{\lbrace \frac{\partial^2 \Theta}{\partial \xi^2} : \Theta \in K \rbrace}$ is compact. According to Lem.~\ref{lem: Uniform convergence of inner products}, $R_m([W],\tau)$ uniformly converges to zero on $K$ if $W([\Theta],\tau)$ is a continuously differentiable functional on $K$. Thus, we can use Thm.~\ref{thm: Consistency of cylindrical approximation of FDEs}, which proves the consistency of Eq.~\eqref{eq: BH cylindrical approx eq}.

We next prove the stability (Def.~\ref{def: Stability of approximated equation}) of Eq.~\eqref{eq: BH cylindrical approx eq} in the $L^{\infty}(\mathbb{R}^{m})$ norm. 
Suppose that $w_0(\boldsymbol{a})$ is bounded by a constant $c$ independent of $m$.\footnote{
    Strictly speaking, if $a_k$ are defined on an infinite interval, this assumption of boundedness is not valid for the FDEs used in our experiments because $\| w_0 \|_{L^{\infty}(\mathbb{R}^{m})} \rightarrow \infty$ as $a_k \rightarrow \infty$. 
    However, the range of $a_k$ is usually set to a finite interval in numerical experiments, and thus $\| w_0 \|_{L^{\infty}(\mathbb{R}^{m})}$ is also finite; i.e., the assumption of boundedness holds.
}
From the cylindrical approximation of the solution Eq.~\eqref{eq: FTE exact solution}, $w(\boldsymbol{a},t)=W_0([P_m\Theta_{\tau}[P_m\Theta]])$, we see
\begin{align}
    \sup_{\boldsymbol{a}\in\mathbb{R}^{m}}|w(\boldsymbol{a},\tau)|
    =
    \sup_{\boldsymbol{a}\in\mathbb{R}^{m}}|W_0([P_m\Theta_{\tau}[P_m\Theta]])|
    =
    \sup_{\boldsymbol{a}\in S_\tau}|W_0([P_m\Theta])|
    \leq 
    \sup_{\boldsymbol{a}\in\mathbb{R}^{m}}|W_0([P_m\Theta])|
    =
    \sup_{\boldsymbol{a}\in\mathbb{R}^{m}}|w_0(\boldsymbol{a})|,
\end{align}
where
\begin{align}
    &S_\tau
    =
    \left\lbrace
    \boldsymbol{a}':
    a_l'
    =
    \sum_{k=0}^{m-1}
    \left(
    \frac{1}{\sqrt{4\pi\tau}}
    \sum_{q=-\infty}^{\infty}
    \int_{-1/2}^{1/2}d\xi
    \int_{-1/2}^{1/2}d\xi'
    e^{-\frac{1}{4\tau}
    \left(\xi-\xi'-q\right)^2}
    \phi_l(\xi)\phi_k(\xi')
    \right)
    a_k,
    \,\,
    \boldsymbol{a}\in \mathbb{R}^{m}
    \right\rbrace \nn
    &\subseteq
    \mathbb{R}^{m}.
\end{align}
Therefore, we obtain
\begin{align}
    \left\| w(\tau)\right \|_{L^{\infty}(\mathbb{R}^{m})} 
    =
    \left\| e^{t\mcl_m}w_0\right \|_{L^{\infty}(\mathbb{R}^{m})} 
    \leq
    \left\| w_0\right \|_{L^{\infty}(\mathbb{R}^{m})}\leq c,\,\,
    \forall m\in \mathbb{N},
\end{align}
for the operator $\mcl_m=-\sum_{k,l=0}^{m-1} A_{kl}a_k\frac{\partial}{\partial a_l}$. From this result, the operator norm of $e^{t\mcl_m}$ is evaluated as
\begin{align}
    \|e^{t\mcl_m}\|
    =
    \sup_{w_0\neq 0}
    \frac{
    \left\| e^{t\mcl_m}w_0\right \|_{L^{\infty}(\mathbb{R}^{m})} 
    }{\left\| w_0\right \|_{L^{\infty}(\mathbb{R}^{m})}}
    \leq
    1.
\end{align}
This shows that $\exists M, \omega$ s.t.~$\forall m \in \mathbb{N}, e^{t\mcl_m}\leq Me^{\omega t}$ (e.g.,~$M=1$ and $\omega=0$), and thus the cylindrical approximation is stable in the sense of Def.~\ref{def: Stability of approximated equation}.

Therefore, the cylindrical approximation \eqref{eq: BH cylindrical approx eq} is consistent and stable, and Thm.~\ref{thm: Convergence of solutions under cylindrical approximation in App} gives the convergence $W([P_m \Theta], \tau) \xrightarrow{m \rightarrow \infty} W([\Theta],\tau)$ on a compact subset in finite time interval $[0, T]$ if $W([P_m \Theta], 0) \xrightarrow{m \rightarrow \infty} W([\Theta],0)$ and if Eq.~\eqref{eq: BH cylindrical approx eq} is well-posed in $[0, T]$.

\subsubsection{Initial Conditions}
We use the Gaussian random field as the initial condition:
\begin{align}
    \label{eq: BH Gaussian initial condition}
    W_0([\Theta])
    =
    -
    \overline{\mu}
    \int_{-1/2}^{1/2} d\xi 
    \Theta(\xi)
    +
    \frac{1}{2}
    \int_{-1/2}^{1/2} d\xi
    \int_{-1/2}^{1/2} d\xi'
    C(\xi,\xi')
    \Theta(\xi)
    \Theta(\xi'),
\end{align}
where $\overline{\mu}$ is the mean velocity and $C(\xi,\xi')$ is the covariance matrix. 
This is equivalent to the initial distribution of the velocity field following the Gaussian distribution \eqref{eq: BH Gaussian initial condition}.
We use the Fourier series as the orthonormal basis:
\begin{align}
    \label{eq: fourier basis}
    \phi_{k}(\xi)=
    \begin{cases}
        1 & (k=0)
        \\
        \sqrt{2}\sin(\pi (k+1)\xi) & (k:\text{odd})
        \\
        \sqrt{2}\cos(\pi k\xi) & (k:\text{nonzero even})
    \end{cases}.
\end{align}
Then, the analytic solution of Eq. \eqref{eq: BH exact solution} under the cylindrical approximation is given by
\begin{align}
    \label{eq: BH exact solution in fourier}
    &W([P_{2M}\Theta],\tau)
    =\nn
    &-
    \overline{\mu}
    a_{0}
    +
    \frac{1}{2}
    \sum_{k=0}^{M-1}
    \sum_{l=0}^{M-1}
    \left(
    e^{-4\pi^2 (k^2+l^2) \tau}
    \tilde{C}_{2k,2l}
    a_{2k}
    a_{2l}
    +
    e^{-4\pi^2 (k^2+(l+1)^2) \tau}
    \tilde{C}_{2k,2l+1}
    a_{2k}
    a_{2l+1}
    \right.
    \nn
    &
    \left.
    \quad
    +
    e^{-4\pi^2 ((k+1)^2+l^2)\tau}
    \tilde{C}_{2k+1,2l}
    a_{2k+1}
    a_{2l}
    +
    e^{-4\pi^2 ((k+1)^2+(l+1)^2)\tau}
    \tilde{C}_{2k+1,2l+1}
    a_{2k+1}
    a_{2l+1}
    \right),
\end{align}
where $m = 2M$ and
\begin{align}
    a_k:=\left(\phi_k,\Theta\right)_{L_{\mathrm{p}}^2([-1/2,1/2])},
    \,\,
    \tilde{C}_{ij}
    :=
    \int_{-1/2}^{1/2} d\xi
    \int_{-1/2}^{1/2} d\xi'
    C(\xi,\xi')
    \phi_{i}(\xi)
    \phi_{j}(\xi').
\end{align}
The derivation is lengthy and is given in App.~\ref{app: Derivation of BH exact solution in exp}.
Note that the higher degree terms decay exponentially in terms of $k$, $l$, and $\tau$, and the solution is dominated by $a_k$ with $k \lesssim$ 1.

We use three types of covariance matrices:
\begin{align}
    & C(\xi, \xi') = \sigma^2 \delta(\xi - \xi'), \label{eq: Cov of delta IC} \\
    & C(\xi, \xi') = \sigma^2 \sum_{k=0}^{99} e^{-k / 10} \phi_k(\xi) \sum_{l=0}^{99} e^{-l / 10} \phi_l(\xi') ,\label{eq: Cov of moderate IC} \\
    & C(\xi, \xi') = \sigma^2 . \label{eq: Cov of const IC}
\end{align}
Substituting them into Eq.~\eqref{eq: BH Gaussian initial condition}, we have three types of initial conditions: the delta, moderate, and constant initial condition, respectively.
They are equivalent to
\begin{align}
    & \tilde{C}_{ij} = \sigma^2 \text{ for all $i=j \geq 0$ (0 otherwise)}, \label{eq: Cov of delta IC in fourier} \\
    & \tilde{C}_{ij} = \sigma^2  e^{-i / 10} e^{-j / 10} \text{ for $i = j \leq 99$  (0 otherwise)},  \label{eq: Cov of moderate IC in fourier} \\
    & \tilde{C}_{ij} = \sigma^2 \text{ for $i, j = 0$  (0 otherwise)} \,. \label{eq: Cov of const IC in fourier}
\end{align}

Eq.~\eqref{eq: Cov of delta IC} is nonsmooth and represents the extremely short-range correlation of the initial velocity field; the velocities at two points in an infinitesimally small neighborhood have no correlation. The spectrum of $C(\xi, \xi')$ (Eq.~\eqref{eq: Cov of delta IC in fourier}) has an infinite tail.
Eq.~\eqref{eq: Cov of const IC} represents the extremely long-range correlation of the initial velocity; the velocities at any two points have the same correlation $\sigma^2$. The spectrum of $C(\xi, \xi')$ (Eq.~\eqref{eq: Cov of const IC in fourier}) decays immediately. 
Eq.~\eqref{eq: Cov of moderate IC} represents a moderate-range correlation of the two above; the velocities at two points have a periodic correlation. The spectrum of $C(\xi, \xi')$ (Eq.~\eqref{eq: Cov of moderate IC in fourier}) decays exponentially.
Theoretically, it is said that $C(\xi, \xi')$ that has a long tail of spectrum is hard to simulate numerically \cite{venturi2018numerical}. In our experiment with PINNs, however, the error of the solution is dominated by the optimization error of PINNs (Sec.~\ref{sec: Experiment}) and strongly depends on the training setups.

\clearpage
\section{Supplementary Discussion} \label{app: Supplementary Discussion}
\subsection{Limitations} \label{app: Limitations}
\paragraph{Higher spacetime dimensions.}
In our experiments, the spacetime dimension is limited to $1+1$ ($t$ and $x$).
Generalization to $1+d$ dimensions is feasible, albeit with increased computational costs.
For spaces with $d>1$ dimensional spaces, multiple options exist for expanding $\theta$ \cite{rodgers2024tensor}.
Despite the inclusion of additional spacetime dimensions, the computational complexity of our model, up to computing functional derivatives, remains $\mathcal{O}(m^{r})$, where $r$ is the order of the FDE.
However, note that $m$ scales exponentially w.r.t. $d$, which is typically $1, 2, 3,$ or $4$. 

\paragraph{Higher-order functional derivatives.}
The order of functional derivatives in FDEs in our experiments is limited to $r=1$; however, extending to $r\geq2$ is straightforward. 
For example, the cylindrical approximation of the second-order functional derivative is expressed as $\delta^2 F([\theta], t)/ \delta \theta(x) \delta \theta(y) \approx \Sigma_{k,l=0}^{m-1} \partial^2 f(\boldsymbol{a}, t) / \partial a_k \partial a_l \phi_k(x) \phi_l (y)$, which can be computed via backpropagation twice.

\paragraph{To further reduce errors.}
The relative errors obtained in our experiments are $\sim 10^{-3}$, which might be unsatisfactory in some applications
This is a general problem inherent in PINNs, for which PINNs are sometimes criticized.
To further reduce the errors, one can use recently developed techniques for solving high-dimensional PINNs \cite{zeng2022competitive, hu2023tackling, wang2023multi, hu2023bias, daw2023mitigating, yao2023multadam, gao2023failure}. 
These methods can be easily equipped with our model; one of the strengths of our model is that it can be integrated with arbitrary techniques developed for PINNs.
See also App.~\ref{app: Radial Gaussian Sampling} for a better sampling of collocation points.

\paragraph{Challenges toward even higher degrees.}
Instability of the numerical integration for computing $a_k$ given a function $\th(x)$, i.e., $a_k = \int \th(x) \phi_k(x) dx$, is observed for large $k$s. 
This instability comes from the intense oscillation of $\phi_k$ for large $k \gtrsim 500$, as we confirmed in our preliminary experiments.
This is a general problem in numerical computation, not exclusive to our model.
Nevertheless, such higher degrees are unnecessary to approximate smooth functions.
Extremely large degrees are required only when one wants to include nonsmooth or divergent functions in the domain of functionals $D(F)$.
Training on such functions with the cylindrical approximation is an open problem
Dealing with highly oscillatory functions is also a central research interest in numerical methods for non-smooth dynamical systems \cite{acary2008numerical}.

\paragraph{Inclusion and diversity of functions}
What class of functions can the cylindrical approximation represent? 
The equi-small tail condition (Thm.~\ref{thm: Necessary and sufficient condition of pre-compactness of metric space}) characterizes the domain of functions $D(F)$ to which the cylindrical approximation can be applied. 
In most of the convergence theorems in App.~\ref{app: Theorems Related to Cylindrical Approximation and Convergence}, such as Thm.~\ref{thm: Uniform convergence of cylindrical approximation of functionals}, the compactness of $K \subset H$ is assumed, which can be ensured if Thm.~\ref{thm: Necessary and sufficient condition of pre-compactness of metric space} holds. 
In other words, the equi-small tail condition is part of a sufficient condition for the convergence of the cylindrical approximation. 
Surprisingly or not, step functions and ReLU satisfy the equi-small tail condition. 
In contrast, functions with divergent norms ($|\theta|_H = \infty$), which are not typically of interest, cannot be handled in the cylindrical approximation.

\paragraph{Inclusion and diversity of functionals}
We assume functional differentiability in this paper. Central theorems of differentiability are given in App.~\ref{app: Differentiation of Functionals}, where functional Lipschitzness is assumed. An example of non-Lipschitz functionals is $F([\th]) = \int dx \sqrt{ |\theta(x)| }$ with $\th(0) = 0$.

\paragraph{Inclusion and diversity of FDEs}
In this paper, we consider the abstract evolution equation, a crucial class of linear FDEs. It does not include, for instance, Eq.~\eqref{eq:Schwinger-Dyson} in quantum field theory or Eq.~\eqref{eq:Wetterich-eq} in functional renormalization group theory. Nevertheless, the cylindrical approximation \textit{is} applicable to these equations, although the theoretical (non-)convergence of solutions is currently unknown. Finally, note that the approximated functional derivatives converge to the non-approximated ones independently of the specific FDEs.

\subsection{Computational Complexity of CP-ALS} \label{app: Computational Complexity of CP-ALS}
In Sec.~\ref{sec: Introduction}, we mentioned the computational complexity of the state-of-the-art method (CP-ALS) ($\mathcal{O}(m^6 T)$) 
We derived it from Eqs.~(567) and (568) in \cite{venturi2018numerical} in Sec.~7.2.2 in \cite{venturi2018numerical}. 
These equations require $\mathcal{O}(m^5)$ including the summation over the indices $e$ ($=0,\ldots, m^2$) and $z$ ($=0,\ldots, m^2$), and the Hadamard matrix product over the index $k$ ($=1,\ldots, m$). 
They are computed for each $q$ ($=1,\ldots m$) and timestep $t$ ($=1, \ldots, T$), leading to $\mathcal{O}(m^5 \times m \times T) = \mathcal{O}(m^6 T)$ in total. 
Note that Eqs.~(567) and (568) in the published version \cite{venturi2018numerical} correspond to Eqs.~(568) and (569) in the arXiv version (\texttt{arXiv:1604.05250v3}).

\subsection{Choice of Bases} \label{app: Orthogonal Bases Compared}
We use the Fourier series for periodic systems and the Legendre polynomials for non-periodic systems. Both are common basis functions in numerical computation. Legendre polynomials are part of the Jacobi polynomials family, a typical class of orthogonal bases. Many Jacobi polynomials are numerically unstable and require careful treatment; for example, the Hermite polynomials change scales factorially with degree ($1/\sqrt{m!}$) and often result in \texttt{nan}. The Laguerre polynomials have similar issues. The Chebyshev polynomials of the first kind and the Legendre polynomials are suitable for numerical computation; they are bounded, defined on a finite interval, and their weight functions are regular (note that the weight function of the Chebyshev polynomials of the first kind, $1/\sqrt{1-x^2}$, can be easily regularized by changing variables). Moreover, the Chebyshev and Legendre polynomials are known to minimize the $L^\infty$ and $L^2$ approximation errors, respectively \cite{hesthaven2007spectral}. Nevertheless, we did not observe a significant performance difference between them in curve-fitting experiments, and we use the Legendre polynomials in our experiments.

\paragraph{How to choose basis functions?}
The choice depends on the FDE, boundary conditions, symmetry, function spaces, and numerical stability. The Fourier series is suitable for periodic functions, but for non-periodic functions, it exhibits the Gibbs phenomenon, causing large numerical errors. Therefore, Legendre polynomials are a good choice for non-periodic functions. The choice of bases is a common concern in numerical analysis, such as in the finite element and spectral methods.


\paragraph{Generalization to other bases: Riesz basis}
Our approach can be generalized to general non-orthonormal bases, such as Riesz bases \cite{guo2006riesz,rabah2003generalized,brugiapaglia2021sparse,fukuda2013on,kurdyumov2010riesz,kurdyumov2008riesz,burlutskaya2009riesz}, where the orthonormality condition $(\phi_k, \phi_l)_H = \delta_{kl}$ is replaced with $(\phi_k, \phi_l)_H = g_{kl}$. Here, $g_{kl}$ is a certain matrix, or a \textit{metric}, whose choice significantly affects the computational costs of the approximated FDEs. Sparse metrics are preferred for efficient computation. See also App.~\ref{app: Computational Complexity of Loss Function}.

\subsection{Computational Complexity Revisited} \label{app: Computational Complexity of Loss Function}
We showed the computational complexity of our model \textit{up to the computation of functional derivatives} is given by $\mcO(m^r)$.
This does not include the computational complexity of the loss function of PINNs.
It could be $> \mcO(m^r)$, strongly depending on the form of the FDE and/or the choice of the orthonormal basis.
For concreteness, let us consider Eq.~\eqref{eq: BH cylindrical approx eq} (BHE).
While the BHE is first-order, the computational complexity of the loss function can be $\mcO(m^2)$ if matrix $A_{kl}$ in Eq.~\eqref{eq: BH cylindrical approx eq} is not diagonal. 
This is the case if we use the Legendre polynomials instead of the Fourier series.

On the other hand, the choice of the basis function can improve computational complexity.
For example, the computational complexity of the second-order FDE can reduce from $\mcO(m^2)$ to $\mcO(m^1)$ when the approximated FDE includes only the diagonal elements of $\{ \partial^2 f(\boldsymbol{a},t)/ \partial a_k \partial a_l \}_{k,l=0}^{m-1}$. 
Again, whether it is the case or not depends on the form of the FDE and/or the choice of the orthonormal basis.

\subsection{Comparison with DFT}
In first-principles computations of density functional theory (DFT), an NN-based approach that utilizes finite element methods for spacetime grid approximation is commonly employed. 
For example, an NN, $\hat{F}(\{y_j := f(\mathbf{r}_j)\}_j)$, approximating a target functional $F([f])$ by evaluating $f$ at specific grid points $\{\mathbf{r}_j\}_j$. 
Functional derivatives at each grid point can be computed using automatic differentiation: $\{ \frac{\delta F([f])}{\delta f(\boldsymbol{r})} \}_{\boldsymbol{r}} \fallingdotseq \{ \frac{\partial \hat{F}(\{ y_j \}_j)}{\partial y_i}\}_i$. 
However, the central focus of this area does not include solving PDEs, let alone FDEs.
Moreover, this method requires the discretization of spacetime, leading to numerical error of derivatives, while in our approach, such discretization is not necessary, and spacetime differentiation can be performed analytically if the basis functions are analytic.

The finite element methods for spacetime and the cylindrical approximation (basis function expansion) are similar in the sense that they replace spacetime degrees of freedom with other quantities: spacetime grid points and expansion coefficients, respectively.
Even when their degrees of freedom for a given problem are comparable, the latter (our approach) is suitable for solving FDEs.

\subsection{FDE Has Multiple Meanings}
The term "Functional Differential Equation" (FDE) varies in meaning across research areas. 
In mathematics, an FDE typically refers to a differential equation with a deviating argument, e.g., $f^\prime(x) = x f(x + 3)$. 
In physics, it denotes a PDE involving functional derivatives, such as the abstract evolution equation described in the main text. 
Additionally, FDE can imply a functional equation involving functional derivatives, e.g., $\frac{\delta F([\theta])}{\delta \theta(x)} = A(x)F([\theta])$, where $A(x)$ is a given function. 
Research on the last interpretation is limited, and the uniqueness, existence, and practical applications remain ambiguous.


\subsection{Radial Gaussian Sampling} \label{app: Radial Gaussian Sampling}
The region $\th \sim 0$, or ${a_k \sim 0}_k$ is the central focus when solving the BHE. 
Thus, sampling as many collocation points from $\th \sim 0$ as possible is of crucial importance. 
To make the training set free from the curse of dimensionality, we can use the following sampling method based on the polar coordinates:
\begin{enumerate}
    \item Sample a unit vector $\mathbf{v} \in \mbr^d$, where $d$ is the dimension of the input space.
    \item Sample a real number (radius) $r > 0$ from a Gaussian distribution $\mcn(0, \sigma^2)$ (or a truncated Gaussian distribution).
    \item Obtain a training collocation point  $r \mathbf{v} \in \mbr^d$.
\end{enumerate}
The collocation points thus obtained obviously concentrate around $\th \sim 0$ more than the Latin hypercube sampling, which we used in our experiment.

\subsection{Curvilinear Patterns in Fig.~\ref{fig: fig_heat_AnalSolPredAbsWithTauVsAk_FTE_deg100_w1024_it500k_stat_model0_FTE} and Noisy Patterns in Fig.~\ref{fig: fig_edit_heat_AnalSolPredRelWithTauVsAk_BHE_init0_deg10_sig1e1_stat_model0} and }
The white curves in Fig.~\ref{fig: fig_heat_AnalSolPredAbsWithTauVsAk_FTE_deg100_w1024_it500k_stat_model0_FTE} represent locations where the predictions and the exact solutions happen to coincide. 
The noisy patterns that occur for $a_k$, $k\geq 1$ in Fig.~\ref{fig: fig_edit_heat_AnalSolPredRelWithTauVsAk_BHE_init0_deg10_sig1e1_stat_model0} are because the solution is almost independent of $a_k$ with $k \gtrapprox 1$. Thus, optimizing the model in the directions of $a_k$ with $k \gtrapprox 1$ have only a negligible effect on minimizing the loss function, keeping the random predictions of the model.
If we delve deeper, they might be related to the loss landscapes of PINNs \cite{basir2022critical, gopakumar2023loss}, which is also an interesting research topic.

\subsection{Second-order Functional Derivatives}
\begin{figure}[htbp]
 \centering
    \centerline{\includegraphics[width=\columnwidth]{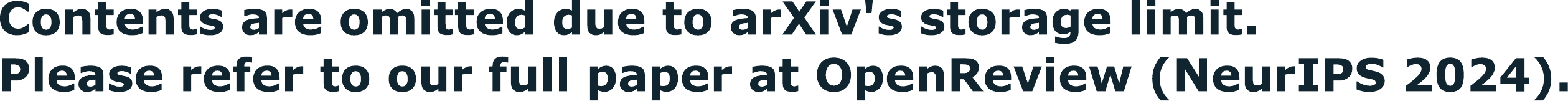}}
    \caption{{\color{blue} This figure is omitted due to arXiv's storage limit. Please refer to our full paper at OpenReview (NeurIPS 2024) or \url{https://github.com/TaikiMiyagawa/FunctionalPINN}.}
    \textbf{Second-order derivative at $\th=0$ \& $t=1$.} 
    The model is trained on the FTE with degree 100 under the linear initial condition. The errors are as small as $\sim 10^{-3}$. 
    }
    \label{fig: fig_heat_SecondMomentPredAndAnaAndRelAndAbsWithXYT1_FTE_deg100_w1024_it500k_stat_model0_FTE}
\end{figure}
\textbf{Fig.~\ref{fig: fig_heat_SecondMomentPredAndAnaAndRelAndAbsWithXYT1_FTE_deg100_w1024_it500k_stat_model0_FTE}} shows the estimated second-order derivative of the FTE's solution at $\th = 0$ and $t=0$ under 
the linear initial condition. The error is as small as $\sim 10^{-3}$, despite that $\th \sim 0$ is not included in the training set due to the curse of dimensionality and that the second-order information is not included in the loss function. 

However, such small errors only emerge when the analytic solution is so simple that, e.g., it includes only one coefficient: $f(\boldsymbol{a}, t) = \rho_0 \upsilon_0(a_1 - \upsilon_0 t)$.
The errors of the second-order derivatives obtained from other conditions and equations were significant ($\gtrsim 10^3$ in some cases). 
We could reduce the error by (i) using another sampling method for training sets that does not suffer from the curse of dimensionality (an example is given in App.~\ref{app: Radial Gaussian Sampling}) and (ii) adding the second-order derivative to the loss function (e.g., differentiate both sides of the BHE w.r.t. $a_k$ and use it to the residual loss for PINNs).

\clearpage
\section{Detailed Experimental Settings} \label{app: Detailed Experimental Settings}
We describe detailed setups of our experiment, including those on the advection-reaction functional equation (ARE) (App.~\ref{app: Advection-reaction Equation}).
The experimental settings for the ARE align with those provided in \cite{venturi2018numerical}.

\paragraph{Datasets.}
The training, validation, and test sets are sampled with Latin hypercube sampling \cite{mckay2000comparison, helton2003latin} with range $[0,1], [-1, 1], [0, 0.001]$, $[-0.1, 0.1]$, $[0, 0.5]$, and $[h-0.01, h+0.01]$ for $t$ (FTE), $a_k$ (FTE), $t$ (BHE), $a_k$ (BHE), $t$ (ARE), and $a_k$ (ARE), respectively, where $h := 0.698835274542439$.
Training mini-batches are randomly sampled every time a mini-batch is created.
For the BHE, we decay the sampling range quadratically in terms of $k \in \{0,1,\ldots,m\}$. 
This sampling stabilizes the training because most of the loss comes from the region where $k$ and $a_k$ are large, which can be seen from the definition of the BHE. That is, all the terms on the right-hand side are proportional to one of $a_k$s, while $a_k$s for large $k$s are negligible in the analytic solution. Therefore, the collocation points with large $a_k$s with large $k$s can be seen as noise in training.
Note that this decaying sampling does not limit the quality of the solution at all because the solution is the characteristic function and we are interested only in $\theta (x) \approx 0$, e.g., $\frac{\delta W([\th],t)}{\delta \th(x)}|_{\th=0} = \langle u(x) \rangle$ (see also App.~\ref{app: Turbulence}).

\paragraph{Miscellaneous settings.}
We use a 4-layer PINN with $3 \times$ (linear + sin activation + layer normalization) + last linear layer.
Unless otherwise noted, the widths are 1024 for the ARE and FTE and 2048 for the BHE.
The batch size is 1024.
The activation function is the $\sin$ function.
The loss function for the FTE with the nonlinear initial condition (main text) and the ARE is the sum of the $L^1$ and $L^\infty$ losses.
The loss function for the others is the smooth $L^1$ loss.
The optimizer is AdamW \cite{loshchilov2018AdamW_SGDW}.
The learning rate scheduler is the linear warmup with cosine annealing with warmup \cite{loshchilov2017sgdr_cosine_annealing_org}, which is defined as the following \texttt{scheduler}:
\begin{verbatim}
scheduler1 = torch.optim.lr_scheduler.LinearLR(
    optimizer, start_factor=start_factor, total_iters=milestone)
scheduler2 = torch.optim.lr_scheduler.CosineAnnealingWarmRestarts(
    optimizer, T_0=T_0, T_mult=T_mult, eta_min=eta_min)
scheduler = torch.optim.lr_scheduler.SequentialLR(
    optimizer, schedulers=[scheduler1, scheduler2], milestones=[milestone])
\end{verbatim}


See PyTorch official documentation \cite{pytorch} for the detailed definitions of the parameters.
\texttt{eta\_min} is set to 0.
\texttt{start\_factor} is set to $10^{-10}$.
$\rho_0 = 1$ and $\upsilon_0 = 1$ are used for the FTE.
$\bar{\mu} = 8$ and $\sigma^2 = 10$ are used for the BHE.
The Legendre polynomials and real Fourier series are used for the FTE and BHE, respectively, as the orthonormal basis.
For the ARE, the modified Chebyshev polynomials described in \cite{venturi2018numerical} are used.

\paragraph{Hyperparameter tuning.}
The full search space is given Tab.~\ref{tab: search space of hparameters}.
Unless otherwise noted, the number of iterations is 300,000 and 500,000 for the BHE and FTE, respectively.
The number of trials for hyperparameter tuning is 50.
Optuna \cite{Optuna} is used for hyperparameter tuning with the TPE sampler and median pruner.   
The number of tuning trials is 50 for all conditions, and the best hyperparameters within the 50 trials are used.
See the config file in our repository for more details.
\begin{table}[htbp]
    \caption{\textbf{Search space of hyperparameters.} Learning rates and weight decays are sampled log-uniformly. $N$ = 300,000 and 500,000 for the BHE and FTE, respectively. \texttt{T\_0}, \texttt{T\_mult}, and \texttt{milestone} are used for the aforementioned \texttt{scheduler}. For the ARE and the main text results of FTE with the nonlinear initial condition, weight decay is fixed to $0$ and the search space of learning rates is from $10^{-7}$ to $10^{-4}$.} \label{tab: search space of hparameters}
    \begin{center}
    \begin{small}
    \begin{sc}
        \begin{tabular}{cc}
             Hyperparameters & Search space\\
             \hline
             Learning rate & From $10^{-6}$ to $10^{-3}$ \\
             Weight decay & From $10^{-7}$ to $10^{-4}$ (others)\\
             \texttt{T\_0} & $N/5$, $N/2$, or $N$ \\
             \texttt{T\_mult} &  1 or 2 \\
             \texttt{milestone}& 0, $N / 10$, or $N / 100$ \\
        \end{tabular}
    \end{sc}
    \end{small}
    \end{center}
\end{table}

\clearpage
\paragraph{Remaining hyperparameters.}
We show the hyperparameters used in our experiments in Tabs.~\ref{tab: hparams0}--\ref{tab: hparams-1}
\begin{table}[htbp]
    \caption{
        \textbf{Hyperparameters: FTE with linear initial condition.}
    } 
    \label{tab: hparams0}
    \begin{center}
    \begin{small}
    \begin{sc}
        \begin{tabular}{rccc}
            \toprule
                \multicolumn{4}{c}{Degree} \\ 
                \cmidrule(l){2-4}
            Hyperparameters & $4$ & $20$ & $100$  \\
            \hline
            Learning rate & $ 8.675 \times 10^{-6}$ & $ 1.878 \times 10^{-5}$ & $ 2.839 \times 10^{-5}$ \\
            Weight decay & $ 4.534 \times 10^{-6}$ & $ 6.184 \times 10^{-7}$& $ 2.008 \times 10^{-5}$\\
            \texttt{T\_0} & $ 250000$ & $500000 $ & $ 500000$ \\
            \texttt{T\_mult} &  $2 $ & $1 $ & $ 1$  \\
            \texttt{milestone}& $ 0$ & $ 5000$ & $0 $  \\
        \end{tabular}
    \end{sc}
    \end{small}
    \end{center}
\end{table}

\begin{table}[htbp]
    \caption{
        \textbf{Hyperparameters: FTE with nonlinear initial condition (main text).}
    } 
    \begin{center}
    \begin{small}
    \begin{sc}
        \begin{tabular}{rccc}
            \toprule
                \multicolumn{4}{c}{Degree} \\ 
                \cmidrule(l){2-4}
            Hyperparameters & $4$  & $100$ & $1000$ \\
             \hline
             Learning rate & $ 8.456 \times 10^{-6} $ & $ 1.042 \times 10^{-5}$& $ 1.130 \times 10^{-5}$ \\
             Weight decay & $0$&  $0$& $0$\\
             \texttt{T\_0} & $500000 $ & $ 500000$ & $250000 $ \\
             \texttt{T\_mult} &  $1 $ & $  1$ & $ 1$ \\
             \texttt{milestone}& $5000 $ &  $ 0$& $ 5000$\\
        \end{tabular}
    \end{sc}
    \end{small}
    \end{center}
\end{table}

\begin{table}[htbp]
    \caption{
        \textbf{Hyperparameters: FTE with nonlinear initial condition (Appendix).}
    } 
    \begin{center}
    \begin{small}
    \begin{sc}
        \begin{tabular}{rccc}
            \toprule
                \multicolumn{4}{c}{Degree} \\ 
                \cmidrule(l){2-4}
            Hyperparameters & $4$ & $10$ & $20$  \\
             \hline
            Learning rate & $ 4.837 \times 10^{-6}$ & $ 5.810 \times 10^{-5}$ & $ 6.076 \times 10^{-6}$ \\
            Weight decay & $  8.637 \times 10^{-5}$ & $  2.877 \times 10^{-5}$& $ 2.068 \times 10^{-7}$\\
            \texttt{T\_0} & $ 250000$ & $ 500000$ & $ 500000$  \\
            \texttt{T\_mult} &  $2 $ & $2 $ & $2 $  \\
            \texttt{milestone}& $0 $ & $50000 $ & $50000 $ \\
        \end{tabular}
    \end{sc}
    \end{small}
    \end{center}
\end{table}

\begin{table}[htbp]
    \caption{
        \textbf{Hyperparameters: BHE with delta initial condition.}
    } 
    \begin{center}
    \begin{small}
    \begin{sc}
        \begin{tabular}{rccc}
            \toprule
                \multicolumn{4}{c}{Degree} \\ 
                \cmidrule(l){2-4}
            Hyperparameters & $4$ & $20$ & $100$  \\
             \hline
            Learning rate & $ 6.680 \times 10^{-5}$ & $ 1.372 \times 10^{-4}$& $ 1.204 \times 10^{-4}$ \\
            Weight decay & $ 4.429 \times 10^{-6}$ & $ 6.644 \times 10^{-7}$& $ 4.893 \times 10^{-7}$\\
            \texttt{T\_0} & $300000 $ & $300000 $ & $150000 $ \\
            \texttt{T\_mult} &  $ 2$ & $  1$ & $  1$ \\
            \texttt{milestone}& $0 $ & $0 $ & $ 3000$ \\
        \end{tabular}
    \end{sc}
    \end{small}
    \end{center}
\end{table}

\begin{table}[htbp]
    \caption{
        \textbf{Hyperparameters: BHE with constant initial condition.}
    } 
    \begin{center}
    \begin{small}
    \begin{sc}
        \begin{tabular}{rccc}
            \toprule
                \multicolumn{4}{c}{Degree} \\ 
                \cmidrule(l){2-4}
            Hyperparameters & $4$ & $20$ & $100$  \\
             \hline
            Learning rate & $5.131  \times 10^{-5}$ & $9.989  \times 10^{-5}$& $ 8.637 \times 10^{-5}$ \\
            Weight decay & $ 1.083 \times 10^{-5}$ & $ 9.353 \times 10^{-7}$& $ 8.378 \times 10^{-5}$\\
            \texttt{T\_0} & $ 300000$ & $150000 $ & $300000$ \\
            \texttt{T\_mult} &  $ 2$ & $1 $ & $1 $ \\
            \texttt{milestone}& $3000 $ & $ 3000$ & $30000 $ \\
        \end{tabular}
    \end{sc}
    \end{small}
    \end{center}
\end{table}

\begin{table}[htbp]
    \caption{
        \textbf{Hyperparameters: BHE with moderate initial condition.}
    } 
    \begin{center}
    \begin{small}
    \begin{sc}
        \begin{tabular}{rccc}
            \toprule
                \multicolumn{4}{c}{Degree} \\ 
                \cmidrule(l){2-4}
            Hyperparameters & $4$ & $20$ & $100$  \\
             \hline
            Learning rate & $2.248  \times 10^{-5}$ & $ 3.254 \times 10^{-5}$& $ 1.209 \times 10^{-5}$ \\
            Weight decay  & $3.669\times 10^{-5}$ & $ 1.771  \times 10^{-6}$& $ 1.423 \times 10^{-6}$\\
            \texttt{T\_0} & $300000 $ & $ 300000$ & $300000$ \\
            \texttt{T\_mult} &  $1 $ & $ 2$ & $1 $ \\
            \texttt{milestone}& $3000 $ & $ 0$ & $0 $ \\
        \end{tabular}
    \end{sc}
    \end{small}
    \end{center}
\end{table}

\begin{table}[htbp]
    \caption{
        \textbf{Hyperparameters: ARE.}
    } 
    \label{tab: hparams-1}
    \begin{center}
    \begin{small}
    \begin{sc}
        \begin{tabular}{rc}
            \toprule
                \multicolumn{2}{c}{\quad\quad\quad\quad\quad\quad\quad\quad\quad\, Degree} \\ 
                \cmidrule(l){2-2}
            Hyperparameters & $6$  \\
             \hline
            Learning rate & $6.3105  \times 10^{-5}$ \\
            Weight decay & $0$ \\
            \texttt{T\_0} & $250000 $  \\
            \texttt{T\_mult} &  $ 1$   \\
            \texttt{milestone}& $ 5000$  \\
        \end{tabular}
    \end{sc}
    \end{small}
    \end{center}
\end{table}

\clearpage 
\paragraph{How to choose weight parameters for each training.}
We use validation relative error, i.e., the relative error between the prediction and the analytic solution, as the measure for choosing the best weight parameter during each training trial.

\paragraph{Loss reweighting.}
The softmax reweighting is used to adaptively balance the residual and boundary loss functions. See the code below and our official code at GitHub (\url{https://github.com/TaikiMiyagawa/FunctionalPINN}), where \texttt{list\_tensors} is [residual loss, boundary loss] of a mini-batch.
\begin{verbatim}
@torch.no_grad()
def softmax_coeffs(
    list_tensors: List[Tensor], 
    temperature: float = 0.25) -> Tensor:
    """
    # Args
    - list_tensors: List of scalar loss Tensors. 
    - temperature: Temperature parameter. Default is 0.25

    # Returns
    - coeffs: Tensor with shape [len(list_tensors),].
    """
    ts = torch.tensor(list_tensors)
    ts /= torch.max(ts) + EPSILON  # generates scale-invariant weight
    coeffs = torch.softmax(ts / temperature, dim=0)
    return coeffs
\end{verbatim}
This function generates the loss reweighting factors \texttt{coeffs} = [$\lambda_1$, $\lambda_2$], where total loss is defined as $\lambda_1 \times \text{\rm residual loss} + \lambda_2 \times \text{\rm boundary loss}$.

\paragraph{Float vs. double.}
No significant differences in performance measures were observed throughout our training. 
Accordingly, we used float32 to reduce computational time.

\paragraph{Libraries and GPUs.}
All the experiments are performed on Python 3.11.8 \cite{Python3}, PyTorch 2.2.0 \cite{pytorch}, Numpy 1.26.0 \cite{numpy}, and Optuna 3.5.0 \cite{Optuna}.
An NVIDIA A100 GPU is used.

\paragraph{Runtime.}
The training process takes 2--3 hours, but this duration can be significantly improved because it largely depends on the implementation of components such as data loaders. See App.~\ref{app: Runtime} for more details on runtime. 
GPU memory consumption is about 400 MBs for the ARE and 800--1500 MBs for the FTE and BHE. 
We note that previous approaches to numerically solving FDEs rely on CPU processing, where performance varies significantly with the level of parallelization. Consequently, directly comparing the computational speed of our GPU-based approach with these methods is inherently challenging.

\paragraph{Figure \ref{fig: fitting}.}
$F([\theta])$ is approximated by $F([P_{1000} \theta])$ for convenience.

\clearpage
\section{Additional Experimental Results} \label{app: Additional Results I}

\subsection{$P_m \theta$ Converges to $\theta$}
The convergence rate of functional derivatives in Thm.~\ref{thm: Pointwise convergence of cylindrical approximation (informal)} is given by $\|\theta - P_m \theta \|$. 
The convergence of $\|\theta - P_m \theta \|$ is visualized in Fig.~\ref{fig: fitting functions} for four functions including non-smooth functions.

\begin{figure}[htbp]
    \centering
    \centerline{\includegraphics[width=\columnwidth]{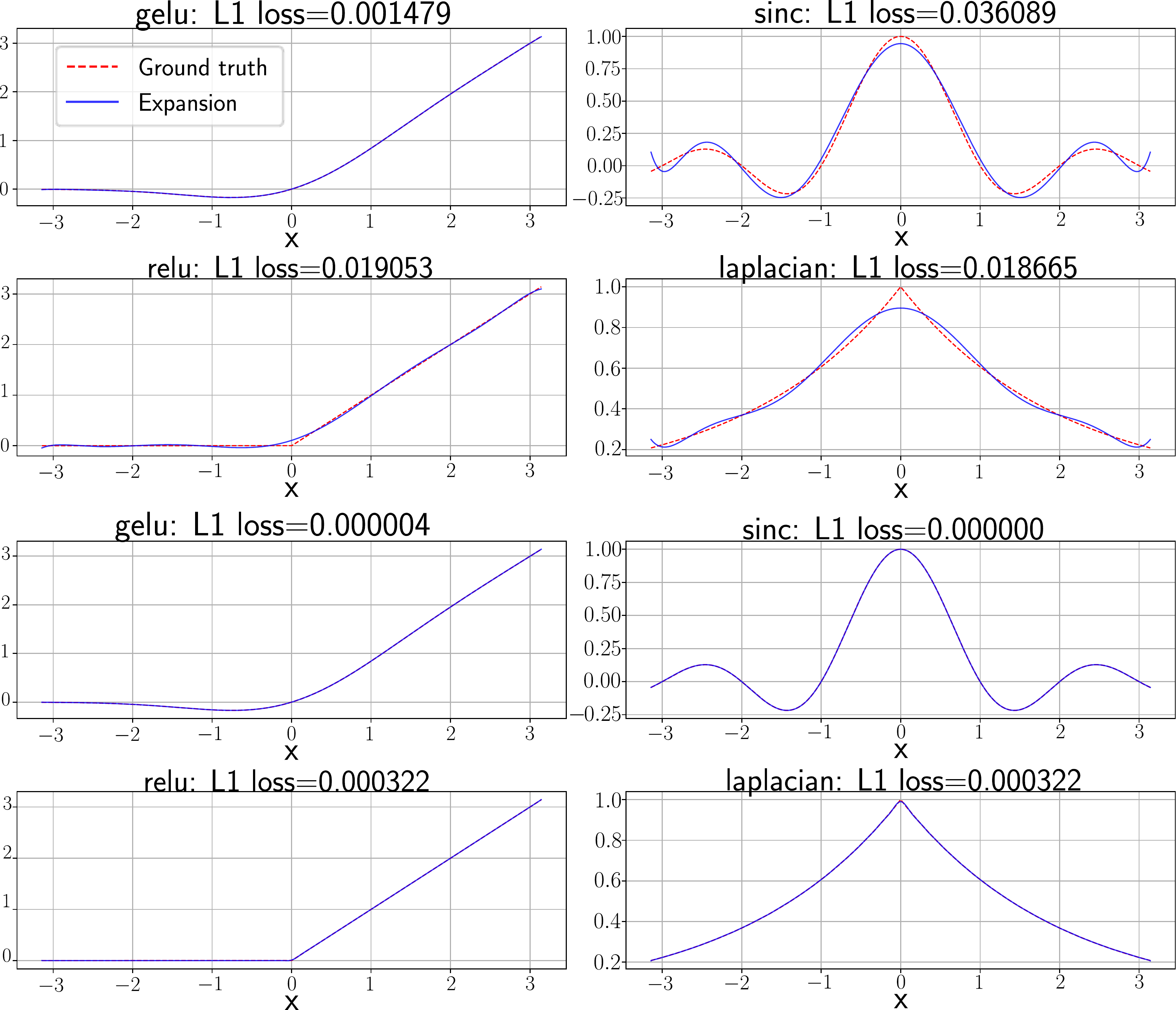}}
    \caption{
        \textbf{Cylindrical approximation of $\th(x)$ (red) by $P_m\th(x)$ (blue) with degree 10 (top four panels) and 100 (bottom four panels).}
        $L^1$ absolute error $\frac{1}{N} \sum_{i=1}^{N} | \th(x_i)-P_m\th(x_i) |$ decreases as $m$ increases. 
        $N = 10^5$. 
        $x_i \in [-\pi, \pi]$ are linearly spaced. 
        GeLU \cite{Hendrycks2016gelu_org}, sinc, ReLU, and Laplacian functions \cite{ramasinghe2022beyond} are used.
        The basis is the Legendre polynomials.
    }
    \label{fig: fitting functions}
\end{figure}

\subsection{Runtime}
\label{app: Runtime}
Tab.~\ref{tab: runtime} shows training runtime.
The runtime remains nearly constant and does not significantly vary from $m \sim 1$ to $m \sim 100$. This aspect is of practical significance as it indicates that the computational time of our model remains stable across a broad range of $m$.

This consistency in runtime may be attributed to the efficiency of the PyTorch library, where operations such as matrix multiplication and automatic differentiation are highly optimized. On the other hand, GPU memory consumption does show variation, ranging from 800 to 1500 MBs as stated App.~\ref{app: Detailed Experimental Settings}, as is naturally expected.

Finally, the previous approaches to numerically solving FDEs rely on CPU processing, where performance significantly varies with the level of parallelization; thus, a direct comparison of computational speed with our GPU-based approach is inherently challenging.

\begin{table}[htbp]
    \caption{\textbf{Degree vs. runtime.} ``X:YZ'' represents ``X hours YZ minutes.''} \label{tab: runtime}
    \begin{center}
    \begin{small}
    \begin{sc}
        \begin{tabular}{rcccc}
            \hline
                    && FTE with 500k iterations \\
            \hline
            Degree  & $4$             & $20$            & $100$           & $1000$ \\
            \#Runs   & 11              & 8               & 3               & ... \\
            Runtime & 2:18 $\pm$ 0:04 & 2:09 $\pm$ 0:10 & 2:33 $\pm$ 0:02 & ... \\
            \\
            \hline
                    && BHE with 300k iterations \\
            \hline
            Degree  & $4$             & $20$            & $100$ \\
            \#Runs   & 10              & 9               & 8 \\
            Runtime & 2:36 $\pm$ 0:02 & 2:29 $\pm$ 0:06 & 2:28 $\pm$ 0:00 \\
            \\
            \hline
                    && ARE with 500k iterations \\
            \hline
            Degree  & $6$             &             &  \\
            \#Runs   & 10              &               & \\
            Runtime & 3:05 $\pm$ 0:04 &  &  \\            
        \end{tabular}
    \end{sc}
    \end{small}
    \end{center}
\end{table}

\clearpage
\subsection{Latin Hypercube Sampling and Curse of Dimensionality} 
\label{app: Latin Hypercube Sampling and Curse of Dimensionality}
We show the histograms of the $L^2$ norm of collocation points in the test sets.
The sampling method is the Latin hypercube sampling, as mentioned in Sec.~\ref{sec: Experiment}.
Apparently, the collocation points s.t. norms $\sim 0$ are not sampled, which would cause the curse of dimensionality in training.
In other words, the Latin hypercube sampling cannot completely address the curse of dimensionality in sampling.

\paragraph{Why does extrapolation mean (Sec.~\ref{sec: Experiment})?}
Firstly, Figs.~\ref{fig: fig_hist_NormOfCoeffs_TestSet_FTE_de4_w1024_it500k_stat_FTE}--\ref{fig: fig_hist_NormOfCoeffs_TestSet_BHE_init1_deg50_sig1e1_stat_BHE} show that no collocation points such that $\| \boldsymbol{a} \| \approx 0$ were included in the training sets.
Secondly, Figs.~\ref{fig: fig_heat_AnalSolPredAbsWithTauVsAk_FTE_deg100_w1024_it500k_stat_model0_FTE} \& \ref{fig: fig_edit_heat_AnalSolPredRelWithTauVsAk_BHE_init0_deg10_sig1e1_stat_model0} plot the errors of PINN predictions at the collocation points where $\boldsymbol{a} = (0, 0, \dots, 0, a_k, 0, \dots, 0)$ with $k = 0, 1, 2, 19,$ or $99$.
Therefore, the collocation points such that $a_k \approx 0$ in Figs.~\ref{fig: fig_heat_AnalSolPredAbsWithTauVsAk_FTE_deg100_w1024_it500k_stat_model0_FTE} \& \ref{fig: fig_edit_heat_AnalSolPredRelWithTauVsAk_BHE_init0_deg10_sig1e1_stat_model0} were not included in the training sets, but the errors were as small as the region where $a_k \not\approx 0$. We refer to this as "extrapolation."

\begin{figure}[htbp]
\vskip 0.1in
    \begin{center}
    \centerline{\includegraphics[width=\columnwidth]{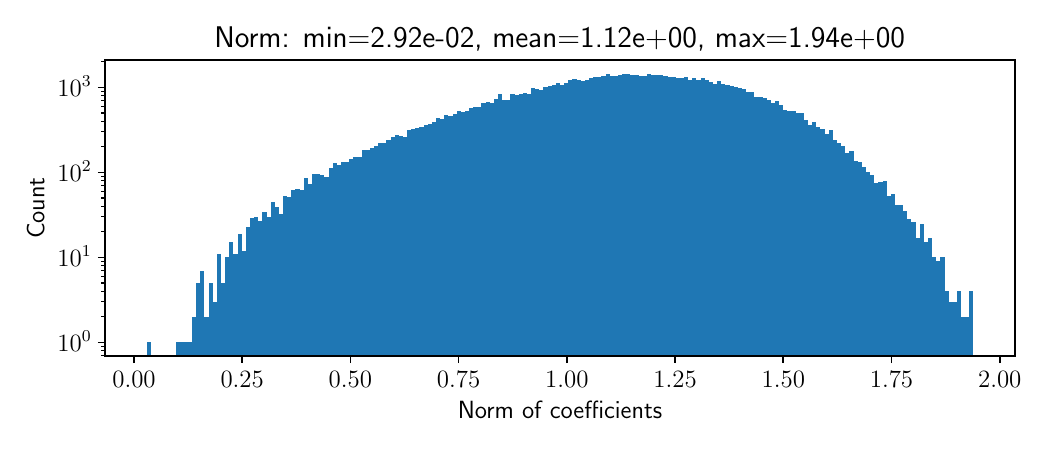}}
    \caption{\textbf{Histogram of $L^2$ norms of 4-dimensional coefficients in test set for the FTE.}
    }
    \label{fig: fig_hist_NormOfCoeffs_TestSet_FTE_de4_w1024_it500k_stat_FTE}
    \end{center}
    \vskip 0.1in
\end{figure}
\begin{figure}[htbp]
\vskip 0.1in
    \begin{center}
    \centerline{\includegraphics[width=\columnwidth]{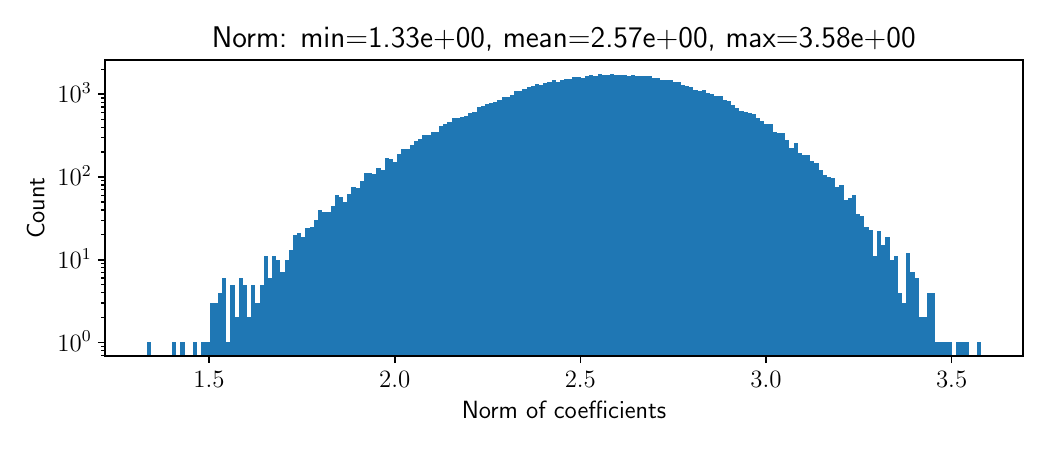}}
    \caption{\textbf{Histogram of $L^2$ norms of 20-dimensional coefficients in test set for the FTE.}
    }
    \label{fig: fig_hist_NormOfCoeffs_TestSet_FTE_deg20_w1024_it500k_stat_FTE}
    \end{center}
    \vskip 0.1in
\end{figure}
\begin{figure}[htbp]
\vskip 0.1in
    \begin{center}
    \centerline{\includegraphics[width=\columnwidth]{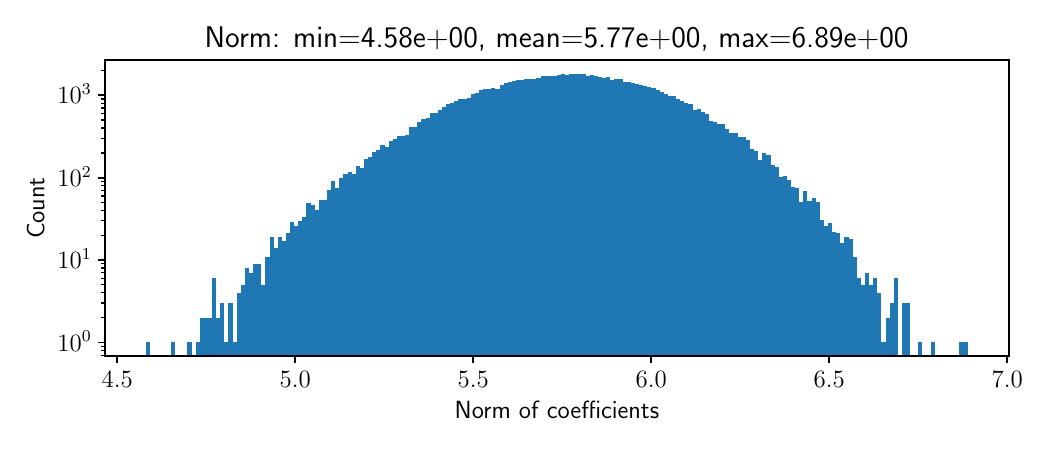}}
    \caption{\textbf{Histogram of $L^2$ norms of 100-dimensional coefficients in test set for the FTE.}
    }
    \label{fig: fig_hist_NormOfCoeffs_TestSet_FTE_deg100_w1024_it500k_stat_FTE}
    \end{center}
    \vskip 0.1in
\end{figure}

\begin{figure}[htbp]
\vskip 0.1in
    \begin{center}
    \centerline{\includegraphics[width=\columnwidth]{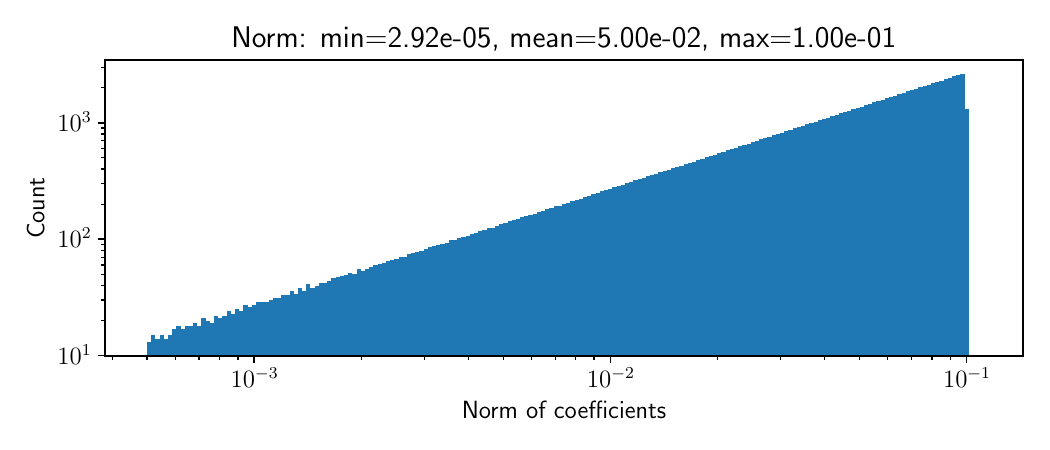}}
    \caption{\textbf{Histogram of $L^2$ norms of 4-dimensional coefficients in test set for the BHE.}
    }
    \label{fig: fig_hist_NormOfCoeffs_TestSet_BHE_init1_deg2_sig1e1_stat_BHE}
    \end{center}
    \vskip 0.1in
\end{figure}
\begin{figure}[htbp]
\vskip 0.1in
    \begin{center}
    \centerline{\includegraphics[width=\columnwidth]{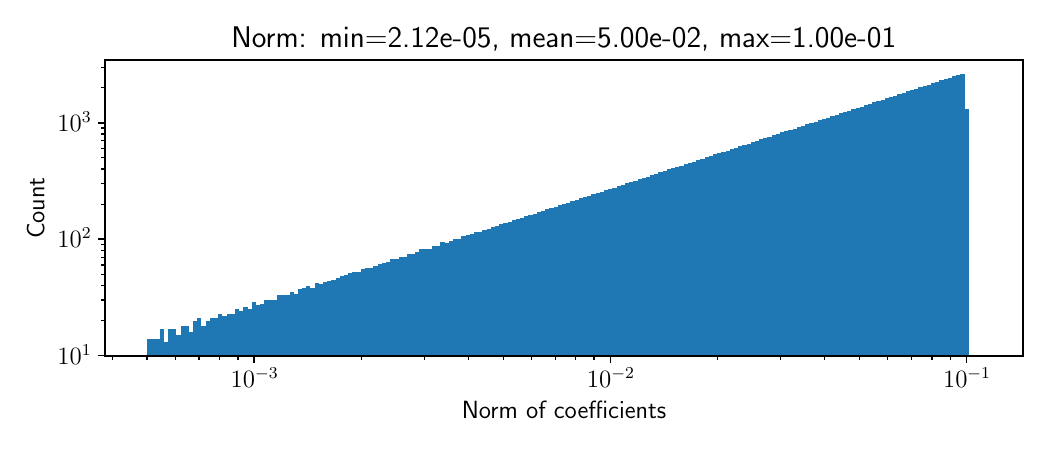}}
    \caption{\textbf{Histogram of $L^2$ norms of 20-dimensional coefficients in test set for the BHE.}
    }
    \label{fig: fig_hist_NormOfCoeffs_TestSet_BHE_init1_deg10_sig1e1_stat_BHE}
    \end{center}
    \vskip 0.1in
\end{figure}
\begin{figure}[htbp]
\vskip 0.1in
    \begin{center}
    \centerline{\includegraphics[width=\columnwidth]{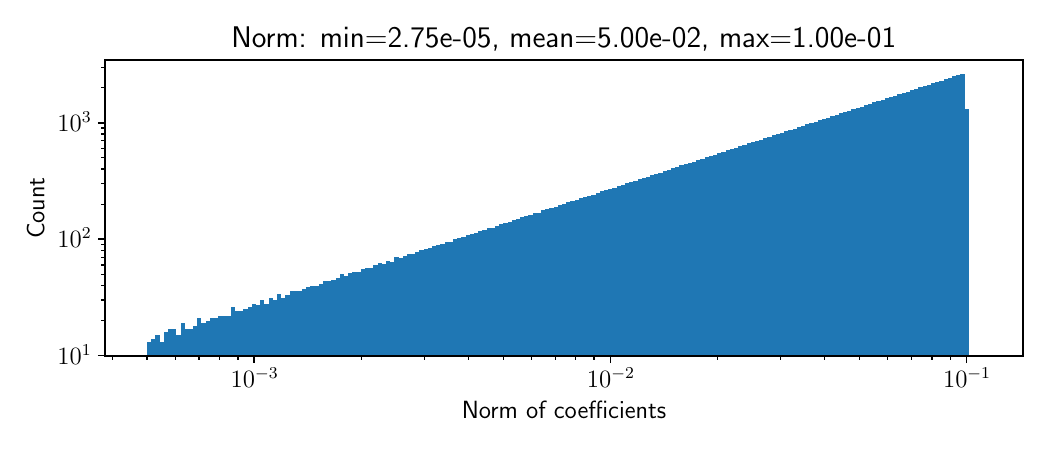}}
    \caption{\textbf{Histogram of $L^2$ norms of 100-dimensional coefficients in test set for the BHE.}
    }
    \label{fig: fig_hist_NormOfCoeffs_TestSet_BHE_init1_deg50_sig1e1_stat_BHE}
    \end{center}
    \vskip 0.1in
\end{figure}

\clearpage
\subsection{Comparison with Baseline: Advection-reaction Functional Equation (ARE)} 
\label{app: Advection-reaction Equation}
We consider the advection-reaction functional equation (ARE).
\begin{equation}
    \frac{\partial F([\theta], t)}{\partial t} = - \int_0^{2\pi} dx \theta(x) \frac{\partial}{\partial x}(\frac{\delta F([\theta], t)}{\delta \theta  (x)})
\end{equation}
We follow the experimental setting described in \cite{venturi2018numerical}.

In \cite{venturi2018numerical}, the reported time-dependent relative error of the CP-ALS and HT fluctuates from $\sim 10^{-8}$ to $\sim 10^{-1}$, and the temporally-averaged relative error is estimated to be $\lesssim 10^{-2}$ (the bottom two panels in Fig.~38 in \cite{venturi2018numerical}), where we roughly approximated the time-dependent error curves as an exponential function of the form $y = 10^{-6+10t}$. 

Our preliminary results are shown in Tab.~\ref{tab: ARE result}. 
We observe that the learned functional is almost constant in time, a failure mode of PINNs, causing large errors. 
We anticipate the result can improve with further hyperparameter optimization \cite{krishnapriyan2021characterizing}.

\begin{table}[htbp] 
\caption{\textbf{Relative and absolute errors.} The models are trained on the ARE. 
The error bars are the standard deviation over 10 training runs with different random seeds.
Relative and absolute error represent the averaged relative and absolute error over all collocation points and all runs.
Best relative and absolute error represent the relative and absolute error of the best collocation point averaged over all runs.
The worst relative and absolute errors represent the averaged relative and absolute errors of the worst collocation point averaged over all runs. 
} 
\label{tab: ARE result}
    \centering
    \begin{center}
        \begin{subtable}{0.7\textwidth}
        \begin{small}
        \begin{sc}
            \begin{tabular}{@{}rcc@{}} 
                \toprule
                Degree & Relative error & Absolute error \\ 
                \midrule
                6  & $ (0.954940 \pm 2.86337) \times 10^{-1}$ & $(6.17040 \pm 1.83985) \times 10^{-2}$ \\ 
            \end{tabular}
        \end{sc}
        \end{small}
        \end{subtable}
        \begin{subtable}{0.7\textwidth}
        \begin{small}
        \begin{sc}
            \begin{tabular}{@{}rcc@{}} 
                \toprule
                Degree & Best relative error & Best absolute error \\ 
                \midrule
                6  & $ (0.12016 \pm 2.49220) \times 10^{-1}$ & $ (0.58046 \pm 1.73938) \times 10^{-2}$ \\ 
            \end{tabular}
        \end{sc}
        \end{small}
        \end{subtable}
        \begin{subtable}{0.7\textwidth}
        \begin{small}
        \begin{sc}
            \begin{tabular}{@{}rcc@{}} 
                \toprule
                Degree & Worst relative error & Worst absolute error \\ 
                \midrule
                6  & $(9.29019 \pm 2.85457) \times 10^{-1}$ & $ (8.49163 \pm 2.0718) \times 10^{-2}$ \\ 
            \end{tabular}
        \end{sc}
        \end{small}
        \end{subtable}
    \end{center}
\end{table}

\clearpage
\subsection{Relative and Absolute Errors When $W([0], t) = 0$ Is Included In Loss} 
\label{app: Relative and Absolute Error When $W([0], t) = 0$ Is Included In Loss}
We show the histograms of the relative and absolute errors for the models trained on the BHE with the loss term corresponding to the identity $W([0], t) = 0$.
They are almost the same as those for the models trained without $W([0], t) = 0$, but the errors of the first-order derivative are $10^{-1}$ times smaller, as shown in Sec.~\ref{sec: Experiment}.

\begin{figure}[htbp]
\vskip 0.1in
    \begin{center}
    \centerline{\includegraphics[width=\columnwidth]{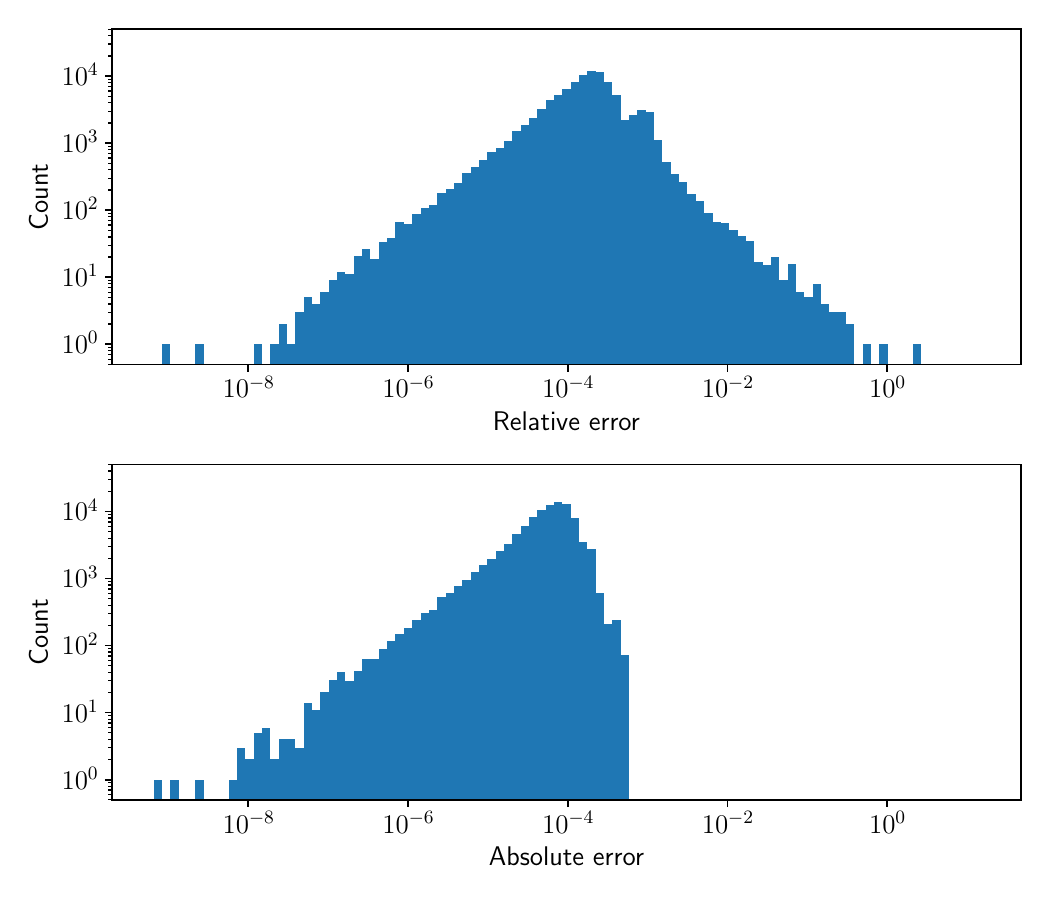}}
    \caption{\textbf{Histogram of relative and absolute error.} The model is trained on the BHE of degree 4 with the delta initial condition. A single random seed is used for the training.
    }
    \label{fig: fig_hist_RelAbs_BHE_init0_deg2_sig1e1_bcW0_it500k_stat_model0_BHE}
    \end{center}
    \vskip 0.1in
\end{figure}

\begin{figure}[htbp]
\vskip 0.1in
    \begin{center}
    \centerline{\includegraphics[width=\columnwidth]{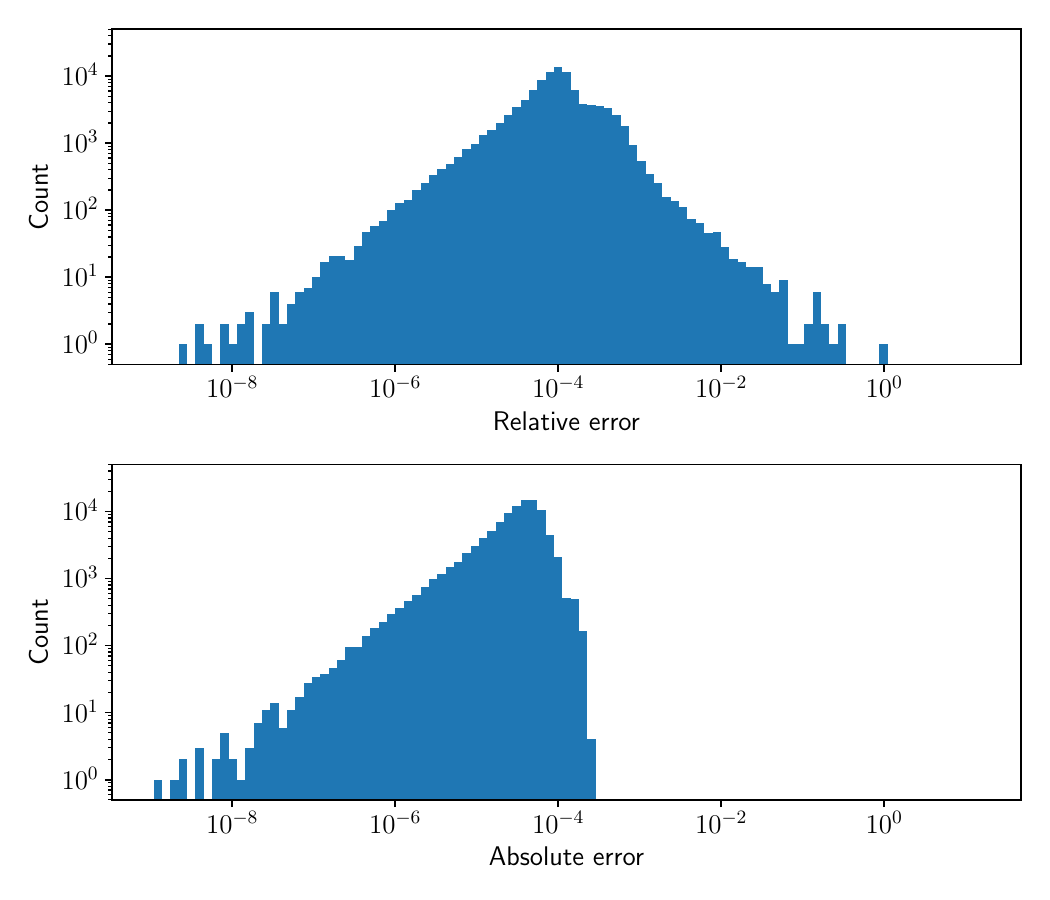}}
    \caption{\textbf{Histogram of relative and absolute error.} The model is trained on the BHE of degree 100 with the delta initial condition. A single random seed is used for the training.
    }
    \label{fig: fig_hist_RelAbs_BHE_init0_deg50_sig1e1_bcW0_it500k_stat_model0_BHE}
    \end{center}
    \vskip 0.1in
\end{figure}

\begin{figure}[htbp]
\vskip 0.1in
    \begin{center}
    \centerline{\includegraphics[width=\columnwidth]{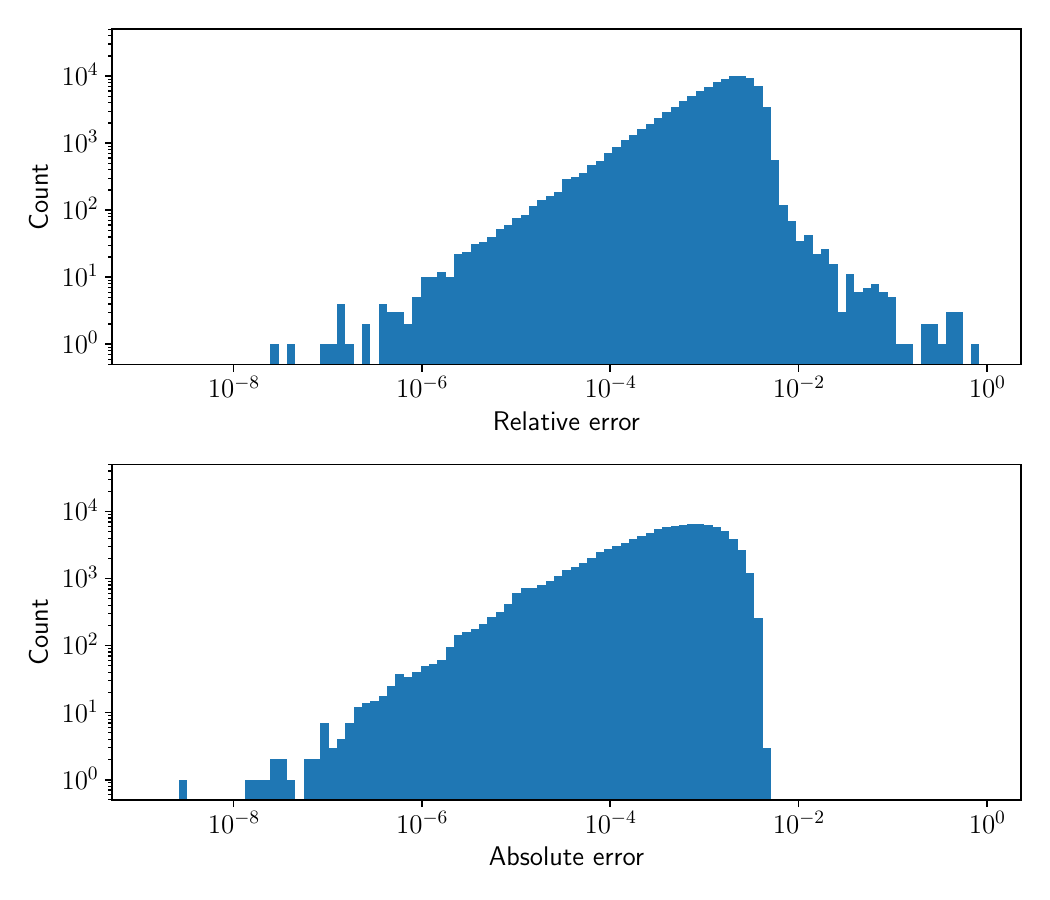}}
    \caption{\textbf{Histogram of relative and absolute error.} The model is trained on the BHE of degree 4 with the constant initial condition. A single random seed is used for the training.
    }
    \label{fig: fig_hist_RelAbs_BHE_init2_deg2_sig1e1_bcW0_it500k_stat_model0_BHE}
    \end{center}
    \vskip 0.1in
\end{figure}

\begin{figure}[htbp]
\vskip 0.1in
    \begin{center}
    \centerline{\includegraphics[width=\columnwidth]{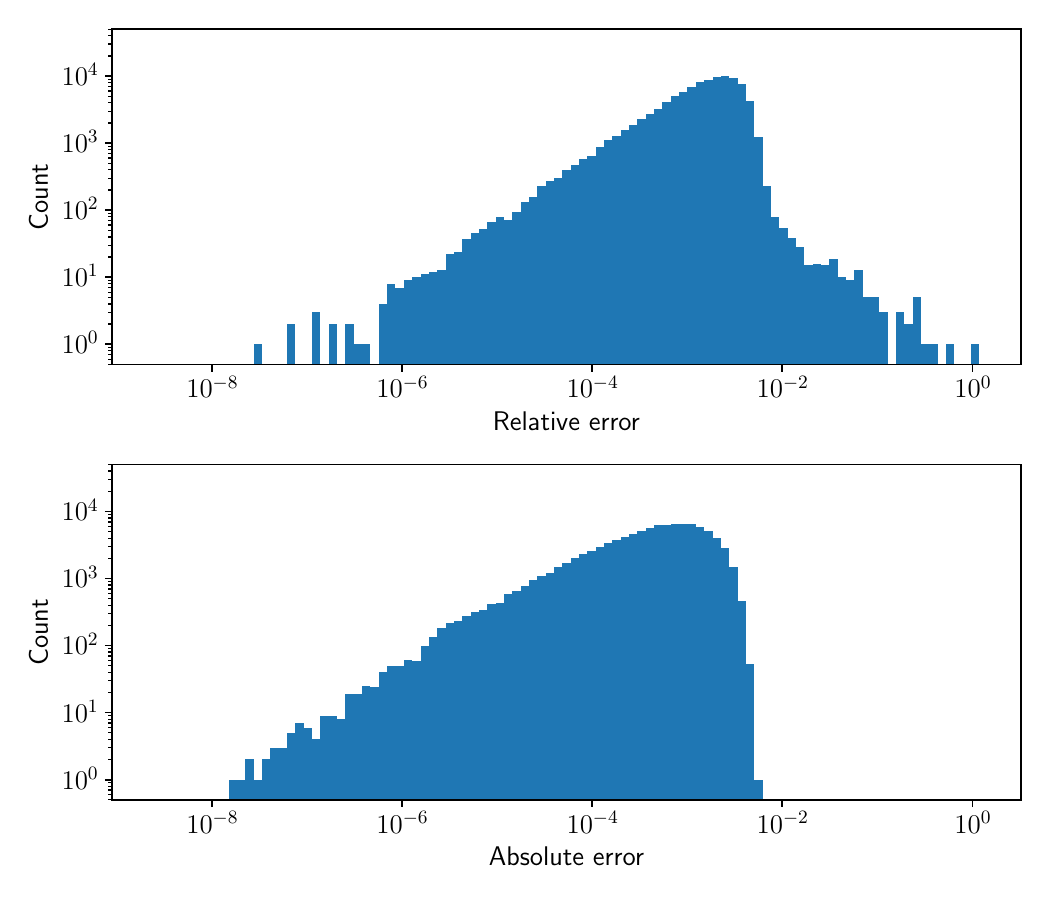}}
    \caption{\textbf{Histogram of relative and absolute error.} The model is trained on the BHE of degree 100 with the constant initial condition. A single random seed is used for the training.
    }
    \label{fig: fig_hist_RelAbs_BHE_init2_deg50_sig1e1_bcW0_it500k_stat_model0_BHE}
    \end{center}
    \vskip 0.1in
\end{figure}

\clearpage
\subsection{Cross-degree Evaluation} \label{app: Cross-degree Evaluation}
In this section, we attempt to empirically investigate the theoretical convergence of the cylindrical approximation. 
The convergence is illustrated in Fig.~\ref{fig: fitting}, where PINN's training is not included, while our focus here is the convergence of the solutions estimated via PINN's training.
According to the convergence theorem of the cylindrical approximation and the universal approximability of PINNs, the models trained on $D_m = \mathrm{span}\{\phi_0,\ldots,\phi_{m-1}\}$ with a large $m$ have a high capability of expression, and they can cover $D_n$ with $n \leq m$. 
On the other hand, the models trained on $D_m$ with a small $m$ have a high capability of expression only for small $m$s, and they \textit{cannot} cover $D_n$ for $n \geq m$.
In theory, this phenomenon can be observed by performing a \textit{cross-degree evaluation}, in which we use training data sampled from $D_m$ and then evaluate the trained model on $D_{m^\prime}$, where $m$ may not equal to $m^\prime$.

The results are shown in Tabs.~\ref{tab: AREmeanFTE_deg4_20_100_w1024_it500k.tex}--\ref{tab: AAEmeanBHE_init2_deg2_10_50_sig1e1.tex} below.
Contrary to the aforementioned intuition, most of the cross-degree evaluations seem to contradict the convergence theorem of FDE solutions under the cylindrical approximation. 
Possible reasons are:
\begin{enumerate}
    \item This is because of the curse of dimensionality, i.e., the optimization error of PINNs in high dimensions.
        Higher-dimensional PDEs require a much larger number of iterations to train PINNs; however, our experiments are performed with a fixed number of iterations (500,000 and 300,000 for the FTE and the BHE, respectively).
        Changing the number of iterations depending on the degree and applying other techniques for training PINNs \cite{zeng2022competitive, hu2023tackling, wang2023multi, hu2023bias, daw2023mitigating, yao2023multadam, gao2023failure} would decrease the optimization error, and we can focus on the approximation error and thus will observe the decay of error w.r.t. increasing $m$.
    \item In addition, note that FDEs in our experiments, except for the FTE with the nonlinear initialization, are dominated by $a_0$ and/or $a_1$ only. Therefore, they show a tiny effect on relative and absolute errors by definition. Nevertheless, the FTE with the nonlinear initialization (Tabs.~\ref{tab: AREmeanFTE_deg4_10_20_w1024_it500k_ICspectrum15.tex} and \ref{tab: AAEmeanFTE_deg4_10_20_w1024_it500k_ICspectrum15.tex}) does not show the theoretical convergence either, which means the optimization error of PINNs, rather than the cylindrical approximation error, dominates the errors anyways.
    \item One can see a strong dependence of the errors on the experimental setups for PINNs.
        The absolute errors of the FTE under the nonlinear initial condition are reduced by a factor of $2$ by simply changing the width of the PINN and the number of training iterations (1024 to 2048 and $5\times10^{5}$ to $8\times10^{5}$, respectively): 
        from
        \begin{itemize}
            \item $(2.46499 \pm 0.26492) \times 10^{-3}$ for $m=4$,  
            \item $(20.3384 \pm 1.0449)  \times 10^{-3}$ for $m=10$, and 
            \item $(98.8854 \pm 2.7068)  \times 10^{-3}$ for $m=20$,
        \end{itemize}
        to
        \begin{itemize}
            \item $(1.43591 \pm 0.1832) \times 10^{-3}$ for $m=4$, 
            \item $(11.0609 \pm 0.3671) \times 10^{-3}$ for $m=10$, and 
            \item $(55.2715 \pm 2.3179) \times 10^{-3}$ for $m=20$,
        \end{itemize}
        where the error bars are the standard deviation over 10 runs. Note that the hyperparameters are different from those used in the main text.
        Moreover, as shown in Fig.~\ref{fig: fig_edit_plot_FirstMomentRel_wERRBARinit050_sig1e1_WandWObcW0_stat_model}, simply adding a regularization term $\|W([0], t)\|$ to the loss function reduces relative errors by an order of magnitude. 
\end{enumerate}

Therefore, to observe the convergence of the cylindrical approximation after PINN's training, careful hyperparameter tuning and optimization are needed. 
Note that optimizing PINNs for high-dimensional PDEs is an active field of research that has been developing (see Apps.~\ref{app: Supplementary Related Work} and \ref{app: Supplementary Discussion}) and is of independent interest.
 
\clearpage
\subsubsection{Relative and Absolute Errors of Functional Transport Equation}
\begin{table}[htbp] 
\vskip 0.1in
\caption{\textbf{Mean relative error $\times 10^{3}$ (functional transport equation with linear initialization) on test set.} The model is trained on the functional transport equation with the linear initialization condition. The error bars represent the standard error of mean with sample size 10 corresponding to different training seeds.} \label{tab: AREmeanFTE_deg4_20_100_w1024_it500k.tex}
    \vskip 0.1in
    \begin{center}
    \begin{small}
    \begin{sc}
        \begin{tabular}{@{}rlll@{}} \toprule
            & \multicolumn{3}{c}{Training set degree} \\ \cmidrule(l){2-4}
            Test set degree&4&20&100 \\ \midrule
            4&  $1.26821	\pm 0.09936$	&	$2.56697	\pm0.20431$ &$	14.6402	 \pm2.1739	$ \\ 
            20& $0.559035	\pm0.026630$ &	$2.01716	\pm0.06875$ &$	5.82534	 \pm 0.49499$\\ 
            100&$0.483585	\pm0.010872$ &	$1.88511	\pm0.05292$ &$	6.24740	 \pm0.10591 $\\ 
        \end{tabular}
    \end{sc}
    \end{small}
    \end{center}
    \vskip -0.1in
\end{table}
\begin{table}[htbp] 
\vskip 0.1in
\caption{\textbf{Mean absolute error $\times 10^{4}$ (functional transport equation with linear initialization)  on test set.} The model is trained on the functional transport equation with the linear initialization condition. The error bars represent the standard error of mean with sample size 10 corresponding to different training seeds.} \label{tab: AAEmeanFTE_deg4_20_100_w1024_it500k.tex}
    \vskip 0.1in
    \begin{center}
    \begin{small}
    \begin{sc}
        \begin{tabular}{@{}rlll@{}} \toprule
            & \multicolumn{3}{c}{Training set degree} \\ \cmidrule(l){2-4}
            Test set degree&4&20&100 \\ \midrule
            4&$1.32203	\pm 0.13933$&$	3.06281	\pm 0.28009$&$	15.2530	 \pm 0.7296$\\ 
            20&$1.34579	\pm 0.14329$&$	2.29632	\pm 0.05205$&$	13.4665	 \pm 0.7117$\\ 
            100&$1.32709	\pm 0.14107$&$	2.27854	\pm 0.05028$&$	12.3312	 \pm 0.5986$\\ 
        \end{tabular}
    \end{sc}
    \end{small}
    \end{center}
    \vskip -0.1in
\end{table}

\begin{table}[htbp] 
\vskip 0.1in
\caption{\textbf{Mean relative error $\times 10^{3}$ (functional transport equation with nonlinear initialization) on test set.} The model is trained on the functional transport equation with the nonlinear initialization condition. The error bars represent the standard error of mean with sample size 10 corresponding to different training seeds.}\label{tab: AREmeanFTE_deg4_10_20_w1024_it500k_ICspectrum15.tex}
    \vskip 0.1in
    \begin{center}
    \begin{small}
    \begin{sc}
        \begin{tabular}{@{}rlll@{}} \toprule
            & \multicolumn{3}{c}{Training set degree} \\ \cmidrule(l){2-4}
            Test set degree&4&10&20 \\ \midrule
             4&$1.09131  \pm 0.02945$&$	   2.06847 \pm 0.06531$&$	   7.51237	\pm 0.25359 $\\ 
            10&$1.14029	\pm 0.03166$&$	3.59902	\pm 0.05139$&$	8.61269	 \pm 0.10351$\\ 
            20&$1.11229	\pm 0.02789$&$	3.62669	\pm 0.06044$&$	11.7414	 \pm 0.1112$\\ 
        \end{tabular}
    \end{sc}
    \end{small}
    \end{center}
    \vskip -0.1in
\end{table}
\begin{table}[htbp] 
\vskip 0.1in
\caption{\textbf{Mean absolute error $\times 10^{3}$ (functional transport equation with nonlinear initialization) on test set.} The model is trained on the functional transport equation with the nonlinear initialization condition. Note that the hyperparameters are different from those used in the main text. The error bars represent the standard error of mean with sample size 10 corresponding to different training seeds.} \label{tab: AAEmeanFTE_deg4_10_20_w1024_it500k_ICspectrum15.tex}
    \vskip 0.1in
    \begin{center}
    \begin{small}
    \begin{sc}
        \begin{tabular}{@{}rlll@{}} \toprule
            & \multicolumn{3}{c}{Training set degree} \\ \cmidrule(l){2-4}
            Test set degree&4&10&20 \\ \midrule
            4&$2.46499	\pm 0.08377$&$	9.00259	\pm 0.21308$&$47.5449	 \pm 0.6410$\\ 
            10&$2.46885	\pm 0.08409$&$	20.3384	\pm 0.3304$&$	69.6191	 \pm 0.7526$\\ 
            20&$2.48688	\pm 0.08436$&$	20.5866	\pm 0.3346$&$	98.8854	 \pm 0.8560$\\ 
        \end{tabular}
    \end{sc}
    \end{small}
    \end{center}
    \vskip -0.1in
\end{table}

\clearpage
\subsubsection{Relative and Absolute Errors of Burgers-Hopf Equation}
\begin{table}[htbp] 
\vskip 0.1in
\caption{\textbf{Mean relative error $\times 10^{4}$ (BHE with delta initialization) on test set.} The model is trained on the BHE with the delta initialization condition. The error bars represent the standard error of mean with sample size 10 corresponding to different training seeds.} \label{tab: AREmeanBHE_init0_deg2_10_50_sig1e1.tex}
    \vskip 0.1in
    \begin{center}
    \begin{small}
    \begin{sc}
        \begin{tabular}{@{}rlll@{}} \toprule
            & \multicolumn{3}{c}{Training set degree} \\ \cmidrule(l){2-4}
            Test set degree&4&20&100 \\ \midrule
            4&$2.93905	\pm 0.05503$&$	2.97886	\pm 0.03187$&$	2.99835	 \pm 0.03851$\\ 
            20&$2.24827	\pm 0.07876$&$	2.20842	\pm 0.09022$&$	2.23898	 \pm 0.05530$\\ 
            100&$4.11689	\pm 0.34740$&$	2.98177	\pm 0.20264$&$	2.41667	 \pm 0.07989$\\ 
        \end{tabular}
    \end{sc}
    \end{small}
    \end{center}
    \vskip -0.1in
\end{table}
\begin{table}[htbp] 
\vskip 0.1in
\caption{\textbf{Mean absolute error $\times 10^{5}$ (BHE with delta initialization) on test set.} The model is trained on the BHE with the delta initialization condition. The error bars represent the standard error of mean with sample size 10 corresponding to different training seeds.} \label{tab: AAEmeanBHE_init0_deg2_10_50_sig1e1.tex}
    \vskip 0.1in
    \begin{center}
    \begin{small}
    \begin{sc}
        \begin{tabular}{@{}rlll@{}} \toprule
            & \multicolumn{3}{c}{Training set degree} \\ \cmidrule(l){2-4}
            Test set degree&4&20&100 \\ \midrule
            4&$1.66451	\pm0.15036$&$	1.35700	\pm0.03771$&$	1.62045 \pm	0.02767$\\ 
            20&$1.66216	\pm0.15005$&$	1.34640	\pm0.03828$&$	1.60869	\pm 0.02892$\\ 
            100&$1.66510\pm0.15030$&$	1.34980	\pm0.03779$&$	1.60980	\pm 0.02831$\\ 
        \end{tabular}
    \end{sc}
    \end{small}
    \end{center}
    \vskip -0.1in
\end{table}

\begin{table}[htbp] 
\vskip 0.1in
\caption{\textbf{Mean relative error $\times 10^{5}$ (BHE with constant initialization) on test set.} The model is trained on the BHE with the constant initialization condition. The error bars represent the standard error of mean with sample size 10 corresponding to different training seeds.} \label{tab: AREmeanBHE_init1_deg2_10_50_sig1e1.tex}
    \vskip 0.1in
    \begin{center}
    \begin{small}
    \begin{sc}
        \begin{tabular}{@{}rlll@{}} \toprule
            & \multicolumn{3}{c}{Training set degree} \\ \cmidrule(l){2-4}
            Test set degree&4&20&100 \\ \midrule
            4&$12.1782	\pm2.70298 $&$	8.32134	\pm 0.72992$&$	8.44530	 \pm 1.4148$\\ 
            20&$9.07441	\pm1.50640 $&$	6.14352	\pm 0.40680$&$	5.99939	 \pm 0.68357$\\ 
            100&$8.26214	\pm1.17503 $&$	5.75086	\pm 0.35933$&$	5.50375	 \pm 0.55500 $\\ 
        \end{tabular}
    \end{sc}
    \end{small}
    \end{center}
    \vskip -0.1in
\end{table}
\begin{table}[htbp] 
\vskip 0.1in
\caption{\textbf{Mean absolute error $\times 10^{5}$ (BHE with constant initialization) on test set.} The model is trained on the BHE with the constant initialization condition. The error bars represent the standard error of mean with sample size 10 corresponding to different training seeds.} \label{tab: AAEmeanBHE_init1_deg2_10_50_sig1e1.tex}
    \vskip 0.1in
    \begin{center}
    \begin{small}
    \begin{sc}
        \begin{tabular}{@{}rlll@{}} \toprule
            & \multicolumn{3}{c}{Training set degree} \\ \cmidrule(l){2-4}
            Test set degree&4&20&100 \\ \midrule
            4&$ 1.62849	\pm 0.21154$&$	1.12380	\pm 0.10234$&$	1.05562	 \pm 0.05272$\\ 
            20&$1.62892	\pm 0.21162$&$	1.12371	\pm 0.10235$&$	1.05556	 \pm 0.05272$\\ 
            100&$1.62931	\pm 0.21148$&$	1.12369	\pm 0.10231$&$	1.05558	 \pm 0.05273$\\ 
        \end{tabular}
    \end{sc}
    \end{small}
    \end{center}
    \vskip -0.1in
\end{table}

\begin{table}[htbp] 
\vskip 0.1in
\caption{\textbf{Mean absolute error $\times 10^{3}$ (BHE with moderate initialization) on test set.} The model is trained on the BHE with the moderate initialization condition. The error bars represent the standard error of mean with sample size 10 corresponding to different training seeds.} \label{tab: AREmeanBHE_init2_deg2_10_50_sig1e1.tex}
    \vskip 0.1in
    \begin{center}
    \begin{small}
    \begin{sc}
        \begin{tabular}{@{}rlll@{}} \toprule
            & \multicolumn{3}{c}{Training set degree} \\ \cmidrule(l){2-4}
            Test set degree&4&20&100 \\ \midrule
            4&$1.32699	\pm 0.06872$&$	1.60725	\pm0.03521 $&$	1.84851	 \pm 0.01689$\\ 
            20&$1.24102	\pm0.04793 $&$	1.63232	\pm0.02236 $&$	1.92692	 \pm0.00862 $\\ 
            100&$1.18515	\pm0.04836 $&$	1.55581	\pm 0.02152$&$	1.82844	 \pm 0.00684$\\ 
        \end{tabular}
    \end{sc}
    \end{small}
    \end{center}
    \vskip -0.1in
\end{table}
\begin{table}[htbp] 
\vskip 0.1in
\caption{\textbf{Mean absolute error $\times 10^{4}$ (BHE with moderate initialization) on test set.} The model is trained on the BHE with the moderate initialization condition. The error bars represent the standard error of mean with sample size 10 corresponding to different training seeds.} \label{tab: AAEmeanBHE_init2_deg2_10_50_sig1e1.tex}
    \vskip 0.1in
    \begin{center}
    \begin{small}
    \begin{sc}
        \begin{tabular}{@{}rlll@{}} \toprule
            & \multicolumn{3}{c}{Training set degree} \\ \cmidrule(l){2-4}
            Test set degree&4&20&100 \\ \midrule
            4&$3.91187	\pm 0.10867$&$	5.49339	\pm 0.03494$&$	6.73537	 \pm 0.01891$\\ 
            20&$3.93392	\pm 0.11011$&$	5.63933	\pm 0.03427$&$	6.87334	 \pm 0.01884$\\ 
            100&$3.90431	\pm 0.10875$&$	5.60266	\pm 0.03403$&$	6.82414	 \pm 0.01860$\\ 
        \end{tabular}
    \end{sc}
    \end{small}
    \end{center}
    \vskip -0.1in
\end{table}

\clearpage
\subsection{Loss Curves and Learning Rate Scheduling} 
\label{app: Loss Curves}
Loss curves and learning rates are provided in Figs.~\ref{fig: LC1}--\ref{fig: LC2}.

\begin{figure}[htbp]
    \centering
    \includegraphics{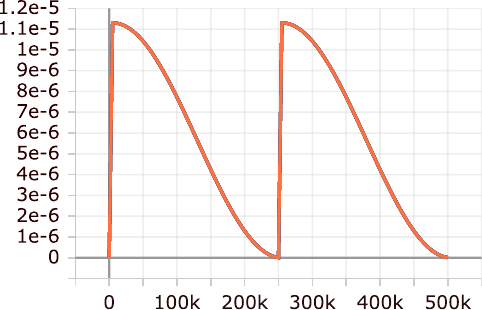}
    \caption{\textbf{Learning rate scheduling.} FTE with nonlinear initial condition with degree $1000$.}
    \label{fig: LC1}
\end{figure}

\begin{figure}[htbp]
    \centering
    \includegraphics{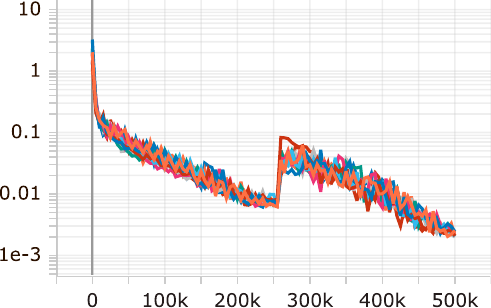}
    \caption{\textbf{Training loss.} FTE with nonlinear initial condition with degree $1000$.}
\end{figure}

\begin{figure}[htbp]
    \centering
    \includegraphics{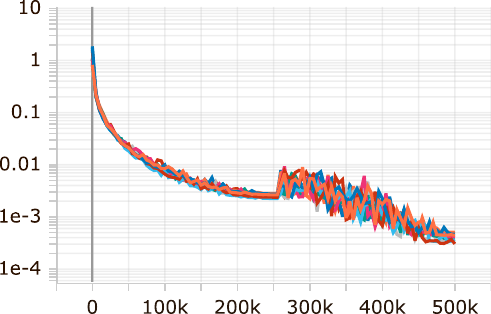}
    \caption{\textbf{Validation loss.} FTE with nonlinear initial condition with degree $1000$.}
\end{figure}

\begin{figure}[htbp]
    \centering
    \includegraphics{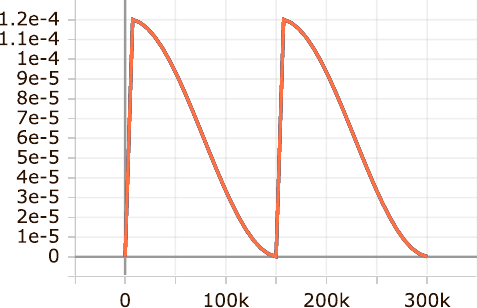}
    \caption{\textbf{Learning rate scheduling.} BHE with delta initial condition with degree $100$.}
\end{figure}

\begin{figure}[htbp]
    \centering
    \includegraphics{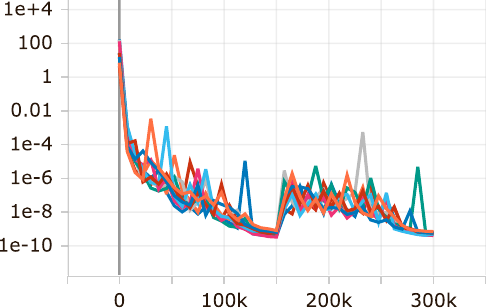}
    \caption{\textbf{Training loss.} BHE with delta initial condition with degree $100$.}
\end{figure}

\begin{figure}[htbp]
    \centering
    \includegraphics{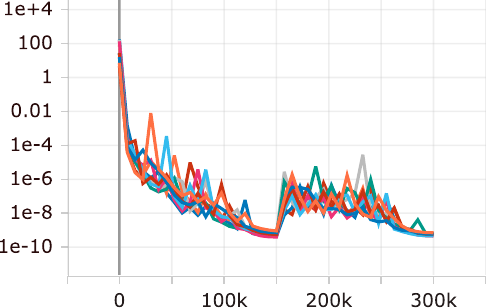}
        \caption{\textbf{Validation loss.} BHE with delta initial condition with degree $100$.}
\end{figure}

\begin{figure}[htbp]
    \centering
    \includegraphics{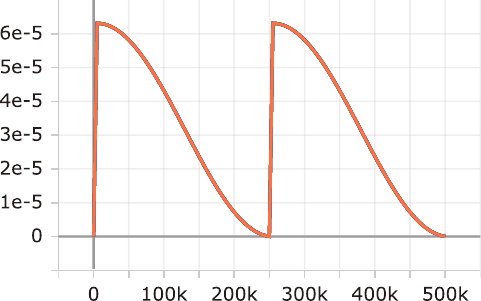}
    \caption{\textbf{Learning rate scheduling.} ARE with degree $6$.}
\end{figure}

\begin{figure}[htbp]
    \centering
    \includegraphics{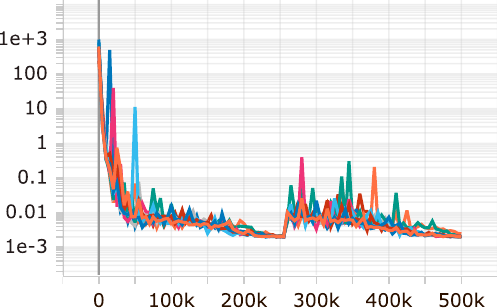}
    \caption{\textbf{Training loss.} ARE with degree $6$.}
\end{figure}

\begin{figure}[htbp]
    \centering
    \includegraphics{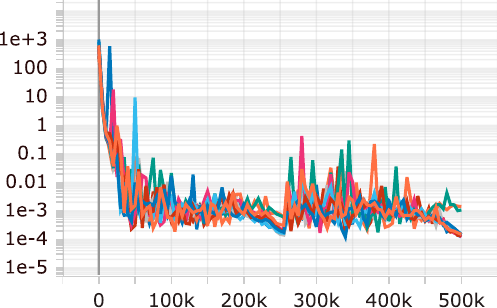}
        \caption{\textbf{Validation loss.} ARE with degree $6$.}
    \label{fig: LC2}
\end{figure}

\clearpage
\subsection{Additional Figures: Functional Transport Equation} 
\label{app: Figures with Error Bars: Functional Transport Equation}
{\color{blue} The contents are omitted due to arXiv's storage limit. Please refer to our full paper at OpenReview (NeurIPS 2024) or \url{https://github.com/TaikiMiyagawa/FunctionalPINN}.}

\clearpage
\subsection{Additional Figures: Burgers-Hopf Equation}
{\color{blue} The contents are omitted due to arXiv's storage limit. Please refer to our full paper at OpenReview (NeurIPS 2024) or \url{https://github.com/TaikiMiyagawa/FunctionalPINN}.}

\clearpage
\section*{NeurIPS Paper Checklist}

\begin{enumerate}

\item {\bf Claims}
    \item[] Question: Do the main claims made in the abstract and introduction accurately reflect the paper's contributions and scope?
    \item[] Answer: \answerYes{} 
    \item[] Justification: Our contribution is summarized at the end of Introduction. How much the results can be expected to generalize to other settings is also discussed in App.~\ref{app: Supplementary Discussion}.
    \item[] Guidelines:
    \begin{itemize}
        \item The answer NA means that the abstract and introduction do not include the claims made in the paper.
        \item The abstract and/or introduction should clearly state the claims made, including the contributions made in the paper and important assumptions and limitations. A No or NA answer to this question will not be perceived well by the reviewers. 
        \item The claims made should match theoretical and experimental results, and reflect how much the results can be expected to generalize to other settings. 
        \item It is fine to include aspirational goals as motivation as long as it is clear that these goals are not attained by the paper. 
    \end{itemize}

\item {\bf Limitations}
    \item[] Question: Does the paper discuss the limitations of the work performed by the authors?
    \item[] Answer: \answerYes{} 
    \item[] Justification: Limitations are discussed in Sec.~\ref{sec: Conclusion and Limitations} and App.~\ref{app: Limitations}.
    \item[] Guidelines:
    \begin{itemize}
        \item The answer NA means that the paper has no limitation while the answer No means that the paper has limitations, but those are not discussed in the paper. 
        \item The authors are encouraged to create a separate "Limitations" section in their paper.
        \item The paper should point out any strong assumptions and how robust the results are to violations of these assumptions (e.g., independence assumptions, noiseless settings, model well-specification, asymptotic approximations only holding locally). The authors should reflect on how these assumptions might be violated in practice and what the implications would be.
        \item The authors should reflect on the scope of the claims made, e.g., if the approach was only tested on a few datasets or with a few runs. In general, empirical results often depend on implicit assumptions, which should be articulated.
        \item The authors should reflect on the factors that influence the performance of the approach. For example, a facial recognition algorithm may perform poorly when image resolution is low or images are taken in low lighting. Or a speech-to-text system might not be used reliably to provide closed captions for online lectures because it fails to handle technical jargon.
        \item The authors should discuss the computational efficiency of the proposed algorithms and how they scale with dataset size.
        \item If applicable, the authors should discuss possible limitations of their approach to address problems of privacy and fairness.
        \item While the authors might fear that complete honesty about limitations might be used by reviewers as grounds for rejection, a worse outcome might be that reviewers discover limitations that aren't acknowledged in the paper. The authors should use their best judgment and recognize that individual actions in favor of transparency play an important role in developing norms that preserve the integrity of the community. Reviewers will be specifically instructed to not penalize honesty concerning limitations.
    \end{itemize}

\item {\bf Theory Assumptions and Proofs}
    \item[] Question: For each theoretical result, does the paper provide the full set of assumptions and a complete (and correct) proof?
    \item[] Answer: \answerYes{} 
    \item[] Justification: The full set of assumptions and complete proofs for each theoretical result is given in Sec.~\ref{sec: Theory: Cylindrical Approximation}, Apps.~\ref{app: Theorems Related to Cylindrical Approximation and Convergence}, \ref{app: Proofs and Derivations}, and \ref{app: Details about our FDEs}. In particular, App.~\ref{app: Theorems Related to Cylindrical Approximation and Convergence} is dedicated to the mathematical background for non-experts of functional analysis, making our paper as self-contained as possible.
    \item[] Guidelines:
    \begin{itemize}
        \item The answer NA means that the paper does not include theoretical results. 
        \item All the theorems, formulas, and proofs in the paper should be numbered and cross-referenced.
        \item All assumptions should be clearly stated or referenced in the statement of any theorems.
        \item The proofs can either appear in the main paper or the supplemental material, but if they appear in the supplemental material, the authors are encouraged to provide a short proof sketch to provide intuition. 
        \item Inversely, any informal proof provided in the core of the paper should be complemented by formal proofs provided in appendix or supplemental material.
        \item Theorems and Lemmas that the proof relies upon should be properly referenced. 
    \end{itemize}

    \item {\bf Experimental Result Reproducibility}
    \item[] Question: Does the paper fully disclose all the information needed to reproduce the main experimental results of the paper to the extent that it affects the main claims and/or conclusions of the paper (regardless of whether the code and data are provided or not)?
    \item[] Answer: \answerYes{}
    \item[] Justification: All the information for reproduction is given in Sec.~\ref{sec: Experiment} and App.~\ref{app: Detailed Experimental Settings}. We also provide our code in the supplementary materials. See also our GitHub \url{https://github.com/TaikiMiyagawa/FunctionalPINN}.
    \item[] Guidelines:
    \begin{itemize}
        \item The answer NA means that the paper does not include experiments.
        \item If the paper includes experiments, a No answer to this question will not be perceived well by the reviewers: Making the paper reproducible is important, regardless of whether the code and data are provided or not.
        \item If the contribution is a dataset and/or model, the authors should describe the steps taken to make their results reproducible or verifiable. 
        \item Depending on the contribution, reproducibility can be accomplished in various ways. For example, if the contribution is a novel architecture, describing the architecture fully might suffice, or if the contribution is a specific model and empirical evaluation, it may be necessary to either make it possible for others to replicate the model with the same dataset, or provide access to the model. In general. releasing code and data is often one good way to accomplish this, but reproducibility can also be provided via detailed instructions for how to replicate the results, access to a hosted model (e.g., in the case of a large language model), releasing of a model checkpoint, or other means that are appropriate to the research performed.
        \item While NeurIPS does not require releasing code, the conference does require all submissions to provide some reasonable avenue for reproducibility, which may depend on the nature of the contribution. For example
        \begin{enumerate}
            \item If the contribution is primarily a new algorithm, the paper should make it clear how to reproduce that algorithm.
            \item If the contribution is primarily a new model architecture, the paper should describe the architecture clearly and fully.
            \item If the contribution is a new model (e.g., a large language model), then there should either be a way to access this model for reproducing the results or a way to reproduce the model (e.g., with an open-source dataset or instructions for how to construct the dataset).
            \item We recognize that reproducibility may be tricky in some cases, in which case authors are welcome to describe the particular way they provide for reproducibility. In the case of closed-source models, it may be that access to the model is limited in some way (e.g., to registered users), but it should be possible for other researchers to have some path to reproducing or verifying the results.
        \end{enumerate}
    \end{itemize}

\item {\bf Open access to data and code}
    \item[] Question: Does the paper provide open access to the data and code, with sufficient instructions to faithfully reproduce the main experimental results, as described in supplemental material?
    \item[] Answer: \answerYes{} 
    \item[] Justification: Our code is given in the supplementary materials and GitHub (\url{https://github.com/TaikiMiyagawa/FunctionalPINN}).
    \item[] Guidelines:
    \begin{itemize}
        \item The answer NA means that paper does not include experiments requiring code.
        \item Please see the NeurIPS code and data submission guidelines (\url{https://nips.cc/public/guides/CodeSubmissionPolicy}) for more details.
        \item While we encourage the release of code and data, we understand that this might not be possible, so “No” is an acceptable answer. Papers cannot be rejected simply for not including code, unless this is central to the contribution (e.g., for a new open-source benchmark).
        \item The instructions should contain the exact command and environment needed to run to reproduce the results. See the NeurIPS code and data submission guidelines (\url{https://nips.cc/public/guides/CodeSubmissionPolicy}) for more details.
        \item The authors should provide instructions on data access and preparation, including how to access the raw data, preprocessed data, intermediate data, and generated data, etc.
        \item The authors should provide scripts to reproduce all experimental results for the new proposed method and baselines. If only a subset of experiments are reproducible, they should state which ones are omitted from the script and why.
        \item At submission time, to preserve anonymity, the authors should release anonymized versions (if applicable).
        \item Providing as much information as possible in supplemental material (appended to the paper) is recommended, but including URLs to data and code is permitted.
    \end{itemize}

\item {\bf Experimental Setting/Details}
    \item[] Question: Does the paper specify all the training and test details (e.g., data splits, hyperparameters, how they were chosen, type of optimizer, etc.) necessary to understand the results?
    \item[] Answer: \answerYes{}
    \item[] Justification: All the information for reproduction is given in Sec.~\ref{sec: Experiment} and App.~\ref{app: Detailed Experimental Settings}. We also provide our code in the supplementary materials.
    \item[] Guidelines:
    \begin{itemize}
        \item The answer NA means that the paper does not include experiments.
        \item The experimental setting should be presented in the core of the paper to a level of detail that is necessary to appreciate the results and make sense of them.
        \item The full details can be provided either with the code, in appendix, or as supplemental material.
    \end{itemize}

\item {\bf Experiment Statistical Significance}
    \item[] Question: Does the paper report error bars suitably and correctly defined or other appropriate information about the statistical significance of the experiments?
    \item[] Answer: \answerYes{}
    \item[] Justification: Error bars (standard deviations) are reported in Sec.~\ref{sec: Experiment} and App.~\ref{app: Additional Results I}.
    \item[] Guidelines:
    \begin{itemize}
        \item The answer NA means that the paper does not include experiments.
        \item The authors should answer "Yes" if the results are accompanied by error bars, confidence intervals, or statistical significance tests, at least for the experiments that support the main claims of the paper.
        \item The factors of variability that the error bars are capturing should be clearly stated (for example, train/test split, initialization, random drawing of some parameter, or overall run with given experimental conditions).
        \item The method for calculating the error bars should be explained (closed form formula, call to a library function, bootstrap, etc.)
        \item The assumptions made should be given (e.g., Normally distributed errors).
        \item It should be clear whether the error bar is the standard deviation or the standard error of the mean.
        \item It is OK to report 1-sigma error bars, but one should state it. The authors should preferably report a 2-sigma error bar than state that they have a 96\% CI, if the hypothesis of Normality of errors is not verified.
        \item For asymmetric distributions, the authors should be careful not to show in tables or figures symmetric error bars that would yield results that are out of range (e.g. negative error rates).
        \item If error bars are reported in tables or plots, The authors should explain in the text how they were calculated and reference the corresponding figures or tables in the text.
    \end{itemize}

\item {\bf Experiments Compute Resources}
    \item[] Question: For each experiment, does the paper provide sufficient information on the computer resources (type of compute workers, memory, time of execution) needed to reproduce the experiments?
    \item[] Answer: \answerYes{}
    \item[] Justification: Compute resources are reported in App.~\ref{app: Detailed Experimental Settings}.
    \item[] Guidelines:
    \begin{itemize}
        \item The answer NA means that the paper does not include experiments.
        \item The paper should indicate the type of compute workers CPU or GPU, internal cluster, or cloud provider, including relevant memory and storage.
        \item The paper should provide the amount of compute required for each of the individual experimental runs as well as estimate the total compute. 
        \item The paper should disclose whether the full research project required more compute than the experiments reported in the paper (e.g., preliminary or failed experiments that didn't make it into the paper). 
    \end{itemize}
    
\item {\bf Code Of Ethics}
    \item[] Question: Does the research conducted in the paper conform, in every respect, with the NeurIPS Code of Ethics \url{https://neurips.cc/public/EthicsGuidelines}?
    \item[] Answer: \answerYes{}
    \item[] Justification: We carefully read the NeurIPS Code of Ethics and checked our work complies it.
    \item[] Guidelines:
    \begin{itemize}
        \item The answer NA means that the authors have not reviewed the NeurIPS Code of Ethics.
        \item If the authors answer No, they should explain the special circumstances that require a deviation from the Code of Ethics.
        \item The authors should make sure to preserve anonymity (e.g., if there is a special consideration due to laws or regulations in their jurisdiction).
    \end{itemize}

\item {\bf Broader Impacts}
    \item[] Question: Does the paper discuss both potential positive societal impacts and negative societal impacts of the work performed?
    \item[] Answer: \answerNA{} 
    \item[] Justification: This paper presents work whose goal is to advance the field of functional differential equations, and there is no societal impact which we feel must be specifically highlighted here.
    \item[] Guidelines:
    \begin{itemize}
        \item The answer NA means that there is no societal impact of the work performed.
        \item If the authors answer NA or No, they should explain why their work has no societal impact or why the paper does not address societal impact.
        \item Examples of negative societal impacts include potential malicious or unintended uses (e.g., disinformation, generating fake profiles, surveillance), fairness considerations (e.g., deployment of technologies that could make decisions that unfairly impact specific groups), privacy considerations, and security considerations.
        \item The conference expects that many papers will be foundational research and not tied to particular applications, let alone deployments. However, if there is a direct path to any negative applications, the authors should point it out. For example, it is legitimate to point out that an improvement in the quality of generative models could be used to generate deepfakes for disinformation. On the other hand, it is not needed to point out that a generic algorithm for optimizing neural networks could enable people to train models that generate Deepfakes faster.
        \item The authors should consider possible harms that could arise when the technology is being used as intended and functioning correctly, harms that could arise when the technology is being used as intended but gives incorrect results, and harms following from (intentional or unintentional) misuse of the technology.
        \item If there are negative societal impacts, the authors could also discuss possible mitigation strategies (e.g., gated release of models, providing defenses in addition to attacks, mechanisms for monitoring misuse, mechanisms to monitor how a system learns from feedback over time, improving the efficiency and accessibility of ML).
    \end{itemize}
    
\item {\bf Safeguards}
    \item[] Question: Does the paper describe safeguards that have been put in place for responsible release of data or models that have a high risk for misuse (e.g., pretrained language models, image generators, or scraped datasets)?
    \item[] Answer: \answerNA{} 
    \item[] Justification: Our paper poses no such risks.
    \item[] Guidelines:
    \begin{itemize}
        \item The answer NA means that the paper poses no such risks.
        \item Released models that have a high risk for misuse or dual-use should be released with necessary safeguards to allow for controlled use of the model, for example by requiring that users adhere to usage guidelines or restrictions to access the model or implementing safety filters. 
        \item Datasets that have been scraped from the Internet could pose safety risks. The authors should describe how they avoided releasing unsafe images.
        \item We recognize that providing effective safeguards is challenging, and many papers do not require this, but we encourage authors to take this into account and make a best faith effort.
    \end{itemize}

\item {\bf Licenses for existing assets}
    \item[] Question: Are the creators or original owners of assets (e.g., code, data, models), used in the paper, properly credited and are the license and terms of use explicitly mentioned and properly respected?
    \item[] Answer: \answerYes{}
    \item[] Justification: We appropriately cite relevant assets. The license and terms of use are mentioned in our code.
    \item[] Guidelines:
    \begin{itemize}
        \item The answer NA means that the paper does not use existing assets.
        \item The authors should cite the original paper that produced the code package or dataset.
        \item The authors should state which version of the asset is used and, if possible, include a URL.
        \item The name of the license (e.g., CC-BY 4.0) should be included for each asset.
        \item For scraped data from a particular source (e.g., website), the copyright and terms of service of that source should be provided.
        \item If assets are released, the license, copyright information, and terms of use in the package should be provided. For popular datasets, \url{paperswithcode.com/datasets} has curated licenses for some datasets. Their licensing guide can help determine the license of a dataset.
        \item For existing datasets that are re-packaged, both the original license and the license of the derived asset (if it has changed) should be provided.
        \item If this information is not available online, the authors are encouraged to reach out to the asset's creators.
    \end{itemize}

\item {\bf New Assets}
    \item[] Question: Are new assets introduced in the paper well documented and is the documentation provided alongside the assets?
    \item[] Answer: \answerYes{} 
    \item[] Justification: We carefully read and follow the NeurIPS Code and Data Submission Guidelines.
    \item[] Guidelines:
    \begin{itemize}
        \item The answer NA means that the paper does not release new assets.
        \item Researchers should communicate the details of the dataset/code/model as part of their submissions via structured templates. This includes details about training, license, limitations, etc. 
        \item The paper should discuss whether and how consent was obtained from people whose asset is used.
        \item At submission time, remember to anonymize your assets (if applicable). You can either create an anonymized URL or include an anonymized zip file.
    \end{itemize}

\item {\bf Crowdsourcing and Research with Human Subjects}
    \item[] Question: For crowdsourcing experiments and research with human subjects, does the paper include the full text of instructions given to participants and screenshots, if applicable, as well as details about compensation (if any)? 
    \item[] Answer: \answerNA
    \item[] Justification: Our paper does not involve crowdsourcing nor research with human subjects.
    \item[] Guidelines:
    \begin{itemize}
        \item The answer NA means that the paper does not involve crowdsourcing nor research with human subjects.
        \item Including this information in the supplemental material is fine, but if the main contribution of the paper involves human subjects, then as much detail as possible should be included in the main paper. 
        \item According to the NeurIPS Code of Ethics, workers involved in data collection, curation, or other labor should be paid at least the minimum wage in the country of the data collector. 
    \end{itemize}

\item {\bf Institutional Review Board (IRB) Approvals or Equivalent for Research with Human Subjects}
    \item[] Question: Does the paper describe potential risks incurred by study participants, whether such risks were disclosed to the subjects, and whether Institutional Review Board (IRB) approvals (or an equivalent approval/review based on the requirements of your country or institution) were obtained?
    \item[] Answer: \answerNA
    \item[] Justification: Our paper does not involve crowdsourcing nor research with human subjects.
    \item[] Guidelines:
    \begin{itemize}
        \item The answer NA means that the paper does not involve crowdsourcing nor research with human subjects.
        \item Depending on the country in which research is conducted, IRB approval (or equivalent) may be required for any human subjects research. If you obtained IRB approval, you should clearly state this in the paper. 
        \item We recognize that the procedures for this may vary significantly between institutions and locations, and we expect authors to adhere to the NeurIPS Code of Ethics and the guidelines for their institution. 
        \item For initial submissions, do not include any information that would break anonymity (if applicable), such as the institution conducting the review.
    \end{itemize}

\end{enumerate}

\end{document}